\newcommand\operA[2]{{\if!#2!\operatorname{#1}\else{\operatorname{#1}_{#2}^{\phantom{I}}}\fi}} %
\DeclareMathOperator{\Gal}{Gal} %
\newcommand\suchthat{{\,:\ \,}}
\newcommand{\card}[1]{{\left|{#1}\right|}}
\newcommand\isom{{\,\cong\,}}
\newcommand\tensor[1][{}]{{\otimes_{#1}}}
\DeclareMathOperator{\Ker}{Ker} %
\newcommand\ideal[1]{{\left<{#1}\right>}}
\newcommand\sg[1]{{\ideal{#1}}}
\def\sub{\subseteq}
\def\R{{\mathbb{R}}}
\def\a{{\mathfrak{a}}}
\def\co{{\,{:}\,}}
\def\s{{\sigma}}
\newcommand{\set}[1]{{\left\{#1\right\}}}
\def\ra{{\rightarrow}}
\def\hra{{\hookrightarrow}}
\renewcommand{\Im}{\operatorname{Im}} %
\renewcommand{\Ker}{\operatorname{Ker}} %
\newcommand\dimcol[2]{{[{#1}\!:\!{#2}]}} %
\newcommand\Zent{{\operatorname{Z}}}
\def\({\left(}
\def\){\right)}
\newcommand\cat[1]{{\mathbf{#1}}}
\newcommand\abs[1]{{\left|{#1}\right|}}
\newcommand\obj[1][]{{\operatorname{obj}\if!#1!\else(\cat{#1})\fi}}
\newcommand\Obj[1][]{{\operatorname{obj}\if!#1!\else(\cat{#1})\fi}}
\newcommand\lam{{\lambda}}
\newcommand\eq[1]{{(\ref{#1})}}%
\newcommand\Eq[1]{{Equation \eq{#1}}}
\long\def\forget#1\forgotten{}
\long\def\second#1\seconded{#1}
\newcommand{\diag}{\operatorname{diag}}
\def\E{{\mathcal{E}}}
\newcommand\normali{{\lhd}}
\newtheorem{thm}{Theorem}[section] %
\newtheorem{cor}[thm]{Corollary}
\newtheorem{defn}[thm]{Definition}
\newtheorem{exer}[thm]{Exercise}
\newtheorem{exmpl}[thm]{Example}
\newtheorem{lem}[thm]{Lemma}
\newtheorem{prop}[thm]{Proposition}
\newtheorem{rem}[thm]{Remark}
\newcommand\Cref[1]{{Corollary~\ref{#1}}}
\newcommand\Dref[1]{{Definition~\ref{#1}}}
\newcommand\Eref[1]{{Example~\ref{#1}}}
\newcommand\Lref[1]{{Lemma~\ref{#1}}}
\newcommand\Pref[1]{{Proposition~\ref{#1}}}
\newcommand\Prefs[2]{{Propositions~\ref{#1} and~\ref{#2}}}
\newcommand\Rref[1]{{Remark~\ref{#1}}}
\newcommand\Tref[1]{{Theorem~\ref{#1}}}
\newcommand\Sref[1]{{Section~\ref{#1}}}
\newcommand\Srefs[2]{{Sections~\ref{#1} and~\ref{#2}}}
\newcommand\Ssref[1]{{Subsection~\ref{#1}}}
\newcommand\Exeref[1]{{Excercise~\ref{#1}}}
\newcommand\etale{{\'{e}tale}}
\newcommand\End{{\operatorname{End}}}
\def\eg{{{\it{e.g.}}}}
\newcommand\MM[1]{\mathcal{M}^{\operatorname{#1}}}
\renewcommand\SS[1]{\mathcal{S}^{\operatorname{#1}}}
\def\w{{\omega}}
\newcommand\Jac{{\operatorname{J}}}
\newcommand\sa{{\operatorname{sa}}}
\newcommand\nacl[1]{{[#1]^\sa}}
\newcommand\Br[1][]{{\operatorname{Br}^{\operatorname{#1}}}}
\newcommand\allBr[1][F]{{\overline{\Br}(#1)}}
\newcommand\Brp[1][p]{{{}_{{#1}\!\!\!\:}\Br}} %
\newcommand\Ce[2][A]{{\operatorname{C}_{#1}(#2)}} %
\newcommand\nuc[2][]{{\mathfrak{N}_{#1}(#2)}}            %
\newcommand\mul[1]{{#1^{\times}}} %
\def\R{{\mathbb{R}}}
\def\HQ{{\mathbb{H}}}
\def\C{{\mathbb{C}}}
\def\Q{{\mathbb{Q}}}
\renewcommand\H[4][!]{{\operatorname{H}^{#2}\!\!\;({#3},{#4}{\if!#1\relax\else(#1)\fi})}}
\newcommand\HB[4][!]{{\operatorname{B}^{#2}\!\!\;({#3},{#4}{\if!#1\relax\else(#1)\fi})}}
\newcommand\HC[4][!]{{\operatorname{C}^{#2}\!\!\;({#3},{#4}{\if!#1\relax\else(#1)\fi})}}
\newcommand\HZ[4][!]{{\operatorname{Z}^{#2}\!\!\;({#3},{#4}{\if!#1\relax\else(#1)\fi})}}
\newcommand\M[1][d]{{\operatorname{M}_{#1}}} %
\newcommand\op[1]{{#1^{\operatorname{op}}}} %
\begin{document}

\title{Semiassociative Algebras over a Field}

\def\UVemail{vishne@math.biu.ac.il}

\author[G.~Blachar]{Guy Blachar}
\address{Department of Mathematics, Bar-Ilan University}
\email{guy.blachar@gmail.com}

\author[D.~Haile]{Darrell Haile}
\address{Department of Mathematics, Indiana University Bloomington}
\email{haile@indiana.edu}

\author[E.~Matzri]{Eliyahu Matzri}
\address{Department of Mathematics, Bar-Ilan University}
\email{elimatzri@gmail.com}

\author[E.~Rein]{Edan Rein}
\address{Department of Mathematics, Bar-Ilan University}
\email{edanrein2000@gmail.com}

\author[U.~Vishne]{Uzi Vishne}
\address{Department of Mathematics, Bar-Ilan University}
\email{\UVemail}

\thanks{}

\renewcommand{\subjclassname}{%
      \textup{2020} Mathematics Subject Classification}
\subjclass{17A60, 16K20, 16K50}

\newcommand\Alg[1]{{\operatorname{Alg}_{#1}}}
\newcommand\Algass[1]{{\operatorname{Alg}^{\operatorname{ass}}_{#1}}}
\newcommand\A[1]{{\mathcal{A}_{#1}}}
\newcommand\T[1]{{\mathcal{T}_{#1}}}

\date{\today}

\begin{abstract}
An associative central simple algebra is a form of matrices, because a maximal \etale{} subalgebra acts on the algebra faithfully by left and right multiplication. In an attempt to extract and isolate the full potential of this point of view, we study nonassociative algebras whose nucleus contains an \etale{} subalgebra bi-acting faithfully on the algebra. These algebras, termed semiassociative, are shown to be the forms of skew matrices, which we are led to define and investigate. Semiassociative algebras modulo skew matrices compose a Brauer monoid, which contains the Brauer group of the field as a unique maximal subgroup.
\end{abstract}

\maketitle

\dedicatory{\it{In memory of Arie-Shmuel (Semi) Olshwanger-Dahari, 1946--2022, a gifted scholar of mathematics and the Hebrew language.
Arie-Shmuel, an academic brother to Vishne and Matzri, devoted his life to learning and teaching.}}

\section{Introduction}

One of the most effective ways to study finite dimensional $F$-central simple algebras is through their maximal subfields. This paper generalizes this approach to the nonassociative realm. Fixing the base field~$F$, our purpose is to study the class of nonassociative central $F$-algebras whose nucleus contains an \etale{} subalgebra satisfying the essential properties which are available in the associative setup. Just as (associative) central simple algebras are the forms of matrices, the algebras in our class turn out to be forms of ``skew matrices'', a deformation of matrices which is amenable to combinatorial techniques.

The study of nonassociative analogs of associative central simple algebras dates back to a work of Dickson \cite{Dic}, who described the nonassociative quaternion algebras, and Albert who described nonassociative crossed products in \cite{Al-II}. Nonassociative quaternion algebras were thoroughly studied by Waterhouse in \cite{Wat}, and generalized by Steele in \cite{St}, defining nonassociative cyclic algebras; these were further generalized in~\cite{BrPu}.
Pumpl\"{u}n and Unger found applications of nonassociative quaternions for space-time block codes \cite{PuU}.
Other works in this field include~\cite{Pu1}, where Pumpl\"{u}n studies Petit algebras \cite{Pet1, Pet2}.

\medskip

We call a nonassociative $F$-central algebra $A$ {\bf{semiassociative}} if its nucleus contains an \etale\ $F$-algebra $K$, such that $A$ is minimally faithful as a $K \tensor K$-module via the action $(k\tensor k')a = kak'$. Associative central simple algebras all belong to this class.
Analogously to the associative theory, the assumption on~$K$ allows us to use the technique of Brauer factor sets as presented by Jacobson \cite[Chapter 2]{Jac} to show that our algebras are forms of ``skew matrices''.

Skew matrix algebras are a twisted version of the standard matrix algebras, defined so that $e_{ij}e_{jk}$ is a scalar multiple of the matrix unit~$e_{ik}$. In general these algebras are far from associative; they are rarely even power-associative. Nevertheless, their nucleus can be directly computed. They have only finitely many ideals, and are often simple. Splitting allows us to deduce properties of arbitrary semiassociative algebras from those of skew matrices.

We define the Brauer monoid to be the class of semiassociative algebras modulo skew matrices. Developing a hierarchy of classes of semiassociative algebras based on properties of the nucleus, we prove that the classical Brauer group is the unique maximal subgroup of the semiassociative Brauer monoid. (Part of the current team developed a theory of associative locally finite infinite dimensional central simple algebras, \cite{infBr}, which gives rise to an ``infinite Brauer monoid'', in whose primary components, once more, the primary components of the classical Brauer group are the unique maximal subgroups; the intersection of the two Brauer monoids is essentially the Brauer group).

If all the products $e_{ij}e_{jk}$ are nonzero, the skew matrix algebra is simple. However, there are simple skew matrix algebras for which some of the products are zero, so if we plan to cover simple semiassociative algebras, we are forced to include this case in our construction. As a result, our skew matrices are diverse enough to serve as forms for Haile's weak crossed product, which were studied in \cite{Haile, Haile2} and in several other papers. Using the separability idempotent, Haile defines a reduced tensor product which closely approximates the second cohomology group of~$F$. In contrast, our approach here is ``$F$-ocentric''\!, taking the product to be induced by the standard tensor product over~$F$. This results in objects of a different flavour.

\medskip

After recalling some basic terminology concerning %
nonassociative algebras, the paper has three parts. Part~\ref{Part1} outlines the fundamental properties of semiassociative algebras. In \Sref{sec:3} we define this class: a finite dimensional nonassociative central $F$-algebra is {\bf{semiassocative}} with respect to an \etale{} $F$-algebra~$K$ of the nucleus $\nuc{A}$, if $A$ is minimally faithful as a module over $K \tensor[F] K$. We prove that this property is independent of $K$. For example, the so-called nonassociative quaternion algebras are semiassociative.
In \Sref{sec:normal!} we discuss the property of normality of an associative simple $F$-algebra whose center is any finite separable extension of $F$, which was introduced in \cite{Haile} as a generalization of normality over a Galois extension of~$F$. This allows us to show in \Sref{sec:Hsec} that semiassociative algebras can be defined in terms of any simple subalgebra of the nucleus whose center is separable over $F$.

\Sref{sec:4} presents skew matrix algebras, which are to semiassociative algebras what standard matrices are to associative central simple algebras. A skew matrix algebra is the algebra spanned by $e_{ij}$, subject to $e_{ij}e_{jk} = c_{ijk}e_{ik}$ for suitable scalars $c_{ijk}$ satisfying $c_{iij} = c_{ijj} = 1$. We show that skew matrix algebras are indeed semiassociative, discuss their presentations, and provide some concrete examples. In \Sref{sec:5} we establish the fact that semiassociative algebras are forms of skew matrices. \Sref{sec:6} deals with Galois descent, and provides more information on a semiassociative algebra and its splitting. The arguments in these two section are transparently based on Chapter 2 of Jacobson's book \cite{Jac}, the key difference being that for associative algebra one begins from the assumption that the algebra is central simple to establish a faithful action of a maximal \etale{} subalgebra, while here the faithful action is our starting point, and indeed semiassociative algebras are not necessarily simple.

Part~\ref{Part2} deals with skew matrices and their combinatorics. We first show, in \Sref{sec:ideals}, that ideals of a skew matrix algebra are spanned by matrix units. We thus define a finite magma whose ideals correspond to the ideals of the skew matrix algebra, and describe an algorithm verifying simplicity. In particular, calling the matrix algebra ``regular'' if all $e_{ij}e_{ji} \neq 0$, we observe that any regular matrix algebra is simple. We then study, in \Srefs{sec:nucprep}{sec:nucofmat}, the nucleus of a skew matrix algebra, which has a Wedderburn decomposition as a direct sum $S + J$, where~$S$ is a maximal regular subalgebra and a direct sum of matrix algebras over~$F$, and~$J$ is the radical. Showing that for any semiassociative algebra the semisimple quotient of the nucleus is separable, we can complete the characterization of forms of skew matrices by proving in \Sref{sec:forms} that they are all semiassociative.

Part~\ref{Part3} revolves around the semiassociative Brauer monoid.
We first observe, in \Sref{sec:tensor}, that the class of semiassociative algebras is closed under tensor product. Anticipating later arguments
which utilize the structure of the nucleus of an arbitrary semiassociative algebra, we also show that the map from a semiassociative algebra to the semisimple quotient of the nucleus is multiplicative. The semiassociative Brauer monoid $\Br[sa](F)$ is defined in \Sref{section:brauer monoid} as the set of equivalence classes of semiassociative algebras, up to tensoring by skew matrices. For simple associative algebras the new equivalence reduces to tensoring by standard matrices, so we have an embedding $\Br(F) \hra \Br[sa](F)$.

The direct summands in the semisimple part of the nucleus, which are associative and simple, even if not central, provide a bridge to the classical Brauer group. We call these direct summands the {\bf{atoms}} of the algebra. A brief discussion of associative semisimple algebras leads us to say that a semiassociative algebra is {\bf{semicentral}} if its atoms are central (\Sref{sec:semicentral}), and {\bf{homogeneous}} if furthermore all the atoms are Brauer equivalent to each other.
The classes of semicentral and homogeneous algebras are closed under tensor product. Altogether, we define in \Sref{sec:monoids} the chain of monoids
$$\xymatrix@C=10pt@R=14pt{
\MM{mat}  & \sub & \MM{hom} &  \sub & \MM{semicentral}  & \sub & \MM{semi}},$$
consisting of (left to right) skew matrices; homogeneous semicentral algebras; semicentral algebras; and all the semiassociative $F$-algebras.
Dividing throughout by skew matrices, we obtain the monoids
$$\Br(F) \sub \Br[sc](F) \sub \Br[sa](F),$$
with the {\bf{semiassociative Brauer monoid}} $\Br[sa](F)$ at the top.
In \Sref{sec:underly} we prove the existence of an underlying (associative) division algebra for homogeneous semiassociative algebras.
Purity of submonoids allows us to show in \Sref{sec:final} that the Brauer group is the unique maximal subgroup of the semiassociative Brauer monoid.

The multiset of atoms of a semiassociative algebra form a nice combinatorial invariant. Since the atoms of a skew matrix algebra are central matrices, the {\emph{distribution}} of atoms is an invariant of the Brauer class. (This distributions for associative central simple algebras are singletons.) We formalize this in \Sref{sec:concrete} by defining a projective map from $\Br[sa](F)$ to the rational ``distribution elements'' in a Grothendieck-like ring of the simple finite dimensional algebras over $F$.

\smallskip

\bigskip

\section{Nonassociative preliminaries}\label{sec:2}

We briefly sketch some notions of nonassociative algebras (\cite{ZSSS} is a standard source).

Recall that for a nonassociative algebra $A$, the {\bf{nucleus}}~$\nuc{A}$ is the intersection of the left, middle and right nuclei, defined as
\begin{eqnarray*}
\nuc[\ell]{A} &= &\set{x \suchthat (x,A,A) = 0}; \\
\nuc[c]{A} &=& \set{x \suchthat (A,x,A) = 0}; \\
\nuc[r]{A} &= &\set{x \suchthat (A,A,x) = 0},
\end{eqnarray*}
where $(x,y,z) = (xy)z-x(yz)$ is the associator. The identity $a(x,y,z)+(a,x,y)z=(ax,y,z)-(a,xy,z)+(a,x,yz)$, which holds in any nonassociative algebra, shows that each of these ``one-sided'' nuclei is an associative subalgebra. The {\bf{center}} of a nonassociative algebra is the set
$\Zent(A) = \set{x \in \nuc{A} \suchthat [x,A]=0}$ of elements in~$\nuc{A}$ commuting with all the elements of $A$. The center is always a commutative and associative subalgebra, and is necessarily a field when~$A$ is simple. An algebra whose center contains a field~$F$ is an $F$-algebra. An $F$-algebra is {\bf{$F$-central}} if the center is equal to~$F$.

The tensor product of two nonassociative algebras $A$ and~$B$ over $F$ is again a nonassociative algebra, with center $\Zent(A\tensor[F] B)=\Zent(A)\tensor[F] \Zent(B)$. The nucleus behaves similarly:

\begin{prop}\label{prop:nuc-tensor}
If $A$ and~$B$ are nonassociative algebras over a field~$F$, then $\nuc{A \tensor B}=\nuc{A}\tensor[F] \nuc{B}$.
\end{prop}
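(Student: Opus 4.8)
The plan is to prove the identity separately for each of the three one-sided nuclei, that is, to establish $\nuc[\ell]{A\tensor B} = \nuc[\ell]{A}\tensor[F]\nuc[\ell]{B}$ together with the analogous statements for $\nuc[c]{}$ and $\nuc[r]{}$, and then to intersect. The inclusion $\supseteq$ in each case is a direct computation on pure tensors: for $x\in\nuc[\ell]{A}$ and $y\in\nuc[\ell]{B}$ one expands $(x\tensor y,\,a\tensor b,\,a'\tensor b')=((xa)a')\tensor((yb)b')-(x(aa'))\tensor(y(bb'))$ and replaces $(xa)a'$ by $x(aa')$ and $(yb)b'$ by $y(bb')$, using that $x$ and $y$ lie in the respective left nuclei. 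Since the associator on $A\tensor B$ is trilinear it suffices to test it on pure tensors in each slot, and the nucleus is a subspace, so $\supseteq$ follows by linearity. The middle and right nuclei are handled verbatim.

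For the reverse inclusion I would fix a one-sided nucleus, say the left one, take $u\in\nuc[\ell]{A\tensor B}$, and write $u=\sum_{i}a_i\tensor b_i$ with the $b_i$ linearly independent over $F$. Expanding the associator and substituting $(a_ia)a'=a_i(aa')+(a_i,a,a')$ and $(b_ib)b'=b_i(bb')+(b_i,b,b')$, the diagonal term $a_i(aa')\tensor b_i(bb')$ cancels and there remain three cross-terms involving the associators of $A$ and of $B$ simultaneously. The key move is to specialize $b=b'=1_B$: then $(b_i,1_B,1_B)=0$ annihilates two of the three cross-terms, and the surviving identity reads $\sum_i (a_i,a,a')\tensor b_i=0$ for all $a,a'\in A$. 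Linear independence of the $b_i$ now forces $(a_i,a,a')=0$, i.e.\ every $a_i$ lies in $\nuc[\ell]{A}$, so $u\in\nuc[\ell]{A}\tensor B$. Writing $u$ instead with the $A$-components linearly independent and specializing $a=a'=1_A$ gives symmetrically $u\in A\tensor\nuc[\ell]{B}$, and the middle and right nuclei are treated by the same specialization.

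To finish I would invoke the elementary linear-algebra fact that $(V_1\tensor W_1)\cap(V_2\tensor W_2)=(V_1\cap V_2)\tensor(W_1\cap W_2)$ for subspaces $V_i\sub A$ and $W_i\sub B$, proved by choosing a basis of $A$ adapted simultaneously to $V_1$ and $V_2$ and a basis of $B$ adapted to $W_1$ and $W_2$. Applied within each one-sided case it gives $u\in(\nuc[\ell]{A}\tensor B)\cap(A\tensor\nuc[\ell]{B})=\nuc[\ell]{A}\tensor\nuc[\ell]{B}$, and likewise for the other two nuclei; iterating the fact across the three identities then yields $\nuc{A\tensor B}=(\nuc[\ell]{A}\tensor\nuc[\ell]{B})\cap(\nuc[c]{A}\tensor\nuc[c]{B})\cap(\nuc[r]{A}\tensor\nuc[r]{B})=(\nuc[\ell]{A}\cap\nuc[c]{A}\cap\nuc[r]{A})\tensor(\nuc[\ell]{B}\cap\nuc[c]{B}\cap\nuc[r]{B})=\nuc{A}\tensor\nuc{B}$. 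The one genuinely delicate point is the entanglement of the $A$- and $B$-associators in the cross-terms; this is precisely what substituting the identity element disentangles, and it is the only place where unitality is used — indeed the statement can fail without a unit, as an algebra with zero multiplication shows.
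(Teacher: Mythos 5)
Your proof is correct. The engine is the same as the paper's: write the element as a sum of pure tensors with one set of components linearly independent, test the associator against elements whose $B$-component (respectively $A$-component) is the identity, and conclude from linear independence that the componentwise associators vanish. The architecture differs in two respects, though. First, you refine the claim to each one-sided nucleus separately and intersect at the end, which forces you to invoke the subspace identity $(V_1\tensor[F] W_1)\cap(V_2\tensor[F] W_2)=(V_1\cap V_2)\tensor[F](W_1\cap W_2)$, applied twice; the paper sidesteps this entirely by taking a single decomposition $x=\sum a_i\tensor b_i$ of \emph{minimal} length, so that the $a_i$ and the $b_i$ are simultaneously independent and $x\in\nuc{A}\tensor[F]\nuc{B}$ drops out of one decomposition with no intersection lemma. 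Second, the paper tests directly against $a\tensor 1$ and $a'\tensor 1$, for which the cross-terms you must cancel never arise; your general expansion followed by the specialization $b=b'=1_B$ reaches the same identity $\sum_i(a_i,a,a')\tensor b_i=0$ with somewhat more bookkeeping. What your route buys is the explicitly recorded, strictly finer conclusion that each of the left, middle and right nuclei is individually multiplicative under tensor products — a fact the paper's computation would also yield but does not state. Your closing observation that unitality is essential is apt; it is used at exactly the same point in both arguments.
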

\begin{proof}
Clearly, $\nuc{A}\tensor[F] \nuc{B}\sub \nuc{A\tensor[F] B}$. To see the reverse inclusion, write $x\in \nuc{A\tensor[F] B}$ in the form $x=\sum_{i=1}^n a_i\otimes b_i$ for the minimal possible $n$. Then $a_1,\dots,a_n$ are linearly independent over $F$, and so are $b_1,\dots,b_n$.

Let $a,a'\in A$; then
$$(a,a',x)=(aa')x-a(a'x)=\sum_{i=1}^n(a,a',a_i)\otimes b_i=0.$$
As $b_1,\dots,b_n$ are linearly independent over $F$, we have $(a,a',a_i)=0$ for each~$i$. A similar argument shows that $(a,a_i,a')=(a_i,a,a')=0$. As this holds for all $a,a'\in A$, we have that $a_i\in \nuc{A}$. By symmetry $b_i\in \nuc{B}$, and thus $x\in \nuc{A}\tensor[F] \nuc{B}$.
\end{proof}

\begin{cor}
If $A$ is an $F$-algebra and $F \sub E$ a field extension, then $\nuc{E \tensor A} = E \tensor \nuc{A}$.
\end{cor}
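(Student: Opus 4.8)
The plan is to recognize the statement as the special case of \Pref{prop:nuc-tensor} in which the second tensor factor is the field~$E$ itself. Since $F \sub E$, the extension $E$ is in particular an associative and commutative $F$-algebra, hence an admissible nonassociative $F$-algebra, and so the proposition applies to the product $A \tensor[F] E$.

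The one observation required is the computation of $\nuc{E}$. Because $E$ is a field, it is both commutative and associative, so every associator $(x,y,z)$ with $x,y,z \in E$ vanishes; consequently each of the three one-sided nuclei $\nuc[\ell]{E}$, $\nuc[c]{E}$, $\nuc[r]{E}$ coincides with~$E$, and therefore $\nuc{E} = E$.

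Substituting this into \Pref{prop:nuc-tensor} gives $\nuc{A \tensor[F] E} = \nuc{A} \tensor[F] \nuc{E} = \nuc{A} \tensor[F] E$. Finally, the canonical flip isomorphism $E \tensor[F] A \isom A \tensor[F] E$ of $F$-algebras, which respects multiplication because the tensor product is formed over the central field~$F$, transports this identity to $\nuc{E \tensor[F] A} = E \tensor[F] \nuc{A}$, as claimed. There is no genuine obstacle in the argument; the entire content is the elementary fact that a field equals its own nucleus, which reduces the corollary to a direct invocation of the preceding proposition.
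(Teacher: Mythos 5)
Your proof is correct and is exactly the intended argument: the paper states this corollary without proof as an immediate specialization of Proposition~\ref{prop:nuc-tensor}, using that a field equals its own nucleus. (The flip isomorphism is not even needed, since one may apply the proposition directly with $E$ as the first tensor factor.)
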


\part{Semiassociative algebras}\label{Part1}

One of the most effective ways to study associative central simple algebras is through their maximal subfields, or more generally, their maximal \etale{} subalgebras. This paper extends this idea to nonassociative $F$-central algebras.
Realizing that the class of all simple nonassociative algebras is too wild to be interesting, we stipulate that the algebra $A$ contains in its nucleus an \etale{} subalgebra~$K$, so that $A$ is a cyclic faithful module over $K^e = K \tensor K$ under the left and right actions. It turns out that this assumption suffice for the construction of a Brauer factor set, as described by Jacobson \cite[Chap~2]{Jac}. Analogously to the classical theory, we show that semiassociative algebras are forms of skew matrices.

\section{Semiassociative algebras}\label{section:semiassociative algebras}\label{sec:3}

Recall that an \etale{} algebra over a field $F$ is a finite direct product of finite separable field extensions of~$F$. This class of commutative algebras is closed under %
tensor products.
Any \etale{} algebra is semisimple. Furthermore if~$K$ is \etale{} over~$F$, then~$K^e$ is semisimple.

A module over any associative ring $R$ is {\bf{minimally faithful}} if it is faithful and has no faithful sections (a section is a quotient of submodules). A semisimple ring has a unique minimally faithful module, namely the direct sum of the simple modules of the summands, with multiplicity one. If the ring $R$ is commutative and semisimple, the unique minimally faithful module is ${}_RR$. In this case a module is minimally faithful if and only if it is cyclic and faithful.

\begin{defn}\label{maindef}
A finite dimensional nonassociative central $F$-algebra is {\bf{semiassociative}} if it is minimally faithful over~$K^e$ for some \etale{} $F$-subalgebra~$K$ of its nucleus.
\end{defn}

\medskip

\begin{exmpl}
Any associative  (finite dimensional) central simple algebra is semiassociative.

Indeed, the algebra has a maximal \etale{} subalgebra $K$ by Koethe's theorem \cite[Cor.~7.1.2$'$]{RowenRT}, and being faithful over $A^e = A \tensor \op{A}$ it is clearly faithful over~$K^e$. (This final argument is not available without associativity.)
\end{exmpl}
\medskip

\begin{rem}[Three-for-two bargain]\label{3for2}
Let $A$ be a nonassociative algebra containing an \etale{} subalgebra~$K$ in its nucleus. Then any two of the following conditions imply the third: $A$ is faithful over $K^e$; $A$ is cyclic over $K^e$; and $\dim A = (\dim K)^2$.
\end{rem}
(This is of course true for any module over an $F$-algebra).
So,

\begin{cor}
The dimension of a semiassociative algebra is a square.
\end{cor}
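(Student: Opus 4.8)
The plan is to unwind \Dref{maindef} and reduce the statement to the three-for-two bargain of \Rref{3for2}. Let $A$ be semiassociative with respect to an \etale{} $F$-subalgebra $K$ of its nucleus $\nuc{A}$; by definition this means precisely that $A$ is minimally faithful over $K^e = K \tensor K$. The entire content of the corollary is already packaged in the preliminary material, so the proof should amount to invoking the right two facts in the right order.

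First I would observe that $K^e$ is a commutative semisimple ring. Commutativity is inherited from $K$, since \etale{} algebras are commutative and a tensor product of commutative algebras is commutative; semisimplicity of $K^e$ for \etale{} $K$ was recorded in the opening of this section. This matters because the discussion preceding \Dref{maindef} establishes that for a commutative semisimple ring a module is minimally faithful if and only if it is cyclic and faithful. Applying this to $A$ as a $K^e$-module, the hypothesis that $A$ is minimally faithful splits into the two separate assertions that $A$ is faithful over $K^e$ and that $A$ is cyclic over $K^e$.

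With both hypotheses in hand, the second step is to feed them into \Rref{3for2}: faithfulness and cyclicity over $K^e$ are two of the three conditions there, so the third must hold as well, yielding $\dim_F A = (\dim_F K)^2$. Since $\dim_F K$ is a (finite) positive integer, $\dim_F A$ is a perfect square, as claimed.

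I do not anticipate a genuine obstacle, as the corollary is essentially a formal consequence of the two assembled lemmas. The only point deserving attention is the reduction in the first step: one must confirm that $K^e$ is commutative semisimple so that ``minimally faithful'' legitimately unpacks into ``cyclic and faithful'', since it is exactly this pair of conditions that \Rref{3for2} consumes. Once that is verified, the dimension count is immediate.
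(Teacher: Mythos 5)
Your proof is correct and follows exactly the route the paper intends: the corollary appears immediately after \Rref{3for2} with only the word ``So,'' as justification, and the intended argument is precisely your unpacking of ``minimally faithful'' into ``cyclic and faithful'' over the commutative semisimple ring $K^e$ (as stated in the paragraph preceding \Dref{maindef}), followed by the three-for-two bargain to conclude $\dim A = (\dim K)^2$. Nothing is missing.
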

The square root of the dimension is the {\bf{degree}} of the algebra. %
As in the associative case, we prove below (\Cref{maxcomm}) that $K$ as above is a maximal commutative subalgebra.

\forget
Semiassociativity respects change of basis:
\begin{prop}
Let $F'/F$ be any field extension. If $A$ is semiassociative over $F$ then $F' \tensor[F] A$ is semiassociative over $F'$.
\end{prop}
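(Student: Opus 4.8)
The plan is to exhibit an explicit \etale{} subalgebra of the nucleus of $F' \tensor[F] A$ over which the base-changed algebra is minimally faithful, namely the base change of the one we already have. Since $A$ is semiassociative over $F$, there is an \etale{} $F$-subalgebra $K \sub \nuc{A}$ such that $A$ is minimally faithful over $K^e = K \tensor[F] K$; because $K$ is \etale{}, $K^e$ is commutative and semisimple, so by the discussion preceding \Dref{maindef} this means precisely that $A$ is cyclic and faithful as a $K^e$-module. I would take $K' = F' \tensor[F] K$ as the candidate \etale{} subalgebra of $F' \tensor[F] A$.

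First I would check the structural conditions. The algebra $F' \tensor[F] A$ is finite dimensional over $F'$, and by the formula $\Zent(A \tensor[F] B) = \Zent(A) \tensor[F] \Zent(B)$ its center is $\Zent(F') \tensor[F] \Zent(A) = F' \tensor[F] F = F'$, so it is $F'$-central. The algebra $K'$ is \etale{} over $F'$: writing $K$ as a finite product of finite separable field extensions, $K'$ is the corresponding product of the base changes, each of which remains a product of finite separable extensions of $F'$ since a separable polynomial stays separable over any extension field. Finally, by \Pref{prop:nuc-tensor} (applied as in its corollary) one has $\nuc{F' \tensor[F] A} = F' \tensor[F] \nuc{A}$, whence $K' = F' \tensor[F] K \sub F' \tensor[F] \nuc{A} = \nuc{F' \tensor[F] A}$, as needed for the $(K')^e$-action $(k_1 \tensor k_2)x = k_1 x k_2$ to be defined.

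The heart of the argument is that the $(K')^e$-module $F' \tensor[F] A$ is simply the base change of the $K^e$-module $A$. Indeed, there is a canonical identification $(K')^e = K' \tensor[F'] K' \isom F' \tensor[F] K^e$, and unwinding the multiplication of $F' \tensor[F] A$ shows that under this identification the structure map $(K')^e \ra \End_{F'}(F' \tensor[F] A)$ equals $\id_{F'} \tensor \rho$ followed by the natural inclusion $F' \tensor[F] \End_F(A) \hra \End_{F'}(F' \tensor[F] A)$, where $\rho \co K^e \ra \End_F(A)$ is the structure map for $A$. Cyclicity then transports immediately: if $a$ generates $A$ over $K^e$, then $1 \tensor a$ generates $F' \tensor[F] A$ over $(K')^e$, since $(K')^e (1 \tensor a) = F' \tensor[F] (K^e a) = F' \tensor[F] A$. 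Faithfulness transports because $F'$ is flat (indeed free) over $F$: injectivity of $\rho$ forces $\id_{F'} \tensor \rho$ injective, and the inclusion into $\End_{F'}(F' \tensor[F] A)$ is injective, so the structure map of $(K')^e$ is injective. As $(K')^e = K' \tensor[F'] K'$ is again commutative and semisimple, cyclic together with faithful gives minimally faithful, so $F' \tensor[F] A$ is semiassociative over $F'$ with respect to $K'$.

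The only points requiring genuine care are the identification of the $(K')^e$-action with the base change of $\rho$ --- which amounts to checking that left and right multiplication by $K'$ in $F' \tensor[F] A$ is the base change of left and right multiplication by $K$ in $A$, and here the hypothesis $K \sub \nuc{A}$ is what makes the two-sided action well behaved --- and the stability of \etale{}ness under the (possibly transcendental or inseparable) extension $F'/F$, which is not the tensor-closure of \etale{} algebras over $F$ but the standard base-change stability. Alternatively, once cyclicity is in hand one could avoid the direct faithfulness computation by invoking \Rref{3for2}: the equality $\dim_{F'}(F' \tensor[F] A) = \dim_F A = (\dim_F K)^2 = (\dim_{F'} K')^2$ together with cyclicity yields faithfulness.
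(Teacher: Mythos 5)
Your proposal is correct and follows essentially the same route as the paper's own proof (\Pref{basicT0}): base-change the \etale{} subalgebra to $K' = F'\tensor[F] K$, use $\nuc{F'\tensor A} = F'\tensor\nuc{A}$ and the identification $(K')^e \isom F'\tensor[F] K^e$, and transport cyclicity of the module. The paper leaves faithfulness to the dimension count of \Rref{3for2}, which you also note as an alternative; your explicit flatness argument is a harmless elaboration of the same idea.
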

\forgotten

\subsection{Independence of semiassociativity in~$K$}\label{ss:31}

We say that $A$ is $K$-semiassociative if $A$ and $K$ satisfy the conditions of \Dref{maindef}. We want to show that this property does not depend on~$K$. Let us start with the split case.

\begin{prop}\label{maxi}
Suppose $A$ is an $F^n$-semiassociative algebra. Let $e_1,\dots,e_n$ be an orthogonal system of idempotents generating $F^n$. Then the~$e_i$ are primitive in~$\nuc{A}$.
\end{prop}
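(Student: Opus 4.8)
The plan is to exploit the Peirce-type decomposition of $A$ induced by the primitive idempotents $e_i \tensor e_j$ of $K^e = F^n \tensor[F] F^n \isom F^{n^2}$, and to show that each diagonal block $e_i A e_i$ is one-dimensional, whence primitivity of $e_i$ in $\nuc{A}$ is immediate.

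First I would record the decomposition. Since each $e_i$ lies in $K \sub \nuc{A}$, multiplication by $e_i$ associates with everything, so the product $e_i a e_j$ is unambiguous and the element $e_i \tensor e_j$ acts on $a \in A$ by $a \mapsto e_i a e_j$. As $\sum_i e_i = 1$, every $a$ decomposes as $a = \sum_{i,j} e_i a e_j$, giving $A = \bigoplus_{i,j} e_i A e_j$; these summands are precisely the submodules on which the primitive idempotents $e_i \tensor e_j$ of the split algebra $K^e$ act as the identity.

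Next I would pin down the dimensions. Faithfulness of $A$ over $K^e$ forces each block $e_i A e_j$ to be nonzero, since otherwise the nonzero idempotent $e_i \tensor e_j$ would annihilate $A$. On the other hand, minimal faithfulness makes $A$ cyclic and faithful over the commutative semisimple ring $K^e$, so $\dim_F A = (\dim_F K)^2 = n^2$ by the three-for-two principle (\Rref{3for2}). Comparing $n^2 = \dim_F A = \sum_{i,j} \dim_F e_i A e_j$ against the $n^2$ nonzero summands forces each $e_i A e_j$ to be exactly one-dimensional. In particular $e_i A e_i$ is one-dimensional, and since it contains the nonzero idempotent $e_i$, we conclude $e_i A e_i = F e_i$.

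Finally I would deduce primitivity. Because $\nuc{A} \sub A$, we have $e_i \nuc{A} e_i \sub e_i A e_i = F e_i$, and the only idempotents of the ring $F e_i$ are $0$ and $e_i$ (from $\lambda^2 = \lambda$). Hence $e_i$ cannot be written as a sum of two nonzero orthogonal idempotents of $\nuc{A}$, i.e.\ it is primitive there. I do not anticipate a serious obstacle: the one delicate point is that the Peirce decomposition and the collapse $e_i A e_i = F e_i$ are genuinely nonassociative statements, valid precisely because the $e_i$ sit in the nucleus; once this is in place, both the counting and the primitivity argument are formal.
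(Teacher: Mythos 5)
Your proposal is correct and follows essentially the same route as the paper: the Peirce decomposition $A=\bigoplus_{i,j}e_iAe_j$, faithfulness forcing each block to be nonzero, and the count $n^2=\sum_{i,j}\dim e_iAe_j\geq n^2$ forcing $\dim e_iAe_i=1$, whence primitivity. The extra details you supply (why the products are unambiguous because the $e_i$ are nuclear, and the explicit idempotent argument in $Fe_i$) are correct elaborations of steps the paper leaves implicit.
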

(Recall that an idempotent in an associative algebra is primitive if it cannot be nontrivially decomposed as a sum of orthogonal idempotents).
\begin{proof}
By assumption the action of $F^n\tensor F^n$ on $A$ is faithful. In particular each $e_i A e_j$ is nonzero, but since $A$ is the direct sum of these summands, $n^2 = \dim A = \sum_{i,j} \dim e_i A e_j \geq \sum_{i,j} 1 = n^2$, so each $\dim e_i A e_j = 1$. In particular $\dim e_i A e_i = 1$ so $e_i$ is primitive.
\end{proof}

A field extension $E/F$ {\bf{splits}} an \etale\  $F$-algebra $K$ of dimension $n$ if $E\tensor[F] K\cong E^n$.

The nucleus of a nonassociative algebra may contain more than one \etale{} subalgebra of the same dimension.  Let us prove that if $A$ is semiassociative with respect to one of them, then it is semiassociative with respect to all.
\begin{prop}\label{coverall}
Let $A$ be a semiassociative $F$-algebra of degree $n$. Then $A$ is $K'$-semiassociative for any $n$-dimensional \etale{} subalgebra $K' \sub \nuc{A}$.
\end{prop}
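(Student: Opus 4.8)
The plan is to reduce the statement first to a question of faithfulness, then to the split case, where it follows from the conjugacy of idempotent systems inside the nucleus.

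First I would peel off the module-theoretic formalities. Since $K'$ is \etale{}, $(K')^e = K' \tensor[F] K'$ is commutative and semisimple, so $A$ is minimally faithful over $(K')^e$ if and only if it is cyclic and faithful over $(K')^e$. As $K'\sub\nuc{A}$ is $n$-dimensional and $\dim A = n^2 = (\dim K')^2$, the three-for-two principle (\Rref{3for2}) lets me trade cyclicity for faithfulness; it therefore suffices to prove that $A$ is \emph{faithful} as a $(K')^e$-module. Next I pass to a splitting field: choose a finite Galois extension $E/F$ splitting both $K$ and $K'$, so that $K_E := E\tensor[F] K \isom E^n$ and $K_E' := E\tensor[F] K' \isom E^n$. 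By the corollary to \Pref{prop:nuc-tensor}, $\nuc{E\tensor A} = E\tensor\nuc{A}$ contains both $K_E$ and $K_E'$. Faithfulness is detected by the vanishing of an annihilator, and $E/F$ is faithfully flat, so $A$ is faithful over $(K')^e$ if and only if $A_E := E\tensor[F] A$ is faithful over $(K_E')^e = E\tensor[F](K')^e$, and likewise for $K$. In particular $A_E$ is faithful over $(K_E)^e$, hence—again by \Rref{3for2} and the remark on commutative semisimple rings—it is $K_E$-semiassociative; as $K_E\isom E^n$ is split, \Pref{maxi} shows that $e_i A_E e_j$ is one-dimensional, in particular nonzero, for all $i,j$, where $e_1,\dots,e_n$ are the primitive idempotents of $K_E$. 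It remains only to prove, for the idempotents $f_1,\dots,f_n$ of $K_E'$, that each $f_i A_E f_j\neq 0$.

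This split statement is the crux. Write $N := \nuc{A_E} = E\tensor\nuc{A}$, a finite-dimensional associative $E$-algebra. Because $N$ lies in the nucleus, left and right multiplications make $A_E$ an associative $(N,N)$-bimodule with $x\cdot a\cdot y = xay$ for $x,y\in N$. Now $\set{e_i}$ and $\set{f_i}$ are two complete orthogonal systems of idempotents of $N$ summing to $1$, the $e_i$ being primitive by \Pref{maxi}. Since $N$ is finite-dimensional it is semiperfect, so Krull--Schmidt applies to $N_N = \bigoplus_i e_i N$ and the number of indecomposable summands is the invariant $n$. Refining each $f_i$ into a sum of primitive orthogonal idempotents of $N$ would produce a complete primitive system whose cardinality must again equal $n$; since the $f_i$ already number $n$, no proper refinement occurs and each $f_i$ is primitive. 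Matching the decompositions $\set{e_i N}$ and $\set{f_i N}$ up to isomorphism and reordering (Krull--Schmidt once more), the standard conjugacy theorem for complete systems of primitive orthogonal idempotents in a semiperfect ring yields a unit $g\in N$ with $f_i = g e_i g^{-1}$ for all $i$. Conjugation by $g$ acts on $A_E$ through the commuting bijections $a\mapsto ga$ and $a\mapsto ag^{-1}$, and the bimodule identity gives $f_i A_E f_j = g\,(e_i A_E e_j)\,g^{-1}$, the image of a nonzero space under a bijection, hence nonzero. Thus $A_E$ is faithful over $(K_E')^e$, and descending along the faithfully flat extension $E/F$ shows $A$ is faithful over $(K')^e$, completing the proof.

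I expect the main obstacle to be precisely this split case: upgrading the two \etale{} subalgebras $K_E,K_E'$ of the nucleus to \emph{conjugate} idempotent systems. This rests on the semiperfectness of $N$ together with Krull--Schmidt, used both to force primitivity of the $f_i$ and to supply the conjugating unit. By contrast, the bimodule computation and the two flat-base-change reductions are routine.
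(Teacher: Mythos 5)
Your proof is correct and follows essentially the same route as the paper's: pass to a common splitting field, use \Pref{maxi} to see that the idempotent system coming from $K$ is primitive/maximal, conjugate the two complete orthogonal idempotent systems inside the finite-dimensional nucleus so that faithfulness transfers from $(E\tensor K)^e$ to $(E\tensor K')^e$, and then descend to $F$. The only cosmetic difference is that you make the primitivity of the $f_i$ and the conjugating unit explicit via Krull--Schmidt and semiperfectness, where the paper simply cites the standard fact that maximal systems of orthogonal idempotents in a finite-dimensional algebra are conjugate.
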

\begin{proof}
Suppose $A$ is $K$-semiassociative. Namely, $A$ is faithful under the action of $K \tensor K$.  We need to show that the action of $K' \tensor K'$ is faithful as well. Let~$E$ be a common splitting field of both~$K$ and~$K'$.
Then $\nuc{E \tensor A}$ contains $E \tensor K$ and $E \tensor K'$, which are both isomorphic to $E^n$ as unital algebras, thus each generated by $n$ orthogonal idempotents, $\set{e_1,\dots,e_n}$ and $\set{e_1',\dots,e_n'}$, respectively. By \Pref{maxi}, the system~$\set{e_i}$ is maximal, because $(E \tensor K)^e$ acts faithfully.
Since $\nuc{E \tensor A}$ is finite dimensional, any two maximal systems of orthogonal idempotents are conjugate in~$\nuc{E \tensor A}$, see \cite[Exercise~I.1.12]{AC}. Now, since $\set{e_i'}$ has the same cardinality as $\set{e_i}$ which is maximal, it is maximal as well, and the two systems are conjugate. Namely, $E \tensor K'$ is conjugate to $E \tensor K$ in $E \tensor A$. Since $(E \tensor K)^e$ acts faithfully on $E \tensor A$, and we may conjugate the whole algebra by an element from the nucleus, the action of $(E \tensor K')^e$ is faithful as well. It follows that $K'^e$ acts faithfully on $A$ (because an annihilator would persist after scalar extension). Therefore,~$A$ is~$K'$-semiassociative.
\end{proof}

As discussed above, the main purpose in defining semiassociative algebras is to extend the theory of associative central simple algebras from the point of view of a fixed maximal subfield. However, not every associative algebra in this new class is simple, see \Eref{assocnonsimple} below. We discuss simplicity further in \Sref{sec:ideals}.

\subsection{Examples}

Quite surprisingly, there is no serious discussion in the literature of a nonassocative extension of the structure theory of associative central simple algebras. There are, however, papers on nonassociative crossed products. Albert's paper~\cite{Al-II} is a prime example.
The nonassociative cyclic algebras defined in \cite{St} and generalized in \cite{BrPu} are (in characteristic zero) semiassociative.

\begin{exmpl}
Nonassociative quaternion algebras, which were first defined by \mbox{Dickson}~\cite{Dic} and by Albert~\cite{Al} and later studied and classified by Waterhouse in \cite{Wat}, are semiassociative. A {\bf{nonassociative quaternion algebra}} $Q$ over $F$ is a nonassociative algebra of the form
$$Q = K \oplus Kz$$
with $(k_1+k_2z)(k_1'+k_2'z) = (k_1k_1'+k_2\s(k_2')b)+(k_1k_2'+k_2\s(k_1'))z$,
where $K/F$ is a quadratic Galois extension with $\Gal(K/F)=\sg{\sigma}$ and $b\in \mul{K}$. If $b \in \mul{F}$ this is a standard quaternion algebra. The algebra is $F$-central simple for any $b \in \mul{K}$. The associator $(z,z,z)$ is nonzero when $b \not \in F$.
\end{exmpl}

\begin{exmpl}\label{Q=0}
Similarly, the {\bf{zero quaternion algebra}} $Q = K \oplus Kz$ with the same operation and $z^2 = b = 0$, is an associative semiassociative algebra which is not simple; the semisimple quotient is~$K$.
\end{exmpl}

\begin{exmpl}\label{weak}
Let $K/F$ be a Galois extension of fields. For each $g,h \in G = \Gal(K/F)$ let $c_{g,h} \in K$. Since we allow zeros, this can be viewed as a ``nonassociative weak cocycle'' (``weak'' in the sense of \cite{Haile}). Define the nonassociative crossed product $(K/F,G,c)$ as the algebra $\bigoplus_{g \in G} K z_g$, with $(kz_g)(k'z_{g'}) = kg(k')c_{g,g'}z_{gg'}$. It is easy to see that $K$ is in the nucleus, and $(K/F,G,c)$ is always semiassociative.
\end{exmpl}

\medskip

After introducing skew matrix algebras in \Sref{sec:4}, our aim is to prove that, just as associative central simple algebras are forms of matrices, {\it{semiassociative central simple algebras are forms of skew matrices}}.

\section{Normal algebras}\label{sec:normal!}

This section prepares the ground for \Sref{sec:Hsec}, where normal subalgebras of the nucleus of~$A$ play a key role.
Here we define normality over a separable center, and provide some alternative descriptions. All algebras in this section are associative.
(We restrict the discussion to simple normal algebras; the notion of normality could probably be further extended to semisimple algebras, see \Rref{whatif} below).

\subsection{Decomposing a tensor product}\label{ssec:prenormal}

In order to get a handle on normality, we record a formula for the tensor product over~$F$ of two $L$-central algebras, where~$L$ is a finite separable field extension of~$F$. This is well known when $L/F$ is Galois, but is quite more delicate in the general case.

Let $E$ denote the Galois closure of $L/F$, $G = \Gal(E/F)$ the \mbox{Galois} group, and $H = \Gal(E/L)$ the subgroup corresponding to~$L$. There is a correspondence between right cosets $\tau H$ and embeddings $\tau \co L \ra E$. Likewise there is a correspondence between double cosets $H\tau H \sub G$ and embeddings $\tau \co L \ra E$ up to composition from the left by an automorphism fixing~$L$. Consider the double coset decomposition $G = \bigcup_{i=1}^r H \tau_i H$, with $\tau_1 = 1$. For each $i$, let $L_i$ be the compositum of $L$ and $\tau_i(L)$ in $E$; in particular $L_1 = L$. Equivalently, taking a generator~$\alpha$ of~$L$ over~$F$, we have that $L_i = F[\alpha,\tau_i(\alpha)]$. The group $G$ acts transitively
on the roots of the minimal polynomial, with $H$ being the stabilizer of $\alpha$. Each root is in the $H$-orbit of $\tau_i(\alpha)$ for a unique~$i$.

It is well known that $$L \tensor[F] L \isom L_1 \oplus \cdots \oplus L_r,$$
where the map to the $i$th summand is $\beta \tensor \beta'\mapsto \beta \tau_i(\beta')$.

\begin{lem}\label{firstlem}
Let~$B$ be an $F$-algebra whose center is $L$. Then $$B \tensor[F] L \isom (B \tensor[L] L_1) \oplus \cdots \oplus (B \tensor[L] L_r)$$ under the map taking $b \tensor \beta$ to $(b \tau_i(\beta),\dots,b \tau_r(\beta))$.
\end{lem}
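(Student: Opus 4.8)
The plan is to reduce the statement to the already-recorded decomposition $L \tensor[F] L \isom L_1 \oplus \cdots \oplus L_r$ by a base-change manipulation, exploiting that $B$ is an $L$-algebra because $L = \Zent(B)$ sits centrally inside it.

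First I would rewrite the left-hand side as a tensor product taken over $L$ rather than $F$. Since $L$ is central in $B$, there is a natural isomorphism of $F$-algebras
$$B \tensor[F] L \isom B \tensor[L] (L \tensor[F] L),$$
given by $b \tensor \beta \mapsto b \tensor[L] (1 \tensor[F] \beta)$, with inverse $b \tensor[L] (\gamma \tensor[F] \beta) \mapsto b\gamma \tensor \beta$, the scalar $\gamma \in L$ being absorbed into $B$ through its central action. I would check that this candidate inverse is well defined on the balanced tensor product (the $L$-balancing $\tilde\psi(b\ell,\gamma\tensor\beta) = \tilde\psi(b,\ell\gamma\tensor\beta)$ holds because $\gamma,\ell \in L$ are central in $B$), that the two maps are mutually inverse, and that they respect multiplication. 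All of this is routine, since both maps are built from algebra homomorphisms of the tensor factors.

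Next I would substitute the known decomposition and distribute $\tensor[L]$ over the finite direct sum. The point requiring care is which copy of $L$ carries the module structure: in $B \tensor[L] (L \tensor[F] L)$ the factor $B$ is tensored over the \emph{left} copy of $L$, and under $L \tensor[F] L \isom \bigoplus_i L_i$, namely $\gamma \tensor \beta \mapsto (\gamma\tau_i(\beta))_i$, that left copy maps diagonally via the first-factor embeddings $L \hra L_i$ (because $\tau_i(1) = 1$, so $\gamma \tensor 1 \mapsto (\gamma)_i$). Consequently
$$B \tensor[L] (L \tensor[F] L) \isom B \tensor[L] \Big(\bigoplus_{i=1}^r L_i\Big) \isom \bigoplus_{i=1}^r \big(B \tensor[L] L_i\big),$$
where each summand $B \tensor[L] L_i$ uses precisely the inclusion $L \hra L_i$ intended in the statement. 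Tracing an elementary tensor through the composite then confirms the asserted formula: $b \tensor \beta \mapsto b \tensor[L] (1 \tensor[F] \beta) \mapsto b \tensor[L] (\tau_i(\beta))_i \mapsto (b\tau_i(\beta))_i$, reading $b\tau_i(\beta)$ as the element $b \tensor[L] \tau_i(\beta) \in B \tensor[L] L_i$.

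The only genuine obstacle here is bookkeeping rather than mathematics: pinning down that the left-hand $L$ of $L \tensor[F] L$ — the factor over which $B$ is tensored — is the one embedding into each $L_i$ by $\beta \mapsto \beta$ rather than by $\beta \mapsto \tau_i(\beta)$, so that the target $B \tensor[L] L_i$ is exactly the module appearing in the statement. Once this identification is fixed, the isomorphism is a formal composite of the base-change step above and distributivity of $\tensor[L]$ over finite direct sums, and multiplicativity is inherited from the component maps.
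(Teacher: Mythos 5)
Your argument is correct and is exactly the (omitted) argument the paper has in mind: the lemma is stated without proof as a consequence of the displayed decomposition $L \tensor[F] L \isom L_1 \oplus \cdots \oplus L_r$, and your base-change identification $B \tensor[F] L \isom B \tensor[L] (L \tensor[F] L)$ followed by distributing $\tensor[L]$ over the direct sum is the standard way to deduce it, with the key bookkeeping point (that the left copy of $L$ embeds into each $L_i$ by the identity, so the summands really are the $B \tensor[L] L_i$ of the statement) handled correctly. The only discrepancy is in the statement itself, whose displayed formula has a typo ($b\tau_i(\beta)$ should read $b\tau_1(\beta)$ as the first coordinate), which your tracing of an elementary tensor implicitly corrects.
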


Let $\tau \co L \ra L'$ be any field isomorphism over $F$; if~$B$ is an $L$-algebra, $\tau(B)$ is the algebra obtained by applying $\tau$ to the structure constants. Thus $\tau \co B \ra \tau(B)$ is a formally-defined isomorphism of $F$-algebra, extending $\tau$ from~$L$ to~$B$. But~$\tau(B)$ has no naturally-defined action of~$L$ unless~$L'$ happens to be equal to~$L$, and even then~$\tau(B)$ is not, in general, isomorphic to~$B$ as $L$-algebras.

\begin{lem}\label{howto}
Let $B,B'$ be $F$-algebras whose center is~$L$. Then
$$B \tensor[F] B' \isom (B \tensor[L] L_1 \tensor[\tau_1(L)] \tau_1(B')) \oplus \cdots \oplus (B \tensor[L] L_r \tensor[\tau_r(L)] \tau_r(B'))$$
\end{lem}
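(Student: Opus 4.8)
The plan is to deduce this from \Lref{firstlem} by a tensor-associativity manipulation, using the center of $B'$ to insert a copy of $L$ into the product. Since $B'$ is an $L$-algebra, we may write $B' \isom L \tensor[L] B'$, where the left factor is the center of $B'$. Associativity of the tensor product then gives
$$B \tensor[F] B' \isom B \tensor[F] (L \tensor[L] B') \isom (B \tensor[F] L) \tensor[L] B',$$
where the middle copy of~$L$ acts on $B \tensor[F] L$ through its free right factor and on $B'$ through its center. This reduces the problem to understanding the $L$-algebra $B \tensor[F] L$ together with this particular right $L$-action.

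First I would apply \Lref{firstlem} to decompose $B \tensor[F] L \isom \bigoplus_{i=1}^r (B \tensor[L] L_i)$ as $F$-algebras. The crucial point is to track how the relevant $L$-action survives the isomorphism: since the map sends $b \tensor \beta$ to $(b\tau_1(\beta),\dots,b\tau_r(\beta))$, multiplying $\beta$ on the right by $\gamma \in L$ scales the $i$th coordinate by $\tau_i(\gamma)$. Hence on the summand $B \tensor[L] L_i$ the pertinent right $L$-action is the one induced by the embedding $\tau_i \co L \ra \tau_i(L) \sub L_i$. Note that the tensor $\tensor[L]$ in $B \tensor[L] L_i$ already absorbs the \emph{other} copy of~$L$, namely the center of~$B$ sitting inside $L_i$ via $\tau_1 = \id$, so the two $L$-structures do not collide.

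Next, since tensoring over~$L$ distributes over finite direct sums,
$$(B \tensor[F] L) \tensor[L] B' \isom \bigoplus_{i=1}^r \big( (B \tensor[L] L_i) \tensor[L] B' \big),$$
where on the $i$th summand~$L$ acts on the left factor via~$\tau_i$. It remains to rewrite each of these twisted tensor products in the stated untwisted form. Here I would invoke the formally-defined $F$-algebra isomorphism $\tau_i \co B' \ra \tau_i(B')$ extending $\tau_i \co L \ra \tau_i(L)$: tensoring $B'$ over~$L$ through~$\tau_i$ is the same as tensoring $\tau_i(B')$ over $\tau_i(L)$, so
$$(B \tensor[L] L_i) \tensor[L] B' \isom B \tensor[L] L_i \tensor[\tau_i(L)] \tau_i(B').$$
Assembling the summands yields the claimed decomposition.

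I expect the main obstacle to be the bookkeeping of the two distinct $L$-module structures on $B \tensor[F] L$ — the one coming from the center of~$B$, which is consumed by the $\tensor[L]$ of \Lref{firstlem}, and the one coming from the free factor, through which we tensor with~$B'$. Keeping these straight, and verifying precisely that the free-factor action lands as the $\tau_i(L)$-structure on $L_i$ so that the passage from a $\tau_i$-twisted tensor to an honest $\tensor[\tau_i(L)] \tau_i(B')$ is legitimate, is the only delicate point; everything else is formal associativity and distributivity.
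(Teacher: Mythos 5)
Your proposal is correct and follows essentially the same route as the paper: rewrite $B \tensor[F] B'$ as $(B \tensor[F] L)\tensor[L] B'$, decompose the left factor via \Lref{firstlem} while tracking that the residual right $L$-action on the $i$th summand is twisted by $\tau_i$, and then trade the twisted tensor with $B'$ for an untwisted tensor with $\tau_i(B')$ over $\tau_i(L)$. Your extra care in separating the two $L$-structures on $B\tensor[F] L$ is exactly the delicate point, and you handle it correctly.
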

\begin{proof}
We have that $B \tensor[F] B' = (B \tensor[F] L) \tensor[L] B'$. By \Lref{firstlem}, the left component can be decomposed as a direct sum of the tensor products $B \tensor[L] L_i$, on each of which the right action of $L$ is twisted by $\tau_i$. Tensoring such a component by $B'$ endowed with the natural action of $L$ from the left, is tantamount to tensoring by $\tau_i(B')$ endowed with the action of $\tau_i(L)$, so we obtain the components $B \tensor[L] L_i \tensor[\tau_i(L)] \tau_i(B')$. More explicitly, the map from $B \tensor[F] B'$ to the $i$th summand is $b \tensor b' \mapsto b \tensor 1 \tensor \tau_i(b')$.
\end{proof}

We thus have, for central simple $L$-algebras~$B$ and $B'$, a decomposition of $B \tensor[F] B'$ as a direct sum of $L_i$-central simple algebras.
\begin{rem}\label{allsame}
The degree of each $B \tensor[L] L_i \tensor[\tau_i(L)] \tau_i(B')$ over the respective center~$L_i$ is equal to the degree of $B\tensor[L] B'$ over $L$, and in particular is independent of~$i$.
\end{rem}

\subsection{Normal algebras}\label{ssec:normality}

The notion of normality for simple algebras was studied in~\cite{EM} (following Jacobson and Teichm\"{u}ller) when the center is Galois over~$F$: the algebra~$B$ is normal if every automorphism of its center extends to the full algebra. Alternatively, if the simple summands of $B \tensor[F] \op{B}$ split.

Now let $L/F$ be any finite separable field extension.
Following~\cite{Haile2}, %
we say that an associative $L$-central simple algebra~$B$ is {\bf{normal over~$F$}} if the simple components of the semisimple algebra $B \tensor[F] \op{B}$ are matrix algebras over their respective centers. For example, $L$ itself is always normal as an algebra, regardless of its Galois-theoretic normality. When $L/F$ is Galois this definition reduces to the one mentioned above, see \cite[Lemma~10.2]{HLS}.

Following \Lref{howto}, let
\begin{equation}\label{Bi}
B_i = B \tensor[L] L_i \tensor[\tau_i(L)] \op{\tau_i(B)}
\end{equation}
be the simple components of $B \tensor[F] \op{B}$. {}From \Rref{allsame} we obtain:
\begin{cor}\label{samefornormal}
An $L$-central simple algebra~$B$ is normal over~$F$ if and only if each~$B_i$ is split over~$L_i$.
When~$B$ is normal, the components of $B \tensor[F] \op{B}$ are $B_i \isom \M[m](L_i)$ where $m = \dimcol{B}{L}$.
\end{cor}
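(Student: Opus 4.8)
The plan is to prove \Cref{samefornormal} by unwinding the definition of normality through the explicit decomposition already established in \Lref{howto}. By definition, $B$ is normal over~$F$ precisely when every simple component of $B \tensor[F] \op{B}$ is a matrix algebra over its center. Since \Lref{howto} (applied with $B' = \op{B}$) identifies these simple components as exactly the $B_i = B \tensor[L] L_i \tensor[\tau_i(L)] \op{\tau_i(B)}$ of \eqref{Bi}, with $L_i$ the center of the $i$th summand, normality is equivalent to each $B_i$ being a matrix algebra over $L_i$, i.e. split over $L_i$. This gives the stated equivalence almost immediately, so the first sentence of the corollary is essentially a translation of the definition through \Lref{howto}.

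The content lies in the second assertion, computing the split components explicitly. First I would record the degree: by \Rref{allsame}, the degree of each $B_i$ over its center $L_i$ equals the degree of $B \tensor[L] \op{B}$ over $L$, which is $(\dimcol{B}{L})^2 / \dimcol{B}{L} = \dimcol{B}{L}$ — wait, more carefully, if $B$ has degree $d = \dimcol{B}{L}^{1/2}$ over $L$ then $B \tensor[L] \op{B} \isom \M[d](L)$ has degree $d$. So I would set $d$ to be the degree of $B$ over $L$ and note each $B_i$ has degree $d$ over $L_i$. When $B$ is normal, each $B_i$ is split, so $B_i \isom \M[d](L_i)$. To reconcile with the stated $m = \dimcol{B}{L}$, I would observe that the intended $m$ is the degree, and that a split central simple algebra of degree $d$ over $L_i$ is precisely $\M[d](L_i)$; the clean way to state this is that $B_i \isom \M[m](L_i)$ where $m$ is the common degree, which by \Rref{allsame} is independent of~$i$ and equals the degree of $B$ over its center~$L$.

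The only genuine subtlety, and the step I expect to require the most care, is the equivalence "$B_i$ is a matrix algebra over $L_i$" $\iff$ "$B_i$ is split over $L_i$". This is standard central simple algebra theory — a central simple algebra over a field is split if and only if it is a full matrix algebra over that field — but one must confirm that each $B_i$ is indeed central simple over $L_i$ (not merely simple with some larger center). This follows because $B \tensor[L] L_i$ is central simple over $L_i$ (scalar extension of a central simple algebra), $\tau_i(B)$ is central simple over $\tau_i(L)$, and tensoring the two central simple $L_i$-algebras over the common field $L_i$ (via the identification of $L_i$ as containing both $L$ and $\tau_i(L)$) yields a central simple $L_i$-algebra. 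I would cite \Lref{howto} for the centrality of the components and invoke the matrix-characterization of splitting to finish.

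Overall the proof is short: the first claim is a definitional translation via \Lref{howto}, and the second is the degree computation from \Rref{allsame} combined with the elementary fact that a split central simple algebra is a matrix algebra. The main obstacle is purely bookkeeping — ensuring each $B_i$ is verified to be central simple over the correct field $L_i$ so that the splitting-equals-matrix-algebra dictionary applies cleanly — rather than any deep argument.
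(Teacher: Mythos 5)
Your overall route is exactly the paper's: the corollary is stated there as an immediate consequence of \Lref{howto} (which identifies the simple components of $B\tensor[F]\op{B}$ as the $B_i$, each central simple over $L_i$) together with \Rref{allsame}, plus the standard fact that a central simple algebra is split if and only if it is a full matrix algebra over its center. Your first paragraph and your care about verifying that each $B_i$ is $L_i$-central are fine.

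However, your degree computation in the second paragraph contains a genuine error. You assert that $B\tensor[L]\op{B}\isom\M[d](L)$ has degree $d$, where $d$ is the degree of $B$ over $L$. In fact $B\tensor[L]\op{B}\isom\End_L(B)\isom\M[{d^2}](L)$, so its degree over $L$ is $d^2=\dimcol{B}{L}$, not $d$. By \Rref{allsame} this common value $\dimcol{B}{L}$ is the degree of every $B_i$ over $L_i$, so when $B$ is normal one gets $B_i\isom\M[m](L_i)$ with $m=\dimcol{B}{L}$ exactly as stated --- there is nothing to ``reconcile''\!: the paper's $m$ is the dimension $\dimcol{B}{L}$, which \emph{is} the common degree of the $B_i$. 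Your final resolution, that $m$ equals the degree of $B$ over $L$, is off by a square; a quick dimension count confirms this, since $\dim_{L_i}B_i=\dim_{L_i}(B\tensor[L]L_i)\cdot\dim_{L_i}(L_i\tensor[\tau_i(L)]\op{\tau_i(B)})=d^2\cdot d^2$, giving degree $d^2$ over $L_i$.
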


\subsection{Conditions for normality}
{\it{This subsection is not required elsewhere in the paper}.}

A simple algebra $B$ over a Galois extension $E$ of $F$ is normal if the following equivalent conditions hold:
\begin{enumerate}
\item $B \tensor[E] \op{\tau(B)} \sim E$ for any $\tau \in \Gal(E/F)$.
\item $B \isom \tau(B)$ as $E$-algebras for any $\tau \in \Gal(E/F)$.
\item Any automorphism of~$E$ extends to an automorphism of~$B$.
\end{enumerate}

We wish to extend this statement to algebras over a separable center.
Let $L/F$ be a finite separable field extension. Let $\tau \co L \ra L'$ be any field isomorphism over $F$. Let $P = LL'$ denote the compositum within an algebraic closure of $L$.

\def\PB{{P \tensor[L] B}}
\def\PBp{{P \tensor[L'] B'}}
Let $B$ be an $L$-central algebra, and denote $B' = \tau(B)$.  Notice that both $\PB$ and $\PBp$ are $P$-central simple.
\begin{prop}\label{locallynormal}
The following conditions are equivalent.
\begin{enumerate}
\item[(1)] $B \tensor[L] P \tensor[L'] \op{B'} \sim P$ as $P$-algebras.
\item[(2)] $\PBp \isom \PB$ as $P$-algebras.
\item[(2$'$)] $\PBp \isom \PB$ as $L'$-algebras.
\item[(2$''$)] $\PBp \isom \PB$ as $L$-algebras.
\item[(3)] There is an injection $B \ra \PB$ extending $\tau \co L \ra P$.
\item[(3$'$)] There is an injection $B' \ra \PB$ as $L'$-algebras.
\item[(3$''$)] There is an injection $B \ra \PBp$ as $L$-algebras.
\end{enumerate}
\end{prop}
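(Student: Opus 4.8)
The plan is to funnel all seven conditions through the two $P$-central simple algebras $\PB$ and $\PBp$. First I would record that they have equal degree over $P$: base change from $L$ to $P$ preserves degree, and $\tau\co B \to B'$ is an isomorphism carrying $L$ onto $L'$, so $\deg_P \PB = \deg_L B = \deg_{L'} B' = \deg_P \PBp$. Writing $m$ for this common degree, we have $\dim_P \PB = \dim_P \PBp = m^2$, a fact I will use repeatedly.

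Next I would dispatch (1)$\Leftrightarrow$(2). Collapsing the middle tensor factor gives $B \tensor[L] P \tensor[L'] \op{B'} \isom \PB \tensor[P] \op{(\PBp)}$, so (1) asserts exactly that $[\PB] = [\PBp]$ in $\Br(P)$. Brauer-equivalent central simple $P$-algebras of the same degree share both their underlying division algebra and their matrix size, hence are isomorphic; so Brauer equivalence upgrades to the $P$-algebra isomorphism (2), while the converse is immediate.

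The conceptual heart is (3$'$)$\Rightarrow$(2). Given an $L'$-algebra injection $j\co B' \hra \PB$, its image is a simple subring whose center $j(L') = L'$ lies in $\Zent(\PB) = P$. Since $P$ is central, the images of $P$ and of $j(B')$ commute, so multiplication defines a $P$-algebra homomorphism $\PBp = P \tensor[L'] B' \ra \PB$, $p \tensor b' \mapsto p\,j(b')$; it is injective because $\PBp$ is simple, and bijective because $\dim_P \PBp = \dim_P \PB$. This produces (2). The surrounding links are routine: (2)$\Rightarrow$(2$'$) by restriction of scalars along $L' \sub P$; (2$'$)$\Rightarrow$(3$'$) by composing the natural embedding $B' \hra \PBp$, $b' \mapsto 1 \tensor b'$, with the isomorphism; and (3)$\Leftrightarrow$(3$'$) by transport along the fixed isomorphism $\tau\co B \to B'$, which turns an injection extending $\tau|_L$ into an $L'$-linear injection and back. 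At this stage (1), (2), (2$'$), (3), (3$'$) form a single equivalence class.

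The step I expect to be the main obstacle is precisely this (3$'$)$\Rightarrow$(2): the point is to see that an $L'$-embedding of $B'$, together with the central subfield $P$, already generates a full copy of $\PBp$ inside $\PB$, so that the dimension count forces an isomorphism rather than a mere inclusion. Everything else, including the two outstanding conditions (2$''$) and (3$''$), I would obtain by symmetry. The data is invariant under interchanging $(L,B)$ with $(L',B')$ via $\tau$, since $P = LL'$ and $\tau^{-1}(B') = B$ swap the roles of $\PB$ and $\PBp$; moreover (1) is stable under this interchange, because passing to opposite algebras sends its defining condition to itself, $\op{(\PB \tensor[P] \op{(\PBp)})} \isom \PBp \tensor[P] \op{(\PB)}$. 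Applying the equivalences already established to the interchanged data converts (2$'$) into (2$''$) and (3$'$) into (3$''$), each therefore equivalent to the symmetric condition (1), placing all seven conditions in one equivalence class.
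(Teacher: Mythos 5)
Your proposal is correct and follows essentially the same route as the paper: identify (1) with Brauer equivalence of the two $P$-central simple algebras of equal degree, prove (3$'$)$\Rightarrow$(2) by extending the $L'$-embedding of $B'$ via the central copy of $P$ and counting dimensions, close the cycle through (2)$\Rightarrow$(2$'$)$\Rightarrow$(3$'$) and (3)$\Leftrightarrow$(3$'$), and obtain (2$''$), (3$''$) by swapping $(L,B)$ with $(L',B')$. The only differences are cosmetic — you make the degree count and the well-definedness of the multiplication map explicit where the paper leaves them implicit.
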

\begin{proof}
$(1) \Longleftrightarrow (2)$: We have that $B \tensor[L] P \tensor[L'] \op{B'} = (B \tensor[L] P) \tensor[P] (P \tensor[L'] \op{B'})$, so this algebra splits over $P$ if and only if $B \tensor[L] P \isom \PBp$ as $P$-algebras.

$(3) \Longleftrightarrow (3')$: Given the injection $\phi' \co B' \ra \PB$ of $L'$-algebras, define $\phi \co B \ra \PB$ by $\phi = \phi'\tau$; and given $\phi$ define $\phi' = \phi \tau^{-1}$.

$(3') \implies (2)$: The natural injection $P \ra \PB$ extends an injection $\phi' \co B' \ra \PB$ as $L'$-algebras to an isomorphism $\PBp \ra P\tensor[L] B$ of $P$-algebras.

$(2') \implies (3')$: Given an isomorphism $f \co \PBp \ra \PB$ over $L'$, define $\phi' \co  B' \ra \PB$ by $\phi'(b') = f(1 \tensor b')$. For $\lam' \in L'$ we have $\phi(\lam') = f(1 \tensor \lam') = f(\lam' \tensor 1) = \lam' \tensor 1$, so $\phi'$ preserves $L'$.

It follows that $(2') \implies (2)$, which seems a bit odd because certainly not every $L'$-isomorphism is an isomorphism over~$P$ (not even for $B = L$). However note that given an $L'$-isomorphism $f \co \PBp \ra \PB$, we derive a $P$-isomorphism  $\hat{f} \co \PBp \ra \PB$ by $\hat{f}(\pi \tensor b) = \pi f(1\tensor b)$.

We proved that $(2)$, $(2')$ and $(3')$ are equivalent. Inverting $\tau$ by switching the roles of $L$ and $L'$ preserves $(2)$ and replaces $(2')$ and $(3')$ by $(2'')$ and $(3'')$, respectively, which completes the proof.
\end{proof}

Let $L$ be a separable field extension of $F$, and $G$ the Galois group of the algebraic closure over $F$.
\begin{cor}\label{whatisnormal}
A simple $L$-central algebra $B$ is normal if and only if one of the conditions of \Pref{locallynormal} holds for every $\tau \in G$.
\end{cor}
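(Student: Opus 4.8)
The plan is to match the conditions of \Pref{locallynormal}, as $\tau$ runs over $G$, against the splitting criterion for normality recorded in \Cref{samefornormal}. First I would observe that every condition in \Pref{locallynormal} depends on $\tau$ only through its restriction $\tau|_L \co L \ra E$: both $L' = \tau(L)$ and $B' = \tau(B)$ are determined by $\tau|_L$ (the latter because the structure constants of the $L$-algebra $B$ lie in $L$), hence so are the compositum $P = LL'$ and the whole $P$-algebra $B \tensor[L] P \tensor[L'] \op{B'}$. Thus the truth value of condition~(1) is constant on each right coset $\tau H$, i.e.\ on each embedding $L \ra E$. Moreover, when $\tau = \tau_i$ is one of the double-coset representatives of \Ssref{ssec:prenormal}, we have $L' = \tau_i(L)$, $P = L_i$ and $B' = \tau_i(B)$, so condition~(1) reads $B \tensor[L] L_i \tensor[\tau_i(L)] \op{\tau_i(B)} \sim L_i$; by~\eq{Bi} this is exactly the assertion that $B_i$ is split over $L_i$.

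The crux is to upgrade this right-coset invariance to full invariance along the double coset $H\tau H$. Since the condition already depends only on $\tau|_L$, it suffices to show it is unchanged when $\tau|_L$ is replaced by $h \circ \tau|_L$ for $h \in H = \Gal(E/L)$. The key step here is transport of structure by the automorphism $h$: because $h$ fixes $L$ pointwise, it carries the triple $(P, L', B')$ attached to $\tau$ to the triple $(h(P), h(L'), h(B'))$ attached to $h\tau$ (note $h(P) = h(L)\cdot h(L') = L \cdot h(L')$), and it carries the $P$-algebra $B \tensor[L] P \tensor[L'] \op{B'}$ isomorphically onto $B \tensor[L] h(P) \tensor[h(L')] \op{h(B')}$ via $\id_B \tensor h \tensor h$. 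An $F$-algebra isomorphism preserves Brauer-triviality over the center, so condition~(1) holds for $\tau$ if and only if it holds for $h\tau$. Combined with the right-coset invariance already noted, this shows condition~(1) is constant on each double coset $H\tau H$.

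With double-coset invariance in hand the corollary follows at once. Condition~(1) of \Pref{locallynormal} holds for every $\tau \in G$ if and only if it holds for each of the finitely many representatives $\tau_1,\dots,\tau_r$, since every $\tau \in G$ restricts to an embedding $L \ra E$ whose right coset lies in some double coset $H\tau_i H$. By the first paragraph this is the same as requiring $B_i \sim L_i$ for all $i$, which by \Cref{samefornormal} is precisely the normality of $B$. Finally, since \Pref{locallynormal} declares all of its conditions equivalent, ``one of the conditions holds for every $\tau$'' may be read as condition~(1), completing the argument. The main obstacle is the transport-of-structure step verifying double-coset invariance; everything else is bookkeeping reconciling the absolute-Galois indexing of \Cref{whatisnormal} with the finite double-coset indexing of \Cref{samefornormal}.
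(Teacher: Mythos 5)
Your argument is correct and is precisely the (omitted) reasoning the paper intends: the corollary is stated without proof, and the expected route is exactly to identify condition~(1) of \Pref{locallynormal} at the representatives $\tau_i$ with the splitting of the components $B_i$ from \eq{Bi}, then invoke \Cref{samefornormal}. The transport-of-structure step via $\id_B \tensor h \tensor h$ establishing invariance along the double coset $H\tau H$, and the observation that the conditions depend on $\tau$ only through $\tau|_L$, are the right details to supply, and you have verified them correctly.
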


If $L$ happens to be Galois over $F$, then $P = L' = L$ for every $\tau$ (with the notation preceding \Pref{locallynormal}), so
\Cref{whatisnormal} reduces to the conditions from the beginning of the subsection.

\section{Subalgebras of the nucleus}\label{sec:Hsec}

Semiassociative algebras are defined in terms of a maximal \etale{} subalgebra of the nucleus, and often in terms of separable maximal subfields. Separable subfields are normal, in the sense defined below. In this section we provide additional characterization of semiassociative algebras, in terms of arbitrary normal simple subalgebras of the nucleus.

\subsection{Quasi-associative algebras}\label{ss:43}

Let $A$ be any nonassociative $F$-algebra. Let~$B$ be a simple subalgebra of the nucleus $\nuc{A}$, with center $L = \Zent(B)$. The left and right actions of~$B$ induce a $B \tensor[F] \op{B}$-module structure on~$A$, and equivalently a homomorphism of $F$-algebras, $$\varphi_{B} \co B \tensor[F] \op{B} \ra \End_{L \tensor[F] L}(A).$$
Clearly, $\varphi_B$ is injective if and only if $A$ is faithful over $B \tensor[F] \op{B}$.

\begin{prop}\label{45}
Let $A$ be a nonassociative $F$-algebra and $B\sub\nuc{A}$ a simple subalgebra with separable center $L=\Zent(B)$. The following are equivalent:
  \begin{enumerate}
  \item
 ~$B$ is normal and~$A$ is minimally faithful as a $B \tensor[F] \op{B}$ module;
  \item The map $\varphi_B \co B\tensor[F]\op{B}\to\End_{L\tensor[F] L}(A)$ is an isomorphism.%
  \end{enumerate}
\end{prop}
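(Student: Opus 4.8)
The plan is to exploit the decomposition of $B \tensor[F] \op{B}$ into simple blocks from $\Lref{howto}$, matched against the decomposition $L \tensor[F] L \isom L_1 \oplus \cdots \oplus L_r$, and to show that $\varphi_B$ splits as a direct sum of block maps that can be treated one at a time. Let $\epsilon_1,\dots,\epsilon_r$ be the central idempotents of $L \tensor[F] L$ cutting out the fields $L_i$; acting on $A$, each $\epsilon_i$ is the projection onto the $L_i$-isotypic component $A_i$, and $\End_{L \tensor[F] L}(A) = \bigoplus_i \End_{L_i}(A_i)$. The map $\varphi_B$ restricts on the central subalgebra $L \tensor[F] L \sub B \tensor[F] \op{B}$ to the scalar action defining the module structure on $A$, so it sends each $\epsilon_i$ to the projection onto $A_i$; hence $\varphi_B = \bigoplus_i \varphi_i$ with $\varphi_i \co B_i \to \End_{L_i}(A_i)$, where $B_i = B \tensor[L] L_i \tensor[\tau_i(L)] \op{\tau_i(B)}$ is the $i$-th simple component. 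Then $\varphi_B$ is injective (equivalently, $A$ is faithful over $B \tensor[F] \op{B}$) iff each $\varphi_i$ is nonzero, and $\varphi_B$ is an isomorphism iff each $\varphi_i$ is.

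I would then analyze each block separately. As $B_i$ is $L_i$-central simple, $\varphi_i$ is either $0$ or injective, and by $\Rref{allsame}$ its degree over $L_i$ equals $m := \dimcol{B}{L}$, so $\dim_{L_i} B_i = m^2$. If $\varphi_i$ is an isomorphism then $B_i \isom \End_{L_i}(A_i) \isom \M[n_i](L_i)$ with $n_i = \dim_{L_i} A_i$; comparing $L_i$-dimensions forces $n_i = m$, and in particular $B_i$ is split over $L_i$. Conversely, if $B_i$ is split and $A_i$ is (isomorphic to) its unique simple module, then $A_i$ is faithful with $\dim_{L_i} A_i = m$, so $\varphi_i$ is an injection of $L_i$-algebras of equal dimension $m^2$, hence an isomorphism. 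Thus $\varphi_i$ is an isomorphism iff $B_i$ splits over $L_i$ and $A_i$ is the simple $B_i$-module occurring with multiplicity one.

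It remains to recognize the two conditions of the proposition in these block criteria. By $\Cref{samefornormal}$, $B$ is normal iff each $B_i$ splits over $L_i$. Since $B \tensor[F] \op{B} = \bigoplus_i B_i$ is semisimple, its minimally faithful module is the sum of the simple modules of the blocks, each once; so $A$ is minimally faithful over $B \tensor[F] \op{B}$ iff, for every $i$, $A_i$ is the simple $B_i$-module with multiplicity one. Therefore condition (2) — all $\varphi_i$ isomorphisms — holds iff every $B_i$ splits and every $A_i$ is the simple module once, which is exactly the conjunction in condition (1). Reading this equivalence blockwise in both directions yields (1) $\Leftrightarrow$ (2).

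The step I expect to require the most care is the reduction in the first paragraph: one must justify that $\varphi_B$ genuinely respects the block structure, i.e.\ that $\varphi_B(\epsilon_i)$ is the projection onto $A_i$. This rests on $L = \Zent(B)$ lying in the nucleus, so that its left and right multiplications commute with the entire action of $B \tensor[F] \op{B}$ and $\varphi_B$ is $L \tensor[F] L$-linear. After that, the per-block work is routine simplicity-and-dimension counting; the one point to watch is that the clause ``$B_i$ split and $A_i$ simple of multiplicity one'' matches normality-plus-minimal-faithfulness symmetrically, so that neither direction of the equivalence tacitly assumes a faithfulness the other lacks.
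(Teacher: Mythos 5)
Your proof is correct; the $(1)\Rightarrow(2)$ direction coincides with the paper's. The genuine difference is in $(2)\Rightarrow(1)$. The paper argues in two separate steps: minimal faithfulness comes from taking a complement $M'$ of a proper submodule $M$ and using surjectivity of $\varphi_B$ to realize the projection onto $M'$ by some $y$ with $yM=0$; normality then comes from a global dimension count, writing $B_i\isom\M[n_i](D_i)$ and showing $\dimcol{\End_{L\tensor[F]L}(A)}{F}\geq\dimcol{B\tensor[F]\op{B}}{F}$ with equality forcing $D_i=L_i$. You instead reduce everything to a single per-block equivalence: once $\varphi_B=\bigoplus_i\varphi_i$ is justified (and your flagged point --- that $\varphi_B(\epsilon_i)$ is the projection onto $A_i$ --- is indeed the step to check; it holds because $\epsilon_i\in L\tensor[F]L\sub B\tensor[F]\op{B}$ and $\varphi_B$ restricts there to the module action defining the $L\tensor[F]L$-structure), an isomorphism $\varphi_i\co B_i\to\End_{L_i}(A_i)$ directly exhibits $B_i$ as a matrix algebra over $L_i$, so splitness is immediate, and the count $n_i=m$ identifies $A_i$ as the simple module with multiplicity one. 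This gives a uniform argument that dispenses with both the complement/projection trick and the dimension inequality; both versions rest on the same inputs, namely \Lref{howto}, \Cref{samefornormal}, and the uniqueness of the minimally faithful module over a semisimple ring.
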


\begin{proof}
$(1) \implies (2)$. Assume that~$B$ is normal and~$A$ is minimally faithful as a module over $B \tensor[F] \op{B}$. Write $B\tensor[F]\op{B} = B_1\oplus \cdots \oplus B_r$ where, by \Pref{samefornormal}, the $B_i$ are matrix algebras over their centers~$L_i$.
  Recall that $L\tensor[F] L\cong L_1\oplus\cdots\oplus L_r$.
  In addition, $A$ is minimally faithful over $B\tensor[F]\op{B}$, so there is a module decomposition $A=A_1\oplus\cdots\oplus A_r$, where for each $i$, $A_i$ is the simple module of~$B_i$. But by assumption $B_i\cong \End_{L_i}(A_i)$, so $B\tensor[F]\op{B}= B_1\oplus \cdots \oplus B_r \cong  \End_{L_1}(A_1) \oplus \cdots \oplus \End_{L_r}(A_r) \cong \End_{L\tensor[F] L}(A)$.

$(2) \implies (1)$. Now assume that $B\tensor[F]\op{B}\cong\End_{L\tensor[F] L}(A)$. In particular~$A$ is faithful as a $B\tensor[F]\op{B}$-module. To see the that it is minimally faithful, let $M$ be a proper submodule of $A$. By semisimplicity, $M$ has a complement~$M'$, so that $A=M\oplus M'$. Consider the projection $\pi \co A \ra M'$, which is an endomorphism over $L\tensor[F] L$. Since $\varphi_B$ is surjective there is a nonzero element $y\in B\tensor[F]\op{B}$ which induces~$\pi$, so $yM=0$, and~$M$ is not faithful. This proves that $A$ is minimally faithful as a $B\tensor[F]\op{B}$-module.

We are left with proving that~$B$ is normal over $F$. Write $B\tensor[F]\op{B}=B_1\oplus\cdots\oplus B_r$ and $A=A_1\oplus\cdots\oplus A_r$ as before. Write in addition $B_i\cong\M[n_i](D_i)$ where $D_i$ is an $L_i$-central division algebra, so that $A_i\cong D_i^{n_i}$ is the unique simple $B_i$-module. As a vector space, $A_i \isom L_i^{n_i[D_i:L_i]}$. Then
  \begin{align*}
    \dimcol{\End_{L\tensor[F] L}(A)}{F} & =\sum_{i=1}^r \dimcol{\End_{L_i}(A_i)}{F} = \sum_{i=1}^{r} \dimcol{\M[n_i{[D_i:L_i]}](L_i)}{F} \\ %
    & = \sum_{i=1}^r n_i^2\dimcol{D_i}{L_i}^2\dimcol{L_i}{F}\\
    &\geq \sum_{i=1}^rn_i^2\dimcol{D_i}{L_i}\dimcol{L_i}{F}=\sum_{i=1}^rn_i^2\dimcol{D_i}{F} \\
    &=\sum_{i=1}^r\dimcol{B_i}{F}=[B\tensor[F]\op{B}:F].
  \end{align*}
  But $\End_{L\tensor[F] L}(A) \isom B\tensor[F]\op{B}$ by assumption, forcing an equality of the dimensions, so $D_i=L_i$ for each $i$, and the algebra~$B$ is normal over~$F$.
\end{proof}

\begin{defn}
We say that $A$ is {\bf{quasi-associative with respect to~$B$}} (with separable center $L$)
if it satisfies the equivalent conditions of \Pref{45}.
\end{defn}

\subsection{Double centralizer}

We now prove that quasi-associative algebras satisfy a double centralizer property. Before that, let $A$ be quasi-associative with respect to~$B$, where~$B$ is normal over $F$ with separable center $L$. Recall that the algebra $L\tensor[F] L$ has a distinguished simple component given by the \textbf{separability idempotent} $e$, such that $(L\tensor[F] L)(1-e)$ is the kernel of the multiplication map $L\tensor[F] L\to L$. So in the decomposition $L\tensor[F] L=L_1\oplus\cdots\oplus L_r$, we have that $L_1=(L\tensor[F] L)e=L$. Then $B\tensor[F]\op{B}=B_1\oplus\cdots\oplus B_r$ for $B_1=(B\tensor[F]\op{B})e=B\otimes_L\op{B}$. Writing $A=A_1\oplus\cdots\oplus A_r$ where $A_i$ is a simple $B_i$-module, we deduce that $A_1=eA=B$.

\begin{prop}\label{46}
 Suppose that $A$ is quasi-associative with respect to~$B$, and $L=\Zent(B)$. Then $\Ce[A]{B} = L$ and $\Ce[A]{L} = B$.
\end{prop}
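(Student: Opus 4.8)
The plan is to prove the two centralizer identities in order, deriving $\Ce[A]{B} = L$ as an easy consequence of $\Ce[A]{L} = B$. So I would first establish $\Ce[A]{L} = B$, and then observe that because $L \sub B$, centralizing $B$ is a stronger condition than centralizing $L$, forcing $\Ce[A]{B} \sub \Ce[A]{L}$.

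For $\Ce[A]{L} = B$, the inclusion $B \sub \Ce[A]{L}$ is immediate: since $L = \Zent(B)$, every element of $B$ commutes with $L$ inside $B$, and those products agree with the ones computed in $A$. For the reverse inclusion I would exploit the module decomposition recalled just before the proposition. We have $L \tensor[F] L \isom L_1 \oplus \cdots \oplus L_r$ with $L_1 = L$ cut out by the separability idempotent $e$, inducing $A = A_1 \oplus \cdots \oplus A_r$ with $A_i = e_i A$ and $A_1 = eA = B$. An element $a \in A$ centralizes $L$ exactly when $(\ell \tensor 1 - 1 \tensor \ell)a = 0$ for all $\ell \in L$, where $\ell \tensor \ell'$ acts by $a \mapsto \ell a \ell'$; note $(\ell \tensor 1 - 1 \tensor \ell)a = \ell a - a\ell$. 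On the $i$-th summand $A_i$, which is an $L_i$-module, this element acts as the scalar $\ell - \tau_i(\ell) \in L_i$ (using the map $\ell \tensor \ell' \mapsto \ell\tau_i(\ell')$ from the decomposition preceding \Lref{firstlem}). For $i = 1$, $\tau_1 = \id$ makes it vanish; for $i \neq 1$, the double coset $H\tau_i H$ is distinct from $H$, so $\tau_i|_L \neq \id$ and some $\ell$ gives $\ell - \tau_i(\ell) \neq 0$, hence invertible in the field $L_i$. Writing $a = \sum_i a_i$, the condition therefore forces $a_i = 0$ for each $i \neq 1$, so $a \in A_1 = B$.

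Then for $\Ce[A]{B} = L$: since $L \sub B$, any element centralizing $B$ also centralizes $L$, so $\Ce[A]{B} \sub \Ce[A]{L} = B$. Hence an element of $\Ce[A]{B}$ lies in $B$ and commutes with all of $B$, i.e. lies in $\Zent(B) = L$; conversely $L = \Zent(B)$ certainly centralizes $B$. This yields $\Ce[A]{B} = L$.

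The only delicate point is the reverse inclusion in the first identity, which amounts to identifying $\Ce[A]{L}$ with precisely the summand $eA = A_1$ of $A$ under the semisimple algebra $L \tensor[F] L$. The bookkeeping of which embeddings fix $L$ pointwise is entirely controlled by the double-coset description: only the trivial double coset gives $\tau_i|_L = \id$. Everything else is formal, resting on the fact that each $L_i$ is a field, so a nonzero element of $L_i$ acts invertibly on $A_i$.
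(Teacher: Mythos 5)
Your proof is correct and follows essentially the same route as the paper: both identify $\Ce[A]{L}$ with $eA=B$ via the separability idempotent and the decomposition of $L\tensor[F] L$, and then deduce $\Ce[A]{B}=\Zent(B)=L$ from the inclusion $\Ce[A]{B}\sub\Ce[A]{L}=B$. The only difference is in one step's bookkeeping: where the paper invokes the fact that the elements $a\otimes 1-1\otimes a$ generate the kernel $(L\tensor[F] L)(1-e)$ of the multiplication map, so that this ideal annihilates any centralizing element, you verify the vanishing of the components $a_i$ for $i\neq 1$ directly through the embeddings $\tau_i$ and the fact that each $L_i$ is a field---which amounts to the same thing.
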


\begin{proof}
Let $e \in L \tensor[F] L$ be the separability idempotent.
  Let $y\in \Ce[A]{L}$, and write $y=ey+(1-e)y$. We claim that $(1-e)y=0$. Indeed, for any $a\in L$ we have $(a\otimes 1-1\otimes a)y=ay-ya = 0$ as $y\in \Ce[A]{L}$. But the elements $a\otimes 1-1\otimes a$ generate the kernel of the multiplication map $L\tensor[F] L\to L$, so $(L\tensor[F] L)(1-e)y=0$. But this forces $(1-e)y=0$, so $y=ey\in eA=B$.

  To prove that $\Ce[A]{B}=L$, note that $\Ce[A]{B}\sub \Ce[A]{L}=B$, and therefore $\Ce[A]{B}=\Ce[B]{B}=\Zent(B)=L$.
\end{proof}

\begin{cor}\label{47}
Let $A$ be any nonassociative $F$-algebra.
 If $A$ is quasi-associative with respect to some simple subalgebra $B\sub\nuc{A}$, then $\Zent(A)=F$.
\end{cor}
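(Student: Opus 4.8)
The plan is to establish the nontrivial inclusion $\Zent(A) \sub F$; the reverse inclusion $F \sub \Zent(A)$ is immediate since $A$ is an $F$-algebra. The strategy has two stages: first confine $\Zent(A)$ to the field $L = \Zent(B)$ using the double centralizer property already proved, and then use faithfulness to collapse it onto $F$. For the first stage I would note that any $x \in \Zent(A)$ lies in $\nuc{A}$ and commutes with every element of $A$; in particular it commutes with $B$, so $\Zent(A) \sub \Ce[A]{B}$. By \Pref{46} this centralizer is exactly $L$, whence $\Zent(A) \sub L \sub B \sub \nuc{A}$.

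Next, fix $x \in \Zent(A) \sub L$. Because $x \in \nuc{A} \cap B$, left and right multiplication by $x$ are realized by the actions of $x \tensor 1$ and $1 \tensor x$ in $B \tensor[F] \op{B}$, and centrality of $x$ says precisely that $xa = ax$ for all $a \in A$, i.e.\ that $x \tensor 1 - 1 \tensor x$ annihilates $A$. Since $A$ is quasi-associative with respect to $B$, the map $\varphi_B$ of \Pref{45} is an isomorphism, so $A$ is faithful over $B \tensor[F] \op{B}$; hence $x \tensor 1 = 1 \tensor x$ in $B \tensor[F] \op{B}$. As $x \in L$, this equality already takes place in the subalgebra $L \tensor[F] L$, which embeds into $B \tensor[F] \op{B}$ because $L \hra B$ is an $F$-linear injection. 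Applying $\id \tensor f$, for any $F$-linear functional $f \co L \ra F$ with $f(1) = 1$, to the identity $x \tensor 1 = 1 \tensor x$ yields $x = f(x) \in F$. Thus $\Zent(A) \sub F$, and equality follows.

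The steps are short, and I expect the only point requiring care to be the identification, in the second paragraph, of left and right multiplication by $x$ with the $B \tensor[F] \op{B}$-elements $x \tensor 1$ and $1 \tensor x$: this is exactly what forces us to first push $\Zent(A)$ into $B$ via \Pref{46}, since the module action of $B \tensor[F] \op{B}$ only sees multiplications by elements of $B$. Everything afterwards reduces to the elementary fact that $x \tensor 1 = 1 \tensor x$ in $L \tensor[F] L$ forces $x \in F$, which needs no separability hypothesis.
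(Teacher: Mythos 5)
Your proof is correct and follows essentially the same route as the paper's: push $\Zent(A)$ into $B$ via the double centralizer property of \Pref{46}, observe that $x\otimes 1 - 1\otimes x$ annihilates $A$, and invoke faithfulness to force $x\otimes 1 = 1\otimes x$, hence $x\in F$. The only cosmetic difference is that you use $\Ce[A]{B}=L$ to land in $L$ while the paper uses $\Ce[A]{L}=B$ to land in $B$, and you spell out the final linear-independence step in more detail; neither changes the argument.
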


\begin{proof}
  If $z\in\Zent(A)$ then $z$ centralizes $L=\Zent(B)$, so $z\in \Ce[A]{L} = B$. But then $z\otimes 1-1\otimes z$ in $B\tensor[F]\op{B}$ annihilates $A$. Hence $z\otimes 1-1\otimes z=0$ (by faithfulness), so $z\in F$.
\end{proof}

\subsection{Quasi-associativity and semiassociativity}

We turn to study the connection between quasi-associative algebras and semiassociative algebras. As we remarked above, when $B=K$ is itself a field, it is automatically a normal algebra over~$F$. Therefore, we have:

\begin{prop}\label{48}
  Let $K/F$ be a separable field extension. Then $A$ is quasi-associative with respect to $K$ if and only if $A$ is $K$-semiassociative.
\end{prop}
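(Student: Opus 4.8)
The plan is to show that for a separable field extension $K/F$ (so that $B = K$ is itself its own center $L = K$), the statement of \Pref{45} specializes exactly to the definition of $K$-semiassociativity. The key observation is that when $B = K = L$, the endomorphism algebra $\End_{L \tensor[F] L}(A)$ appearing in \Pref{45} becomes $\End_{K \tensor[F] K}(A)$, and the map $\varphi_K \co K \tensor[F] \op{K} \ra \End_{K \tensor[F] K}(A)$ has source $K \tensor[F] \op{K} = K \tensor[F] K = K^e$ (since $K$ is commutative, so $\op{K} = K$). Thus the two sides of condition (2) of \Pref{45} live in the same world as the $K^e$-action defining semiassociativity.

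First I would invoke the remark already made in the text: since $K$ is a field, it is automatically normal over $F$, so the normality hypothesis in condition (1) of \Pref{45} is free. This reduces the equivalence in \Pref{45} to: $A$ is minimally faithful over $K^e$ if and only if $\varphi_K$ is an isomorphism. By \Dref{maindef}, $K$-semiassociativity is precisely the statement that $A$ is minimally faithful over $K^e$ with $K \sub \nuc{A}$ an \etale{} (here, separable field) subalgebra. So the content to verify is simply that \emph{minimal faithfulness over $K^e$} coincides with the condition (1) of \Pref{45} in this specialized setting — which it does essentially by definition once normality is automatic.

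The cleanest route is therefore to apply \Pref{45} directly with $B = K$: condition (1) reads ``$K$ is normal (automatic) and $A$ is minimally faithful over $K \tensor[F] \op{K} = K^e$'', which is exactly $K$-semiassociativity; and this holds if and only if condition (2), the isomorphism $\varphi_K$, holds. Hence $A$ is quasi-associative with respect to $K$ (the defining condition being the equivalent conditions of \Pref{45}) precisely when $A$ is $K$-semiassociative. The proof is essentially a one-line dereferencing of \Pref{45} together with the automatic normality of a separable field.

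**The main (and only real) obstacle** is making sure the identifications $\op{K} = K$ and $K \tensor[F] \op{K} = K^e$ are transparent, and that $K$ being \etale{} as required in \Dref{maindef} matches ``$K$ simple with separable center $L = K$'' as required in \Pref{45} — but a separable field extension is both \etale{} and a simple algebra equal to its own center, so these hypotheses align perfectly with no gap. I do not anticipate any genuine difficulty beyond carefully citing the automatic normality of fields and the definition of quasi-associativity.
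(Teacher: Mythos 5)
Your proof is correct and is essentially identical to the paper's: both reduce to the observation that a separable field $K$ is automatically normal over $F$, so condition (1) of \Pref{45} with $B=K=L$ collapses to minimal faithfulness over $K^e$, which is the definition of $K$-semiassociativity. No gap.
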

\begin{proof}
  As $K$ is already normal over $F$, $A$ is quasi-associative with respect to $K$ if and only if it is minimally faithful as a $K\tensor[F] K$-module, which is the condition for being $K$-semiassociative.
\end{proof}

For the general case, we use the following formula for the dimension:

\begin{lem}\label{49}
  Let $A$ be a quasi-associative algebra with respect to~$B$, and write $L=\Zent(B)$. Then $\dimcol{A}{F}=\dimcol{B}{L}\dimcol{L}{F}^2$.
\end{lem}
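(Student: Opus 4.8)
The plan is to reduce everything to a dimension count over $F$, leveraging the decompositions already established in the proof of \Pref{45}. Since $A$ is quasi-associative with respect to $B$, the algebra $B$ is normal over $F$ and $A$ is minimally faithful over $B \tensor[F] \op{B}$. Writing $L \tensor[F] L = L_1 \oplus \cdots \oplus L_r$ as in \Lref{howto}, we obtain the corresponding decomposition $B \tensor[F] \op{B} = B_1 \oplus \cdots \oplus B_r$ with $B_i$ an $L_i$-central simple algebra, together with the module decomposition $A = A_1 \oplus \cdots \oplus A_r$, where $A_i$ is the unique simple $B_i$-module.

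The first key step uses normality: by \Cref{samefornormal}, each $B_i \isom \M[m](L_i)$ with $m = \dimcol{B}{L}$. Hence the simple module $A_i$ is isomorphic to $L_i^m$ as an $L_i$-vector space, so that $\dim_{L_i} A_i = m = \dimcol{B}{L}$, and therefore $\dimcol{A_i}{F} = \dimcol{B}{L}\,\dimcol{L_i}{F}$.

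Summing over $i$ gives $\dimcol{A}{F} = \sum_{i=1}^r \dimcol{A_i}{F} = \dimcol{B}{L}\sum_{i=1}^r \dimcol{L_i}{F}$. The final step is to recognize that $\sum_{i=1}^r \dimcol{L_i}{F} = \dimcol{L \tensor[F] L}{F} = \dimcol{L}{F}^2$, obtained by comparing $F$-dimensions in the decomposition $L \tensor[F] L \isom L_1 \oplus \cdots \oplus L_r$. Combining these identities yields $\dimcol{A}{F} = \dimcol{B}{L}\,\dimcol{L}{F}^2$, as required.

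There is no serious obstacle here; the content is essentially bookkeeping built on the structural results already in hand. The one point that genuinely requires care is the identification $\dim_{L_i} A_i = \dimcol{B}{L}$, which rests on normality through \Cref{samefornormal}: without normality the summands $B_i$ could be nontrivial division algebras $\M[n_i](D_i)$ over $L_i$, and then $\dim_{L_i} A_i = n_i\dimcol{D_i}{L_i}$ rather than $\dimcol{B}{L}$, so the clean formula would fail. This is precisely why the hypothesis of quasi-associativity (equivalently, normality of $B$) is the operative assumption.
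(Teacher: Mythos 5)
Your proof is correct and follows essentially the same route as the paper: both invoke \Cref{samefornormal} to identify each $B_i$ with $\M[m](L_i)$ for $m = \dimcol{B}{L}$, identify the minimally faithful module as $L_1^m \oplus \cdots \oplus L_r^m$, and sum $F$-dimensions using $\sum_i \dimcol{L_i}{F} = \dimcol{L}{F}^2$. Your closing remark about where normality is genuinely used matches the role it plays in the paper's argument.
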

\begin{proof}
Apply \Cref{samefornormal} to write $B\tensor[F]\op{B}=B_1\oplus\cdots\oplus B_r$, where $B_i = \M[m](L_i)$ for each $i$, and $m = \dimcol{B}{L}$. The unique minimally faithful module of $B \tensor[F] \op{B}$ is thus the direct sum $L_1^m \oplus \cdots \oplus L_r^m$, and therefore
$$\dimcol{A}{F}= \sum_{i=1}^r\dimcol{L_i^m}{F} = m \sum \dimcol{L_i}{L}\dimcol{L}{F} =m \dimcol{L}{F}^2$$
as required.
\end{proof}

\begin{prop}\label{4-10}
  Let $A$ be a quasi-associative algebra with respect to some %
  $B\sub\nuc{A}$. Then $A$ is semiassociative.
\end{prop}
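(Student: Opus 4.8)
The plan is to manufacture an \etale{} subalgebra $K \sub \nuc{A}$ witnessing semiassociativity by locating a suitable maximal subfield inside~$B$. Since $B$ is simple with separable center $L = \Zent(B)$, it is $L$-central simple, and hence possesses a maximal subfield $K$ that is separable over~$L$ (every central simple algebra is split by a separable extension of its center of degree equal to its degree, and any such extension embeds as a maximal subfield). Then $\dimcol{K}{L} = \deg_L(B)$, and by transitivity of separability $K/F$ is a separable field extension, so $K$ is \etale{} over~$F$; moreover $K \sub B \sub \nuc{A}$, so $K$ lies in the nucleus as required.

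Next I would check the dimension bookkeeping. By \Lref{49}, $\dimcol{A}{F} = \dimcol{B}{L}\dimcol{L}{F}^2 = \deg_L(B)^2 \dimcol{L}{F}^2$, while $\dimcol{K}{F} = \dimcol{K}{L}\dimcol{L}{F} = \deg_L(B)\dimcol{L}{F}$. Hence $\dimcol{A}{F} = \dimcol{K}{F}^2$, i.e.\ $A$ carries the correct dimension relative to~$K$. Faithfulness then comes for free: the inclusion $K \hra B$ induces an injection $K \tensor[F] K \hra B \tensor[F] \op{B}$ of $F$-algebras (note $K = \op{K} \hra \op{B}$ since $K$ is commutative), compatible with the actions on~$A$, as in both cases $k \tensor k'$ acts by $a \mapsto kak'$. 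Since $A$ is minimally, hence faithfully, acted on by $B \tensor[F] \op{B}$, any element of $K \tensor[F] K$ annihilating~$A$ must already vanish in $B \tensor[F] \op{B}$, so $A$ is faithful over $K^e = K \tensor[F] K$.

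To conclude, $K$ is \etale{} in $\nuc{A}$, the action of $K^e$ on $A$ is faithful, and $\dimcol{A}{F} = \dimcol{K}{F}^2$; by the three-for-two principle (\Rref{3for2}) $A$ is also cyclic over~$K^e$. As $K^e$ is commutative and semisimple, being cyclic and faithful is precisely being minimally faithful, so $A$ is $K$-semiassociative, and therefore semiassociative in the sense of \Dref{maindef}. (Alternatively, once $K/F$ is seen to be a separable field extension, one may invoke \Pref{48} to replace the final step, reducing $K$-semiassociativity to quasi-associativity with respect to~$K$.)

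The only genuine input is the existence of a maximal subfield of the central simple algebra~$B$ that is separable over its center~$L$; everything else is dimension counting together with the elementary observation that faithfulness passes to subalgebras. I expect that existence statement to be the one point needing a clean citation, while the dimension identity and the faithfulness transfer are routine.
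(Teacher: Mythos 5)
Your argument breaks down at its very first step: a central simple algebra $B$ over $L$ need \emph{not} contain a maximal subfield at all, let alone a separable one. The parenthetical justification you offer --- that every central simple algebra is split by a separable extension of its center of degree equal to its degree --- is false once $B$ is not a division algebra. For instance $\M[3](\R)$ has degree $3$ over $L=\R$, but $\R$ has no cubic extension, so $\M[3](\R)$ has no maximal subfield; and this case genuinely occurs in the proposition (take $A=B=\M[3](\R)$, which is quasi-associative with respect to itself). Koethe's theorem supplies a separable maximal subfield only for central \emph{division} algebras; for $B=\M[k](D)$ with $k\geq 2$ one in general only gets a maximal \etale{} subalgebra (e.g.\ $k$ diagonal copies of a maximal separable subfield of $D$), not a field. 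Everything after this point in your write-up --- the dimension count via \Lref{49}, the transfer of faithfulness to the subalgebra $K\tensor[F]K\sub B\tensor[F]\op{B}$, and the three-for-two argument --- is correct, so the gap is localized but fatal as written.

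The paper avoids this by a two-step reduction: it first treats the case where $B$ is a division algebra, where the separable maximal subfield does exist and the argument is essentially yours; then for $B=\M[k](D)\isom D\tensor[F]\M[k](F)$ it factors out the matrix units using \Pref{double} to write $A\isom A'\tensor[F]\M[k](F)$ with $A'$ quasi-associative with respect to $D$, and concludes via \Cref{matricesup}. Alternatively, you could repair your proof directly by replacing the maximal subfield with the maximal \etale{} subalgebra $K_0\times\cdots\times K_0\sub\M[k](D)$, where $K_0$ is a separable maximal subfield of $D$ embedded diagonally; its $F$-dimension is still $\dimcol{K_0}{L}\cdot k\cdot\dimcol{L}{F}=\deg(B)\dimcol{L}{F}$, and the rest of your dimension and faithfulness bookkeeping goes through verbatim, since \Dref{maindef} only asks for an \etale{} subalgebra of the nucleus, not a subfield.
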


\begin{proof}
  Write $L=\Zent(B)$. We first assume that~$B$ is a division algebra. Take a maximal separable subfield $K\sub B$, so that $\dimcol{B}{L}=\dimcol{K}{L}^2$. But then $\dimcol{A}{F}=\dimcol{B}{L}\dimcol{L}{F}^2=\dimcol{K}{L}^2\dimcol{L}{F}^2=\dimcol{K}{F}^2$, which is the dimension of the unique minimally faithful $K\tensor[F] K$-module, so $A$ is minimally faithful. That proves that $A$ is quasi-associative with respect to $K$, which is what we want by \Pref{48}.

  For the general case, write $B=\M[k](D)\cong D\tensor[F]\M[k](F)$ for an $L$-central division algebra~$D$. Since $A$ contains $\M[k](F)$ in its nucleus, we can write $A = A'\tensor[F]\M[k](F)$ for a nonassociative algebra $A'$ which can be observed to be quasi-associative with respect to $D$. The first part then shows that $A'$ is semiassociative, so $A\cong A'\tensor[F]\M[k](F)$ is also semiassociative by \Cref{matricesup} below.
\end{proof}

\begin{prop}\label{4-11}
Let $A$ be a semiassociative algebra. Then $A$ is quasi-associative with respect to any normal simple subalgebra $B \sub \nuc{A}$ that has a maximal separable subfield.
\end{prop}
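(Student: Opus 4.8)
The plan is to verify condition~(2) of \Pref{45}, namely that $\varphi_B \co B\tensor[F]\op{B}\to\End_{L\tensor[F] L}(A)$ is an isomorphism, where $L=\Zent(B)$; since~$B$ is normal by hypothesis, this is exactly what ``quasi-associative with respect to~$B$'' amounts to. The strategy is to descend to a maximal separable subfield, exploit what is already known there, and then bootstrap back up to~$B$ by a dimension count combined with an idempotent argument that forces injectivity.

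First I would fix a maximal separable subfield $K\sub B$ supplied by the hypothesis; being a maximal subfield of~$A$ it satisfies $\dimcol K F=n=\deg A$, and being maximal in~$B$ it contains $L=\Zent(B)$ with $\dimcol K L=\deg B$. In particular $K$ is an $n$-dimensional \etale{} subalgebra of $\nuc{A}$, so \Pref{coverall} shows that~$A$ is $K$-semiassociative; that is, $A$ is minimally faithful over the \etale{} algebra $K\tensor[F] K$. Since the unique minimally faithful module of a commutative semisimple algebra is the regular one, this gives $A\isom K\tensor[F] K$ as $K\tensor[F] K$-modules. Restricting the action along the inclusion $L\tensor[F] L\sub K\tensor[F] K$ and using that $K\tensor[F] K$ is free of rank $\dimcol K L^2=\dimcol B L$ over $L\tensor[F] L$, I obtain
$$A\isom (L\tensor[F] L)^{m}\quad\text{as }L\tensor[F] L\text{-modules},\qquad m=\dimcol B L.$$

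From this explicit shape the two sides of~$\varphi_B$ have equal dimension: writing $L\tensor[F] L=L_1\oplus\cdots\oplus L_r$ and $A=A_1\oplus\cdots\oplus A_r$ with $A_i\isom L_i^m$, one computes $\dimcol{\End_{L\tensor[F] L}(A)}{F}=m^2\dimcol L F^2=\dimcol B F^2=\dimcol{B\tensor[F]\op{B}}{F}$. It remains to see that $\varphi_B$ is injective, and here normality enters: by \Cref{samefornormal}, $B\tensor[F]\op{B}=B_1\oplus\cdots\oplus B_r$ with $B_i\isom\M[m](L_i)$, and its center is exactly $L\tensor[F] L$, so the primitive central idempotents $\eps_1,\dots,\eps_r$ of $B\tensor[F]\op{B}$ are those of $L\tensor[F] L$. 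Consequently $\varphi_B(\eps_i)$ acts on~$A$ as the projection onto $A_i\isom L_i^m\neq 0$; hence no $B_i\sub\Ker\varphi_B$, and since $\Ker\varphi_B$ is a sum of some of the simple summands $B_i$, it vanishes. An injective map between $F$-spaces of equal dimension is an isomorphism, so $\varphi_B$ is an isomorphism and~$A$ is quasi-associative with respect to~$B$.

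I expect the main obstacle to be precisely the upgrade from the field~$K$ to the algebra~$B$: minimal faithfulness over the larger algebra $B\tensor[F]\op{B}$ does not follow formally from faithfulness over the subalgebra $K\tensor[F] K$, since faithfulness passes from a larger algebra to a smaller one and not conversely. The device that resolves this is to extract from $K$-semiassociativity the precise $L\tensor[F] L$-module type of~$A$, which simultaneously pins down $\dimcol{\End_{L\tensor[F] L}(A)}{F}$ and, through the shared central idempotents, kills $\Ker\varphi_B$. A secondary point to handle with care is the identification $\dimcol K F=n$ and $\dimcol K L=\deg B$, which rest on~$K$ being a maximal subfield of~$A$.
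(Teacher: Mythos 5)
Your proof is correct and follows essentially the same route as the paper's: both pass to a maximal separable subfield $K$ of $B$, invoke \Pref{coverall} to get $K$-semiassociativity, use the faithful action of $L\tensor[F]L\sub K\tensor[F]K$ to rule out simple summands of $B\tensor[F]\op{B}$ lying in the kernel (resp.\ annihilator), and close with the dimension identity $\dimcol{K}{F}^2=\dimcol{B}{L}\dimcol{L}{F}^2$ supplied by normality. The only difference is that you verify condition~(2) of \Pref{45} directly (that $\varphi_B$ is an isomorphism, via injectivity plus a count of $\dimcol{\End_{L\tensor[F]L}(A)}{F}$), whereas the paper verifies the equivalent condition~(1) (minimal faithfulness, via \Lref{49}).
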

\begin{proof}
Let $K$ be a maximal subfield of~$B$. By \Prefs{coverall}{48},~$A$ is quasi-assocative
with respect to $K$. Let $L$ denote the center of~$B$. Write $L \tensor[F] L = L_1 \oplus \cdots \oplus L_r$ and let $B_i$ be the simple summands of $B \tensor[F] \op{B}$ defined in \eq{Bi}. The annihilator of $A$ as a $B \tensor[F] \op{B}$-module is a direct sum of simple components, but the center $L_i$ of each component is a summand of $L \tensor[F] L \sub K \tensor[F] K$, which acts faithfully by assumption, and thus cannot annihilate~$A$. This proves that $A$ is faithful over $B \tensor[F] \op{B}$.

By \Lref{49} applied to $K$ instead of both~$B$ and $L$, we have that $\dimcol{A}{F} = \dimcol{K}{F}^2 = \dimcol{K}{L}^2\dimcol{L}{F}^2 = \dimcol{B}{L}\dimcol{L}{F}^2$ because $K$ is a maximal subfield of~$B$, but this is the dimension of a minimally faithful module over $B \tensor[F] \op{B}$ by the same lemma. It follows that $A$ is minimally faithful.
\end{proof}

\begin{cor}\label{hurray}
For a nonassociative algebra~$A$ whose nucleus contain a maximal \etale{} subalgebra which is a field, the following are equivalent:
\begin{enumerate}
\item $A$ is semiassociative;
\item $A$ is quasi-associative with respect to some normal simple subalgebra of the nucleus,
\item $A$ is quasi-associative with respect to every normal simple subalgebra of the nucleus containing a maximal subfield.
\end{enumerate}
\end{cor}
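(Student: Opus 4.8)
The plan is to read this corollary as an assembly of the three preceding propositions, arranged as a cyclic chain $(2)\Rightarrow(1)\Rightarrow(3)\Rightarrow(2)$, with the standing hypothesis supplying the object that glues the cycle together. First I would record what that hypothesis gives: $\nuc{A}$ contains a maximal \etale{} subalgebra $K$ that is a field, hence a separable field extension of $F$. Being a field it is a simple subalgebra of the nucleus, being separable it is normal over $F$ (as observed just before \Pref{48}), and it is trivially its own maximal subfield. This $K$ is the witness I will feed into the universally quantified statements.

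For $(2)\Rightarrow(1)$ there is nothing to prove beyond citing \Pref{4-10}, which states that quasi-associativity with respect to any simple subalgebra of the nucleus already forces semiassociativity.

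For $(1)\Rightarrow(3)$ I would invoke \Pref{4-11} essentially verbatim: a semiassociative $A$ is quasi-associative with respect to every normal simple $B\sub\nuc{A}$ admitting a maximal separable subfield, which is exactly the content of $(3)$ once ``maximal subfield'' there is understood as the maximal separable subfield demanded by that proposition.

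Finally $(3)\Rightarrow(2)$ is where the hypothesis does its work: the field $K$ is a normal simple subalgebra of $\nuc{A}$ containing a maximal subfield (itself), so specializing the ``for every'' clause of $(3)$ to $B=K$ produces quasi-associativity with respect to $K$, an instance of the ``for some'' clause of $(2)$. Closing the triangle yields the three-way equivalence. I expect the only genuinely delicate point — the main, and rather small, obstacle — to be bookkeeping rather than mathematics: confirming that the ``maximal subfield'' wording of $(3)$ lines up with the ``maximal separable subfield'' hypothesis of \Pref{4-11}, and that $K$ really does satisfy every clause needed to instantiate $(3)$ at $B=K$.
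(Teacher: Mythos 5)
Your proof is correct and is essentially the paper's own (the corollary is stated without an explicit proof precisely because it is the assembly you describe): $(2)\Rightarrow(1)$ is \Pref{4-10}, $(1)\Rightarrow(3)$ is \Pref{4-11}, and $(3)\Rightarrow(2)$ follows by instantiating at the separable field $K$, which is simple, normal over $F$, and its own maximal subfield. The bookkeeping point you flag --- that ``maximal subfield'' in $(3)$ must be read as the maximal separable subfield required by \Pref{4-11} --- is the right one to watch and resolves exactly as you say.
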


\begin{rem}\label{whatif}
By extending the notion of normality to semisimple algebras with \etale{} center, and correspondingly extending the definition of quasi-associativity, we suppose the corollary can be extended to cover any nonassociative algebra~$A$ whose nucleus contains a maximal \etale{} subalgebra. The following would then be equivalent:
\begin{enumerate}
\item $A$ is semiassociative;
\item $A$ is quasi-associative with respect to some normal semisimple subalgebra of the nucleus;
\item $A$ is quasi-associative with respect to every normal semisimple subalgebra of the nucleus.
\end{enumerate}
\end{rem}

\section{Skew matrix algebras}\label{section:skew matrices}\label{sec:4}

Skew matrices are defined by projectively altering the multiplication formula for matrix units.
Let $F$ be a field and $n\geq 1$. Unless otherwise stated, indices will go from $1$ to $n$.
\begin{defn}\label{def:skew-mat}
A \textbf{skew set} of degree $n$ is a tensor $c_{ijk}$ of $n\times n \times n$ scalars from $F$. A skew set~$c$ is \textbf{reduced} if $c_{iij}=c_{jii}=1$ for all $i,j$.

For a reduced skew set $c$, define the \textbf{skew matrix algebra}, denoted $\M[n](F;c)$, to be the $F$-vector space whose basis is the matrix units $e_{ij}$, with multiplication rule
\begin{equation}\label{genmat-prod}
e_{ij}e_{k\ell}=\delta_{jk}c_{ij\ell}e_{i\ell}.
\end{equation}
\end{defn}
With one exception (which will be indicated), all skew sets below are reduced.

For example, the trivial skew set $c_{ijk}=1$ gives the standard matrix algebra, i.e.\ $\M[n](F;1)=\M[n](F)$.
It would have been pleasant to require that all the entries in a skew set be nonzero. Indeed, when all $c_{ijk} \neq 0$, the skew matrix algebra is always simple (\Cref{cijknon0}). However there are simple skew matrix algebras for which many $c_{ijk}$ are zero (see \Eref{simple_with_c=0}), so we are forced to include this possibility in the preset. Consequently, as we see below, some skew matrix algebras are not simple.

\subsection{Basic properties}
\begin{rem}\label{eii}
\begin{enumerate}
\item\label{eii-1} We always have that $e_{ii}e_{ij} = e_{ij}e_{jj} = e_{ij}$.
\item\label{eii-2} The sum $e_{11}+e_{22}+\cdots+e_{nn}$ is the identity element of $\M[n](F;c)$.
\end{enumerate}
In fact, both statements are equivalent to $c$ being reduced, which we always assume to be the case.
\end{rem}

\begin{prop}\label{themat}
The only {\emph{associative simple}} skew matrix algebra is the standard matrix algebra over $F$.
\end{prop}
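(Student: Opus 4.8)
The plan is to sidestep any cocycle bookkeeping and instead extract from reducedness just enough structure to apply the Wedderburn--Artin theorem. First I would record what reducedness gives: since $c_{iij}=c_{jii}=1$, \Rref{eii} tells us that $e_{11},\dots,e_{nn}$ is a complete system of orthogonal idempotents summing to the identity, and each $e_{ii}$, being a basis vector, is a \emph{nonzero} idempotent. The multiplication rule \eq{genmat-prod} also yields the Peirce decomposition $A=\bigoplus_{i,j}e_{ii}Ae_{jj}$ with every component $e_{ii}Ae_{jj}=Fe_{ij}$ one-dimensional; in particular $\dimcol{A}{F}=n^2$.

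Now suppose $A=\M[n](F;c)$ is associative and simple. Being finite-dimensional and simple, Wedderburn--Artin gives $A\isom\M[m](D)$ for a division $F$-algebra $D$ with center $Z=\Zent(A)$ a finite extension of $F$, and comparing $F$-dimensions yields $n^2=\dimcol{A}{F}=m^2\dimcol{D}{Z}\dimcol{Z}{F}$. The key step is a lower bound on $m$ obtained by counting idempotents: realizing $\M[m](D)$ on its natural right module $V\isom D^m$, the complete orthogonal system $\set{e_{ii}}$ splits $V=\bigoplus_{i=1}^n e_{ii}V$ into $n$ nonzero $D$-submodules, whence $n\le\dim_D V=m$.

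Combining the two facts, $n^2=m^2\dimcol{D}{Z}\dimcol{Z}{F}\ge m^2\ge n^2$, so equality holds throughout, forcing $m=n$, $\dimcol{D}{Z}=1$ and $\dimcol{Z}{F}=1$. Hence $D=Z=F$ and $A\isom\M[n](F)$, which is the assertion (up to isomorphism, and as a byproduct the center of such an $A$ is automatically $F$). I expect the only delicate point to be the idempotent count justifying $n\le m$; everything else is dimension bookkeeping. I would favour this structural route over the alternative of proving directly that associativity forces each $c_{ijk}\neq0$ and then trivializing the resulting cocycle, because forcing non-vanishing from simplicity seems to require the ideal-theoretic analysis of skew matrices developed only later in the paper.
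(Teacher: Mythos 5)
Your proof is correct and takes essentially the same route as the paper's: Wedderburn--Artin for a simple algebra of dimension $n^2$, plus the observation that the complete system of $n$ orthogonal idempotents $e_{11},\dots,e_{nn}$ forces $m=n$ and $D=F$. You merely spell out the idempotent-counting step (splitting the natural module $D^m$ into $n$ nonzero summands) that the paper's one-line proof leaves implicit.
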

\begin{proof}
An associative simple algebra of dimension $n^2$ is matrices over a division ring by Wedderburn's theorem, and it has to be $\M[n](F)$ because $e_{11},e_{22},\dots,e_{nn}$ are orthogonal idempotents.
\end{proof}
So the class of associative skew matrix algebras does not include any new simple objects. Moreover,
\begin{prop}\label{nonzeroassoc}
In an associative simple skew matrix algebra $\M[n](F;c)$, all $c_{ijk}$ are nonzero.
\end{prop}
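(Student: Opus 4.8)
The plan is to work with the Peirce decomposition coming from the diagonal idempotents and to invoke simplicity directly, so that in fact \Pref{themat} is not needed. First I would record the local structure. By \Rref{eii} the elements $e_{11},\dots,e_{nn}$ form a complete system of orthogonal idempotents, and since the skew set is reduced one computes straight from \eq{genmat-prod} that $e_{ii}e_{k\ell}e_{jj}=\delta_{ik}\delta_{\ell j}e_{ij}$ (using $c_{ii\ell}=c_{ijj}=1$). Hence each Peirce component is one-dimensional, $e_{ii}Ae_{jj}=Fe_{ij}$, and associativity is what legitimizes the decomposition $A=\bigoplus_{i,j}e_{ii}Ae_{jj}$. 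In particular every basis element $e_{ij}$ spans its Peirce component and is nonzero.

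Next I would exploit simplicity. For a fixed index $j$, the set $Ae_{jj}A$ is a two-sided ideal containing $e_{jj}=e_{jj}e_{jj}e_{jj}\neq 0$, so $Ae_{jj}A=A$. Sandwiching between $e_{ii}$ and $e_{kk}$ and using associativity together with $e_{jj}=e_{jj}^2$ gives
$$Fe_{ik}=e_{ii}Ae_{kk}=e_{ii}(Ae_{jj}A)e_{kk}=(e_{ii}Ae_{jj})(e_{jj}Ae_{kk})=F(e_{ij}e_{jk})=Fc_{ijk}e_{ik}.$$
Since $e_{ik}\neq 0$, the scalar $c_{ijk}$ cannot vanish. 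As $i,j,k$ were arbitrary, all entries of the skew set are nonzero.

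The only point requiring care — the main obstacle, such as it is — is the middle equality $e_{ii}(Ae_{jj}A)e_{kk}=(e_{ii}Ae_{jj})(e_{jj}Ae_{kk})$: a general element of $Ae_{jj}A$ is a sum $\sum_\alpha a_\alpha e_{jj}b_\alpha$, and one checks termwise that $e_{ii}a_\alpha e_{jj}b_\alpha e_{kk}=(e_{ii}a_\alpha e_{jj})(e_{jj}b_\alpha e_{kk})$, which is exactly where associativity and the relation $e_{jj}^2=e_{jj}$ enter. Alternatively, and perhaps closer in spirit to the preceding proposition, one could invoke \Pref{themat} to identify $A\isom\M[n](F)$, conjugate the complete system of orthogonal primitive idempotents $\set{e_{ii}}$ to the standard diagonal matrix units (any two such systems of equal cardinality being conjugate, as in the proof of \Pref{coverall}), and then read off that each $e_{ij}$ maps to a nonzero scalar multiple $\alpha_{ij}E_{ij}$ of a standard matrix unit, whence $e_{ij}e_{jk}=\alpha_{ij}\alpha_{jk}E_{ik}\neq 0$ and so $c_{ijk}\neq 0$. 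The simplicity argument is shorter and self-contained, so I would present that one.
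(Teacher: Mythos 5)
Your argument is correct and is essentially the paper's own proof: both rest on the observation that associativity collapses the relevant sandwich (the paper computes $e_{ij}Ae_{jk}=Fc_{ijk}e_{ik}$ directly; you compute $e_{ii}(Ae_{jj}A)e_{kk}=Fc_{ijk}e_{ik}$) and then invoke simplicity to force $c_{ijk}\neq 0$. The only difference is cosmetic: the paper concludes via primality of a simple unital ring, whereas you use that the ideal $Ae_{jj}A$ must be all of $A$.
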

\begin{proof}
By associativity we have that $$e_{ij}Ae_{jk} = \sum_{r,s} F e_{ij}e_{rs}e_{jk} = Fe_{ij}e_{jk} = F c_{ijk}e_{ik},$$
so if $c_{ijk} = 0$ the algebra is not prime.
\end{proof}

In any skew matrix algebra $\M[n](F;c)$, the subalgebra
$$\Delta = F e_{11} + Fe_{22}+\cdots +F e_{nn}$$
will be called {\bf{the diagonal subalgebra}}. Notice that the diagonal subalgebra is always isomorphic to $F \oplus \cdots \oplus F$.
\begin{prop}\label{dn}
The diagonal subalgebra is contained in the nucleus of $\M[n](F;c)$, for any skew set $c$.
\end{prop}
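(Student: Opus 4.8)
The plan is to reduce the statement to a finite check on basis elements. Since $\nuc{\M[n](F;c)}$ is by definition the intersection of the left, middle, and right nuclei, and each of these is an $F$-subspace (indeed a subalgebra) of $\M[n](F;c)$, while $\Delta$ is spanned by the idempotents $e_{mm}$, it suffices to show that every $e_{mm}$ lies in all three one-sided nuclei. By trilinearity of the associator $(x,y,z)=(xy)z-x(yz)$, this in turn reduces to verifying that each of the three associators $(e_{mm},e_{ij},e_{k\ell})$, $(e_{ij},e_{mm},e_{k\ell})$, and $(e_{ij},e_{k\ell},e_{mm})$ vanishes for all choices of indices.

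The key observation, and really the only place where a hypothesis is used, is that because $c$ is reduced the idempotent $e_{mm}$ acts on basis elements without introducing any scalar. Indeed, from \eqref{genmat-prod} together with $c_{mmj}=1$ we get $e_{mm}e_{ij}=\delta_{mi}c_{mmj}e_{mj}=\delta_{mi}e_{ij}$, and together with $c_{imm}=1$ we get $e_{ij}e_{mm}=\delta_{jm}c_{ijm}e_{im}=\delta_{jm}e_{im}$. Thus left and right multiplication by $e_{mm}$ are the scalar-free projections onto the row and the column indexed by $m$.

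With these two identities in hand, each of the three associators is a short computation in which the terms $(xy)z$ and $x(yz)$ turn out to coincide. For the right associator, for instance, $(e_{ij}e_{k\ell})e_{mm}=\delta_{jk}c_{ij\ell}\delta_{\ell m}e_{im}$ (using $c_{imm}=1$), while $e_{ij}(e_{k\ell}e_{mm})=\delta_{\ell m}\delta_{jk}c_{ijm}e_{im}$; the two agree because the factor $\delta_{\ell m}$ forces $c_{ij\ell}=c_{ijm}$. The left and middle associators are handled identically: in each case the surviving Kronecker deltas pin the free summation index to $m$, so that the single structure constant appearing in $(xy)z$ equals the one appearing in $x(yz)$, and the difference vanishes.

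I expect no genuine obstacle here: the content is entirely in the reduced normalization $c_{iij}=c_{jii}=1$, which guarantees that $e_{mm}$ multiplies basis elements with coefficient $1$, and the remainder is routine delta bookkeeping. The only point requiring care is to use both halves of the reduced condition — $c_{iij}=1$ for left multiplication and $c_{jii}=1$ for right multiplication — since these are precisely what keep the two sides of each associator scalar-matched.
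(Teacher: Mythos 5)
Your proof is correct and follows essentially the same route as the paper's: both reduce to the vanishing of associators of matrix units with $e_{mm}$ in each of the three slots, using the reduced condition $c_{mmj}=c_{jmm}=1$ (via $e_{mm}e_{mj}=e_{mj}=e_{jm}e_{mm}$) to match the surviving structure constants. Your version just makes the Kronecker-delta bookkeeping more explicit where the paper first discards the non-chained associators.
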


\begin{proof}
An associator of matrix units $(e_{ii'},e_{jj'},e_{kk'})$ is zero regardless of~$(c)$, unless $i' = j$ and $j' = k$. Now, by \Rref{eii}.\eq{eii-1}, $(e_{ii}e_{ij})e_{jk} = e_{ij}e_{jk} = e_{ii}(e_{ij}e_{jk})$;
$(e_{ki}e_{ii})e_{ij} = e_{ki}e_{ij} = e_{ki}(e_{ii}e_{ij})$ and $(e_{jk}e_{ki})e_{ii} = c_{jki}e_{ji}e_{ii} = e_{jk}e_{ki}$. This shows that $e_{ii}$ is in the nucleus.
\end{proof}

\begin{prop}\label{centeris}
The center of $\M[n](F;c)$ contains only the scalar matrices.
\end{prop}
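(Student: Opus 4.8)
The plan is to take an arbitrary central element, expand it in the matrix-unit basis, and force it down to a scalar by imposing commutativity against two carefully chosen families of basis elements, using throughout that the skew set is reduced. Since a central element in particular commutes with everything in the algebra, I will work only with the commuting condition; membership in the nucleus holds automatically and is not needed for this containment (conversely, every scalar matrix is central, as $\sum_i e_{ii}$ is the identity by \Rref{eii}.\eq{eii-2}, but the statement only asserts the inclusion $\Zent(\M[n](F;c)) \sub F$).

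First I would write a general element as $x = \sum_{i,j} x_{ij} e_{ij}$ and compute its commutator with a diagonal idempotent $e_{kk}$. Using the multiplication rule \eq{genmat-prod} together with the reduced relations $c_{ikk} = 1$ and $c_{kkj} = 1$, the products collapse to $x e_{kk} = \sum_i x_{ik} e_{ik}$ and $e_{kk} x = \sum_j x_{kj} e_{kj}$. Comparing coefficients, the first sum is supported on matrix units whose second index is $k$, the second on matrix units whose first index is $k$, and the only unit common to both is $e_{kk}$. Hence $[x,e_{kk}]=0$ forces $x_{ik} = 0$ for all $i \neq k$ and $x_{kj} = 0$ for all $j \neq k$. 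Letting $k$ range over all indices kills every off-diagonal coefficient, so any central element is diagonal, $x = \sum_i d_i e_{ii}$.

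Next I would impose that such a diagonal $x$ commute with an off-diagonal unit $e_{k\ell}$ with $k \neq \ell$. Again by \eq{genmat-prod} and the reduced relations $c_{kk\ell} = 1$ and $c_{k\ell\ell} = 1$, the products simplify to $x e_{k\ell} = d_k e_{k\ell}$ and $e_{k\ell} x = d_\ell e_{k\ell}$, so commutativity yields $d_k = d_\ell$. As this holds for every pair $k \neq \ell$ (vacuous only when $n=1$, in which case $\M[n](F;c) = F$ already), all the $d_i$ coincide and $x$ is a scalar multiple of the identity.

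There is no serious obstacle here; the argument is a direct computation once the structure constants are controlled. The only point requiring care is the systematic use of the reduced hypothesis to reduce each relevant constant to $1$: namely $c_{ikk} = c_{k\ell\ell} = 1$ from $c_{jii}=1$, and $c_{kkj} = c_{kk\ell} = 1$ from $c_{iij}=1$, which is exactly what makes the products against diagonal idempotents and against a single off-diagonal unit simplify cleanly.
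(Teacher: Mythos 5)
Your proof is correct and is exactly the argument the paper has in mind: the paper's proof simply states that the computation for standard matrices works verbatim, and that computation is precisely your two-step argument (commuting with the $e_{kk}$ to kill off-diagonal entries, then with an off-diagonal $e_{k\ell}$ to equate the diagonal entries), with the reduced condition $c_{iij}=c_{jii}=1$ guaranteeing that all the structure constants involved equal $1$. Nothing is missing.
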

\begin{proof}
The proof for the standard matrix algebra works verbatim for every skew matrix algebra.
\forget %
Let $a = \sum \alpha_{ij}e_{ij}$. Then $[e_{kk},a] = \sum_{ij} \alpha_{ij}(e_{kk}e_{ij}-e_{ij}e_{kk}) =
\sum_{j} \alpha_{kj}e_{kj} - \sum_{i} \alpha_{ik}e_{ik}$. If $a$ is central then $\alpha_{kj} = 0$ for every $k \neq j$, so $a$ is diagonal. But then, for fixed $k,k'$, $[e_{kk'},a] = \sum_i \alpha_{ii} [e_{kk'}, e_{ii}] = (\alpha_{k'k'} - \alpha_{kk}) e_{kk'}$, so $\alpha_{kk} = \alpha_{k'k'}$.
\forgotten
\end{proof}

\begin{prop}\label{part1go}
Every skew matrix algebra is a semiassociative $F$-central algebra (with respect to the diagonal subalgebra).
\end{prop}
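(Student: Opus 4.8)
The plan is to verify directly the three requirements of \Dref{maindef} for $A = \M[n](F;c)$ with respect to the diagonal subalgebra $\Delta = Fe_{11} + \cdots + Fe_{nn}$: that $\Delta$ is an \etale{} subalgebra of the nucleus, that $A$ is $F$-central, and that $A$ is minimally faithful over $\Delta \tensor \Delta$ under the action $(x \tensor x')a = xax'$. The first point is immediate: $\Delta \isom F \oplus \cdots \oplus F$ is a product of copies of~$F$, hence \etale{} over~$F$, and it lies in $\nuc{A}$ by \Pref{dn}. Because $\Delta \sub \nuc{A}$, the left and right multiplications by its elements associate and commute correctly, so the prescribed action is a genuine $\Delta \tensor \Delta$-module structure. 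Centrality is exactly \Pref{centeris}: the center consists only of scalar matrices, so $\Zent(A) = F$.

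The heart of the argument is the explicit computation of this action on the basis. I would compute $(e_{ii} \tensor e_{jj})e_{k\ell} = e_{ii} e_{k\ell} e_{jj}$ for each basis vector~$e_{k\ell}$, the product being unambiguous since $e_{ii},e_{jj} \in \nuc{A}$. Using the reducedness relations $c_{ii\ell} = 1$ and $c_{ijj} = 1$ (equivalently \Rref{eii}), one finds $e_{ii}e_{k\ell} = \delta_{ik} e_{k\ell}$ and $e_{k\ell}e_{jj} = \delta_{j\ell} e_{k\ell}$, so that
$$ (e_{ii} \tensor e_{jj})\, e_{k\ell} = \delta_{ik}\delta_{j\ell}\, e_{k\ell}. $$
Thus $e_{ii} \tensor e_{jj}$ acts as the projection onto the line $Fe_{ij}$, and the decomposition $A = \bigoplus_{i,j} Fe_{ij}$ exhibits $A$ as the direct sum of the $n^2$ pairwise non-isomorphic simple modules of the commutative semisimple algebra $\Delta \tensor \Delta \isom F^{n^2}$, each occurring with multiplicity one.

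By the description of minimally faithful modules over a semisimple ring recalled before \Dref{maindef}, this direct sum is precisely the unique minimally faithful $\Delta \tensor \Delta$-module, which completes the proof. Alternatively, the same conclusion follows from \Rref{3for2}: the displayed formula shows the action is faithful (an element $\sum_{i,j} \alpha_{ij}(e_{ii}\tensor e_{jj})$ annihilating every $e_{k\ell}$ forces all $\alpha_{ij}=0$) and cyclic (the element $\sum_{i,j} e_{ij}$ generates, since $(e_{ii}\tensor e_{jj})\sum_{k,\ell} e_{k\ell} = e_{ij}$), while $\dim A = n^2 = (\dim \Delta)^2$, so any two of these three conditions yield the third. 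I do not anticipate a genuine obstacle; the only point requiring care is the consistent use of reducedness to collapse the structure constants $c_{ii\ell}$ and $c_{ijj}$ to~$1$, which is exactly what makes the diagonal act by clean projections.
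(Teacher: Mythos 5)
Your proof is correct and follows essentially the same route as the paper's: it invokes \Pref{dn} and \Pref{centeris} for the nucleus and centrality, and then establishes faithfulness via the action of the idempotents $e_{ii}\tensor e_{jj}$ on the matrix units, which is exactly the paper's computation $(e_{ii}\tensor e_{jj})\cdot e_{ij}=e_{ij}\neq 0$. The only difference is that you spell out minimality explicitly (via the multiplicity-one decomposition or \Rref{3for2}), a step the paper leaves implicit since $\dim A=(\dim\Delta)^2$.
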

\begin{proof}
Following \Prefs{dn}{centeris}, it remains to show that the annihilator of $A$ as an $\Delta \tensor \Delta$-module is trivial; but every ideal of $\Delta \tensor \Delta$ contains some $e_{ii} \tensor e_{jj}$, and $(e_{ii} \tensor e_{jj})\cdot e_{ij} = e_{ii}e_{ij}e_{jj} = e_{ij} \neq 0$.
\end{proof}

\begin{prop}\label{centralizerofD}
The diagonal subalgebra is self-centralizing.
\end{prop}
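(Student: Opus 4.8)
The plan is to show directly that the centralizer $\Ce[A]{\Delta}$ of the diagonal subalgebra in $A = \M[n](F;c)$ equals $\Delta$ itself. One inclusion is immediate: by \Rref{eii} the idempotents $e_{ii}$ are orthogonal and $\Delta \isom F \oplus \cdots \oplus F$ is commutative, so every element of $\Delta$ commutes with all the $e_{ii}$, whence $\Delta \sub \Ce[A]{\Delta}$.

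For the reverse inclusion I would take an arbitrary element $a = \sum_{ij}\alpha_{ij}e_{ij}$ and compute its commutator with each diagonal idempotent $e_{kk}$. Using the multiplication rule \eq{genmat-prod} together with the reduced conditions $c_{kkj}=1$ and $c_{ikk}=1$, the products collapse to $e_{kk}a = \sum_j \alpha_{kj}e_{kj}$ and $a e_{kk} = \sum_i \alpha_{ik}e_{ik}$, exactly as in the standard matrix algebra. Hence $[e_{kk},a] = \sum_{j \neq k}\alpha_{kj}e_{kj} - \sum_{i \neq k}\alpha_{ik}e_{ik}$.

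If $a$ centralizes $\Delta$, then $[e_{kk},a]=0$ for every $k$; since the matrix units appearing are pairwise distinct, each coefficient must vanish, giving $\alpha_{kj}=0$ for all $j\neq k$. Letting $k$ range over all indices forces every off-diagonal entry of $a$ to be zero, so $a \in \Delta$ and the two inclusions combine to give $\Ce[A]{\Delta}=\Delta$. This is precisely the computation recorded (in suppressed form) inside the proof of \Pref{centeris}, stopped one step earlier: there one continues on to force the diagonal entries to be mutually equal, whereas here we want to retain the full diagonal.

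There is essentially no obstacle beyond bookkeeping; the only point to watch is that the relevant structure constants $c_{kkj}$ and $c_{ikk}$ are genuinely equal to $1$ by the reduced hypothesis, which is exactly what makes the commutator identical to the associative case. Notably, associativity is never invoked---the argument uses only the bilinear multiplication and the reduced normalization---so the self-centralizing property holds for \emph{every} skew matrix algebra, regardless of simplicity.
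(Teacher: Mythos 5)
Your proof is correct and is essentially the paper's own argument: the paper's proof simply says ``as in Proposition \ref{centeris}, because $e_{ii}e_{ij}=e_{ij}=e_{ij}e_{jj}$,'' which is exactly the commutator computation with $e_{kk}$ that you carry out explicitly, using the reduced normalization $c_{kkj}=c_{ikk}=1$ to reduce to the standard matrix case. Your write-up just makes the suppressed bookkeeping visible; no difference in substance.
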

\begin{proof}
As in \Pref{centeris}, because $e_{ii}e_{ij} = e_{ij} = e_{ij}e_{jj}$.
\end{proof}

\begin{rem}\label{tensormat}
The tensor product of skew matrix algebras $\M[n](F;c)$ and $\M[n'](F;c')$ is a skew matrix algebra $\M[nn'](F;c\tensor c')$, with the skew set $(c\tensor c')_{ii',jj',kk'} = c_{ijk}c'_{i'j'k'}$.
\end{rem}

Comparing to \Tref{basicTx}, this remark proves that skew matrix algebras compose a submonoid of the monoid of all semiassociative algebras.

\subsection{Equivalence of skew sets}
We say that two reduced skew sets $c,c'$ are {\bf{equivalent}}, denoted $c \sim c'$, if there are nonzero scalars $\gamma_{ij}$, with $\gamma_{ii} = 1$, such that
$$c'_{ijk} = \gamma_{ij}\gamma_{jk}\gamma_{ik}^{-1} \, c_{ijk}.$$
This is indeed an equivalence relation.

We say that an isomorphism $\M[n](F;c) \ra \M[n](F;c')$ is {\bf{homogeneous}} if it preserves each of the linear subspaces $Fe_{ij}$. An equivalence induces a homogeneous isomorphism defined by $e_{ij} \mapsto \gamma_{ij}e_{ij}$.

\begin{rem}
One can view equivalence classes as orbits under the action of reduced matrices $(\gamma_{ij})$ (with $\gamma_{ii}=1$), which form a group with respect to pointwise multiplication, in the obvious manner. The matrices $\gamma_{ij} = \epsilon_i \epsilon_j^{-1}$ (for $\epsilon_1,\dots,\epsilon_n \in \mul{F}$) are the ones acting trivially, %
so the dimension of a generic orbit is $n(n-1)-(n-1) = (n-1)^2$.
\end{rem}

Note that conjugation by the diagonal matrix $\diag\set{\epsilon_1,\dots,\epsilon_n}$ is a homogeneous automorphism, leaving $c$ unchanged.

\medskip
Assume $\M[n](F;c)$ is associative. In \Pref{nonzeroassoc} we saw that if the algebra is simple, then all $c_{ijk} \neq 0$. Let us now prove the converse (this is a special case of \Cref{cijknon0} below).
\begin{exmpl}\label{assocskew-3}
Assume $\M[n](F;c)$ is associative and all $c_{ijk} \neq 0$. Take $\gamma_{ij} = c_{1ij}$. Then $c_{jk\ell} = \gamma_{jk}\gamma_{k\ell}\gamma_{j\ell}^{-1}$ because $(\gamma_{jk}\gamma_{k\ell} - c_{jk\ell}\gamma_{j\ell})e_{1\ell} = (e_{1j},e_{jk},e_{k\ell})$ is zero when the algebra is associative.
So $c \sim 1$, and $\M[n](F;c) \isom \M[n](F;1) = \M[n](F)$.
\end{exmpl}

\subsection{Examples}

As an illustration of the diversity of skew matrix algebras, let us describe the $2$-by-$2$ skew matrix algebras.
\begin{exmpl}[Skew matrices of degree $n = 2$]\label{n=2}
There are three types of skew matrix algebras of degree $2$:
\begin{enumerate}
\item\label{n=2.1} An associative nonsimple algebra. %
\item\label{n=2.2} A nonsimple algebra which is not associative. %
\item\label{n=2.3} A one-parameter family of simple algebras, whose only associative member is the standard matrix algebra.
\end{enumerate}
Case \eq{n=2.3} are the split nonassociative quaternion algebras of Waterhouse~\cite{Wat}.

To check this statement, notice that a reduced skew set for $n = 2$ has only two nontrivial entries, namely $c_{121}$ and $c_{212}$. The action of a reduced matrix $(\gamma_{ij})$ multiplies both parameters by $\gamma_{12}\gamma_{21}$. The nontrivial associators are $(e_{12},e_{21},e_{12}) = (c_{121}-c_{212})e_{12}$ and $(e_{21},e_{12},e_{21}) = -(c_{121}-c_{212})e_{21}$, so $\M[2](F;c)$ is associative if and only if $c_{121}=c_{212}$.
\begin{enumerate}
\item If $c_{121} = c_{212} = 0$ then $J = Fe_{12}+Fe_{21}$ is the radical; this is case~\eq{n=2.1}.
\item If $c_{121} = 0$ and $c_{212} \neq 0$, we may assume $c_{212} = 1$; then the associator ideal contains $e_{12}$ and $e_{21}$, and $E = Fe_{12}+Fe_{21}+Fe_{22}$ is an idempotent ideal. The dual case gives an isomorphic algebra, by transposing the indices. This is case \eq{n=2.2}.
\item Finally, if $c_{121},c_{212} \neq 0$, the ratio parameter $c_{121}c_{212}^{-1}$ determines the algebra; we will see below that all such algebras are simple. The ratio is $1$ for the standard matrix algebra, and this family is case \eq{n=2.3}.
    \end{enumerate}
\end{exmpl}

In particular,
\begin{exmpl}[Associative skew matrix algebra which is not simple]\label{assocnonsimple}
The {\bf{zero matrix algebra}} algebra $\M[2](F;c)$, for $c_{121}=c_{212}=0$, is associative but not simple. (This is a zero quaternion algebra in the sense of \Eref{Q=0}, with $K = F \oplus F$).
\end{exmpl}

\begin{rem}\label{1gen}%
If $c_{121} \neq c_{212}$ then $\M[2](F;c)$ is generated by a single element. Take $z = e_{12}+e_{21}$. Then $z^2 = c_{121}e_{11}+c_{212}e_{22}$ so $\operatorname{span}\set{1,z^2} = \operatorname{span}\set{e_{11},e_{22}}$,
while $(z^2)z = c_{121}e_{12}+c_{212}e_{21}$,
so $\operatorname{span}\set{z,(z^2)z} = \operatorname{span}\set{e_{12},e_{21}}$. Moreover, $(z,z,z) = [z^2,z] = (c_{121}-c_{212})(e_{12}-e_{21})$, so the algebra is not power-associative.
\end{rem}

\section{Matrix forms}\label{sec:5}

We say that a semiassociative algebra is {\bf{split}} if it is a skew matrix algebra.
A field extension $E/F$ {\bf{splits}} a semiassociative algebra~$A$, if $A_E = E \tensor A$ is split.

\begin{thm}\label{main-thm}
Let $F$ be an infinite field and $A$ a semiassociative $F$-algebra of degree $n$. A field splitting an $n$-dimensional \etale{} subalgebra $K \sub \nuc{A}$ splits $A$ as well.
Furthermore, if~$E$ splits~$K$ then $E \tensor K$ can be assumed to be the diagonal subalgebra of $E\tensor A$.
\end{thm}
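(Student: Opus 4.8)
The plan is to prove this via Galois descent / change of scalars combined with the structure theory already developed for the split case. Let $E/F$ split $K$, so that $E \tensor[F] K \cong E^n$. The goal is to show $A_E = E \tensor[F] A$ is a skew matrix algebra, i.e.\ that it admits a basis of matrix units $e_{ij}$ obeying a relation of the form \eqref{genmat-prod}. First I would pass to the nucleus: by the corollary to \Pref{prop:nuc-tensor}, $\nuc{A_E} = E \tensor[F] \nuc{A}$ contains $E \tensor[F] K \cong E^n$, and by \Pref{coverall} the scalar extension $A_E$ remains semiassociative with respect to this split \etale{} subalgebra. So without loss of generality I work over $E$ with an $E^n$-semiassociative algebra whose distinguished \etale{} subalgebra is split.

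The heart of the argument is the split case: show that an $E^n$-semiassociative algebra is a skew matrix algebra with $E \tensor[F] K$ as its diagonal subalgebra. Here I would invoke \Pref{maxi}, which gives that the idempotents $e_1,\dots,e_n$ generating $E^n$ are primitive in $\nuc{A_E}$ and, crucially, that each $\dim_E e_i A_E e_j = 1$. Choosing a nonzero element $e_{ij} \in e_i A_E e_j$ for each pair (with $e_{ii} = e_i$, the given idempotents, which are legitimate by \Rref{eii} once one checks $e_{ii}e_{ii}=e_{ii}$), one gets an $E$-basis $\set{e_{ij}}$ of $A_E$ matching the matrix-unit grading. Since each product $e_{ij}e_{k\ell}$ lies in $e_i A_E e_\ell$ when $j = k$ (and in $e_i A_E e_\ell = 0$-compatible positions otherwise, forcing $e_{ij}e_{k\ell} \in e_i A_E e_\ell$ which is one-dimensional), and lies in $e_i A_E e_\ell$ forcing it to be a scalar multiple $c_{ij\ell}e_{i\ell}$, one reads off the scalars $c_{ij\ell}$. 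The relations $e_{ii}e_{ij} = e_{ij} = e_{ij}e_{jj}$ (which hold because $e_{ij} \in e_i A_E e_j$ and the $e_i$ are orthogonal idempotents summing to $1$) give exactly the reduced conditions $c_{iij} = c_{ijj} = 1$. This identifies $A_E \cong \M[n](E;c)$ with $E \tensor[F] K$ as the diagonal subalgebra, establishing the ``Furthermore'' clause simultaneously.

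The main obstacle I anticipate is the step where the product $e_{ij}e_{jk}$ must be shown to be a scalar multiple of the chosen basis vector $e_{ik}$, rather than merely landing in the correct graded piece: this requires that the multiplication genuinely respects the Peirce-type decomposition $A_E = \bigoplus_{i,j} e_i A_E e_j$ induced by the idempotents in the \emph{nucleus}. Because $K \subseteq \nuc{A_E}$, the associator identities let me compute $(e_i a e_j)(e_k a' e_\ell) = e_i\big(a e_j e_k a'\big)e_\ell$ without ambiguity in the parenthesization — this is precisely where nuclearity of the idempotents (the content of \Pref{dn} in reverse, guaranteed here by $K \subseteq \nuc{A}$) is essential and is the reason semiassociativity rather than mere faithfulness is required. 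Once this Peirce decomposition is algebraically controlled, the constants $c_{ij\ell}$ are well-defined and the identification is routine.

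Finally I would descend: having shown $A_E \cong \M[n](E;c)$ for the specific splitting field $E$ of $K$, the definition of ``splits'' (from \Sref{sec:5}) says precisely that $E$ splits $A$, completing the first assertion. The hypothesis that $F$ is infinite I expect to be needed only to guarantee existence of generic elements in the one-dimensional Peirce components that assemble into a compatible system of matrix units; if the naive choice of $e_{ij}$ produces degenerate products, an infinite field allows a perturbation argument to avoid the zero locus, though in the split case each Peirce piece being exactly one-dimensional may make this automatic.
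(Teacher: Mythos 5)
Your proposal is correct, and it reaches the same destination as the paper by a leaner route. The paper's proof also works in $A_E$ with the orthogonal idempotents $e_1,\dots,e_n$ of $E\tensor K$, but it fixes a single cyclic generator $v$ with $A=KvK$, writes $K=F[u]$ (this is where the hypothesis that $F$ is infinite enters, to get a primitive element), sets $v_{ij}=e_i(1\tensor v)e_j$, proves these form a basis via the cyclicity computation $A_E=\sum Ev_{ij}$, and then needs a separate normalization step (\Pref{prop:vij-unit}) to rescale the $v_{ij}$ into a reduced skew set. You instead get the one-dimensionality and nonvanishing of each Peirce component $e_iA_Ee_j$ directly from faithfulness plus the dimension count of \Pref{maxi}, and by choosing $e_{ii}=e_i$ and arbitrary nonzero elements off the diagonal, the reduced conditions $c_{iij}=c_{jii}=1$ come for free from nuclearity of the $e_i$ --- no normalization, and (as you suspect) no need for $F$ to be infinite. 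Your identification of where $K\sub\nuc{A}$ is essential --- making the Peirce decomposition and the parenthesization of $(e_iae_j)(e_ka'e_\ell)$ unambiguous --- is exactly right. Two small points: the fact that $A_E$ is semiassociative over $E$ with respect to $E\tensor K$ is \Pref{basicT0} rather than \Pref{coverall} (the latter compares different \etale{} subalgebras of a fixed algebra), though the content is an immediate consequence of faithfulness and cyclicity being preserved under scalar extension; and the paper's seemingly wasteful choice of all $v_{ij}$ coming from a single $v$ is not idle --- it is what makes the basis Galois-equivariant ($\s(v_{ij})=v_{\s(i)\s(j)}$, \Pref{prop:sigma-vij}), which is needed for the descent arguments of \Sref{sec:6}. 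Your freely chosen $e_{ij}$ would not in general have that property, so your argument proves \Tref{main-thm} but would need to be supplemented if one wants the conjugacy condition \eq{Gact} on the resulting skew set.
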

The proof is given in \Ssref{ssec:52}.

\subsection{Applications}
We can now obtain a useful nonassociative analogue of the associative splitting criterion (see \eg\ in \cite[Proposition 2.2.8]{GS}):
\begin{cor}\label{split-nuc-skew}
A semiassociative algebra of degree $n$ is split if and only if $F^n$ is contained in the nucleus (as a unital subalgebra).
\end{cor}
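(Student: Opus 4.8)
The statement to prove is \Cref{split-nuc-skew}: a semiassociative algebra $A$ of degree $n$ is split (i.e.\ is a skew matrix algebra) if and only if $F^n$ is contained in $\nuc{A}$ as a unital subalgebra.

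The plan is to prove the two directions separately, with the forward direction being essentially a structural observation and the reverse direction being a direct application of \Tref{main-thm}. For the forward direction, suppose $A$ is split, meaning $A \isom \M[n](F;c)$ for some reduced skew set $c$. Then by \Pref{dn} the diagonal subalgebra $\Delta = Fe_{11} + \cdots + Fe_{nn}$ lies in $\nuc{A}$, and as noted in the text following that proposition, $\Delta \isom F \oplus \cdots \oplus F = F^n$ as a unital algebra (the identity of $\Delta$ being $\sum_i e_{ii}$, which is the identity of $A$ by \Rref{eii}.\eq{eii-2}). This immediately exhibits $F^n$ as a unital subalgebra of the nucleus.

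For the reverse direction, suppose $F^n \sub \nuc{A}$ as a unital subalgebra. First I would observe that $K := F^n$ is an $n$-dimensional \etale{} $F$-algebra (a direct product of $n$ copies of the field $F$, each a trivially separable extension). Since $A$ is semiassociative of degree $n$ and contains the $n$-dimensional \etale{} subalgebra $K = F^n$ in its nucleus, \Pref{coverall} guarantees that $A$ is in fact $K$-semiassociative, i.e.\ semiassociative with respect to this particular $K$. Now I apply \Tref{main-thm} with this choice of $K$: a field extension $E/F$ that splits $K$ will split $A$. But $K = F^n$ is already split over $F$ itself, since trivially $F \tensor[F] F^n \isom F^n$, so we may take $E = F$. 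The theorem then yields that $A = F \tensor[F] A$ is split, i.e.\ $A$ is a skew matrix algebra, which is exactly what we want.

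The only subtlety, and the step I would treat most carefully, is the invocation of \Tref{main-thm} in the degenerate case $E = F$: the theorem is stated for an infinite field $F$, and one should confirm that applying it with the trivial extension $E/F$ (where $K = F^n$ needs no extension to become diagonal) is legitimate and that $A$ is genuinely identified with a skew matrix algebra in the sense of \Dref{def:skew-mat}. This is not a real obstacle, since $\Tref{main-thm}$ is precisely the assertion that a field splitting the \etale{} subalgebra splits the algebra, and $F$ splits $F^n$; the furthermore clause even arranges $F \tensor K = K$ to be the diagonal subalgebra. One should perhaps note that the corollary implicitly inherits the infinite-field hypothesis of \Tref{main-thm}, or remark that the split case is where the argument is cleanest. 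With these observations the equivalence is established.
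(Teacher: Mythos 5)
Your proof is correct and follows essentially the same route as the paper: the forward direction cites \Pref{dn} for the diagonal subalgebra, and the reverse direction notes that $A$ is $F^n$-semiassociative and applies \Tref{main-thm} with $E=F$. Your remark about the infinite-field hypothesis being implicitly inherited from \Tref{main-thm} is a fair observation, but it applies equally to the paper's own one-line proof.
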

\begin{proof}
$(\Rightarrow)$: A skew matrix algebra contains a copy of $F^n$ by \Pref{dn}.
$(\Leftarrow)$: Since $A$ is semiassociative and $F^n \sub \nuc{A}$, $A$ is $F^n$-semiassociative, so we are done by \Tref{main-thm} by taking $E = F$.
\end{proof}
\forget
\begin{cor}\label{cor:split-nuc-skew}
An $F$-central algebra of dimension $n^2$ is a skew matrix algebra if and only if it is $F^n$-semiassociative (where $F^n$ is a unital subalgebra).
\end{cor}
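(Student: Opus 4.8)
The plan is to establish the two implications separately, exploiting that $F^n$ is already ``split'' over $F$, so that—unlike in \Tref{main-thm}—no passage to an auxiliary splitting field is needed.

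The ``only if'' direction is immediate from the results of \Sref{sec:4}. If $A$ is a skew matrix algebra $\M[n](F;c)$ (with $c$ reduced), then its diagonal subalgebra $\Delta = Fe_{11}+\cdots+ Fe_{nn}$ is isomorphic to $F^n$ and lies in the nucleus by \Pref{dn}, and \Pref{part1go} asserts precisely that $A$ is then $F$-central semiassociative with respect to $\Delta$. Thus $A$ is $F^n$-semiassociative.

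For the ``if'' direction, suppose $A$ is $F^n$-semiassociative. Since an $F^n$-semiassociative algebra is in particular a semiassociative algebra of degree $n$ whose nucleus contains $F^n$ as a unital subalgebra, \Cref{split-nuc-skew} applies directly and shows $A$ is split, i.e.\ a skew matrix algebra; this settles the corollary. For a self-contained argument that avoids citing \Cref{split-nuc-skew}, one can reconstruct the skew-matrix structure by hand: let $e_1,\dots,e_n$ be the orthogonal idempotents generating $F^n\sub\nuc{A}$, summing to $1$. Because they are nuclear, the Peirce decomposition $A=\bigoplus_{i,j} e_i A e_j$ is a legitimate direct sum of subspaces (the projections $a\mapsto e_iae_j$ are well defined and orthogonal precisely because the $e_i$ associate with everything), and \Pref{maxi} gives $\dim e_i A e_j = 1$ for all $i,j$. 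Choosing a nonzero $e_{ij}\in e_iAe_j$, and in particular $e_{ii}:=e_i$ (which spans $e_iAe_i$), produces a basis $\set{e_{ij}}$ of $A$.

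Reading off the products then finishes the construction: nuclearity of the $e_k$ yields $e_{ij}e_{kl}\in e_iAe_l$ together with $e_{ij}e_{kl}=\delta_{jk}\,e_{ij}e_{kl}$, so that all products with $j\neq k$ vanish while $e_{ij}e_{jl}=c_{ijl}e_{il}$ for scalars $c_{ijl}$—this is exactly the defining relation~\eq{genmat-prod}. The normalization $e_{ii}=e_i$ forces $e_{ii}e_{ij}=e_ie_{ij}=e_{ij}$ and $e_{ji}e_{ii}=e_{ji}e_i=e_{ji}$, i.e.\ $c_{iij}=c_{jii}=1$, so the skew set $c$ is reduced and the linear map sending each basis element $e_{ij}$ of $A$ to the matrix unit $e_{ij}$ of $\M[n](F;c)$ is an isomorphism $A\isom\M[n](F;c)$. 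I expect no serious obstacle here; the only points demanding care are that the Peirce decomposition is valid precisely because the idempotents are nuclear, and that pinning $e_{ii}=e_i$ (rather than an arbitrary spanning vector) is what guarantees the reduced normalization $c_{iij}=c_{jii}=1$.
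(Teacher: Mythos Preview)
Your argument is correct and matches the paper's approach: the paper proves $(\Rightarrow)$ via \Pref{dn} (implicitly together with \Pref{part1go}, exactly as you cite) and $(\Leftarrow)$ by invoking \Tref{main-thm} with $E=F$; your self-contained argument simply unpacks that invocation by running the Peirce decomposition directly with the nuclear idempotents, which is precisely what the proof of \Tref{main-thm} does once $K$ is already split. One small bonus of your explicit version is that it sidesteps the standing hypothesis in \Tref{main-thm} that $F$ be infinite (needed there only to get a single generator of $K$), since you work with the idempotents themselves rather than a primitive element.
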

\begin{proof}
$(\Rightarrow)$ follows from \Pref{dn}.
$(\Leftarrow)$ by \Tref{main-thm}, taking $E = F$.
\end{proof}
\forgotten

Splitting gives us another essential property of $K$.
\begin{cor}\label{maxcomm}
If $A$ is semiassociative of degree~$n$, then any $n$-dimensional \etale{} subalgebra $K \sub \nuc{A}$ is a maximal commutative subalgebra.
\end{cor}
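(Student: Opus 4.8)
The plan is to reduce the assertion to the self-centralizing property of the diagonal subalgebra in the split case. First I would reformulate maximality in terms of the centralizer: it suffices to show $\Ce[A]{K} = K$, since then any commutative subalgebra $S \supseteq K$ has every element commuting with all of $K \sub S$, whence $S \sub \Ce[A]{K} = K$ and $S = K$. As $K$ is \etale{}, hence commutative, we have $K \sub \Ce[A]{K}$ automatically, so the real content is the dimension bound $\dim_F \Ce[A]{K} \leq n$.

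Next I would observe that the centralizer commutes with scalar extension. Fixing a basis $k_1,\dots,k_n$ of $K$, the subspace $\Ce[A]{K}$ is the kernel of the $F$-linear map $a \mapsto ([a,k_1],\dots,[a,k_n])$; since a field extension $E/F$ is free, hence flat, this kernel base-changes to the corresponding kernel over $E$, giving $E \tensor \Ce[A]{K} = \Ce[E \tensor A]{E \tensor K}$. Consequently $\dim_F \Ce[A]{K} = \dim_E \Ce[E \tensor A]{E \tensor K}$ for every extension $E/F$, and it is enough to compute the centralizer after one convenient extension.

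Now I would choose $E$ to split $K$. By \Tref{main-thm}, $E \tensor A$ is a skew matrix algebra $\M[n](E;c)$ in which $E \tensor K$ is exactly the diagonal subalgebra. By \Pref{centralizerofD} the diagonal subalgebra is self-centralizing, so $\Ce[E \tensor A]{E \tensor K} = E \tensor K$, which has dimension $n$. Combined with the base-change identity of the previous paragraph this yields $\dim_F \Ce[A]{K} = n = \dim_F K$, and since $K \sub \Ce[A]{K}$ we conclude $\Ce[A]{K} = K$, as desired.

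The one point requiring care—and the main obstacle—is the hypothesis in \Tref{main-thm} that the base field be infinite. If $F$ is finite I would first replace $F$ by an infinite extension $F_0$ (for instance $F(t)$): scalar extension preserves semiassociativity, which follows from the three-for-two principle of \Rref{3for2} together with the base-change formula $\nuc{F_0 \tensor A} = F_0 \tensor \nuc{A}$, so $F_0 \tensor A$ is semiassociative with respect to the $n$-dimensional \etale{} subalgebra $F_0 \tensor K$ of its nucleus. Running the argument above over the infinite field $F_0$ computes the centralizer after a further extension to a splitting field $E \supseteq F_0$, and the transitivity $E \tensor_F A = E \tensor_{F_0}(F_0 \tensor_F A)$ propagates the base-change identity back down to $F$, again giving $\dim_F \Ce[A]{K} = n$. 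This scalar-extension bookkeeping is the only genuinely delicate part; the geometric heart of the statement is contained entirely in \Pref{centralizerofD}.
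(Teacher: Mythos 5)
Your proof is correct and follows essentially the same route as the paper: reduce to the split case via \Tref{main-thm} and invoke the self-centralizing property of the diagonal (\Pref{centralizerofD}), the only cosmetic difference being that you base-change the centralizer $\Ce[A]{K}$ while the paper base-changes an intermediate commutative subalgebra $K'$ directly. Your explicit treatment of the infinite-field hypothesis of \Tref{main-thm} is a point of care the paper's own proof passes over silently.
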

\begin{proof}
Let $E$ be a splitting field of $K$. By \Tref{main-thm}, $E\tensor A \isom \M[n](E;c)$ for a suitable reduced skew set $c_{ijk} \in E$. Let $K \sub K' \sub A$ be a commutative subalgebra. Then $E\tensor K \sub E\tensor K' \sub \M[n](E;c)$, but $E \tensor K \isom E^n$, so $E \tensor K'$ contains the diagonal subalgebra of $\M[n](E;c)$, which is its own centralizer by \Pref{centralizerofD}. It follows that $E \tensor K' = E \tensor K$, so $K' = K$.
\end{proof}

\begin{rem}
For a semiassociative algebra $A$, the center of the nucleus satisfies $\Zent(\nuc{A}) \sub \bigcap K$ where the intersection is over all $n$-dimensional \etale{} subalgebras of $\nuc{A}$; indeed for any such algebra $K$, we have
$$F \sub \Zent(\nuc{A}) \sub K \sub \nuc{A} \sub A.$$

The nontrivial claim here is that $\Zent(\nuc{A}) \sub K$. But an element $z$ in the center of $\nuc{A}$ must centralize~$K$, so $z \in K$ by \Pref{maxcomm}.
\end{rem}

Here is a case where being nonassociative is an advantage:
\begin{cor}\label{genericsplitting}
Let $A$ be a $K$-semiassociative algebra for which $K = \nuc{A}$.
Then a field extension $E/F$ splits $A$ if and only if it splits~$K$.
\end{cor}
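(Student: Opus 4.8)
The plan is to prove the two implications separately, with the backward direction being essentially a restatement of \Tref{main-thm} and the forward direction resting on the fact that scalar extension commutes with the nucleus. For ($\Leftarrow$): if $E$ splits $K$, then $E$ splits the $n$-dimensional \etale{} subalgebra $K \sub \nuc{A}$, so by \Tref{main-thm} it splits $A$, and no further work is required.

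The substance is in ($\Rightarrow$). Suppose $E$ splits $A$, so that $E \tensor A \isom \M[n](E;c)$ for some reduced skew set $c$. By the corollary to \Pref{prop:nuc-tensor} we have $\nuc{E \tensor A} = E \tensor \nuc{A} = E \tensor K$, where the last equality uses the standing hypothesis $K = \nuc{A}$. Since $K$ is \etale{} of dimension $n$ over $F$, the algebra $E \tensor K$ is \etale{} of dimension $n$ over $E$, and so $E$ splits $K$ exactly when $E \tensor K \isom E^n$. On the skew-matrix side, \Pref{dn} tells us that the diagonal subalgebra $\Delta \isom E^n$ lies in $\nuc{\M[n](E;c)}$, with $\dimcol{\Delta}{E} = n$. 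Transporting along the splitting isomorphism, $\Delta$ becomes an $n$-dimensional subalgebra of $\nuc{E \tensor A} = E \tensor K$, which itself has dimension $n$ over $E$. A dimension count then forces $\Delta = E \tensor K$, whence $E \tensor K \isom E^n$ and $E$ splits $K$.

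The argument is short, and the only point requiring care is the dimension count: it is precisely the hypothesis $K = \nuc{A}$ that pins down $\dimcol{E \tensor K}{E} = n$ and thereby matches the dimension of $\Delta$. Without this assumption the nucleus of $E \tensor A$ could strictly contain $E \tensor K$, the diagonal could be a proper subalgebra of the nucleus, and the containment $\Delta \sub \nuc{E \tensor A}$ would no longer identify $E \tensor K$ with $E^n$. I would also note in passing that the backward direction inherits the standing assumption that $F$ is infinite (through \Tref{main-thm}), whereas the forward direction is free of any cardinality or characteristic restriction.
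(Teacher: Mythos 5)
Your proof is correct and follows the paper's own argument essentially verbatim: the backward direction is Theorem~\ref{main-thm}, and the forward direction identifies $E\tensor K=\nuc{E\tensor A}=\nuc{\M[n](E;c)}$ and equates it with the diagonal subalgebra $E^n$ by the dimension count $\dimcol{E\tensor K}{E}=n=\dimcol{\Delta}{E}$. The paper leaves that count implicit (``contains the diagonal subalgebra $E^n$ and is thus equal to it''), so your spelling it out, and your remark that the hypothesis $K=\nuc{A}$ is exactly what makes it work, is a faithful elaboration rather than a different route.
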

\begin{proof}
$(\Leftarrow)$: This is \Tref{main-thm}.

$(\Rightarrow)$: By assumption $K = \nuc{A}$, so $E \tensor K = E \tensor \nuc{A} = \nuc{E \tensor A} = \nuc{\M[n](E;c)}$, which contains the diagonal subalgebra $E^n$ and is thus equal to it. Namely $E \tensor K = E^n$, so $E$ splits $K$.
\end{proof}
In comparison, a field may split an associative central simple algebra, without splitting any of its maximal subfields; the diagonal of the split algebra does not necessarily descend to a subfield in the original algebra (for example a biquadratic extension can split a quaternion algebra without splitting its subfields; a maximal subfield of a noncrossed product splits the algebra but not its subfields).

\subsection{Proof of \Tref{main-thm}}\label{ssec:52}

Let $F$ be an infinite field, let $K$ be an \etale\ $F$-algebra of degree $n$, and let $A$ be a $K$-semiassociative $F$-algebra. Let $E$ be a splitting field of $K$. The proof is an expansion on the ideas of \Ssref{ss:31}.

Fix an element $v \in A$ for which $A = KvK$. This element exists since $A$ is assumed a cyclic module.
\begin{rem}
For central simple associative algebras, where $K$ is only assumed to be a maximal subfield, there are many proofs for cyclicity in the literature, see \cite{Albert1} or \cite[Appendix]{KaK} and the references therein. The proof in \cite[Theorem~2.2.2]{Jac}, using properties of modules over Frobenius algebras and the assumption on dimensions, applies in our setup, and is essentially \Rref{3for2}.
\end{rem}

As $F$ is infinite, $K$ is generated by a single element \cite[Cor.~4.2(d)]{FR17}, say $K = F[u]$.

Thus we may write $K \cong F[x]/\sg{f_u(x)}$, where $f_u$, the minimal polynomial of $u$ over $F$ (which may be reducible), has~$n$ distinct roots $r_1,\dots,r_n$ in the splitting field~$E$.

\begin{cor}\label{cor:f-basis}
$\set{u^k vu^l}_{k,l=0}^{n-1}$ is an $F$-basis for $A$.
\end{cor}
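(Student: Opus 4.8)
The plan is to show that the $n^2$ elements $\{u^k v u^l\}_{k,l=0}^{n-1}$ span $A$ and are linearly independent; since $\dim_F A = n^2$ (the degree being $n$), spanning and independence are equivalent, so it suffices to prove either one. First I would recall that $A = KvK$ as a cyclic $K^e$-module, and that $K = F[u]$ with $u$ satisfying the degree-$n$ minimal polynomial $f_u$. Since $1, u, \dots, u^{n-1}$ form an $F$-basis of $K$, every element of $K$ is a polynomial in $u$ of degree $< n$, and hence $A = KvK$ is spanned over $F$ by the products $u^k v u^l$ with $0 \le k, l \le n-1$. This already gives a spanning set of cardinality $n^2 = \dim_F A$, so by a dimension count the set must be a basis.

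The one delicate point is the bookkeeping: $u$ lies in the nucleus $\nuc{A}$ (since $K \subseteq \nuc{A}$), so the associativity relations $(u^k, v, u^l) = 0$ hold, and the expression $u^k v u^l$ is unambiguous without parenthesization — this is what lets us treat $KvK$ as spanned by monomials in the left and right $u$-actions rather than by some larger, tangled set of bracketings. I would state explicitly that because $K$ is in the nucleus, the left-multiplication-by-$u$ and right-multiplication-by-$u$ operators commute with each other's placement around $v$, so an arbitrary element $\sum_{k,l} \alpha_{kl} u^k v u^l$ of $A$ is well defined. The fact that $A$ is cyclic (generated by $v$ under $K \tensor K$) is exactly the hypothesis $A = KvK$, which was fixed at the start of the subsection.

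The main obstacle, such as it is, is simply to make sure the dimension equals $n^2$ so that the spanning set of size $n^2$ is forced to be independent. This is immediate from \Rref{3for2} together with the definition of degree: $A$ is semiassociative of degree $n$ means $\dim_F A = (\dim_F K)^2 = n^2$. Thus I would invoke the three-for-two principle to guarantee $\dim_F A = n^2$, conclude that the $n^2$ spanning elements $u^k v u^l$ are necessarily linearly independent, and so form an $F$-basis. I expect no genuine difficulty here; the corollary is a clean consequence of cyclicity, the single-generator description $K = F[u]$, and the dimension equality, and the proof is a short spanning-plus-counting argument.
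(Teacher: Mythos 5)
Your proof is correct and follows the same route the paper intends for this corollary (which it leaves as an immediate consequence): $A=KvK$ with $K=F[u]$ gives the $n^2$ elements as a spanning set, the nuclearity of $K$ makes $u^kvu^l$ unambiguous, and the dimension count $\dim_F A=n^2$ from \Rref{3for2} upgrades spanning to a basis. Nothing is missing.
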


Consider the algebra $A_E=E\tensor[F] A$, whose center is $E = E \tensor F$. Likewise we denote $K_E = E \otimes K$.
The following is well-known:
\begin{rem}\label{fact:idempotents}
$K_E=E\tensor[F] K=\sum_{i=1}^n Ee_i$ where $e_1,\dots,e_n \in E \tensor[F] K$ is a set of orthogonal idempotents summing to~$1$. In particular $K_E e_i = Ee_i$.
\end{rem}

Working in $A_E = E \tensor A$, let $$v_{ij}=e_i(1\otimes v)e_j.$$

\begin{prop}\label{prop:vij-base}
The set $\set{v_{ij}}$ forms an $E$-basis for $A_E$.
\end{prop}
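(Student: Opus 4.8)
The plan is to show that the $n^2$ elements $v_{ij} = e_i(1\otimes v)e_j$ span $A_E$, since there are exactly $n^2 = \dim_E A_E$ of them and so spanning is equivalent to being a basis. The key observation is that the idempotents $e_1,\dots,e_n$ lie in $\nuc{E\tensor A}$ (they generate $K_E = E\tensor K \subseteq \nuc{E\tensor A}$, using the Corollary to \Pref{prop:nuc-tensor} that $\nuc{E\tensor A} = E\tensor\nuc{A}$), so the left and right multiplications by the $e_i$ associate with everything. In particular, writing $1 = \sum_i e_i$ and using that each $e_i$ is central-like within the nucleus for the purpose of reassociating products, we have
\begin{equation}\label{eq:vij-sum}
\sum_{i,j} v_{ij} = \sum_{i,j} e_i(1\otimes v)e_j = \Big(\sum_i e_i\Big)(1\otimes v)\Big(\sum_j e_j\Big) = 1\otimes v,
\end{equation}
and more generally the $v_{ij}$ are the ``components'' of $1\otimes v$ relative to the idempotent decomposition.

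First I would establish that $A_E = K_E\cdot(1\otimes v)\cdot K_E$, which follows from the hypothesis that $A = KvK$ is cyclic over $K\tensor K$ by extending scalars to $E$; the action $(k\otimes k')a = kak'$ tensors up, so $A_E$ is generated by $1\otimes v$ under left and right multiplication by $K_E$. Next, since $K_E = \sum_i Ee_i$ and $K_Ee_i = Ee_i$ by \Rref{fact:idempotents}, every element of $A_E$ is an $E$-linear combination of products $e_i(1\otimes v)e_j$, because left multiplication by $K_E$ decomposes through the $e_i$ and right multiplication through the $e_j$. Here the crucial point—requiring the nuclear hypothesis—is that the products $e_i\,(1\otimes v)\,e_j$ are unambiguous and that reassociating $e_i\big((1\otimes v)e_j\big) = \big(e_i(1\otimes v)\big)e_j$ is legitimate; this is exactly where $e_i,e_j\in\nuc{A_E}$ is used, via the defining property of the nucleus that associators involving nuclear elements vanish. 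Thus the $\set{v_{ij}}$ span $A_E$ over $E$.

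Finally I would conclude by the dimension count: $\dim_E A_E = \dim_F A = n^2$ since $A$ is semiassociative of degree $n$, while there are exactly $n^2$ elements $v_{ij}$. A spanning set of size equal to the dimension is automatically a basis, so $\set{v_{ij}}$ is an $E$-basis for $A_E$.

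I expect the main obstacle to be the careful justification of the reassociations in the spanning argument: in a nonassociative algebra one cannot freely move parentheses, so every manipulation of the form $e_i(va)e_j$ versus $(e_iv)(ae_j)$ must be traced back to the fact that the $e_i$ lie in the nucleus. Concretely, to express an arbitrary element $k(1\otimes v)k'$ (for $k,k'\in K_E$) in terms of the $v_{ij}$, one writes $k = \sum_i k e_i$ and $k' = \sum_j e_j k'$ and must verify that $ke_i = e_i k$ (commutativity of $K_E$) together with the nuclear associativity lets one slide the scalars $k e_i\in E e_i$ and $e_j k'\in E e_j$ out past $(1\otimes v)$. This is routine once the nuclear membership is invoked, but it is the step that genuinely uses the structure of semiassociative algebras rather than formal linear algebra.
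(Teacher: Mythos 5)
Your proof is correct and follows essentially the same route as the paper: decompose $A_E$ via the orthogonal idempotents coming from $K_E$, use $A = KvK$ and $K_E e_i = Ee_i$ to see that the $n^2$ elements $v_{ij}$ span $A_E$, and conclude by comparing with $\dim_E A_E = n^2$. Your explicit attention to justifying the reassociations via nuclear membership of the $e_i$ is a point the paper leaves implicit, but the argument is the same.
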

\begin{proof}
As $\sum_{i=1}^n e_i=1$ in $K_E$, we have
\begin{eqnarray*}
  A_E & = & \sum_{i,j=1}^n e_i A_E e_j=\sum_{i,j=1}^n e_i (E \tensor KvK)e_j  \\
&  = & \sum_{i,j=1}^n e_i (E \tensor K)(1\tensor v)(E \tensor K)e_j = \sum_{i,j=1}^n e_i K_E (1 \tensor v) K_E e_j \\
&  = & \sum_{i,j=1}^nEe_i (1 \tensor v) e_j E
=\sum_{i,j=1}^nEv_{ij}E=\sum_{i,j=1}^nEv_{ij}.
\end{eqnarray*}
Since each summand in the right-hand side has dimension at most~$1$, the $v_{ij}$ must be nonzero, and the claim follows.
\end{proof}

\begin{prop}\label{prop:vij-mult}
There are (unique) scalars $\set{c_{ijk}}_{i,j,k=1}^n$ in $E$ such that
$$v_{ij}v_{kl}=\delta_{jk}c_{ijl}v_{il}.$$
\end{prop}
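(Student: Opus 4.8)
The plan is to combine two facts established just above: the idempotents $e_1,\dots,e_n$ lie in the nucleus of $A_E$, and each Peirce space $e_iA_Ee_l$ is one-dimensional, equal to $Ev_{il}$. First I would record the nuclearity: since $K \sub \nuc{A}$, \Pref{prop:nuc-tensor} (applied to the extension $F \sub E$) gives $\nuc{A_E} = \nuc{E \tensor A} = E \tensor \nuc{A} \supseteq E \tensor K = K_E$, and $e_m \in K_E$ by \Rref{fact:idempotents}. Hence every associator having one of the $e_m$ among its three entries vanishes, which is the one principle that licenses all the reassociations below. Writing $w = 1 \otimes v$ so that $v_{ij} = e_iwe_j$, I would state this principle once at the outset to keep the nonassociative bookkeeping clean.

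The core step is to show that $v_{ij}v_{kl}$ always lies in $e_iA_Ee_l$. Using that $e_i$ is in the left nucleus, I would reassociate $e_i(v_{ij}v_{kl}) = (e_iv_{ij})v_{kl}$, and then collapse $e_iv_{ij} = v_{ij}$ by pulling $e_i$ inward across the nuclear factors and invoking $e_i^2 = e_i$; symmetrically, using that $e_l$ is in the right nucleus, $(v_{ij}v_{kl})e_l = v_{ij}(v_{kl}e_l) = v_{ij}v_{kl}$. Therefore $v_{ij}v_{kl} \in e_iA_Ee_l$, which by the Peirce decomposition in the proof of \Pref{prop:vij-base} equals the one-dimensional space $Ev_{il}$. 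Consequently $v_{ij}v_{kl}$ is a scalar multiple of $v_{il}$, and this scalar is unique because $v_{il} \neq 0$ and the $\set{v_{ij}}$ form a basis.

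It remains to produce the factor $\delta_{jk}$. Here I would use that $e_j$ lies in the middle nucleus to write $v_{ij}v_{kl} = ((e_iw)e_j)v_{kl} = (e_iw)(e_jv_{kl})$, and then evaluate the inner factor by reassociating across the nuclear $e_j$ and $e_l$: $e_jv_{kl} = e_j((e_kw)e_l) = ((e_je_k)w)e_l = \delta_{jk}\,e_jwe_l$, the last equality being orthogonality $e_je_k = \delta_{jk}e_j$. Thus $v_{ij}v_{kl} = 0$ whenever $j \neq k$, while for $j = k$ the product equals a unique scalar multiple $c_{ijl}v_{il}$ of $v_{il}$, which is exactly the claimed formula. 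I expect the only genuine obstacle to be the careful tracking of which associators vanish; every reassociation must be justified by the presence of one of the nuclear idempotents as an associator entry, and once that principle is fixed the computation is routine.
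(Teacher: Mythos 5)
Your proposal is correct and follows essentially the same route as the paper: the paper's proof simply writes $v_{ij}v_{kl}=e_i(1\otimes v)e_je_k(1\otimes v)e_l$ without parentheses (relying implicitly on the idempotents being nuclear), kills the product via $e_je_k=0$ when $j\neq k$, and lands in $e_iA_Ee_l=Ev_{il}$ when $j=k$. Your version merely makes the associator bookkeeping and the one-dimensionality of the Peirce spaces explicit, which is a faithful expansion of the same argument.
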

\begin{proof}
Consider the product of $v_{ij}$ and $v_{kl}$. If $j\neq k$, we have
$$v_{ij}v_{kl}=e_i(1\otimes v)e_je_k(1\otimes v)e_l=0.$$ If $j=k$,
then $v_{ij}v_{jl}=e_i(1\otimes v)e_je_j(1\otimes v)e_l\sub e_iA_Ee_l=Ev_{il}$.
\end{proof}

To complete the proof, we need to replace the skew set $(c_{ijk})$ by a reduced skew set.

\begin{prop}\label{prop:vij-unit}
Write $1=\sum_{i,j=1}^n \alpha_{ij}v_{ij}\in A_E$. Then:
\begin{enumerate}
  \item\label{item:unit-1} $\alpha_{ij}=0$ for $i \neq j$;
  \item\label{item:unit-2} $e_i = \alpha_{ii}v_{ii}$ and $\alpha_{ii}=c_{iii}^{-1}=c_{iij}^{-1}=c_{jii}^{-1}$.
\end{enumerate}
\end{prop}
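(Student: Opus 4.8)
The plan is to exploit the fact that the idempotents $e_i$ form the identity of $K_E$, together with the multiplication rule from \Pref{prop:vij-mult}, to pin down the coefficients of $1$ in the basis $\set{v_{ij}}$. First I would use that each $e_i$ lies in $K_E \sub \nuc{A_E}$ (the diagonal idempotents come from $K$, which is in the nucleus), so multiplying the expansion $1 = \sum_{i,j} \alpha_{ij} v_{ij}$ on the left by $e_k$ and on the right by $e_\ell$ is unambiguous and yields $e_k \cdot 1 \cdot e_\ell = \sum_{i,j} \alpha_{ij} e_k v_{ij} e_\ell$. Since $e_k v_{ij} e_\ell = e_k e_i (1\tensor v) e_j e_\ell = \delta_{ki}\delta_{j\ell} v_{ij}$ by orthogonality of the $e_i$, the right-hand side collapses to $\alpha_{k\ell} v_{k\ell}$. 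On the other hand the left-hand side is $e_k e_\ell = \delta_{k\ell} e_k$. Comparing, for $k \neq \ell$ we get $\alpha_{k\ell} v_{k\ell} = 0$, and as each $v_{k\ell}$ is nonzero (\Pref{prop:vij-base}) this forces $\alpha_{k\ell} = 0$, giving part~\eq{item:unit-1}. For $k = \ell$ the same computation gives $e_k = \alpha_{kk} v_{kk}$, which is the first assertion of part~\eq{item:unit-2}.

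To obtain the identities among the $c$'s I would feed the relation $e_i = \alpha_{ii} v_{ii}$ back into the multiplication rule. The key observations are that $e_i$ is idempotent and acts as a left/right identity on the appropriate $v_{ij}$. Computing $e_i v_{ij}$ via $e_i = \alpha_{ii} v_{ii}$ gives $\alpha_{ii} v_{ii} v_{ij} = \alpha_{ii} c_{iij} v_{ij}$; but $e_i v_{ij} = e_i e_i (1 \tensor v) e_j = v_{ij}$ directly, so $\alpha_{ii} c_{iij} = 1$, i.e.\ $\alpha_{ii} = c_{iij}^{-1}$ for every $j$. Symmetrically, computing $v_{ji} e_i = \alpha_{ii} v_{ji} v_{ii} = \alpha_{ii} c_{jii} v_{ji}$ against the direct evaluation $v_{ji} e_i = v_{ji}$ yields $\alpha_{ii} = c_{jii}^{-1}$. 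Taking $j = i$ in either identity gives $\alpha_{ii} = c_{iii}^{-1}$, and these together establish $\alpha_{ii} = c_{iii}^{-1} = c_{iij}^{-1} = c_{jii}^{-1}$, completing part~\eq{item:unit-2}.

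I do not anticipate a serious obstacle here; the argument is essentially bookkeeping with orthogonal idempotents. The one point that requires care is the legitimacy of manipulating products like $e_i v_{ij}$ and $v_{ji} e_i$ without worrying about reassociation: this is precisely where the hypothesis $e_i \in \nuc{A_E}$ is used, since associativity involving a nuclear element is guaranteed, so expressions such as $e_i(1 \tensor v)e_j$ and their products with further $e_k$'s can be regrouped freely. Once that is noted, every step reduces to the orthogonality relations $e_k e_i = \delta_{ki} e_i$ and the definition $v_{ij} = e_i (1 \tensor v) e_j$, and the nonvanishing of the $v_{ij}$ from \Pref{prop:vij-base}.
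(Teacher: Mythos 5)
Your proposal is correct and follows essentially the same route as the paper: part (1) by sandwiching $1=\sum\alpha_{ij}v_{ij}$ between the orthogonal idempotents $e_k$, $e_\ell$, and part (2) by comparing $e_iv_{ij}=v_{ij}$ and $v_{ji}e_i=v_{ji}$ with the expansion $e_i=\alpha_{ii}v_{ii}$ via the multiplication rule of \Pref{prop:vij-mult}. Your explicit remark that the reassociations are legitimate because $e_i\in\nuc{A_E}$ is a point the paper leaves implicit, but the argument is the same.
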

\begin{proof}
As $1=\sum_{i,j=1}^n\alpha_{ij}v_{ij}$, we have
$$\delta_{kl}e_k=e_ke_l=e_k1e_l=\sum_{i,j=1}^n\alpha_{ij}e_kv_{ij}e_l=\alpha_{kl}v_{kl},$$
using $e_k v_{ij} e_l = \delta_{ik}\delta_{jl} v_{ij}$.
Therefore, for $k\neq l$ we have $\alpha_{kl}=0$ since $v_{kl} \neq 0$. When $l = k$ we proved $\alpha_{kk}v_{kk}=e_k$. For any $j$, we now see that
\begin{align*}
  v_{kj} & =1v_{kj}=\sum_{i=1}^n\alpha_{ii}v_{ii}v_{kj}=\alpha_{kk}c_{kkj}v_{kj}\\
  v_{jk} & =v_{jk}1=\sum_{i=1}^n\alpha_{ii}v_{jk}v_{ii}=\alpha_{kk}c_{jkk}v_{jk},
\end{align*}
so $c_{kkj},c_{jkk}$ are nonzero and $\alpha_{kk}=c_{kkj}^{-1}=c_{jkk}^{-1}$, proving \eqref{item:unit-2}.
\end{proof}

\begin{proof}[Proof of \Tref{main-thm}]
Let $E$ be a field splitting $f_u$, as considered above.
\Pref{prop:vij-unit} shows that $c_{iii}\in\mul{E}$, so we can take
\begin{equation}\label{eij}
e_{ij}=c_{iii}^{-1}v_{ij},
\end{equation}
and then $\set{e_{ij}}_{i,j=1}^n$ forms an $E$-basis for $A_E$ by \Pref{prop:vij-base}. Also, $e_{ii} = \alpha_{ii}v_{ii} = e_i$
by \Pref{prop:vij-unit}, and so $\sum_{i=1}^ne_{ii}= \sum_i e_i = 1$, so it remains to check the defining multiplication formula. By \Pref{prop:vij-mult},
$$e_{ij}e_{kl}=c_{iii}^{-1}c_{kkk}^{-1}v_{ij}v_{kl}=\delta_{jk}c_{iii}^{-1}c_{jjj}^{-1}c_{ijl}v_{il}= \delta_{jk}c_{jjj}^{-1}c_{ijl}e_{il};$$
therefore, we see that the skew set corresponding to $\set{e_{ij}}$ is given by $c'_{ijk}=c_{jjj}^{-1}c_{ijk}$. Clearly
$c'_{iij} = c_{iii}^{-1}c_{iij} = 1$ and $c'_{ijj} = c_{jjj}^{-1}c_{ijj} = 1$, so $c'$ is reduced. Finally, $A_E\cong\M[n](E;c')$.
\end{proof}

\section{The action of $\Gal(E/F)$ on $A_E$}\label{section:gal-action}\label{sec:6}

In \Tref{main-thm} we take $E$ to be any field splitting the algebra~$K$. In this section we further assume that $E/F$ is Galois. This will pose further restrictions on the factor set of the skew matrix algebra $A_E \isom \M[n](E;c)$.

Recall the setup of \Tref{main-thm}. Let $F$ be an infinite field, let~$K$ be an \etale\  $F$-algebra of degree $n$, and suppose that $A$ is a $K$-semiassociative $F$-algebra. Write $K=F[u]$ for some $u$ for which the minimal polynomial $f_u$ has $n$ distinct roots $r_1,\dots,r_n$ in its splitting field $E$. Choose $v\in A$ such that $A=KvK$, and let $v_{ij}=e_i(1\otimes v)e_j\in A_E=E\tensor[F] A$ where $e_1,\dots,e_n$ is a set of orthogonal idempotents in $K_E=E\tensor[F] K$.

\smallskip

Let $G = \Gal(E/F)$ be the Galois group of $E/F$. The action of $G = \Gal(E/F)$ extends to $A_E$ by  $\sigma(e\otimes a)=\sigma(e)\otimes a$ for any $\s \in G$. Identifying $A$ with $F \tensor A \sub E \tensor A = A_E$, it follows that
\begin{equation}\label{fixed-AE} %
(A_E)^G = A.
\end{equation}

\begin{cor}
Every semiassociative algebra is the invariant subalgebra of a skew matrix algebra, under a suitable finite group action.
\end{cor}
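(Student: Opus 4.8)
The plan is to read off the corollary directly from Equation~\eqref{fixed-AE} combined with \Tref{main-thm}. Given a semiassociative $F$-algebra $A$ of degree $n$, I would first extract an $n$-dimensional \etale{} subalgebra $K\sub\nuc{A}$, which exists by \Dref{maindef} (every semiassociative algebra is $K$-semiassociative for such a $K$). The only subtlety at the outset is that \Tref{main-thm} is stated for an \emph{infinite} base field, so I must treat the finite-field case separately, or else observe that one may enlarge the splitting field as needed; I will address this in the obstacle paragraph.

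Assuming $F$ infinite, I would next choose a finite Galois extension $E/F$ that splits $K$. Such an $E$ exists: take $K=F[u]$, let $f_u$ be the minimal polynomial of $u$, and let $E$ be the \emph{Galois closure} of its splitting field over $F$, so that $E/F$ is Galois and $E\tensor_F K\isom E^n$. By \Tref{main-thm}, $E$ then splits $A$, meaning $A_E=E\tensor_F A\isom\M[n](E;c)$ is a skew matrix algebra for a suitable reduced skew set~$c$. This realizes $A$ inside a split (skew matrix) algebra after a finite Galois scalar extension.

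The final step is to recognize $A$ as the fixed subalgebra. Setting $G=\Gal(E/F)$, the action $\sigma(e\tensor a)=\sigma(e)\tensor a$ makes $G$ act by $F$-algebra automorphisms on $A_E$ (it is clearly $F$-linear and multiplicative since it acts only on the first tensor factor, and the multiplication of $A_E$ is $E$-bilinear over the untouched factor $A$). By \Eq{fixed-AE} we have $(A_E)^G=A$. Thus $A$ is exactly the invariant subalgebra of the skew matrix algebra $A_E\isom\M[n](E;c)$ under the finite group $G$, which is precisely the assertion of the corollary.

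The main obstacle is the finiteness hypothesis on $F$ inherited from \Tref{main-thm}. For finite $F$ the theorem as stated does not directly apply, so I would handle this case by a separate remark: over a finite field every finite-dimensional \etale{} algebra is still split by some finite (necessarily Galois) extension $E$, and the combinatorial construction of the $v_{ij}$ and the reduction to a skew set in \Ssref{ssec:52} goes through once one has a splitting field in hand---the role of infinitude was only to guarantee that $K=F[u]$ is generated by a single element and that $f_u$ has distinct roots, both of which hold automatically for \etale{} algebras over a finite field. Alternatively, if one prefers to cite \Tref{main-thm} verbatim, the corollary may simply be understood with the standing infiniteness assumption of \Sref{sec:6}, in which case no extra work is needed and the proof is the three-line argument above. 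I expect the cleanest write-up to note the infinite-field case as immediate and dispatch the finite-field case in a sentence.
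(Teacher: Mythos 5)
Your proposal is correct and is essentially the paper's own argument: the paper proves the corollary by citing \Eq{fixed-AE} and taking $E$ to be the splitting field of $K$, exactly as you do (note that the splitting field of the separable polynomial $f_u$ is already Galois over $F$, so no further closure is needed). Your remarks on the standing infinite-field hypothesis of \Tref{main-thm} are a reasonable caveat, but the paper simply works within that standing assumption of \Sref{sec:6}.
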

\begin{proof}
Take $E$ to be the splitting field of~$K$.
\end{proof}

\begin{rem}[{\cite[Subsection 2.3.1]{Jac}}]\label{ei=}
The idempotents $e_1,\dots,e_n$ can be expressed in terms of $r_1,\dots,r_n$:
\begin{equation}\label{eq:ei-expr}
e_i=\frac{(u-r_1)\cdots(u-r_{i-1})(u-r_{i+1})\cdots(u-r_n)}{(r_i-r_1)\cdots(r_i-r_{i-1})(r_i-r_{i+1})\cdots(r_i-r_n)}.
\end{equation}
As $(u-r_i)e_i=0$, we have $ue_i=r_ie_i$.
\end{rem}

In addition, each $\sigma\in G$ induces a permutation on $\set{1,\dots,n}$, by  $\sigma(r_i)=r_{\sigma(i)}$. From \Rref{ei=} we obtain that $\s(e_i) = e_{\s(i)}$.

\begin{prop}\label{prop:sigma-vij}
The action of $G$ on the $v_{ij}$ is by $\sigma(v_{ij})=v_{\sigma(i)\sigma(j)}$.
\end{prop}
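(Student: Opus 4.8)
The plan is to exploit the fact that each $\sigma \in G = \Gal(E/F)$ acts on $A_E$ as an $F$-algebra automorphism, so that it distributes over the product defining $v_{ij}=e_i(1\otimes v)e_j$ once its effect on each of the three factors is known. First I would record that the map $e \otimes a \mapsto \sigma(e) \otimes a$ is a homomorphism of (nonassociative) $F$-algebras: multiplication in $A_E = E \tensor[F] A$ is $(e \otimes a)(e' \otimes a') = ee' \otimes aa'$, and applying $\sigma$ only to the $E$-factor manifestly respects this rule irrespective of associativity. Hence $\sigma(xy)=\sigma(x)\sigma(y)$ for all $x,y \in A_E$, which is all the multiplicativity I will need.

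Next I would assemble the action on the three constituent factors. We already know $\sigma(e_i) = e_{\sigma(i)}$ from the discussion following \Rref{ei=}, and $\sigma(1 \otimes v) = 1 \otimes v$ since $\sigma$ fixes $1 \in E$ while leaving the $A$-factor untouched. Because $K \sub \nuc{A}$ yields $K_E = E \tensor K \sub \nuc{A_E}$ (by the corollary to \Pref{prop:nuc-tensor}), the idempotents $e_i,e_j$ lie in the nucleus, so the triple product $e_i(1\otimes v)e_j$ is unambiguous and $\sigma$ respects whichever bracketing we pick. Applying the homomorphism property twice then gives
$$\sigma(v_{ij}) = \sigma(e_i)\,\sigma(1 \otimes v)\,\sigma(e_j) = e_{\sigma(i)}(1 \otimes v)e_{\sigma(j)} = v_{\sigma(i)\sigma(j)},$$
which is the claim.

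The computation itself is essentially immediate, so I do not anticipate a serious obstacle. The only point genuinely requiring care is the nonassociativity: I must check that $\sigma$ is still multiplicative on $A_E$ (it is, since the tensor-product multiplication is defined factor-wise) and that writing $e_i(1\otimes v)e_j$ without parentheses is legitimate, which is guaranteed by $e_i,e_j \in \nuc{A_E}$. Beyond these bookkeeping remarks, there is nothing to grind through.
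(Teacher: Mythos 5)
Your proof is correct and follows exactly the same route as the paper's: apply $\sigma$ factor-wise to $v_{ij}=e_i(1\otimes v)e_j$ using $\sigma(e_i)=e_{\sigma(i)}$ and $\sigma(1\otimes v)=1\otimes v$. The paper states this in one line; your additional remarks on multiplicativity of $\sigma$ and on the unambiguity of the triple product (since $e_i,e_j$ lie in the nucleus) are correct bookkeeping that the paper leaves implicit.
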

\begin{proof}
Recall that $v_{ij}=e_i(1\otimes v)e_j$, so
$$\sigma(v_{ij})=\sigma(e_i)\sigma(1\otimes v)\sigma(e_j)=e_{\s(i)}(1\otimes v)e_{\s(j)} = v_{\s(i),\s(j)}.$$
\end{proof}

\begin{cor}[Conjugacy condition on the factor set]
Let $c$ be the factor set obtained above, for which $A_E \isom \M[n](E;c)$. Then $c$ satisfies the conjugacy condition:
\begin{equation}\label{Gact}
\sigma(c_{ijk})=c_{\sigma(i)\sigma(j)\sigma(k)}
\end{equation}
for every $\s \in G$.
\end{cor}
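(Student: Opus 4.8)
The plan is to apply each automorphism $\sigma$ to the defining multiplication relation for the basis and read off \eq{Gact} by comparing coefficients. The one structural fact I would record first is that each $\sigma \in G$ acts on $A_E = E \tensor[F] A$ as $\sigma \tensor \id$, and since the product on the tensor algebra factors through the two tensorands, $\sigma$ is a ring automorphism: $\sigma(xy) = \sigma(x)\sigma(y)$ for all $x,y \in A_E$. (This multiplicativity was already used implicitly in the proof of \Pref{prop:sigma-vij}, and it holds despite nonassociativity because it only uses that the product splits over the two factors.)

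First I would establish the conjugacy condition for the (a priori non-reduced) factor set attached to the basis $\set{v_{ij}}$ by \Pref{prop:vij-mult}, namely $v_{ij}v_{jk} = c_{ijk}v_{ik}$. Applying $\sigma$ and invoking \Pref{prop:sigma-vij} to the left-hand side gives
$$\sigma(v_{ij})\sigma(v_{jk}) = v_{\sigma(i)\sigma(j)}v_{\sigma(j)\sigma(k)} = c_{\sigma(i)\sigma(j)\sigma(k)}\,v_{\sigma(i)\sigma(k)},$$
while applying $\sigma$ to the right-hand side gives $\sigma(c_{ijk})\,v_{\sigma(i)\sigma(k)}$. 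Since the $v_{il}$ form an $E$-basis of $A_E$ (\Pref{prop:vij-base}), comparing the coefficients of $v_{\sigma(i)\sigma(k)}$ yields $\sigma(c_{ijk}) = c_{\sigma(i)\sigma(j)\sigma(k)}$ for this factor set.

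It then remains to transfer the identity to the reduced factor set named in the statement, which was produced in the proof of \Tref{main-thm} as $c'_{ijk} = c_{jjj}^{-1}c_{ijk}$ (and relabelled $c$ in the corollary). Taking $i=j=k$ in the previous paragraph gives in particular $\sigma(c_{jjj}) = c_{\sigma(j)\sigma(j)\sigma(j)}$, so
$$\sigma(c'_{ijk}) = \sigma(c_{jjj})^{-1}\sigma(c_{ijk}) = c_{\sigma(j)\sigma(j)\sigma(j)}^{-1}\,c_{\sigma(i)\sigma(j)\sigma(k)} = c'_{\sigma(i)\sigma(j)\sigma(k)},$$
which is exactly \eq{Gact}. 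Equivalently, the diagonal case $\sigma(c_{iii}) = c_{\sigma(i)\sigma(i)\sigma(i)}$ shows that the reduced basis $e_{ij} = c_{iii}^{-1}v_{ij}$ of \eq{eij} satisfies $\sigma(e_{ij}) = e_{\sigma(i)\sigma(j)}$, after which one may instead apply $\sigma$ directly to $e_{ij}e_{jk} = c'_{ijk}e_{ik}$.

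I expect no serious obstacle: the computation is forced once $\sigma$ is known to be multiplicative and to permute the $v_{ij}$ by its action on indices. The only point requiring a little care is the bookkeeping separating the non-reduced factor set of \Pref{prop:vij-mult} from the reduced one referenced in the corollary, together with the check that passing between them — division by the diagonal entries $c_{jjj}$ — preserves the conjugacy relation. This is automatic precisely because those diagonal entries themselves obey the relation, as the $i=j=k$ specialization shows.
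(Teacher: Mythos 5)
Your proof is correct and is essentially the paper's own argument: apply $\sigma$ to the relation $v_{ij}v_{jk}=c_{ijk}v_{ik}$ of \Pref{prop:vij-mult}, use $\sigma(v_{ij})=v_{\sigma(i)\sigma(j)}$ from \Pref{prop:sigma-vij}, and compare coefficients in the basis $\set{v_{ij}}$. Your extra paragraph transferring the identity to the reduced factor set $c'_{ijk}=c_{jjj}^{-1}c_{ijk}$ via the diagonal case $\sigma(c_{jjj})=c_{\sigma(j)\sigma(j)\sigma(j)}$ is a careful bookkeeping step the paper leaves implicit (it is what makes \Cref{actone} work), but it does not change the route.
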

\begin{proof}
Let $\sigma\in G$. On the one hand, by \Pref{prop:vij-mult},
$$v_{\sigma(i)\sigma(j)}v_{\sigma(j)\sigma(k)}=c_{\sigma(i)\sigma(j)\sigma(k)}v_{\sigma(i)\sigma(k)}.$$
On the other hand, %
using \Pref{prop:sigma-vij},
$$v_{\sigma(i)\sigma(j)}v_{\sigma(j)\sigma(k)}=\sigma(v_{ij}v_{jk})=\sigma(c_{ijk}v_{ik})=\sigma(c_{ijk})v_{\sigma(i)\sigma(k)}.$$
\end{proof}

\begin{cor}\label{actone}
The action of $G$ on $\M[n](E;c)=\sum Ee_{ij}$ is by $$\s(\sum \alpha_{ij}e_{ij}) = \sum \s(\alpha_{ij})e_{\s(i)\s(j)}.$$
\end{cor}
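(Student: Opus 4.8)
The plan is to combine the two facts already established in this section: the expansion rule for the Galois action on basis elements, and the relationship between the $e_{ij}$ and the $v_{ij}$. Since \Cref{actone} concerns the action on a general element $\sum \alpha_{ij} e_{ij}$, by $E$-linearity of the action (more precisely, $F$-linearity combined with the Galois twist on scalars) it suffices to understand $\s(e_{ij})$ for each individual basis element.

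First I would recall from \eq{eij} that $e_{ij} = c_{iii}^{-1} v_{ij}$. Applying $\s$ and using that $\s$ acts as a ring homomorphism on $A_E$ together with \Pref{prop:sigma-vij}, I compute
$$\s(e_{ij}) = \s(c_{iii}^{-1})\,\s(v_{ij}) = \s(c_{iii})^{-1} v_{\s(i)\s(j)}.$$
The key step is to identify $\s(c_{iii})^{-1}$ with $c_{\s(i)\s(i)\s(i)}^{-1}$: this is exactly the conjugacy condition \eq{Gact} specialized to $i=j=k$, giving $\s(c_{iii}) = c_{\s(i)\s(i)\s(i)}$. Substituting, I obtain
$$\s(e_{ij}) = c_{\s(i)\s(i)\s(i)}^{-1} v_{\s(i)\s(j)} = e_{\s(i)\s(j)},$$
where the last equality is again \eq{eij} applied to the index pair $(\s(i),\s(j))$.

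Finally, for a general element $\sum_{ij}\alpha_{ij} e_{ij}$ with $\alpha_{ij}\in E$, I would use that $\s(e\otimes a) = \s(e)\otimes a$ is $F$-linear and multiplicative, so that $\s(\alpha_{ij} e_{ij}) = \s(\alpha_{ij})\,\s(e_{ij})$; summing over $i,j$ and inserting the computation of $\s(e_{ij})$ yields the stated formula $\s(\sum \alpha_{ij} e_{ij}) = \sum \s(\alpha_{ij}) e_{\s(i)\s(j)}$. I do not anticipate a real obstacle here: the statement is essentially a bookkeeping consequence of \eq{eij}, \Pref{prop:sigma-vij}, and the already-proved conjugacy condition \eq{Gact}, and the only point requiring a moment's care is keeping track of where the Galois twist lands on the scalars $\alpha_{ij}$ versus on the structural constants $c_{iii}$.
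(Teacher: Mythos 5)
Your proof is correct and follows exactly the same route as the paper: the paper's one-line proof derives $\s(e_{ij})=e_{\s(i)\s(j)}$ by citing precisely \eq{eij}, \Pref{prop:sigma-vij} and \eq{Gact}, which is the computation you spell out, and the extension to general elements by semilinearity is the same routine step. You have simply written out the details the paper leaves implicit.
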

\begin{proof}
We have that $\s(e_{ij}) = e_{\s(i)\s(j)}$ by \eq{eij}, \Pref{prop:sigma-vij} and \eq{Gact}.
\end{proof}

\forget %
\begin{cor}\label{cor:conj-elem}
Let $x=\sum_{i,j=1}^n\alpha_{ij}v_{ij}\in A_E$. Then $x\in A$ if and only if $x$ satisfies the ``conjugacy condition'':
$$\sigma(\alpha_{ij})=\alpha_{\sigma(i)\sigma(j)}$$
for every $\sigma \in G$.
\end{cor}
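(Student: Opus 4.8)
The plan is to characterize membership in $A$ by Galois invariance and then read off the conjugacy condition by comparing coefficients against the basis $\set{v_{ij}}$. Recall from \eq{fixed-AE} that $A = (A_E)^G$, so $x \in A$ if and only if $\sigma(x) = x$ for every $\sigma \in G$. The whole argument thus reduces to translating the equation $\sigma(x) = x$ into a statement about the coefficients $\alpha_{ij}$.

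First I would compute $\sigma(x)$. Since $\sigma$ acts as a $\sigma$-semilinear ring automorphism (it fixes $A$ pointwise and acts on the scalars $E$), and since $\sigma(v_{ij}) = v_{\sigma(i)\sigma(j)}$ by \Pref{prop:sigma-vij}, we obtain
$$\sigma(x) = \sum_{i,j} \sigma(\alpha_{ij}) v_{\sigma(i)\sigma(j)}.$$
Reindexing by setting $i' = \sigma(i)$ and $j' = \sigma(j)$, this becomes
$$\sigma(x) = \sum_{i',j'} \sigma\!\left(\alpha_{\sigma^{-1}(i')\,\sigma^{-1}(j')}\right) v_{i'j'}.$$

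Next, because $\set{v_{ij}}$ is an $E$-basis of $A_E$ by \Pref{prop:vij-base}, the equality $\sigma(x) = x$ holds if and only if the coefficient of each $v_{i'j'}$ agrees on both sides, that is $\sigma(\alpha_{\sigma^{-1}(i')\,\sigma^{-1}(j')}) = \alpha_{i'j'}$ for all $i',j'$. Renaming $i = \sigma^{-1}(i')$ and $j = \sigma^{-1}(j')$ puts this in the equivalent form $\sigma(\alpha_{ij}) = \alpha_{\sigma(i)\sigma(j)}$. Running this equivalence over all $\sigma \in G$ yields precisely the stated conjugacy condition.

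There is no real obstacle here, as all the essential inputs are already in place: the identification $A = (A_E)^G$, the action formula $\sigma(v_{ij}) = v_{\sigma(i)\sigma(j)}$, and the linear independence of the $v_{ij}$ over $E$. The only point requiring care is the bookkeeping of the index substitution, ensuring that the universal quantifier over $\sigma$ together with the reindexing combine so that the final condition is symmetric in the desired way (and in particular that one need not separately invoke $\sigma^{-1}$, since the condition is asserted for every $\sigma \in G$).
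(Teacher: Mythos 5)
Your proof is correct and follows essentially the same route as the paper's: it reduces membership in $A$ to $G$-invariance via \eq{fixed-AE}, computes $\sigma(x)$ using \Pref{prop:sigma-vij} and semilinearity, and compares coefficients against the $E$-basis $\set{v_{ij}}$ of \Pref{prop:vij-base}. Your treatment is just slightly more explicit about the reindexing step, which the paper leaves implicit.
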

\begin{proof}
By \Pref{prop:sigma-vij}, $\sigma(x)=\sum_{i,j=1}^n\sigma(\alpha_{ij})v_{\sigma(i)\sigma(j)}$. By \eq{fixed-AE}, $x\in A$ if and only if $\sigma(x)=x$ for all $\sigma\in G$, if and only if $\sigma(\alpha_{ij})=\alpha_{\sigma(i)\sigma(j)}$ for all $\sigma\in G$.
\end{proof}
\forgotten

\begin{cor}\label{cor:conj-elem}
Let $x=\sum_{i,j=1}^n\alpha_{ij}e_{ij}\in A_E$. Then $x\in A$ if and only if $x$ satisfies the ``conjugacy condition'':
$$\sigma(\alpha_{ij})=\alpha_{\sigma(i)\sigma(j)}$$
for every $\sigma \in G$.
\end{cor}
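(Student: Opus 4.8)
The plan is to reduce the statement to a coefficient-wise comparison, using two ingredients already in hand: the description of the Galois action on the basis $\set{e_{ij}}$ furnished by \Cref{actone}, and the identification $(A_E)^G = A$ recorded in~\eq{fixed-AE}. Since $\set{e_{ij}}_{i,j=1}^n$ is an $E$-basis of $A_E$ (by \Pref{prop:vij-base} together with~\eq{eij}), and $\set{e_{ij}}$ is stable as a set under $G$, membership of $x$ in $A$ will be detected entirely at the level of the scalar coefficients $\alpha_{ij}$.

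First I would invoke~\eq{fixed-AE}, which gives $A = (A_E)^G$, so that $x \in A$ if and only if $\sigma(x) = x$ for every $\sigma \in G$. Next, for a fixed $\sigma \in G$, I apply \Cref{actone} to compute
$$\sigma(x) = \sum_{i,j=1}^n \sigma(\alpha_{ij})\, e_{\sigma(i)\sigma(j)}.$$
To compare this with $x = \sum_{i,j}\alpha_{ij}e_{ij}$, I reindex the summation by the bijection $(i,j) \mapsto (\sigma(i),\sigma(j))$ of $\set{1,\dots,n}^2$, substituting $i \mapsto \sigma^{-1}(i)$ and $j \mapsto \sigma^{-1}(j)$, which rewrites $\sigma(x) = \sum_{i,j} \sigma(\alpha_{\sigma^{-1}(i)\sigma^{-1}(j)})\, e_{ij}$.

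Then, since the $e_{ij}$ are linearly independent over $E$, the equality $\sigma(x) = x$ holds if and only if the coefficients agree, namely $\alpha_{ij} = \sigma(\alpha_{\sigma^{-1}(i)\sigma^{-1}(j)})$ for all $i,j$. Substituting $i \mapsto \sigma(i)$ and $j \mapsto \sigma(j)$ (again a permutation of the index pairs) recasts this as $\alpha_{\sigma(i)\sigma(j)} = \sigma(\alpha_{ij})$, which is exactly the conjugacy condition. Requiring $\sigma(x)=x$ for every $\sigma \in G$ therefore yields the condition for all $\sigma$, establishing both implications simultaneously.

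The only point demanding care is the bookkeeping in the permutation reindexing, so that the substitution $\sigma \leftrightarrow \sigma^{-1}$ is applied consistently; I do not anticipate a genuine obstacle here. Indeed, this corollary is the transcription of the forgotten $v_{ij}$-version (which rested on \Pref{prop:sigma-vij}) onto the reduced basis $e_{ij}$, the passage between the two being the scaling~\eq{eij}, under which \Cref{actone} shows the Galois action takes the identical permutation form.
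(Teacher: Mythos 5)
Your proof is correct and is essentially the paper's own argument: both rest on the identification $A=(A_E)^G$ from \eq{fixed-AE} and the formula $\sigma(x)=\sum\sigma(\alpha_{ij})e_{\sigma(i)\sigma(j)}$ from \Cref{actone}, followed by comparison of coefficients in the basis $\set{e_{ij}}$. The only difference is that you spell out the reindexing by the permutation $(i,j)\mapsto(\sigma(i),\sigma(j))$, which the paper leaves implicit.
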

\begin{proof}
By \Pref{actone}, $\sigma(x)=\sum_{i,j=1}^n\sigma(\alpha_{ij})e_{\sigma(i)\sigma(j)}$. By \eq{fixed-AE}, $x\in A$ if and only if $\sigma(x)=x$ for all $\sigma\in G$, if and only if $\sigma(\alpha_{ij})=\alpha_{\sigma(i)\sigma(j)}$ for all $\sigma\in G$.
\end{proof}

\begin{rem}\label{rem:choose-reduced}
The element $v$ for which $A = KvK$ can be chosen so that the associated factor set is reduced. More explicitly, in the proof of \Tref{main-thm} we used $v_{ij}=e_ive_j$ to find a new $E$-basis for $A_E$, $e_{ij}=c_{jjj}^{-1}v_{ij}$, which showed that $A_E$ is isomorphic to a skew matrix algebra. We note that this change of basis can be regarded as a change of the generating element $v$ (taken so that $A=KvK$). Indeed, let $w=\sum_{i,j=1}^ne_{ij}$. \Cref{cor:conj-elem} shows that $w\in A$. Now $e_iwe_j=e_{ij}$, so taking~$w$ as our initial generator would have yielded the $E$-basis~$\set{e_{ij}}$.
\end{rem}

Reversing the arguments, we proved the following.
\begin{cor}
Let $E/F$ be a Galois extension, and suppose that the skew matrix $E$-algebra $M$ is defined over $F$; namely $M \isom E \tensor A$ for a semiassociative $F$-central algebra $A$. Then $M \isom \M[n](E;c)$ for a skew set $c_{ijk} \in E$ satisfying~\eq{Gact}.
\end{cor}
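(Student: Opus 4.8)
The plan is to reduce to the construction in the proof of \Tref{main-thm}, arranging a skew-matrix basis of $M$ on which $G=\Gal(E/F)$ acts by permuting indices. Suppose I can present $M=\M[n](E;c)$ in a basis $\set{e_{ij}}$ with $\s(e_{ij})=e_{\s(i)\s(j)}$ for the induced permutation action of each $\s\in G$ on $\set{1,\dots,n}$. Then on one hand $e_{\s(i)\s(j)}e_{\s(j)\s(k)}=c_{\s(i)\s(j)\s(k)}e_{\s(i)\s(k)}$, while on the other hand $\s(e_{ij}e_{jk})=\s(c_{ijk}e_{ik})=\s(c_{ijk})e_{\s(i)\s(k)}$; comparing coefficients yields \eq{Gact}. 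This is exactly the computation behind the corollary ``Conjugacy condition on the factor set,'' so the whole problem is to produce such a basis.

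To build it I would apply \Tref{main-thm} to $A$ itself, which calls for a maximal \etale{} subalgebra $K\sub\nuc{A}$ that is split by $E$ (recall $F$ is infinite throughout this section). This is the only place the hypothesis ``$M$ is a skew matrix algebra'' enters: by \Pref{dn} the diagonal $E^n$ sits inside $\nuc{M}$, and the corollary to \Pref{prop:nuc-tensor} identifies $\nuc{M}=E\tensor\nuc{A}$. Hence $E\tensor\nuc{A}$ contains the split maximal \etale{} $E$-subalgebra $E^n$, so $E$ is a splitting field of the associative algebra $\nuc{A}$. It remains to descend $E^n$ to a maximal \etale{} $K\sub\nuc{A}$ of degree $n$ with $E\tensor K\isom E^n$; its $n$ orthogonal idempotents are then permuted by $G$, which is what feeds the construction above.

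The descent is the step I expect to be the crux. The action of $G$ on $\nuc{M}=E\tensor\nuc{A}$ is semilinear with fixed ring $\nuc{A}$, and it carries split maximal \etale{} $E$-subalgebras to split maximal \etale{} $E$-subalgebras; by conjugacy of maximal systems of orthogonal idempotents (\cite[Exercise~I.1.12]{AC}, used already in \Pref{coverall}) all of these lie in a single orbit under conjugation by units of $\nuc{M}$. For each $\s$ one thus gets a conjugator $u_\s$ with $u_\s\,\s(E^n)\,u_\s^{-1}=E^n$, and the task is to splice the family $\set{u_\s}$ into a single descent datum---a Hilbert~90/Speiser-type vanishing for the relevant unit group. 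Producing a genuinely $G$-stable split maximal \etale{} subalgebra $\Delta\sub\nuc{M}$ is the real content; granting it, $K:=\Delta\cap\nuc{A}=\Delta^{G}$ is \etale{} over $F$ of degree $n$ and $E\tensor K=\Delta\isom E^n$.

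With such a $K$ the argument closes immediately. \Tref{main-thm} applied to $(A,K,E)$ presents $M=A_E$ as $\M[n](E;c)$, \Rref{ei=} gives $\s(e_i)=e_{\s(i)}$ for the idempotents of $E\tensor K$, whence $\s(v_{ij})=v_{\s(i)\s(j)}$ by \Pref{prop:sigma-vij} and $\s(e_{ij})=e_{\s(i)\s(j)}$ after the normalization \eq{eij} (this is \Cref{actone}); the reduction of the first paragraph then delivers \eq{Gact}. I note that if one is content to take $E$ to be the Galois splitting field of a prescribed maximal \etale{} $K\sub\nuc{A}$, as in the construction itself, then the descent in the third paragraph is unnecessary and the corollary is immediate; the work above is precisely what is needed to handle an arbitrary Galois $E$ for which $M=E\tensor A$ happens to be split.
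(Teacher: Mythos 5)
Your reduction (find a basis with $\s(e_{ij})=e_{\s(i)\s(j)}$ and compare coefficients in $e_{ij}e_{jk}$) and your closing paragraph are exactly the paper's argument: the corollary is proved by ``reversing the arguments'' of the section, i.e.\ it is the construction of \Tref{main-thm} applied to a maximal \etale{} subalgebra $K\sub\nuc{A}$, combined with \Pref{prop:sigma-vij} and the conjugacy condition already established for the resulting factor set. The section's standing setup takes $E$ to be a Galois extension splitting the chosen $K$, and under that reading your final paragraph already finishes the proof.

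The gap is in the extra generality you try to extract, and it is not a fixable omission: the descent step you flag as ``the real content'' is false in general. For an arbitrary Galois $E/F$ with $E\tensor A$ split, you want a $G$-stable copy of $E^n$ inside $\nuc{E\tensor A}$, equivalently a maximal \etale{} $K\sub\nuc{A}$ with $E\tensor K\isom E^n$. No Hilbert~90 argument can produce this, because such a $K$ need not exist: the paper notes right after \Cref{genericsplitting} that a biquadratic Galois extension $E$ can split an associative quaternion division algebra $Q$ without splitting any of its maximal subfields. In that case $E\tensor Q\isom\M[2](E)$ is a skew matrix algebra and $\nuc{Q}=Q$, yet no $2$-dimensional \etale{} subalgebra of $Q$ becomes $E\times E$ after scalar extension, so no $G$-stable split diagonal exists in $\nuc{E\tensor Q}$ and your family $\set{u_\s}$ cannot be spliced into a descent datum. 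So either the corollary is read with the section's hypothesis that $E$ splits a maximal \etale{} subalgebra of $\nuc{A}$ (in which case your descent is unnecessary, as you yourself observe), or the permutation action in \eq{Gact} is left unspecified and the statement carries no more content than the construction supplies; in neither case does your third paragraph admit a valid completion.
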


\part{Skew matrix algebras and the nucleus}\label{Part2}

The key example for semiassociative algebras are skew matrices.
In this part we define and study this class of algebras.

\section{Ideals of skew matrices}\label{sec:ideals}

Although our main interest is in simple skew matrix algebras, we devote this section to ideals in the non-simple case. The key point is that (thanks to the role of the idempotents $e_{ii}$) ideals are always homogeneous, reducing properties of ideals of skew matrices to combinatorial arguments.

\subsection{The grading}
Let $\E_n$ denote the semigroup of order $n^2+1$ whose elements are the matrix units $e_{ij}$ and zero, with the multiplication rule induced from standard matrices. When grading an algebra by a semigroup with zero, we tacitly assume that the homogeneous component of the zero element is the zero space.
\begin{rem}\label{gradeEc}
Every skew matrix algebra $\M[n](F;c)$ is graded by $\E_n$; the homogeneous component of $e_{ij} \in \E_n$ is the subspace $Fe_{ij}$ of the algebra.
\end{rem}
This is a {\bf{fine grading}}, in the sense that the dimension of every homogeneous component of nonzero degree is~$1$.

\subsection{Ideals are homogeneous}

Recall that an ideal of a graded algebra is {\bf{homogenous}} if it is generated by its homogeneous elements.

We say that a semigroup with zero $S$ is {\bf{separated}} if for every $s',s'' \in S$, the set $s'Ss''$ has at most one nonzero element.
\begin{thm}\label{sep}
Let $S$ be a separated semigroup.
Then every ideal of a unital $S$-graded (nonassociative) algebra is homogeneous.
\end{thm}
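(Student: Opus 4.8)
The plan is to show that if $I$ is an ideal of a unital $S$-graded algebra $A = \bigoplus_{s \in S} A_s$ and $x \in I$, then every homogeneous component of $x$ also lies in $I$. Write $x = \sum_{s} x_s$ with $x_s \in A_s$, and let $T = \supp(x) = \set{s : x_s \neq 0}$ be its (finite) support. The goal is to isolate each component by multiplying $x$ on the left and right by homogeneous identity components and exploiting separatedness.

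First I would use the unit: since $A$ is unital and graded, the identity decomposes as $1 = \sum_s u_s$ with $u_s \in A_s$, and these $u_s$ behave like orthogonal idempotents indexed by the idempotents of $S$. For a fixed target degree $t \in T$ that I wish to extract, I would find homogeneous idempotent-like elements $u_{s'}, u_{s''}$ (coming from the decomposition of $1$) such that left multiplication by $u_{s'}$ and right multiplication by $u_{s''}$ kills every component $x_s$ with $s \neq t$ but preserves $x_t$. Concretely, for a degree $t$ factoring through source idempotent $s'$ and target idempotent $s''$ in the semigroup (so that $s' t = t = t s''$), the element $u_{s'} x u_{s''}$ lies in $I$ since $I$ is an ideal, and its degree-$r$ component is supported on $s' A_r s'' \subseteq A_{s' r s''}$.

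The separatedness hypothesis is the crux: for the fixed pair $(s', s'')$, the set $s' S s''$ contains \emph{at most one} nonzero element, which forces all surviving components of $u_{s'} x u_{s''}$ into a single degree. The hard part will be bookkeeping the grading semigroup carefully — identifying, for each $t \in T$, the correct ``row'' and ``column'' idempotents $s', s''$ from the decomposition of $1$ so that $u_{s'} x u_{s''}$ retains exactly the component $x_t$ (up to a nonzero homogeneous factor) and annihilates the rest. I would argue that $u_{s'} x_s u_{s''}$ is nonzero only when $s \in s' S s''$, and by separatedness there is a unique such degree, namely $t$; thus $u_{s'} x u_{s''}$ is homogeneous of degree $t$ and lies in $I$. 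Recovering $x_t$ itself (rather than a multiple) uses that $u_{s'}, u_{s''}$ act as local identities on $A_t$, coming from the unital structure.

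Finally I would assemble the pieces: having shown each $u_{s'} x u_{s''} \in I$ is homogeneous and recovers (a scalar multiple of, in fact exactly) the component it targets, I conclude that every $x_t \in I$ for $t \in \supp(x)$. Since this holds for arbitrary $x \in I$, the ideal $I$ is generated by — indeed spanned by — its homogeneous elements, which is precisely the assertion that $I$ is homogeneous. I expect the main obstacle to be handling degeneracies in the semigroup $S$: elements $s$ for which the relevant products $s' s$ or $s s''$ vanish in $S$ (land on the zero element), and ensuring the unital decomposition of $1$ indeed yields the idempotents needed to realize the left/right ``projections''; the reduced-ness built into the matrix-unit setting (\Rref{eii}) is the model case, and the abstract separated-semigroup formulation is designed to make exactly this projection argument go through.
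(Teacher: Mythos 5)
Your opening moves are exactly the paper's: decompose $1=\sum_s u_s$ into homogeneous components and observe that, by separatedness, $(u_{s'}A)u_{s''}\sub A_r$ for the unique $r$ with $s'Ss''\cup\set{0}=\set{r,0}$. But the assembly step you propose has a genuine gap. You want, for each $t\in\supp(x)$, a single pair $(s',s'')$ such that $u_{s'}xu_{s''}=x_t$ exactly, and you justify this by (i) the existence of ``source/target idempotents'' $s',s''$ with $s't=t=ts''$ for which $u_{s'},u_{s''}$ act as local identities on $A_t$, and (ii) the claim that $u_{s'}x_su_{s''}=0$ for $s\neq t$ ``by separatedness''. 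Neither holds in general. For (i), a separated semigroup need not contain such idempotents, the support of the decomposition of $1$ need not consist of idempotents, and even when $s't=t$ the component $u_{s'}$ is only guaranteed to map $A_t$ into $A_t$ --- only the full sum $\sum_{s'}u_{s'}$ acts as the identity. For (ii), separatedness says that all the products $u_{s'}x_su_{s''}$, as $s$ ranges over $\supp(x)$, land in the \emph{same} component $A_r$; it does not make the off-target ones vanish, so $u_{s'}xu_{s''}$ is homogeneous but is in general a mixture of contributions from several $x_s$, not a copy of $x_t$.

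The fix is to drop the attempt to isolate individual components, which is unnecessary. Simply write $x=(1x)1=\sum_{s',s''}(u_{s'}x)u_{s''}$; each summand lies in $I$ because $I$ is an ideal, and is homogeneous by your (correct) separatedness observation. Hence every element of $I$ is a sum of homogeneous elements of $I$, which is what it means for $I$ to be homogeneous; if you do want $x_t\in I$ for each $t$, group the summands by degree afterwards. This is precisely the paper's proof.
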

\begin{proof}
Let $A$ be a unital $S$-graded algebra. Decompose the identity $1 = \sum u_s$ into homogeneous components. For $s',s''$, let $s \in S$ be the unique element such that $s'Ss'' \cup \set{0} = \set{s,0}$ (possibly $s =0$). Ranging over homogeneous elements, we have that $(u_{s'}A)u_{s''} \sub A_s+A_0 = A_s$.

Now let $I \normali A$ be an ideal, and let $a \in I$. Clearly, $a = (1 a) 1 = \sum_{s',s''} (u_{s'}a)u_{s''}$, but as we have just seen, the summands, which are all in $I$, are homogeneous.
It follows that
every element of~$I$ decomposes as a sum of homogenous elements of~$I$.
\end{proof}

\begin{prop}
The semigroup $\E_n$ is separated.
\end{prop}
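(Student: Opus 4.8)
The plan is to unwind the definition of separatedness directly, exploiting that the multiplication in $\E_n$ is the associative matrix-unit rule $e_{ij}e_{k\ell} = \delta_{jk}e_{i\ell}$. I need to show that for any fixed $s', s'' \in \E_n$, the set $s' \E_n s''$ contains at most one nonzero element.

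First I would dispose of the degenerate case: if either $s'$ or $s''$ equals the zero element of $\E_n$, then every product $s' s s''$ vanishes, so $s' \E_n s'' = \set{0}$ and the condition holds vacuously. Hence I may assume $s' = e_{ij}$ and $s'' = e_{k\ell}$ for fixed indices $i,j,k,\ell$.

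Next, for an arbitrary $s = e_{pq} \in \E_n$, I would compute the triple product, which is unambiguous by associativity of $\E_n$:
$$e_{ij}\,e_{pq}\,e_{k\ell} = \delta_{jp}\,\delta_{qk}\,e_{i\ell}.$$
Thus $s' s s''$ is nonzero exactly when $p = j$ and $q = k$, that is, only for the single choice $s = e_{jk}$, and in that case it equals $e_{i\ell}$. Consequently the only possible nonzero element of $s' \E_n s''$ is $e_{i\ell}$, which is precisely the separation condition.

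The argument has no real obstacle beyond index bookkeeping: the point is simply that the outer factors $s'$ and $s''$ pin down the left index $i$ and the right index $\ell$ of any nonzero product, while the two interior Kronecker deltas leave no freedom in the choice of the middle factor. No feature of the nonassociative grading is needed here, since $\E_n$ is itself an honest (associative) semigroup.
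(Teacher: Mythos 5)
Your proof is correct and is exactly the paper's argument, which simply records that $e_{ij}\E_n e_{k\ell} = \set{e_{i\ell},0}$; you have just spelled out the Kronecker-delta computation and the degenerate zero cases in more detail. Nothing further is needed.
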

\begin{proof}
Indeed, $e_{ij}\E_ne_{k\ell} = \set{e_{i\ell},0}$.
\end{proof}

\begin{cor}
Every ideal of a skew matrix algebra is homogeneous (with respect to the $\E_n$-grading).
\end{cor}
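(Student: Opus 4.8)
The plan is to deduce this corollary immediately from the two results that precede it, since the corollary is precisely the instantiation of the general homogeneity theorem in the case of the grading semigroup $\E_n$. No new argument is required beyond assembling three facts already established in the excerpt.

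First I would invoke \Rref{gradeEc}, which states that every skew matrix algebra $\M[n](F;c)$ is a unital algebra graded by the semigroup $\E_n$; the unitality is guaranteed by \Rref{eii}.\eq{eii-2}, since $e_{11}+\cdots+e_{nn}$ is the identity. Next I would invoke the preceding proposition, which establishes that $\E_n$ is separated because $e_{ij}\E_n e_{k\ell} = \set{e_{i\ell},0}$ has at most one nonzero element. With a separated grading semigroup and a unital graded algebra in hand, the hypotheses of \Tref{sep} are exactly satisfied.

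I would then apply \Tref{sep} directly: for a unital $S$-graded nonassociative algebra with $S$ separated, every ideal is homogeneous. Specializing $S = \E_n$ and the algebra to $\M[n](F;c)$ yields the claim, namely that every ideal of a skew matrix algebra is generated by its homogeneous components, i.e.\ is spanned by the subspaces $Fe_{ij}$ it meets.

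There is no genuine obstacle here, as the substantive work has already been carried out in \Tref{sep} and in the verification that $\E_n$ is separated; the role of the earlier theorem was precisely to isolate the combinatorial hypothesis (separatedness) that makes homogeneity automatic even in the absence of associativity. The only point worth a word of care is confirming that the skew matrix algebra really is \emph{unital} as an $\E_n$-graded algebra, so that the decomposition $a = (1a)1$ used in the proof of \Tref{sep} is available; this is exactly \Rref{eii}.\eq{eii-2}, so the corollary follows.
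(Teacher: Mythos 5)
Your proposal is correct and matches the paper's intended argument exactly: the corollary is stated as an immediate consequence of \Tref{sep} together with the separatedness of $\E_n$ and the $\E_n$-grading of $\M[n](F;c)$, with unitality supplied by \Rref{eii}. Your extra care in checking the unitality hypothesis is a sensible touch but introduces nothing beyond what the paper already relies on.
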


The next step is to use the grading to control ideals of skew matrix algebras. However, the grading by $\E_n$ is not restrictive enough, and we need to introduce grading by other magmas.

\subsection{The frame}

To every skew matrix algebra we associate a finite magma (namely a set with binary operation) with zero, as follows. Let~$c$ be a skew set. The {\bf{frame}} of $c$ is the magma $\E(c)$, of cardinality $n^2+1$, whose elements are the linear subspaces $Fe_{ij}$ and the zero space, with the multiplication induced by \eq{genmat-prod}, namely, $Fe_{ij}\cdot Fe_{jk} = Fc_{ijk}e_{ik}$ and $Fe_{ij} \cdot Fe_{k\ell} = 0$ for $j \neq k$. Clearly, the frame only depends on whether or not each $c_{ijk}$ is zero. Notice that $\E_n$ is the frame of the standard matrices (matching $e_{ij}$ with $Fe_{ij}$).

Multiplication by $Fe_{ii}$ is determined by \Pref{eii}.\eq{eii-1}, but other than that, $(Fe_{ij})(Fe_{jk})$ can be either $Fe_{ik}$ or $0$, and any combination is a possible frame. There are $2^{n(n-1)^2}$ frames for skew matrices of order~$n$. (The four frames for $n=2$ implicitly appear in \Rref{n=2}).

Frames are not necessarily associative:
\begin{exmpl}
The frames in \Eref{n=2}.\eq{n=2.2} and in \Eref{simple_with_c=0} are not associative.
\end{exmpl}

A fine grading $A = \bigoplus A_s$, by a magma $S$, is {\bf{strong}} if $A_s A_{s'} = A_{ss'}$ for any $s,s'$ in the grading magma (this term is being used when the grading is not necessarily fine).

The motivation to define frames is, of course:
\begin{rem}
Every skew matrix algebra $\M[n](F;c)$ is strongly graded by its frame.
\end{rem}
One could view the skew matrix algebra as a ``skew group algebra'' of its frame, or rather a skew magma algebra, but we refrain from using this terminology.

\subsection{Ideals of magmas}

A nonempty subset $I \sub S$ is an ideal of the magma $S$ if $SI$ and $IS$ are contained in~$I$.

\begin{rem}
When an algebra $A$ is strongly graded by a magma with zero~$S$, there is a one-to-one correspondence between homogeneous ideals of~$A$ and ideals of~$S$, given, for ideals $J \normali S$, by $J \mapsto \sum_{j\in J} A_j$.
\end{rem}

\begin{cor}
Let $c$ be a reduced skew set. There is a one-to-one correspondence between the ideals of the skew matrix algebra $\M[n](F;c)$ and the ideals of the frame~$\E(c)$.
\end{cor}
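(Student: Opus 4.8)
The plan is to assemble the two immediately preceding observations into a single bijection. Recall first that every skew matrix algebra $\M[n](F;c)$ is strongly graded by its frame $\E(c)$, and that the homogeneous subspaces for this grading are exactly the lines $Fe_{ij}$ together with the zero space. These are precisely the same homogeneous components as for the coarser $\E_n$-grading: the two gradings differ only in the magma structure on the index set, not in the decomposition of the algebra. Consequently the notion of a homogeneous ideal is literally the same for both gradings, namely an ideal that is spanned by the matrix units it contains.

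With this identification in hand, I would invoke the earlier corollary that every ideal of a skew matrix algebra is homogeneous, which was proved via the separatedness of $\E_n$. Since homogeneity with respect to the $\E_n$-grading and with respect to the $\E(c)$-grading mean the same thing, it follows at once that every ideal of $\M[n](F;c)$ is homogeneous with respect to the frame grading as well. This is the step that removes the word ``homogeneous'' from the general correspondence and lets it account for \emph{all} ideals.

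Finally I would apply the general correspondence for strongly graded algebras: for an algebra $A$ strongly graded by a magma-with-zero $S$, the assignments $J \mapsto \sum_{j \in J} A_j$ and $I \mapsto \set{s \in S \suchthat A_s \sub I}$ are mutually inverse bijections between the ideals of $S$ and the homogeneous ideals of $A$. Taking $A = \M[n](F;c)$ and $S = \E(c)$, and using that every ideal of $A$ is homogeneous, this yields the desired one-to-one correspondence between ideals of $\M[n](F;c)$ and ideals of $\E(c)$.

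I do not expect a substantial obstacle here, as the statement is an assembly of prior results; the only points requiring care are the observation that the $\E_n$- and $\E(c)$-gradings share identical homogeneous components, so that the homogeneity corollary (stated for $\E_n$) transfers to the frame, and the routine check that the two assignments above are mutually inverse. The latter uses that $A_s \neq 0$ for $s \neq 0$, so that $\set{s \suchthat A_s \sub I}$ reads off exactly the degrees occurring in $I$ (with the zero degree always included since $A_0 = 0 \sub I$ trivially), and that an ideal $J \normali \E(c)$ necessarily contains the zero element, matching the convention that its homogeneous component is the zero space.
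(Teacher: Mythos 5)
Your proof is correct and follows exactly the route the paper intends: the corollary is an assembly of the homogeneity result (every ideal of a skew matrix algebra is homogeneous, via separatedness of $\E_n$) with the general correspondence between ideals of a grading magma and homogeneous ideals of a strongly graded algebra, applied to the frame $\E(c)$. Your added care about the two gradings sharing homogeneous components and about the mutual-inverse check is sound but not a departure from the paper's argument.
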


\begin{cor}
A skew matrix algebra $\M[n](F;c)$ has finitely many ideals.
\end{cor}

Let $S$ be a magma. As usual, let $\ideal{X}$ be the minimal ideal containing a subset $X \sub S$. An easy way to find the ideals of a finite magma is to associate a directed graph $\Gamma(S)$, whose vertices are the elements of $S$ with edges $s \ra s'$ if $s' \in \ideal{s}$. Then, a set~$C$ of vertices compose an ideal if and only if it is {\bf{closed}} in the sense that if $s \ra s'$ is an edge and $s \in C$ then $s' \in C$ as well. The magma is simple if and only if the associated graph is {\bf{strongly connected}}, namely there is a directed path from every vertex to every other vertex.

Now consider a skew set $c$, and its associated frame $\E(c)$. Let $\Gamma_c$ be the graph whose vertices are the elements $Fe_{ij}$ of the frame, with edges $Fe_{ij} \ra Fe_{ik}$ and $Fe_{jk} \ra Fe_{ik}$ whenever $c_{ijk} \neq 0$. Although $\Gamma_c$ is a proper subgraph of $\Gamma(\E(c))$, both graphs have the same closed subsets, as the latter is the transitive closure of the former.

\begin{cor}
The skew matrix algebra $\M[n](F;c)$ is simple if and only if $\Gamma_c$ is strongly connected.
\end{cor}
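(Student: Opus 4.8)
The plan is to chain together the three structural facts recorded immediately above, translating simplicity of the algebra first into a statement about ideals of the frame and then into the graph-theoretic condition. First I would invoke the preceding corollary: every ideal of $\M[n](F;c)$ is homogeneous, and the homogeneous ideals are in bijection with the ideals of the frame $\E(c)$ via $J \mapsto \sum_{j \in J} A_j$. Under this bijection the zero ideal corresponds to $\set{0} \normali \E(c)$ and the whole algebra to $\E(c)$ itself, so $\M[n](F;c)$ is simple precisely when the magma $\E(c)$ has no ideals other than $\set{0}$ and $\E(c)$, that is, when $\E(c)$ is simple.

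Next I would apply the graph criterion for magma simplicity: a finite magma $S$ is simple exactly when the directed graph $\Gamma(S)$, with an edge $s \ra s'$ whenever $s' \in \ideal{s}$, is strongly connected. The one point that needs care is the absorbing zero element of $\E(c)$. I would observe that for $n \geq 2$ every nonzero vertex $Fe_{ij}$ satisfies $0 \in \ideal{Fe_{ij}}$: choosing $k \neq j$ gives $Fe_{ij} \cdot Fe_{k\ell} = 0 \in \ideal{Fe_{ij}}$. Hence the zero vertex lies in the ideal generated by any nonzero vertex, and a proper nonempty closed vertex set is either $\set{0}$ or consists of $0$ together with a proper closed set of nonzero vertices. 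Consequently $\E(c)$ is simple if and only if the nonzero matrix-unit vertices form a single strongly connected component; this is exactly strong connectivity of $\Gamma(\E(c))$ restricted to the $Fe_{ij}$, the content of the criterion.

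Finally I would compare $\Gamma(\E(c))$ with the smaller graph $\Gamma_c$. By the remark preceding the statement, $\Gamma(\E(c))$ is the transitive closure of $\Gamma_c$, so the two digraphs induce the same reachability relation on the vertex set $\set{Fe_{ij}}$; in particular one is strongly connected if and only if the other is. Combining the three equivalences then yields that $\M[n](F;c)$ is simple if and only if $\Gamma_c$ is strongly connected, as claimed.

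I expect the only genuine obstacle to be the bookkeeping around the zero element described in the second paragraph: one must apply the general magma-simplicity criterion to the frame in a way that isolates the nonzero matrix units, verifying that the absorbing zero vertex neither obstructs nor spuriously forces strong connectivity among them. Every other step is a direct citation of a result already established.
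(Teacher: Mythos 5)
Your proof is correct and follows essentially the same route as the paper, which treats this corollary as an immediate consequence of the preceding chain (homogeneity of ideals, the ideal correspondence with the frame, the graph criterion for magma simplicity, and the transitive-closure relation between $\Gamma(\E(c))$ and $\Gamma_c$). Your extra care with the absorbing zero vertex is a sensible filling-in of bookkeeping the paper leaves implicit, but it does not change the argument.
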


Notice that the graph $\Gamma_c$ can be computed from the frame; it does not care about specific values of the skew set; only about which ones are nonzero. In particular, if all the entries of $c$ are nonzero, then $\E(c) = \E_n$, and the skew matrix algebra has the same ideals as the standard matrix algebras, that is, none.
\begin{cor}\label{cijknon0}
If $c_{ijk} \neq 0$ for every $i,j,k$,  then $\M[n](F;c)$ is simple. Indeed, for any $ij$ and $k\ell$ there is a path $ij \ra jk \ra k\ell$ in $\Gamma_c$.
\end{cor}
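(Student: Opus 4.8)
The plan is to reduce the assertion to the combinatorial criterion established immediately above and then to exhibit explicit directed paths. By that criterion, $\M[n](F;c)$ is simple if and only if the graph $\Gamma_c$ is strongly connected, so under the hypothesis that $c_{ijk} \neq 0$ for all $i,j,k$ it suffices to show that for any two vertices $Fe_{ij}$ and $Fe_{k\ell}$ there is a directed path from the former to the latter.

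First I would record precisely which edges issue from a fixed source vertex. Each triple $(a,b,m)$ with $c_{abm} \neq 0$ contributes the two edges $Fe_{ab} \to Fe_{am}$ and $Fe_{bm} \to Fe_{am}$. Viewing these from the standpoint of a fixed source $Fe_{ij}$, the first family (applied to the triple $(i,j,m)$) gives an edge $Fe_{ij} \to Fe_{im}$ for every $m$, changing the second index while preserving the first, whereas the second family (applied to the triple $(m,i,j)$) gives an edge $Fe_{ij} \to Fe_{mj}$ for every $m$, changing the first index while preserving the second. Thus the out-neighbours of $Fe_{ij}$ are exactly the vertices sharing its row or its column, and since every entry of $c$ is nonzero, all of these edges are genuinely present in $\Gamma_c$.

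With both families available the desired route is immediate: change the second index by $Fe_{ij} \to Fe_{i\ell}$ (justified by $c_{ij\ell} \neq 0$), then change the first index by $Fe_{i\ell} \to Fe_{k\ell}$ (justified by $c_{ki\ell} \neq 0$). In the degenerate cases $i = k$ or $j = \ell$ one of the two steps is trivial and a shorter path results, but in all cases we reach $Fe_{k\ell}$. As the endpoints were arbitrary, $\Gamma_c$ is strongly connected, and the cited criterion then yields the simplicity of $\M[n](F;c)$.

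The argument is bookkeeping rather than substance, so I do not anticipate a genuine obstacle; the only point demanding care is keeping straight the two distinct roles a single matrix unit $Fe_{ij}$ plays as a source of an edge (as the ``$Fe_{ab}$''-vertex of one triple versus the ``$Fe_{bm}$''-vertex of another), so that its out-neighbours are correctly identified as precisely the row-mates and column-mates. Once this identification is in place, the nonvanishing hypothesis supplies every edge needed, and strong connectivity follows from the single length-two route above.
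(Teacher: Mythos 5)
Your proof is correct and follows the paper's intended route: reduce simplicity to strong connectivity of $\Gamma_c$ and exhibit an explicit short path between arbitrary vertices. Your identification of the out-neighbours of $Fe_{ij}$ as its row-mates $Fe_{im}$ (needing $c_{ijm}\neq 0$) and its column-mates $Fe_{mj}$ (needing $c_{mij}\neq 0$) is exactly right, and the resulting path $Fe_{ij}\to Fe_{i\ell}\to Fe_{k\ell}$, justified by $c_{ij\ell}\neq 0$ and $c_{ki\ell}\neq 0$, is a genuine length-two path in $\Gamma_c$. In fact your version is more careful than the paper's one-line justification: the path $ij\to jk\to k\ell$ written there changes both indices in its first step, so it is not literally a sequence of edges of $\Gamma_c$ as defined (it is a path only in the transitive closure $\Gamma(\E(c))$, realized inside $\Gamma_c$ as $ij\to jj\to jk\to kk\to k\ell$ as in the subsequent corollary); your two-step route repairs this while reaching the same conclusion.
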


Even better,
\begin{cor}\label{cijinon0}
If $c_{iji} \neq 0$ for every $i,j$, then $\M[n](F;c)$ is simple. Indeed, for any $ij$ and $k\ell$ there is a path $ij \ra jj \ra jk \ra kk \ra k\ell$.
\end{cor}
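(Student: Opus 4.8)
The plan is to apply the simplicity criterion established just above---that $\M[n](F;c)$ is simple exactly when its graph $\Gamma_c$ is strongly connected---and to prove strong connectivity by producing, for every pair of vertices $ij$ and $k\ell$, the explicit directed path $ij \ra jj \ra jk \ra kk \ra k\ell$ asserted in the statement. Since this joins an arbitrary source to an arbitrary target, its existence yields strong connectivity, and hence simplicity, at once.

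So the whole task reduces to checking that each of the four edges occurs in $\Gamma_c$. Recalling the edge rule (a triple $(p,q,r)$ with $c_{pqr}\neq 0$ produces the left-factor edge $pq \ra pr$ and the right-factor edge $qr \ra pr$), I would first dispatch the two edges that appeal to the hypothesis. The edge $ij \ra jj$ is the right-factor edge of the triple $(j,i,j)$, coming from the product $e_{ji}e_{ij}=c_{jij}e_{jj}$, so it is present provided $c_{jij}\neq 0$; similarly $jk \ra kk$ is the right-factor edge of $(k,j,k)$, from $e_{kj}e_{jk}=c_{kjk}e_{kk}$, present provided $c_{kjk}\neq 0$. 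Both $c_{jij}$ and $c_{kjk}$ are instances of $c_{iji}$ (with the repeated outer index equal to $j$ and to $k$, respectively), so the assumption $c_{iji}\neq 0$ guarantees both edges. The other two edges use only reducedness: $jj \ra jk$ is the left-factor edge of $(j,j,k)$, available since $c_{jjk}=1$ (equivalently $e_{jj}e_{jk}=e_{jk}$ by \Rref{eii}), and $kk \ra k\ell$ is the left-factor edge of $(k,k,\ell)$, available since $c_{kk\ell}=1$. Concatenating gives the path.

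I expect no real obstacle: this is a bookkeeping verification once the graph formalism of \Cref{cijknon0} is in place. The only point demanding a moment's attention is to recognise that the two nonvanishing inputs actually needed, $c_{jij}$ and $c_{kjk}$, are precisely of the form $c_{iji}$, so that the present (weaker) hypothesis---nonvanishing of the $n(n-1)$ entries $c_{iji}$ with $i\neq j$, rather than of all $c_{ijk}$---already suffices.
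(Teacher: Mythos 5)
Your proposal is correct and is exactly the paper's argument: the paper asserts the path $ij \ra jj \ra jk \ra kk \ra k\ell$ and appeals to the strong-connectivity criterion of \Cref{cijknon0}, leaving the edge-by-edge verification implicit, which you carry out correctly (the two reduced entries $c_{jjk}=c_{kk\ell}=1$ give the middle edges, and the hypothesis applied to $c_{jij}$ and $c_{kjk}$ gives the other two). Nothing further is needed.
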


\begin{exmpl}[{Simple $\M[n](F;c)$ such that most $c_{ijk}$ are zero}]\label{simple_with_c=0}
Fix $n$. Let $c$ be the reduced skew set defined by $c_{iji} = 1$ for all $i,j$ and $c_{ijk} = 0$ for any distinct $i,j,k$. Then $\M[n](F;c)$ is simple by \Cref{cijinon0}.
\end{exmpl}

\subsection{Counterexamples}

It is convenient to be able to construct skew sets for a predetermined system of ideals. Indeed, a subset of the frame forms an ideal if and only if it is closed in the graph, so we merely need to dispose of all the edges outgoing from the to-be-closed set.

\begin{rem}[Forcing ideals in skew matrices]\label{algo}
\begin{enumerate}
\item Given subsets~$I_t$ of the set of $1$-dimensional spaces $\set{Fe_{ij} \suchthat 1\leq i,j\leq n}$, construct a skew set $c$ such that each~$I_t$ is an ideal of $\M[n](F;c)$:
For each~$t$, for every~$i,j,k = 1,\dots,n$, if $Fe_{ij} \in I_t$ or $Fe_{jk} \in I_t$, but $Fe_{ik} \not \in I_t$, set $c_{ijk} = 0$. All the other entries of~$c$ are set to~$1$.
\item Listing all principal ideals of a $\M[n](F;c)$: find the closure of each $Fe_{ij}$ in $\Gamma_c$ via standard graph algorithms.
\end{enumerate}
\end{rem}

We can now present several examples for bad behaviour of skew matrix algebras.

\medskip

It is well-known that the square of an ideal in a nonassociative algebra is not always an ideal. Skew matrix algebras are not to immune to this problem.
\begin{exmpl}[An ideal whose square is not an ideal]\label{badsquare}
Set $n = 3$. Consider the skew set $c$ with $c_{iji} = 0$ for $i\neq j$, $c_{231} = c_{312} = 0$, and all the other values being $1$. This is the skew set forced by the assumption that $I  = Fe_{12}+F_{13}+Fe_{23}$ is an ideal of $\M[n](F;c)$; the skew set is produced by the algorithm in \Rref{algo}.

Since $c_{123} \neq 0$, $I^2 = Fe_{13}$, which is not an ideal.
\end{exmpl}

\begin{figure}
$$\xymatrix@C=18pt@R=18pt{
{\circ} \ar@{<->}[r] \ar@{<->}[d] \ar@{<->}@/^2ex/[rr] \ar@{<->}@/_2ex/[dd] & {\circ} \ar@{<->}[r] \ar@{<->}[d] \ar@{<->}@/^2ex/[dd] & {\circ} \ar@{<->}[d] \ar@{<->}@/^2ex/[dd] & &
{\circ} \ar@{->}[r] \ar@{->}[d] \ar@{->}@/^2ex/[rr] \ar@{->}@/_2ex/[dd] & {\circ} \ar@{<->}[r] \ar@{<-}[d] \ar@{<-}@/^2ex/[dd] & {\circ} \ar@{<->}[d] \ar@{<-}@/^2ex/[dd] & &
{\circ} \ar@{<->}[r] \ar@{<->}[d] \ar@{<->}@/^2ex/[rr] \ar@{<->}@/_2ex/[dd] & {\circ} \ar@{<->}[d] \ar@{<->}@/^2ex/[dd] & {\circ} \ar@{<->}@/^2ex/[dd]
                 \\
{\circ} \ar@{<->}[r] \ar@{<->}[d] \ar@{<->}@/^2ex/[rr] & {\circ} \ar@{<->}[r] \ar@{<->}[d] & {\circ} \ar@{<->}[d]& &
{\circ} \ar@{<-}[r] \ar@{->}[d] \ar@{->}@/^2ex/[rr] & {\circ} \ar@{->}[r] \ar@{->}[d] & {\circ} \ar@{<-}[d] & &
{\circ} \ar@{<->}[r] \ar@{<->}@/^2ex/[rr] & {\circ} \ar@{<->}[r] \ar@{<->}[d] & {\circ} \ar@{<->}[d]
\\
{\circ} \ar@{<->}[r] \ar@{<->}@/_2ex/[rr] & {\circ} \ar@{<->}[r] & {\circ} & &
{\circ} \ar@{<-}[r] \ar@{<-}@/_2ex/[rr] & {\circ} \ar@{<-}[r] & {\circ} & &
{\circ} \ar@{<->}@/_2ex/[rr] & {\circ} \ar@{<->}[r] & {\circ}
}$$
\caption{The graphs $\Gamma_c$ for standard matrices (left), for \Eref{badsquare} (middle), and for \Eref{simple_with_c=0} with $n=3$ (right)}
\end{figure}

\def\dontimply{{\ \ \not\!\!\!\implies}}
Let us also comment on skew matrix algebras in the context of the main structural families of finite dimensional nonassociative algebras.

\begin{exmpl}
There are skew matrix algebras which are counterexamples to the following implications:
$$\mbox{skew matrix algebra} \dontimply
\mbox{semiprime} \dontimply
\mbox{prime} \dontimply \mbox{simple}.
$$

\begin{enumerate}
\item The algebra in \Eref{n=2}.\eq{n=2.1} is not semiprime.
\item A semiprime algebra which is not prime: Force the ideals $I = Fe_{13}+Fe_{32}+Fe_{12}$ and $I' = Fe_{24}+Fe_{41}+Fe_{21}$ in a skew matrix algebra of degree $n = 4$ (as in \Rref{algo}). The only principal ideals are $I,I',R,R+Fe_{11}$ and $R+Fe_{22}$, where~$R$ is the space spanned by all matrix units other than $e_{11},e_{22}$.
     The algebra is not prime because $II' = 0$; but the square of any ideal is nonzero.

\item The algebra in \Eref{n=2}.\eq{n=2.2} is prime but not simple.
\end{enumerate}
\end{exmpl}

\section{The nucleus}\label{sec:nucprep}

In this section we collect properties of matrix units in a skew matrix algebra with respect to the nucleus, to be used in the next section for a structural description.

\subsection{The nucleus and one-sided nuclei}\label{ss:nuc}

We have seen in \Pref{dn} that the diagonal subalgebra is always in the nucleus.
\begin{rem}\label{trivi4}
Let $A \sub \M[n](F;c)$ be a subalgebra containing the diagonal subalgebra $\Delta$. Then $A$ is homogeneous, and every ideal of $A$ is homogeneous.

Indeed, for $a \in A$, writing $a = \sum a_{ij}e_{ij}$ ($a_{ij} \in F$), we have that $e_{ii}ae_{jj} = a_{ij}e_{ij} \in A$ for every $i,j$. If $I \normali A$ then $I = \sum e_{ii}I e_{jj} = \sum (I \cap Fe_{ij})$.
\end{rem}

\begin{cor}\label{nucleus-eij}
The nucleus of a skew matrix algebra $\M[n](F;c)$, as well as each of the one-sided nuclei, is homogenous.
\end{cor}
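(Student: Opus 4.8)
The plan is to reduce everything to \Rref{trivi4}, which already shows that any subalgebra of $\M[n](F;c)$ containing the diagonal subalgebra $\Delta$ is homogeneous with respect to the $\E_n$-grading. Thus the entire task amounts to checking two things: that the nucleus and each of the three one-sided nuclei are subalgebras, and that each of them contains $\Delta$.

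For the containment, I would recall from \Pref{dn} that $\Delta \sub \nuc{A}$; since the nucleus is by definition the intersection $\nuc[\ell]{A} \cap \nuc[c]{A} \cap \nuc[r]{A}$, the diagonal is contained in each of the three one-sided nuclei as well. That each one-sided nucleus is an associative subalgebra was already recorded in \Sref{sec:2} via the associator identity, and the full nucleus is then a subalgebra as an intersection of subalgebras. So each of the four objects in question is a subalgebra containing~$\Delta$.

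With these facts in place, I would simply invoke \Rref{trivi4}. Concretely, for $a = \sum a_{ij}e_{ij}$ lying in one of the (one-sided) nuclei, the idempotents $e_{ii},e_{jj} \in \Delta$ lie in that same nucleus, so the product $e_{ii}ae_{jj}$ stays inside it; moreover, because the outer factors are nuclear, the threefold product is unambiguously associative, and \Rref{eii}.\eq{eii-1} gives $e_{ii}ae_{jj} = a_{ij}e_{ij}$. Hence every homogeneous component $a_{ij}e_{ij}$ of~$a$ again lies in the nucleus, which is precisely homogeneity.

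I do not expect a genuine obstacle here, as the statement is essentially a corollary of \Rref{trivi4}. The only points requiring minor care are the closure of the one-sided nuclei under multiplication (already noted in \Sref{sec:2}) and the well-definedness of the product $e_{ii}ae_{jj}$ without parentheses, which is guaranteed precisely because the outer factors $e_{ii},e_{jj}$ are nuclear.
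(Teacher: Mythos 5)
Your proposal is correct and follows essentially the same route as the paper, which states \Cref{nucleus-eij} as an immediate consequence of \Rref{trivi4} together with \Pref{dn}. The extra details you supply --- that each one-sided nucleus is a subalgebra containing $\Delta$, and that the product $e_{ii}ae_{jj}$ is unambiguous because the outer factors are nuclear --- are exactly the points the paper leaves implicit, and you have verified them correctly.
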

In this sense, each one-sided nucleus can be regarded as a binary relation on the set of indices. By \Pref{dn}, these relations are reflexive.

We prove some symmetry conditions on $c$ which are necessary for matrix units to be in the nucleus. For convenience, we denote the ``reduced associator''
\begin{equation}\label{assocformula-}
(e_{ij},e_{jk},e_{k\ell})_0 = c_{ijk}c_{ik\ell} - c_{ij\ell}c_{jk\ell},
\end{equation}
which is a scalar; the associator is $(e_{ij},e_{jk},e_{k\ell}) = (e_{ij},e_{jk},e_{k\ell})_0e_{i\ell}$.

\begin{prop}\label{73}
Set $A = \M[n](F;c)$. For any $i,j,k$:
\begin{enumerate}
\item\label{73-0} If $\operatorname{span}\set{e_{ii},e_{ij},e_{ji},e_{jj}}$ is associative then $c_{jij} = c_{iji}$.
\item\label{73-1} If $e_{ij}$ is in any of the one-sided nuclei, then $c_{jij} = c_{iji}$.
\item\label{73-2} If $e_{ij} \in \nuc[\ell]{A} \cup \nuc[c]{A}$ then $c_{ijk}c_{jik} = c_{iji}$.
\item\label{73-3} If $e_{ij} \in \nuc[r]{A} \cup \nuc[c]{A}$ then $c_{kij}c_{kji} = c_{iji}$.
\end{enumerate}
\end{prop}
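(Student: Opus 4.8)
The engine of the whole proposition is the explicit reduced-associator formula \eqref{assocformula-}, which evaluates any associator of three matrix units: recall that $(e_{ab},e_{cd},e_{ef})$ vanishes unless $b=c$ and $d=e$, while otherwise $(e_{ij},e_{jk},e_{k\ell})=(c_{ijk}c_{ik\ell}-c_{ij\ell}c_{jk\ell})e_{i\ell}$. Since the one-sided nuclei are homogeneous (\Cref{nucleus-eij}), to extract a \emph{necessary} numerical condition from a membership such as $e_{ij}\in\nuc[\ell]{A}$ it suffices to test the associator against a single well-chosen triple of matrix units and read off the scalar equation after simplifying with the reducedness relations $c_{iij}=c_{jii}=1$. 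The plan is to prove parts~(1) and~(2) first, since the relation $c_{iji}=c_{jij}$ they furnish is needed to finish the remaining two parts.

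For part~(1), the span $\operatorname{span}\{e_{ii},e_{ij},e_{ji},e_{jj}\}$ is a $2\times2$ skew matrix subalgebra whose only possibly-nonzero associators are $(e_{ij},e_{ji},e_{ij})$ and $(e_{ji},e_{ij},e_{ji})$. Computing the first, $(e_{ij},e_{ji},e_{ij})=(c_{iji}c_{iij}-c_{ijj}c_{jij})e_{ij}=(c_{iji}-c_{jij})e_{ij}$ after using $c_{iij}=c_{ijj}=1$; associativity forces this to vanish, giving $c_{iji}=c_{jij}$. For part~(2), this same associator $(e_{ij},e_{ji},e_{ij})$ places $e_{ij}$ in both the left and the right slot, so it vanishes whenever $e_{ij}\in\nuc[\ell]{A}$ or $e_{ij}\in\nuc[r]{A}$, again yielding $c_{iji}=c_{jij}$; for the middle nucleus one instead uses $(e_{ji},e_{ij},e_{ji})=(c_{jij}-c_{iji})e_{ji}$, which puts $e_{ij}$ in the central slot and delivers the same identity when $e_{ij}\in\nuc[c]{A}$.

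Parts~(3) and~(4) follow the identical template with a third index $k$ inserted. For part~(3) and the left nucleus I would evaluate $(e_{ij},e_{ji},e_{ik})=(c_{iji}-c_{ijk}c_{jik})e_{ik}$, whose vanishing gives $c_{ijk}c_{jik}=c_{iji}$ outright; for the middle nucleus, $(e_{ji},e_{ij},e_{jk})=(c_{jij}-c_{ijk}c_{jik})e_{jk}$ vanishes and produces $c_{ijk}c_{jik}=c_{jij}$, which equals $c_{iji}$ by part~(2). Part~(4) is the mirror image: the middle-nucleus triple $(e_{ki},e_{ij},e_{ji})=(c_{kij}c_{kji}-c_{iji})e_{ki}$ gives $c_{kij}c_{kji}=c_{iji}$ directly, while the right-nucleus triple $(e_{kj},e_{ji},e_{ij})=(c_{kij}c_{kji}-c_{jij})e_{kj}$ gives $c_{kij}c_{kji}=c_{jij}$, once more reconciled with the target via part~(2).

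The computations themselves are routine applications of \eqref{assocformula-} together with reducedness; the only real work is the bookkeeping. The crux is to select, for each prescribed nucleus and each target relation, a matrix-unit triple whose chain of indices places $e_{ij}$ in the correct slot (left, middle, or right) and whose reduced associator isolates precisely the product $c_{ijk}c_{jik}$ or $c_{kij}c_{kji}$. I expect the one mildly delicate point to be that the middle- and right-nucleus triples naturally output $c_{jij}$ rather than $c_{iji}$, so the argument is genuinely sequential: part~(2) must be in hand to identify these two scalars before parts~(3) and~(4) can be completed.
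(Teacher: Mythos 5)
Your proof is correct and follows essentially the same route as the paper's: both evaluate the reduced associator \eqref{assocformula-} on well-chosen triples of matrix units ($(e_{ij},e_{ji},e_{ij})$ and its $i\leftrightarrow j$ swap for parts (1)--(2), then $(e_{ij},e_{ji},e_{ik})$, $(e_{ji},e_{ij},e_{jk})$, $(e_{ki},e_{ij},e_{ji})$, $(e_{kj},e_{ji},e_{ij})$ for parts (3)--(4)), using part (2) to identify $c_{jij}$ with $c_{iji}$ in the middle- and right-nucleus cases. You are in fact slightly more explicit than the paper, which compresses those cases into the remark ``switching $i,j$''.
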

\begin{proof}
By \Eq{assocformula-}, $(e_{ij},e_{ji},e_{ij})_0 = c_{iji}-c_{jij} = - (e_{ji},e_{ij},e_{ji})_0$, which proves the first claim. The second one follows immediately.
Next, $(e_{ij},e_{ji},e_{ik})_0 = c_{iji}-c_{jik}c_{ijk}$ proves the third claim (switching $i,j$ if $e_{ij} \in \nuc[c]{A}$), and
$(e_{ki},e_{ij},e_{ji})_0 = c_{kij} c_{kji} - c_{iji}$ proves the fourth claim.
\end{proof}

\begin{prop}\label{givesymm}
Put $A = \M[n](F;c)$ where $c$ is a skew set.
If $e_{ij} \in \nuc[*]{A}$ for some $* \in \set{\ell,c,r}$, and $c_{iji} \neq 0$, then $e_{ji} \in \nuc[*]{A}$ as well.
\end{prop}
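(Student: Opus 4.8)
The plan is to reduce everything to the reduced associator identities. Since each one-sided nucleus is homogeneous (\Cref{nucleus-eij}), membership of a matrix unit in $\nuc[*]{A}$ is equivalent to the vanishing of all the reduced associators \eq{assocformula-} in which it participates. First I unpack the three cases. For the left nucleus, the only potentially nonzero associators headed by $e_{ij}$ are $(e_{ij},e_{jk},e_{k\ell})$, so $e_{ij}\in\nuc[\ell]{A}$ is equivalent to
$$c_{ijk}c_{ik\ell} = c_{ij\ell}c_{jk\ell} \qquad\text{for all } k,\ell,$$
while $e_{ji}\in\nuc[\ell]{A}$ amounts to $c_{jik}c_{jk\ell} = c_{ji\ell}c_{ik\ell}$ for all $k,\ell$. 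Similarly $e_{ij}\in\nuc[r]{A}$ reads $c_{pqi}c_{pij}=c_{pqj}c_{qij}$, and $e_{ij}\in\nuc[c]{A}$ reads $c_{pij}c_{pjq}=c_{piq}c_{ijq}$, with the analogous identities (indices swapped) expressing the desired conclusions for $e_{ji}$.

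The key input is the invertibility supplied by $c_{iji}\neq 0$ together with \Pref{73}. If $e_{ij}\in\nuc[\ell]{A}\cup\nuc[c]{A}$, then by part~\eq{73-2} of \Pref{73} we have $c_{ijk}c_{jik}=c_{iji}$ for every $k$; since $c_{iji}\neq 0$ this forces every $c_{ijk}$ and $c_{jik}$ to be nonzero and yields the substitution $c_{jik}=c_{iji}\,c_{ijk}^{-1}$. Dually, if $e_{ij}\in\nuc[r]{A}\cup\nuc[c]{A}$, part~\eq{73-3} gives $c_{kij}c_{kji}=c_{iji}$, hence $c_{kji}=c_{iji}\,c_{kij}^{-1}$ for every $k$. (Part~\eq{73-1} also records $c_{jij}=c_{iji}\neq 0$, so the companion relation $e_{ji}e_{ij}=c_{iji}e_{jj}$ is likewise nonzero, although this is not needed directly.)

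With these substitutions the proof becomes a one-line reduction in each case. In the left-nucleus case, inserting $c_{jik}=c_{iji}c_{ijk}^{-1}$ and $c_{ji\ell}=c_{iji}c_{ij\ell}^{-1}$ into the desired identity $c_{jik}c_{jk\ell}=c_{ji\ell}c_{ik\ell}$ and cancelling the common factor $c_{iji}$ turns it into $c_{ij\ell}c_{jk\ell}=c_{ijk}c_{ik\ell}$, which is exactly the hypothesis $e_{ij}\in\nuc[\ell]{A}$. The right-nucleus case is identical using part~\eq{73-3}: substituting $c_{pji}=c_{iji}c_{pij}^{-1}$ and $c_{qji}=c_{iji}c_{qij}^{-1}$ collapses the target identity for $e_{ji}$ onto the hypothesis for $e_{ij}$. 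For the middle nucleus one has $e_{ij}\in\nuc[c]{A}\subseteq(\nuc[\ell]{A}\cup\nuc[c]{A})\cap(\nuc[r]{A}\cup\nuc[c]{A})$, so both substitutions are available, and applying $c_{pji}=c_{iji}c_{pij}^{-1}$ and $c_{jiq}=c_{iji}c_{ijq}^{-1}$ reduces the target to the hypothesis in the same manner.

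I expect no conceptual obstacle; the only real care is bookkeeping---correctly identifying which index patterns yield the relevant nonzero associators for each nucleus, and matching each substitution coming from \Pref{73} to the precise coefficients appearing in the condition for $e_{ji}$. The whole argument hinges on the single fact that $c_{iji}\neq 0$ makes the relevant coefficients invertible, so that $e_{ij}$ and $e_{ji}$ play symmetric roles.
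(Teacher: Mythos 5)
Your proof is correct and follows essentially the same route as the paper: both arguments rest on \Pref{73}.\eq{73-2}--\eq{73-3} and on the fact that $c_{iji}\neq 0$ forces the relevant coefficients $c_{ijk}$, $c_{jik}$, $c_{kij}$, $c_{kji}$ to be invertible. The only difference is presentational --- the paper exhibits an explicit linear identity expressing each reduced associator of $e_{ji}$ in terms of one of $e_{ij}$ plus a term vanishing by \Pref{73}, whereas you solve for $c_{jik}$ and $c_{kji}$ and substitute; expanding either computation recovers the other.
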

\begin{proof}
First assume $e_{ij} \in \nuc[\ell]{A}$. Namely, $c_{ijk}c_{ik\ell} = c_{ij\ell}c_{jk\ell}$ for any $k,\ell$. Now
$$c_{ijk}(e_{ji},e_{ik},e_{k\ell})_0 = -c_{ji\ell}(e_{ij},e_{jk},e_{k\ell})_0 + c_{jk\ell}(c_{ijk}c_{jik} - c_{ji\ell}c_{ij\ell})$$
is zero by \Pref{73}.\eq{73-2} and the assumption.
Since $c_{iji} \neq 0$, by \Pref{73}.\eq{73-2} we also have that $c_{ijk} \neq 0$, which proves that $e_{ji} \in \nuc[\ell]{A}$.

Next, assume $e_{ij} \in \nuc[r]{A}$. Now
$$c_{\ell ij}(e_{\ell k},e_{kj},e_{ji})_0 = -c_{kji}(c_{\ell k},e_{ki},e_{ij})_0 + c_{\ell kj}(c_{\ell ij}c_{\ell ji} - c_{kij}c_{kji})$$
is zero by \Pref{73}.\eq{73-3} and the assumption, proving again that $e_{ji} \in \nuc[r]{A}$ because
$c_{\ell ij} \neq 0$ by \Pref{73}.\eq{73-3}.

Finally, assume $e_{ij} \in \nuc[c]{A}$. Then
$$c_{ijk}(e_{\ell j},e_{ji},e_{ik})_0 = -c_{\ell ji}(e_{\ell i},e_{ij},e_{jk})_0 + c_{\ell jk}(c_{\ell ij}c_{\ell ji} - c_{ijk}c_{jik})$$
is zero by the assumption and by \Pref{73}.\eq{73-2}--\eq{73-3}, which shows that $e_{ji} \in \nuc[c]{A}$ since $c_{ijk} \neq 0$ by the assumption.
\end{proof}

\begin{cor}
If all $c_{iji} \neq 0$, then the nucleus of $\M[n](F;c)$ is symmetric (as a relation).
\end{cor}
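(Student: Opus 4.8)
The plan is to reduce immediately to \Pref{givesymm}. Recall that by definition $\nuc{A} = \nuc[\ell]{A} \cap \nuc[c]{A} \cap \nuc[r]{A}$, and that by \Cref{nucleus-eij} each of these one-sided nuclei is homogeneous; this is what makes it legitimate to regard the nucleus as a relation on $\set{1,\dots,n}$ via the condition $e_{ij} \in \nuc{A}$. So the statement to prove is simply that $e_{ij} \in \nuc{A}$ implies $e_{ji} \in \nuc{A}$.

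First I would observe that $e_{ij} \in \nuc{A}$ means precisely that $e_{ij}$ lies simultaneously in each of $\nuc[\ell]{A}$, $\nuc[c]{A}$, and $\nuc[r]{A}$. Since by hypothesis all $c_{iji} \neq 0$, the hypothesis of \Pref{givesymm} is met for every pair $i,j$, so I can apply that proposition separately for each of the three choices $* \in \set{\ell,c,r}$ to conclude that $e_{ji}$ belongs to each one-sided nucleus as well. Intersecting the three conclusions gives $e_{ji} \in \nuc{A}$, which is exactly the symmetry of the relation.

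There is no genuine obstacle here: the entire content is carried by \Pref{givesymm}, and the corollary merely packages the three one-sided symmetry statements into a single statement about the nucleus, using that the nucleus is their common intersection and that the nonvanishing hypothesis $c_{iji} \neq 0$ is assumed uniformly over all pairs $i,j$.
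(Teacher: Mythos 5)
Your proposal is correct and matches the paper's intended argument exactly: the corollary is an immediate consequence of Proposition \ref{givesymm} applied to each of the three one-sided nuclei, whose intersection is the nucleus. Nothing further is needed.
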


\begin{prop}\label{givetrans}
Put $A = \M[n](F;c)$ where $c$ is a skew set. If $e_{ij}, e_{jk} \in \nuc[*]{A}$ for some $* \in \set{\ell,c,r}$, and $c_{ijk} \neq 0$, then $e_{ik} \in \nuc[*]{A}$.
\end{prop}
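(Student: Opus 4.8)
The plan is to bypass the explicit associator bookkeeping used for \Pref{givesymm} and instead exploit a structural fact recorded in \Sref{sec:2}: each of the one-sided nuclei $\nuc[\ell]{A}$, $\nuc[c]{A}$ and $\nuc[r]{A}$ is an \emph{associative subalgebra} of $A$, hence an $F$-subspace that is closed under multiplication. The crucial difference from the symmetry statement is that the target element $e_{ik}$ is itself a scalar multiple of a product of the two given nuclear elements, whereas $e_{ji}$ in \Pref{givesymm} could not be so expressed.

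Concretely, I would fix $* \in \set{\ell,c,r}$ and assume $e_{ij}, e_{jk} \in \nuc[*]{A}$. The multiplication rule \eq{genmat-prod} gives $e_{ij}e_{jk} = c_{ijk}e_{ik}$. Since $\nuc[*]{A}$ is a subalgebra it contains this product, and since it is an $F$-subspace and $c_{ijk} \neq 0$ by hypothesis, scaling by $c_{ijk}^{-1} \in F$ yields $e_{ik} \in \nuc[*]{A}$. The argument is uniform across the three choices of $*$, because the subalgebra property holds for each one-sided nucleus separately.

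The only ingredient needing justification is closure of each one-sided nucleus under multiplication, and this is exactly what the associator identity $a(x,y,z)+(a,x,y)z=(ax,y,z)-(a,xy,z)+(a,x,yz)$ from \Sref{sec:2} provides; for $\nuc[\ell]{A}$, taking $a,x \in \nuc[\ell]{A}$ annihilates every term except $(ax,y,z)$, forcing it to vanish, and the analogous specializations settle $\nuc[c]{A}$ and $\nuc[r]{A}$. I do not expect a real obstacle: once $e_{ik}$ is rewritten as $c_{ijk}^{-1}e_{ij}e_{jk}$, transitivity of the (one-sided) nucleus relation is immediate, which is precisely why this proposition is so much shorter than the preceding symmetry result.
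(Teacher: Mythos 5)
Your proposal is correct and is essentially identical to the paper's proof, which reads in full: ``Each $\nuc[*]{A}$ is a subalgebra so $e_{ij}e_{jk} = c_{ijk}e_{ik} \in \nuc[*]{A}$ by assumption.'' The extra justification you give for closure of the one-sided nuclei under multiplication is exactly the associator identity the paper invokes in \Sref{sec:2}, so there is nothing to add.
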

\begin{proof}
Each $\nuc[*]{A}$ is a subalgebra so $e_{ij}e_{jk} = c_{ijk}e_{ik} \in \nuc[*]{A}$ by assumption.
\end{proof}

\forget
\begin{enumerate}
\item
The associator:
$$(e_{ij},e_{jk},e_{k\ell}) = (c_{ijk}c_{ik\ell} - c_{ij\ell}c_{jk\ell})e_{i\ell}$$
\item
$(e_{ij},e_{ji},e_{ij}) = (c_{iji}c_{iij} - c_{ijj}c_{jij})e_{ij} = (c_{iji} - c_{jij})e_{ij}$.
This implies that if $e_{ij}$ is in any of the one-sided nuclei, then $c_{iji}=c_{jij}$.
\item
$(e_{ij},e_{ji},e_{ik}) = (c_{iji} - c_{ijk}c_{jik})e_{ik}$.
\end{enumerate}
\forgotten

\subsection{Counterexamples for the one-sided nuclei}
We provide some example of bad behaviour of the nuclei.

\begin{exmpl}[Distinct one-sided nuclei]
Consider the algebra from \Eref{badsquare}. %
Then $\nuc{A} = \sum F e_{ii} + Fe_{13}$, which is not semisimple. Moreover in this case $\nuc[\ell]{A} = \nuc{A} + Fe_{23}+Fe_{32}$,
$\nuc[c]{A} = \nuc{A} + Fe_{13}$,
$\nuc[r]{A} = \nuc{A} + Fe_{12}+Fe_{21}$.
So the one-sided nuclei are distinct, and none is contained in the intersection of the other two.
\end{exmpl}

\begin{exmpl}[Simple algebra with distinct one-sided nuclei]
For $n = 4$, take the trivial skew set $c_{ijk} = 1$, except for the value $c_{123} = 2$. The nucleus is equal to the diagonal, $\Delta$. But $\nuc[\ell]{A} = \Delta+\sum_{i,j \neq 1} Fe_{ij}$;
$\nuc[c]{A} = \Delta+\sum_{i,j \neq 2} Fe_{ij}$; and
$\nuc[r]{A} = \Delta+\sum_{i,j \neq 3} Fe_{ij}$. So the intersections of any two one-sided nuclei are distinct.
\end{exmpl}

\section{Structure of the nucleus}\label{sec:nucofmat}

For any skew matrix algebra, we obtain in this section an explicit combinatorial decomposition as a direct sum of matrix blocks over~$F$ and the radical.

\subsection{Block and partition subalgebras}\label{ss:parti}

Fix a skew set $c$.
\begin{defn}
For every subset $N$ of the index set $\set{1,\dots,n}$, the subalgebra spanned by the $e_{ij}$ with $i,j \in N$ is a (nonunital) subalgebra of $\M[n](F;c)$. These are called {\bf{block subalgebras}}.
\end{defn}
Being a skew matrix algebra of degree $\card{N}$, we may apply various results on a block subalgebra to obtain information on the ambient skew set. For example, if the block subalgebra of $\set{i,j}$ is associative, then $c_{iji}=c_{jij}$ (this is \Pref{73}.\eq{73-0}). If the block subalgebra of $\set{i,j,k}$ is
associative and simple, then necessarily $c_{ijk} \neq 0$ (\Pref{nonzeroassoc}).

Similarly,
\begin{defn}
For an equivalence relation $\equiv$ on the index set, the subalgebra spanned by the $e_{ij}$ with $i \equiv j$ is a (unital) subalgebra of $\M[n](F;c)$, which is the direct sum of the block subalgebras defined by the equivalence classes of the relation. Such subalgebras are called {\bf{partition subalgebras}}.
\end{defn}

\begin{rem}\label{whatissemisimple}
Being a direct sum of its block subalgebras, a partition subalgebra is semisimple if and only if all of its blocks are simple.
\end{rem}

Generically, the nucleus of a skew matrix algebra is the diagonal, which is the smallest partition subalgebra. Our goal is to characterize the skew matrix algebras whose nucleus is a semisimple partition subalgebra.

\subsection{Semisimple nucleus}

Fix a skew set $c$. A homogeneous subalgebra $A \sub \M[n](F;c)$ is {\bf{regular}} if $c_{iji} \neq 0$ for every $e_{ij} \in A$. Regularity is inherited by subalgebras. If $\M[n](F;c)$ itself is regular then it is simple by \Cref{cijinon0}; but there can be proper subalgebras which are regular even when $\M[n](F;c)$ is not regular. The diagonal subalgebra is always regular.

\begin{rem}
Recall that an associative algebra $A$ is von Neumann regular if every element $a \in A$ satisfies $a \in aAa$. Our definition of regularity is motivated by the observation that  $e_{ij} \in (e_{ij}\M[n](F;c))e_{ij}$ if and only if $c_{iji} \neq 0$. Since we are mostly concerned with the nucleus, for whose elements \Pref{73}.\eq{73-1} holds, we will obtain the same statements if regularity was taken to mean $c_{jij} \neq 0$ for the $e_{ij} \in A$.
\end{rem}

We will prove the following.
\begin{thm}\label{main7}
Let $c$ be a skew set. The nucleus of $\M[n](F;c)$ is a semisimple partition subalgebra if and only if the nucleus is regular.
\end{thm}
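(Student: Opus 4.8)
The plan is to use \Cref{nucleus-eij}: since the nucleus is homogeneous, it is completely encoded by the reflexive relation $i \equiv j \iff e_{ij}\in\nuc{A}$ on $\set{1,\dots,n}$ (reflexivity being \Pref{dn}). The nucleus is then a partition subalgebra exactly when $\equiv$ is an equivalence relation, so both implications reduce to statements about this relation combined with the symmetry and transitivity lemmas already in hand.

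For the direction ``regular $\implies$ semisimple partition'', I would first show $\equiv$ is an equivalence relation. Symmetry is immediate from \Pref{givesymm}, since regularity gives $c_{iji}\neq 0$ for every $e_{ij}\in\nuc{A}$. For transitivity, given $e_{ij},e_{jk}\in\nuc{A}$, I would note that $e_{ij}\in\nuc[\ell]{A}$, so \Pref{73}.\eq{73-2} yields $c_{ijk}c_{jik}=c_{iji}\neq 0$; hence $c_{ijk}\neq 0$ and \Pref{givetrans} gives $e_{ik}\in\nuc{A}$. Thus the nucleus is a partition subalgebra. Each of its blocks is a skew matrix algebra every matrix unit of which lies in $\nuc{A}$, so the block is regular and hence simple by \Cref{cijinon0}; \Rref{whatissemisimple} then shows the partition subalgebra is semisimple.

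For the converse, I would start from a semisimple partition subalgebra, so by \Rref{whatissemisimple} every block $B_N$ is simple. The crucial point is that $B_N$ is associative: it is a subalgebra of $\nuc{A}\sub\nuc[\ell]{A}$, and the left nucleus is associative, so $\nuc{A}$ and hence $B_N$ are associative. Now $B_N$ is an associative simple skew matrix algebra, so \Pref{nonzeroassoc} forces all of its structure constants to be nonzero, in particular $c_{iji}\neq 0$. As every $e_{ij}\in\nuc{A}$ lies in some block, the nucleus is regular.

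I expect the only delicate point to be the transitivity step, where regularity alone does not let one invoke \Pref{givetrans} directly and one must first extract $c_{ijk}\neq 0$ from \Pref{73}.\eq{73-2}. Everything else is bookkeeping about the equivalence relation together with the appeal, in the converse, to the already-proved criterion \Pref{nonzeroassoc} for the associative simple case.
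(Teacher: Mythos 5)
Your proof is correct. The forward direction is the paper's own argument verbatim: symmetry from \Pref{givesymm}, transitivity by first extracting $c_{ijk}\neq 0$ from \Pref{73}.\eq{73-2} and then applying \Pref{givetrans}, and semisimplicity of the resulting partition subalgebra from \Cref{cijinon0} applied blockwise (the paper packages this last step as \Pref{quitenice}). In the converse you take a slightly different route: the paper proves the more general \Pref{main7.2} --- any homogeneous associative semisimple subalgebra containing the diagonal is regular --- by a semiprimeness computation ($0 \neq (Ae_{ij}A)^2$ forces $e_{ij}Ae_{ij} = Fe_{ij}e_{ji}e_{ij} \neq 0$), whereas you use \Rref{whatissemisimple} to isolate each simple block, observe it is associative because the nucleus is, and quote \Pref{nonzeroassoc}. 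Since the proof of \Pref{nonzeroassoc} is exactly the primeness form of the same computation, the two routes are mathematically equivalent; the paper's version merely avoids assuming the subalgebra is a partition subalgebra, which is harmless here since that is part of the hypothesis of the direction being proved. Your application of \Pref{nonzeroassoc} to a block is legitimate because a block subalgebra is itself a skew matrix algebra for the restricted (still reduced) skew set, a point the paper itself exploits in \Sref{sec:nucofmat}.
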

Note that for the nucleus, being associative, ``semisimple'' means sum of matrix blocks.
The statement of \Tref{main7} is true (with the same proof) for any of the one-sided nuclei, as well as for an intersection of any two one-sided nuclei.
\begin{cor}
Assume $c_{iji} \neq 0$ (all $i,j$). Then~$\M[n](F;c)$ is simple and has a semisimple nucleus.
\end{cor}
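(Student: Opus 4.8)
The statement specializes two results already in hand, so the plan is simply to assemble them. For the simplicity assertion I would invoke \Cref{cijinon0} directly: its hypothesis is exactly $c_{iji} \neq 0$ for all $i,j$, and it concludes that $\M[n](F;c)$ is simple, by exhibiting the path $ij \ra jj \ra jk \ra kk \ra k\ell$ in the graph $\Gamma_c$ connecting any two matrix units. Nothing further is needed for this half.

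For the nucleus, I would first recall from \Cref{nucleus-eij} that $\nuc{\M[n](F;c)}$ is homogeneous, hence a span of matrix units $e_{ij}$. Each such $e_{ij}$ satisfies $c_{iji} \neq 0$ by the global hypothesis, so the nucleus is a regular homogeneous subalgebra in the sense defined above. \Tref{main7} then applies verbatim and yields that the nucleus is a semisimple partition subalgebra, which is in particular semisimple, as claimed.

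The entire combinatorial weight sits inside \Tref{main7} (and, for simplicity, inside \Cref{cijinon0}); the corollary itself adds no new obstacle. The only point worth stating explicitly is that the global assumption $c_{iji} \neq 0$ for all $i,j$ restricts in particular to those indices with $e_{ij}$ lying in the nucleus, which is precisely the regularity condition that \Tref{main7} requires — and this restriction is immediate, since a stronger hypothesis (all $c_{iji}$ nonzero) trivially implies the weaker one (those $c_{iji}$ with $e_{ij}$ in the nucleus are nonzero).
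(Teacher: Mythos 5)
Your proposal is correct and matches the paper's own (very brief) argument: the paper likewise notes that the hypothesis makes the whole algebra regular, hence the (homogeneous) nucleus is regular, and then applies \Tref{main7} for semisimplicity and \Cref{cijinon0} for simplicity. No gaps.
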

Indeed, in this case $\M[n](F;c)$ is regular, so the nucleus is regular as well. We first prove the following.

\begin{prop}\label{quitenice}
A regular partition subalgebra $P$ of $\M[n](F;c)$ is semisimple.
\end{prop}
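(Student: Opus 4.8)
The plan is to reduce the statement to two facts already established: that a partition subalgebra is semisimple precisely when each of its block subalgebras is simple (\Rref{whatissemisimple}), and that a skew matrix algebra all of whose entries $c_{iji}$ are nonzero is simple (\Cref{cijinon0}). Assembling these two gives the result almost immediately.

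First I would recall that $P$ corresponds to an equivalence relation $\equiv$ on $\set{1,\dots,n}$, so that by definition $P$ is the direct sum of the block subalgebras $P_N$ indexed by the equivalence classes~$N$, where $P_N$ is spanned by the $e_{ij}$ with $i,j \in N$. The key preliminary observation is that each $P_N$ is itself a skew matrix algebra of degree $\card{N}$: since $c$ is reduced, its restriction to the indices in~$N$ remains reduced, and $\sum_{i \in N} e_{ii}$ serves as the identity element of $P_N$, with multiplication inherited from \eq{genmat-prod}. This identification is what makes \Cref{cijinon0} applicable blockwise.

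The crux is then to apply \Cref{cijinon0} to each block. Because $P$ is regular, we have $c_{iji} \neq 0$ for every $e_{ij} \in P$; as regularity is inherited by subalgebras, each block $P_N$ is regular, meaning $c_{iji} \neq 0$ for all $i,j \in N$. By \Cref{cijinon0} each $P_N$ is therefore simple. Finally, since every block of $P$ is simple, \Rref{whatissemisimple} yields that $P$ is semisimple, as claimed.

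The only point requiring genuine care—though it is not really an obstacle—is verifying that each block subalgebra is a bona fide skew matrix algebra, with a reduced skew set and the correct identity element, so that the simplicity criterion \Cref{cijinon0} indeed applies to it. Once this identification is in place, the proposition follows by direct assembly of the cited results, with no further computation needed.
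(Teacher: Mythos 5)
Your proposal is correct and follows essentially the same route as the paper's own proof: regularity gives $c_{iji}\neq 0$ on each block, \Cref{cijinon0} makes each block simple, and semisimplicity of $P$ follows from the direct-sum decomposition into blocks. The extra care you take in identifying each block subalgebra as a reduced skew matrix algebra of degree $\card{N}$ is implicit in the paper (it is stated in \Ssref{ss:parti}), so nothing is missing.
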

\begin{proof}
If $i,j$ are in the same block, then $e_{ij} \in P$ so $c_{iji} \neq 0$ by assumption and then the corresponding block subalgebra is simple by \Cref{cijinon0}. The partition algebra, being a direct sum of its block subalgebras, is thus semisimple.
\end{proof}

\begin{prop}\label{main7.1}
If the nucleus of $\M[n](F;c)$ is regular, then it is a semisimple partition subalgebra.
\end{prop}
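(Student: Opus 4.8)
The plan is to show that the nucleus, regarded as a binary relation on the index set $\set{1,\dots,n}$, is an equivalence relation; once this is established the nucleus is by definition a partition subalgebra, and since it is regular by hypothesis, \Pref{quitenice} immediately gives that it is semisimple. By \Cref{nucleus-eij} the nucleus is homogeneous, so it is spanned by the matrix units $e_{ij}$ that it contains, and we may write $i \sim j$ precisely when $e_{ij} \in \nuc{A}$. The task then reduces entirely to verifying the three relation axioms for $\sim$.

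Reflexivity is immediate from \Pref{dn}, which places the whole diagonal subalgebra in the nucleus. For symmetry, I would take $e_{ij} \in \nuc{A}$; regularity gives $c_{iji} \neq 0$, and then \Pref{givesymm} (applied with $* = \ell,c,r$ simultaneously, since $\nuc{A}$ lies in each one-sided nucleus) yields $e_{ji} \in \nuc{A}$. These two steps are essentially bookkeeping on top of the cited results.

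The one genuinely nontrivial step is transitivity, and this is where I expect the main obstacle to lie. Suppose $e_{ij}, e_{jk} \in \nuc{A}$; I must produce $e_{ik} \in \nuc{A}$. By \Pref{givetrans} it suffices to show that $c_{ijk} \neq 0$, so the whole difficulty is extracting this nonvanishing. The key observation is that $e_{ij} \in \nuc{A} \sub \nuc[\ell]{A}$, so the associator identity of \Pref{73}.\eq{73-2} applies and gives $c_{ijk}c_{jik} = c_{iji}$. The right-hand side is nonzero by regularity (because $e_{ij} \in \nuc{A}$), which forces $c_{ijk} \neq 0$, exactly as required. Thus transitivity follows from the interplay between the associator relation in \Pref{73} and the hypothesis of regularity.

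With $\sim$ confirmed to be an equivalence relation, the nucleus is exactly the partition subalgebra attached to $\sim$; being regular it is semisimple by \Pref{quitenice}, completing the argument. I would flag that the only subtlety worth double-checking is that the scalar identity of \Pref{73}.\eq{73-2} is available for the \emph{specific} triple $(i,j,k)$ at hand (its index $k$ is free there), so no additional hypotheses on $k$ are needed beyond $e_{ij} \in \nuc{A}$ and regularity.
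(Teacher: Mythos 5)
Your proof is correct and follows essentially the same route as the paper's: symmetry via \Pref{givesymm} using regularity, transitivity via \Pref{73}.\eq{73-2} to get $c_{ijk}\neq 0$ and then \Pref{givetrans}, and finally \Pref{quitenice} for semisimplicity. The only cosmetic difference is that you spell out reflexivity and the reduction to a relation on indices, which the paper leaves implicit.
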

\begin{proof}
Denote $M = \M[n](F;c)$. Assume $\nuc{M}$ is regular. If $e_{ij} \in \nuc{M}$, then $c_{iji} \neq 0$ by assumption, and $e_{ji} \in \nuc{M}$ by \Pref{givesymm}. Now suppose $e_{ij}, e_{jk} \in \nuc{M}$; by \Pref{73}.\eq{73-2} (applied to $e_{ij}$) or \Pref{73}.\eq{73-3} (applied to $e_{jk}$), $c_{ijk} \neq 0$ and so by \Pref{givetrans} we obtain $e_{ik} \in \nuc{M}$. This shows that the relation defining $\nuc{M}$ is symmetric and transitive, so $\nuc{M}$ is a partition subalgebra.

Now the nucleus is a regular partition subalgebra, so it is semisimple by \Pref{quitenice}.
\end{proof}

\begin{prop}\label{main7.2}
Let $A \sub \M[n](F;c)$ be a homogeneous associative subalgebra containing the diagonal.
If $A$ is semisimple, then it is regular.
\end{prop}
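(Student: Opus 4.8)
The plan is to use homogeneity to write $A = \bigoplus_{(i,j) \in R} Fe_{ij}$ for a reflexive relation $R$ on $\set{1,\dots,n}$ (reflexive because $A$ contains the diagonal), and then to read off the simple components of $A$ from its ``corners''. First I would record the elementary products: using $e_{ii}e_{ij} = e_{ij} = e_{ij}e_{jj}$ from \Rref{eii}.\eq{eii-1}, one checks that $e_{ii}Ae_{jj} = Fe_{ij}$ whenever $(i,j) \in R$ and $e_{ii}Ae_{jj} = 0$ otherwise; in particular $e_{ii}Ae_{ii} = Fe_{ii}$, which is isomorphic to $F$ as an algebra.

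Next I would locate the diagonal idempotents in the Wedderburn decomposition. Since $e_{ii}Ae_{ii} \isom F$ is local, each $e_{ii}$ is a primitive idempotent of~$A$. As $A$ is associative and semisimple it decomposes as $A = A_1 \oplus \cdots \oplus A_m$ into simple components with central idempotents $f_1, \dots, f_m$ summing to $1 = \sum_i e_{ii}$; primitivity of $e_{ii}$ forces $f_t e_{ii} = e_{ii}$ for a unique $t = t(i)$. Now fix an off-diagonal $e_{ij} \in A$. Then $e_{ii}Ae_{jj} = Fe_{ij} \neq 0$; but were $t(i) \neq t(j)$ we would get $e_{ii}Ae_{jj} = e_{ii}f_{t(i)}Af_{t(j)}e_{jj} = 0$ by orthogonality of the central idempotents. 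Hence $e_{ii}$ and $e_{jj}$ lie in one simple component $A_t$, and the corners may be computed there: $e_{ii}Ae_{jj} = e_{ii}A_te_{jj}$, $e_{jj}Ae_{ii} = e_{jj}A_te_{ii}$, and $e_{ii}Ae_{ii} = e_{ii}A_te_{ii} = Fe_{ii}$.

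The heart of the argument is a simplicity computation inside $A_t$. Since $A_t$ is simple and $e_{jj} \neq 0$, the two-sided ideal $A_te_{jj}A_t$ is all of $A_t$, so, using $e_{jj}e_{jj} = e_{jj}$,
$$(e_{ii}Ae_{jj})(e_{jj}Ae_{ii}) = e_{ii}A_te_{jj}A_te_{ii} = e_{ii}A_te_{ii} = Fe_{ii}.$$
In particular the left-hand side is nonzero, which forces $e_{jj}Ae_{ii} = Fe_{ji} \neq 0$, i.e.\ $(j,i) \in R$. Evaluating the product directly then gives $(Fe_{ij})(Fe_{ji}) = F\,e_{ij}e_{ji} = F\,c_{iji}e_{ii}$, and comparing with $Fe_{ii}$ we conclude $c_{iji} \neq 0$. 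As $e_{ij}$ was an arbitrary off-diagonal basis element of $A$, and $c_{iii} = 1$ on the diagonal, this shows $A$ is regular.

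The main obstacle I anticipate is purely organizational: pinning $e_{ii}$ and $e_{jj}$ to a common simple component $A_t$ and justifying that the corner products may be computed inside $A_t$ rather than in $A$. Once that reduction is in place, the identity $A_te_{jj}A_t = A_t$ does all the work, converting the hypothetical $c_{iji} = 0$ into the absurdity $e_{ii}A_te_{ii} = 0$, so no further calculation is needed.
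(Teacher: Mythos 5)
Your proof is correct, but it reaches the conclusion by a somewhat different route than the paper. The paper's argument is a two-line semiprimeness computation: for $e_{ij}\in A$ it forms the nonzero ideal $I=Ae_{ij}A$ (homogeneous by \Rref{trivi4}), notes $I^2=Ae_{ij}Ae_{ij}A\neq 0$ since a semisimple algebra is semiprime, and observes that the only way the homogeneous middle factor $e_{ij}Ae_{ij}$ can be nonzero is through $e_{ij}e_{ji}e_{ij}=c_{iji}e_{ij}$, which simultaneously forces $e_{ji}\in A$ and $c_{iji}\neq 0$. You instead invoke the full Wedderburn decomposition: you check that each $e_{ii}$ is primitive (since $e_{ii}Ae_{ii}=Fe_{ii}$), pin $e_{ii}$ and $e_{jj}$ to a common simple component $A_t$ via the central idempotents, and then use $A_te_{jj}A_t=A_t$ to get $(e_{ii}Ae_{jj})(e_{jj}Ae_{ii})=e_{ii}A_te_{ii}=Fe_{ii}\neq 0$, whence $e_{ji}\in A$ and $c_{iji}\neq 0$. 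All the steps you flag as organizational (primitivity, uniqueness of $t(i)$, the reduction of corner products to $A_t$) do go through, so the argument is complete. What the paper's version buys is economy — it needs only semiprimeness and homogeneity of ideals, not the structure theory of idempotents; what yours buys is a more explicit picture of how the relation $R$ is partitioned by the simple components of $A$, which is essentially the ``partition subalgebra'' structure that \Tref{main7} is driving at anyway.
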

\begin{proof}
Since $A$ contains the diagonal, any ideal $I\vartriangleleft A$ is homogeneous by
\Rref{trivi4}.

Suppose $e_{ij} \in A$. Let $I = A e_{ij} A$, a nonzero ideal.
Now $0 \neq I^2 = Ae_{ij}Ae_{ij}A$ because $A$ is semiprime, so necessarily $e_{ji} \in A$ and $e_{ij}Ae_{ij} = F e_{ij}e_{ji}e_{ij}$ is nonzero. In particular $c_{iji} \neq 0$, as needed.
\end{proof}

We can now prove the main theorem of this section:

\begin{proof}[Proof of \Tref{main7}]
The direction $(\Leftarrow)$ is \Pref{main7.1}. For the direction $(\Rightarrow)$, note that $\nuc{\M[n](F;c)}$ is a homogeneous associative subalgebra containing the diagonal, so we are done by \Pref{main7.2}.
\end{proof}

\subsection{The nucleus of skew matrices}

Fix a skew set $c$. In the previous subsection we gave conditions for the nucleus $N = \nuc{\M[n](F;c)}$ to be a partition semisimple subalgebra.
Let $J = \Jac(N)$ be the radical of~$N$. Here we describe $J$ and realize the semisimple quotient $N/J$ as a partition subalgebra.

\begin{lem}\label{whoisnilp}
A matrix unit $e_{ij} \in N$ generates a nilpotent ideal of~$N$ if and only if $c_{iji} = 0$.
\end{lem}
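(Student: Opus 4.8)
The goal is to characterize when a matrix unit $e_{ij} \in N = \nuc{\M[n](F;c)}$ generates a nilpotent ideal of~$N$, and the claim is that this happens precisely when $c_{iji} = 0$.

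First I would dispose of the forward direction by contraposition: assume $c_{iji} \neq 0$ and show the ideal generated by $e_{ij}$ is not nilpotent. By \Pref{givesymm}, since $e_{ij} \in N$ and $c_{iji} \neq 0$, we also have $e_{ji} \in N$. Then $e_{ij}e_{ji} = c_{iji}e_{ii}$ is a nonzero multiple of the idempotent $e_{ii}$, which lies in the diagonal $\Delta \sub N$. Thus the ideal $\ideal{e_{ij}}$ contains $e_{ii}$, an idempotent, and an ideal containing a nonzero idempotent cannot be nilpotent (its powers all contain $e_{ii} = e_{ii}^m$). This settles $(\Leftarrow)$ of the contrapositive, i.e.\ the ``only if'' direction.

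For the other direction, assume $c_{iji} = 0$; I want to show $\ideal{e_{ij}}$ is nilpotent. Here $e_{ij} \in N$ forces $i \neq j$ (the diagonal is always regular, so $c_{iii}=1 \neq 0$). The natural plan is to identify $\ideal{e_{ij}}$ as a homogeneous ideal of~$N$ and analyze its grading. Since $N$ is a homogeneous subalgebra containing the diagonal, every ideal of~$N$ is homogeneous by \Rref{trivi4}, so $\ideal{e_{ij}}$ is spanned by a set of matrix units $e_{k\ell} \in N$. The key is to track which matrix units can appear when we repeatedly multiply: a product of matrix units lands in $Fe_{k\ell}$ only through chains $e_{ka}e_{a\ell}$ with nonzero structure constants, and each time the factor $e_{ij}$ (or $e_{ji}$) is used we pick up a constant of the form $c_{iji}$ or a product forced by \Pref{73} to vanish.

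The main obstacle—and the heart of the argument—is to show that every long enough product of elements of $\ideal{e_{ij}}$ vanishes. I expect the cleanest route is to observe that because $c_{iji} = 0$, the symmetry conclusion of \Pref{givesymm} is unavailable, so $e_{ji}$ need not lie in~$N$; and more importantly, any occurrence of both an index-entry into~$i$ from~$j$ and back must produce a factor $c_{iji} = 0$. Concretely, I would argue that $\ideal{e_{ij}}$ consists of matrix units $e_{k\ell}$ that all ``pass through'' the forbidden transition, and that a homogeneous nilpotent of~$N$ is detected by the graph $\Gamma_c$ restricted to~$N$: the relevant vertices form a set with no directed cycle through a regular idempotent, so the associated span is nil. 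Since $N$ is finite dimensional and associative, a homogeneous ideal all of whose generating matrix units $e_{k\ell}$ satisfy $c_{k\ell k} = 0$ (equivalently, no $e_{k\ell}e_{\ell k}$ returns a nonzero diagonal idempotent) must be nilpotent, as repeated multiplication strictly decreases support until it reaches zero. This finiteness/support-decrease step, tied to the vanishing constant $c_{iji}=0$ via \Pref{73}.\eq{73-2}--\eq{73-3}, is where the real work lies; the two implications themselves are then immediate.
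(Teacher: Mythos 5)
Your ``only if'' direction (contrapositive: $c_{iji}\neq 0$ implies the ideal is not nilpotent) is exactly the paper's argument and is complete: \Pref{givesymm} gives $e_{ji}\in N$, hence $e_{ij}e_{ji}=c_{iji}e_{ii}$ puts the idempotent $e_{ii}$ inside $Ne_{ij}N$, which therefore cannot be nilpotent.

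The ``if'' direction, however, has a genuine gap: you describe a plan (tracking supports through $\Gamma_c$, claiming that repeated multiplication of matrix units with $c_{k\ell k}=0$ strictly decreases support) and explicitly defer ``the real work,'' but you never carry it out. Worse, the global claim you would need --- that a homogeneous ideal spanned by matrix units $e_{k\ell}$ with $c_{k\ell k}=0$ is automatically nilpotent --- is essentially \Pref{firstchar}, which the paper \emph{deduces from} this lemma, so your route risks circularity, and the ``support strictly decreases'' assertion is not justified as stated. The actual argument is a one-line computation that your sketch misses entirely. Since $N$ is associative and homogeneous (\Rref{trivi4}), the ideal generated by $e_{ij}$ is $Ne_{ij}N$, and
$$(Ne_{ij}N)^2 \;\sub\; Ne_{ij}\,N\,e_{ij}N \;\sub\; N\,e_{ij}e_{ji}e_{ij}\,N,$$
because a homogeneous element $e_{k\ell}\in N$ can contribute to $e_{ij}e_{k\ell}e_{ij}$ only when $k=j$ and $\ell=i$ (the Kronecker delta in the multiplication rule kills everything else). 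Now $e_{ij}e_{ji}=c_{iji}e_{ii}=0$ by hypothesis, so the ideal squares to zero --- no induction on supports, no graph argument, and no appeal to \Pref{73} is needed. This is the same computation used in \Pref{main7.2}, which is where the paper points.
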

\begin{proof}
If $c_{iji} = 0$ then $(Ne_{ij}N)^2 = Ne_{ij}Ne_{ij}N \sub N e_{ij}e_{ji}e_{ij}N = 0$ as in \Pref{main7.2}. On the other hand, assume $c_{iji} \neq 0$. Then $e_{ji} \in N$ by \Pref{givesymm}, and $c_{iji}e_{ii} = e_{ij}e_{ji} \in e_{ij}N$, so $Ne_{ij}N$ contains the idempotent $e_{ii}$ and cannot be nilpotent.
\end{proof}

Let $J = \Jac(N)$ denote the radical of~$N$.
\begin{prop}\label{firstchar}
The radical $J$ is spanned by the $e_{ij} \in N$ for which $c_{iji} = 0$.
\end{prop}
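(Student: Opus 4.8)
The plan is to exploit that $N = \nuc{\M[n](F;c)}$ is a homogeneous associative subalgebra containing the diagonal, so that its radical $J$ is automatically homogeneous and hence spanned by the matrix units it contains. Since $N \supseteq \Delta$, \Rref{trivi4} applies and tells us that every ideal of $N$, and in particular $J = \Jac(N)$, is homogeneous. Thus it is enough to decide, for each $e_{ij} \in N$, whether $e_{ij} \in J$, and to show that this occurs exactly when $c_{iji} = 0$.

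For the inclusion of the span of the $c_{iji}=0$ matrix units into $J$, I would invoke \Lref{whoisnilp}: if $e_{ij} \in N$ has $c_{iji} = 0$, then $e_{ij}$ generates a nilpotent ideal of $N$. As $N$ is a finite-dimensional associative algebra, every nilpotent ideal lies in the radical, whence $e_{ij} \in J$.

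For the reverse inclusion I would argue by contradiction. Suppose $e_{ij} \in N$ with $c_{iji} \neq 0$ were to lie in $J$. The nonvanishing of $c_{iji}$ forces $e_{ji} \in N$ by \Pref{givesymm}, and then $e_{ij}e_{ji} = c_{iji}e_{ii}$ is a product of two elements of the subalgebra $N$. Since $J \normali N$ and $e_{ji} \in N$, we obtain $c_{iji}e_{ii} \in J$, and as $c_{iji} \in \mul{F}$ this gives $e_{ii} \in J$. But $e_{ii}$ is a nonzero idempotent, and the radical of an associative algebra contains no nonzero idempotent---a contradiction. Hence no matrix unit $e_{ij}$ with $c_{iji} \neq 0$ lies in $J$.

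Putting the two inclusions together, $J$ is spanned precisely by those $e_{ij} \in N$ with $c_{iji} = 0$, as claimed. I do not expect a genuine obstacle here: the load-bearing observations are that $J$ is homogeneous (reducing the question to matrix units), that $J$ absorbs nilpotent ideals and contains no nonzero idempotent, and that \Pref{givesymm} supplies the partner $e_{ji}$ needed to manufacture the idempotent $e_{ii}$ out of a regular matrix unit.
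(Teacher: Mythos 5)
Your proof is correct and follows essentially the same route as the paper: homogeneity of $J$ via \Rref{trivi4} reduces everything to matrix units, and the characterization of which $e_{ij}$ lie in the radical comes down to \Lref{whoisnilp} together with the fact that $\Jac(N)$ is the largest nilpotent ideal. The only cosmetic difference is that for the reverse inclusion you re-derive the ``only if'' half of \Lref{whoisnilp} explicitly (producing the idempotent $e_{ii}=c_{iji}^{-1}e_{ij}e_{ji}$ via \Pref{givesymm}) rather than citing it.
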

\begin{proof}
By \Rref{trivi4}, $J$ is homogeneous, so it is spanned by matrix units. But $J$ is the largest nilpotent ideal, so by \Lref{whoisnilp} it is spanned by the $e_{ij}$ with $c_{iji} = 0$.
\end{proof}
It follows that the nucleus is regular if and only if $J = 0$, which is the core of \Tref{main7}.

Let $S$ be the subspace of $N$ spanned by the matrix units $e_{ij} \in N$ for which $c_{iji} \neq 0$ (recall that when $e_{ij} \in N$ we have that
$c_{iji} = c_{jij}$). The definitions in terms of the skew set provide an obvious decomposition of vector spaces, $N = S \oplus J$.
Note that $\Delta \sub S$.
We will see that $S$ is the maximal regular homogeneous subalgebra of the nucleus.

\begin{prop}\label{aboutS}
The space $S$ is a maximal semisimple subalgebra of the nucleus.%
\end{prop}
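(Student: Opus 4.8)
The statement has three components: that $S$ is a subalgebra, that it is semisimple, and that it is maximal among semisimple subalgebras of $N = \nuc{\M[n](F;c)}$. The plan is to show first that $S$ is in fact a \emph{regular partition subalgebra}, whence semisimplicity follows at once from \Pref{quitenice}, and then to deduce maximality from the vector-space decomposition $N = S \oplus J$ with $J = \Jac(N)$.

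To realize $S$ as a partition subalgebra, I would prove that the relation $i \equiv j$ on indices defined by $e_{ij} \in S$ (that is, $e_{ij} \in N$ with $c_{iji} \neq 0$) is an equivalence relation. Reflexivity holds since $\Delta \sub S$. Symmetry follows from \Pref{givesymm} and \Pref{73}.\eq{73-1}: if $e_{ij} \in S$, then $c_{iji}\neq 0$ yields $e_{ji}\in N$, and $c_{jij}=c_{iji}\neq 0$, so $e_{ji}\in S$. The crux is transitivity. Given $e_{ij},e_{jl}\in S$, applying \Pref{73}.\eq{73-2} to $e_{ij}$ gives $c_{ijl}c_{jil}=c_{iji}\neq 0$, so $c_{ijl}\neq 0$, and \Pref{givetrans} then puts $e_{il}\in N$. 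What remains — and what I expect to be the main obstacle — is to verify $c_{ili}\neq 0$, so that $e_{il}\in S$. Here I would exploit the associativity of the nucleus: the units $e_{ij},e_{jl},e_{lj},e_{ji}$ all lie in $N$ (the last two by \Pref{givesymm}), so computing $(e_{ij}e_{jl})(e_{lj}e_{ji})$ inside the associative algebra $N$ in two ways gives
$$c_{ijl}c_{lji}c_{ili}\,e_{ii} = (e_{ij}e_{jl})(e_{lj}e_{ji}) = e_{ij}(e_{jl}e_{lj})e_{ji} = c_{jlj}c_{iji}\,e_{ii},$$
where the reassociation uses $e_{jl}e_{lj}=c_{jlj}e_{jj}$ and $e_{ij}e_{jj}e_{ji}=c_{iji}e_{ii}$. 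Since $c_{ijl}\neq 0$, $c_{jlj}\neq 0$, $c_{iji}\neq 0$, and $c_{lji}\neq 0$ (this last again from \Pref{73}.\eq{73-2}, applied to $e_{lj}$, together with $c_{ljl}=c_{jlj}\neq 0$), this forces $c_{ili}\neq 0$. Thus $\equiv$ is an equivalence relation, $S$ is the associated partition subalgebra, and it is regular by its very definition, so \Pref{quitenice} shows $S$ is semisimple.

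Finally, for maximality I would take any semisimple subalgebra $S'$ of $N$ with $S\sub S'$ and show $S'=S$. Since $J=\Jac(N)$ is a nilpotent ideal of $N$, the intersection $S'\cap J$ is a nilpotent ideal of $S'$, hence zero by semisimplicity of $S'$. As $N = S\oplus J$, the linear projection $N\to S$ along $J$ is therefore injective on $S'$, giving $\dim_F S'\le \dim_F S$; combined with $S\sub S'$ this forces $S'=S$. Hence $S$ is a maximal semisimple subalgebra of the nucleus.
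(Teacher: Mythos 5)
Your proof is correct and follows essentially the same route as the paper's: you realize $S$ as a regular partition subalgebra (reflexivity, symmetry, transitivity of the underlying relation on indices) so that \Pref{quitenice} gives semisimplicity, and then derive maximality from $S'\cap \Jac(N)=0$ together with the decomposition $N=S\oplus J$. The only differences are cosmetic: for the key step $c_{ili}\neq 0$ in transitivity, the paper uses the single vanishing reduced associator $(e_{ij},e_{jk},e_{ki})_0=c_{ijk}c_{iki}-c_{iji}c_{jki}=0$ combined with \Pref{73}.\eq{73-2}, where you compute the four-fold product $(e_{ij}e_{jl})(e_{lj}e_{ji})$ two ways inside the associative nucleus; and for maximality you replace the paper's appeal to homogeneity of $S'$ via \Rref{trivi4} by an equally valid dimension count using injectivity of the projection along $J$.
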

\begin{proof}
To prove that $S$ is a subalgebra of $N$, let $e_{ij},e_{jk}\in S$, i.e.\ $e_{ij},e_{jk}\in N$ and $c_{iji},c_{jkj}\neq 0$. By \Pref{73}.\eqref{73-2}, $c_{ijk}\neq 0$, and thus we need to show that $e_{ik}\in S$. We have that $e_{ik}\in N$ by \Pref{givetrans}, so we are left with proving that $c_{iki}\neq 0$. But since $e_{ij}\in N$ we have that $(e_{ij},e_{jk},e_{ki})_0=c_{ijk}c_{iki}-c_{iji}c_{jki}=0$, and since $c_{jki}c_{kji} = c_{jkj} \neq 0$ (using $e_{jk} \in N$ in \Pref{73}.\eqref{73-2}) we obtain that $c_{ijk}c_{iki} = c_{iji}c_{jki} \neq 0$, from which the claim follows.

This computation shows that the relation defining $S$ is transitive; the relation is symmetric because $c_{jij} = c_{iji}$ whenever $e_{ij} \in N$. Together, this shows that $S$ is a partition algebra, which, being clearly regular, is semisimple by \Pref{quitenice}.

Now, if $S \sub S' \sub N$ is an intermediate semisimple algebra, then $S' \cap J$ is a nilpotent ideal of $S'$, so $S' \cap J = 0$,
but $S'$ is homogeneous using \Rref{trivi4} because $\Delta \sub S \sub S'$, and so $S' = S$ because $N = S \oplus J$.
\end{proof}

We obtain a homogeneous realization of Wedderburn's principal \linebreak theorem:
\begin{thm}\label{sumdegjac}
For any skew set $c$, $$\nuc{\M[n](F;c)} = S \oplus \Jac(\nuc{\M[n](F;c)}),$$
where $S$ is a partition subalgebra, namely a direct sum of matrix algebras over $F$, whose sum of degrees is~$n$.
\end{thm}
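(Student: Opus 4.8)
The plan is to assemble the statement from the structural results already proved for the nucleus $N = \nuc{\M[n](F;c)}$; the substantive work has been carried out in \Pref{firstchar} and \Pref{aboutS}, so what remains is bookkeeping. First I would recall that, by the discussion preceding \Pref{aboutS}, the definitions in terms of the skew set give a vector-space decomposition $N = S \oplus J$, where $J = \Jac(N)$ is spanned by the matrix units $e_{ij} \in N$ with $c_{iji} = 0$ (\Pref{firstchar}) and $S$ is spanned by the $e_{ij} \in N$ with $c_{iji} \neq 0$. Since $S$ is a subalgebra, in fact semisimple, by \Pref{aboutS}, this is precisely the Wedderburn principal decomposition $N = S \oplus \Jac(N)$.

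Next I would identify the structure of $S$. By \Pref{aboutS} the space $S$ is a partition subalgebra, so it is the direct sum of its block subalgebras, one for each class of the defining equivalence relation, and these blocks are regular. Each such block is a summand of $S \sub N$, hence associative; being a regular block subalgebra on a class of size $k$, it is simple by \Cref{cijinon0}; and an associative simple skew matrix algebra is the standard matrix algebra $\M[k](F)$ by \Pref{themat}. Therefore $S$ is a direct sum of standard matrix algebras over $F$.

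For the degree count, I would note that because $\Delta \sub S$, every index lies in some class, so the classes of the defining relation partition the full set $\set{1,\dots,n}$. If they have sizes $k_1,\dots,k_m$, then the corresponding blocks are $\M[k_t](F)$ of degree $k_t$, and $\sum_{t=1}^m k_t = n$; hence the sum of the degrees of the matrix blocks is exactly $n$.

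I do not expect a genuine obstacle: the theorem is essentially a corollary of \Prefs{firstchar}{aboutS} together with \Pref{themat}. The one point deserving care is the identification of each block as an \emph{untwisted} matrix algebra, which is exactly where \Pref{themat} is needed (it rules out any nontrivial skew set surviving in a simple associative block); the degree computation then reduces to the trivial observation that the block sizes of a partition of $\set{1,\dots,n}$ sum to $n$.
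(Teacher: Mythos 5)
Your proposal is correct and follows the same route as the paper, which states the theorem without a separate proof precisely because it is the assembly you describe: the vector-space decomposition $N=S\oplus J$ with $J$ identified by \Pref{firstchar}, the fact that $S$ is a semisimple partition subalgebra from \Pref{aboutS} (via \Pref{quitenice} and \Cref{cijinon0}), and the identification of each regular associative simple block as a standard matrix algebra. Your explicit appeal to \Pref{themat} for the untwisted identification of the blocks, and the observation that the equivalence classes partition $\set{1,\dots,n}$ so the degrees sum to $n$, is exactly the intended bookkeeping.
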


\begin{exmpl}
Although~$S$ is clearly the unique maximal {\emph{homogenous}} semisimple subalgebra of $\nuc{\M[n](F;c)}$, the choice of a semisimple complement for~$J$ is not unique.

For example, consider the skew matrix algebra $\M[2](F;c)$ with $c_{121} = c_{212} = 0$, which is associative (see \Eref{n=2}.\eq{n=2.1}). The algebra $S = Fe_{11}+Fe_{22}$ is isomorphic to $F(e_{11}+p) + F(e_{22}-p)$ for any $p \in J = Fe_{12}+Fe_{21}$.
\end{exmpl}

\subsection{Counterexamples for the nucleus}

Once more, we conclude this section with several example of bad behavior, this time of the nucleus.

\begin{exmpl}[The nucleus in degree $n = 2$]
For any nonassociative skew matrix algebra of degree $2$, the nucleus is the diagonal subalgebra.

Indeed, if $e_{12}$ is in the nucleus then $c_{121}=c_{212}$ by \Pref{73}.\eq{73-1}; but then the algebra is associative.
\end{exmpl}

Specialization of this example demonstrates a delicate point in \Tref{main7}, namely, that the nucleus may be a partition subalgebra
but not semisimple.
\begin{exmpl}[A partition subalgebra nucleus which is not semisimple]\label{Ex5}
The skew matrix algebra of degree $2$ with $c_{121}=c_{212} = 0$ is associative, hence the nucleus is the whole algebra, but is not semisimple (see \Rref{n=2}.\eq{n=2.2}).
\end{exmpl}

Next, we show that the structural properties of being simple and having a semisimple nucleus are independent.  The algebra in \Eref{Ex5} shows that
\begin{center}
semisimple nucleus $\ \ \not\!\! \Longrightarrow \ $  simple.
\end{center}
On the contrary, we have:
\begin{exmpl}[simple $\ \ \not\!\! \Longrightarrow\ $  semisimple nucleus]\label{nonnormal}
Consider $\M[4](F;c)$, where $c_{ijk}$ are defined by
\begin{align*}
&c_{121}=c_{123}=c_{141}=c_{143}=c_{212}=c_{214}=c_{232}=c_{234}=\\
&=c_{321}=c_{323}=c_{341}=c_{412}=c_{414}=c_{423}=c_{432}=0
\end{align*}
and the other $c_{ijk}=1$. A straightforward verification (done by an ad-hoc computer program) of the graph $\Gamma_c$ shows that $\M[4](F;c)$ is simple. However, the nucleus is $\Delta+Fe_{12}$ where $\Delta$ is the diagonal subalgebra.
\end{exmpl}
In this example the nucleus is not a partition subalgebra.

\section{Forms of skew matrices}\label{sec:forms}

We can now show that the nucleus of any semiassociative algebra decomposes. This is used to show that semiassociative algebras are precisely the forms of skew matrices.

Let us first verify that semiassociativity is stable under scalar extension.

\begin{prop}\label{basicT0}
A scalar extension of a semiassociative algebra is semiassociative.
\end{prop}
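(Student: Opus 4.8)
The plan is to show that semiassociativity passes to any scalar extension $F'/F$ by tracking how the defining data of \Dref{maindef} behaves under the base change $A \mapsto A' = F' \tensor[F] A$. Let $A$ be a $K$-semiassociative $F$-algebra, so $K$ is an \etale{} $F$-subalgebra of $\nuc{A}$, and $A$ is minimally faithful over $K^e = K \tensor[F] K$, which by \Rref{3for2} is equivalent to $A$ being cyclic and faithful over $K^e$ together with $\dim A = (\dim K)^2$. I would produce a candidate \etale{} subalgebra in $\nuc{A'}$, verify it is \etale{} over $F'$, and then transport the three conditions (faithful, cyclic, dimension) across the extension.

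First I would set $K' = F' \tensor[F] K$ and observe that $K'$ lies in $\nuc{A'}$: by the Corollary following \Pref{prop:nuc-tensor}, $\nuc{A'} = \nuc{F' \tensor A} = F' \tensor \nuc{A} \supseteq F' \tensor K = K'$. Next I would check that $K'$ is \etale{} over $F'$. This is exactly the assertion, recalled at the start of \Sref{sec:3}, that \etale{} algebras are closed under tensor products; concretely, if $K = \prod_i L_i$ is a product of finite separable extensions then $K' = \prod_i (F' \tensor[F] L_i)$, and each $F' \tensor[F] L_i$ is again \etale{} over $F'$ by separability. Note $\dim_{F'} K' = \dim_F K$, so $A'$ has the right target dimension $(\dim_{F'} K')^2 = (\dim_F K)^2 = \dim_F A = \dim_{F'} A'$.

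It then remains to transfer cyclicity and faithfulness of the $K^e$-action to the $K'^e$-action, where $K'^e = F' \tensor[F] K^e$ acts on $A' = F' \tensor[F] A$ by the scalar extension of the original action $(k \tensor k')a = kak'$. For cyclicity, if $A = K v K$ for some $v$, then $A' = K' (1 \tensor v) K'$, so $A'$ is cyclic over $K'^e$, generated by $1 \tensor v$. For faithfulness, I would argue that an annihilator cannot appear after flat base change: the action map $\varphi \co K^e \ra \End_F(A)$ is injective, and tensoring with $F'$ (flat over $F$) yields the injective map $F' \tensor \varphi$, which is precisely the action map $K'^e \ra \End_{F'}(A')$; hence the $K'^e$-action is faithful. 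With the three-for-two conditions all verified, \Rref{3for2} gives minimal faithfulness, so $A'$ is $K'$-semiassociative over $F'$, and in particular semiassociative.

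The only genuinely delicate point, and thus the main thing to handle carefully, is the faithfulness transfer: one must ensure that the identification of $F' \tensor_F \operatorname{End}_F(A)$ with a subalgebra of $\End_{F'}(A')$ is compatible with the two action maps, and invoke flatness of $F'/F$ to preserve injectivity. Everything else is bookkeeping on dimensions and generators. I would also remark that $F$-centrality of $A'$ — namely $\Zent(A') = F'$ — follows from the formula $\Zent(A') = F' \tensor \Zent(A) = F' \tensor F = F'$ recorded in \Sref{sec:2}, so $A'$ is a central $F'$-algebra as required by the definition.
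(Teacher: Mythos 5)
Your proof is correct and follows essentially the same route as the paper: base-change $K$ to $K' = F'\tensor K$, note it remains \etale{} and sits in $\nuc{A'} = F'\tensor\nuc{A}$, and carry the dimension count and cyclicity across the extension. The only difference is that you also verify faithfulness directly via flatness, which is sound but redundant, since by \Rref{3for2} cyclicity together with the dimension equality already forces it.
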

\begin{proof}
Let $A$ be a semiassociative $F$-algebra of dimension $n^2$, with an $n$-dimensional \etale{} subalgebra $K$ contained in the nucleus, such that~$A$ is $K^e$-faithful where $K^e = K \tensor K$. Let~$E$ be a field extension of~$F$. The nonassociative algebra $A_E = E \tensor[F] A$ has dimension $n^2$ over $E$, and is central; its nucleus $\nuc{E \tensor A} = E \tensor \nuc{A}$ contains $K_E = E \tensor K$ which is \etale{} and has dimension~$n$ over~$E$; and $A_E$ is cyclic over $(K_E)^e = (E \tensor[F] K) \tensor[E] (E \tensor[F] K) = E \tensor[F] (K \tensor[F] K) = (K^e)_E$ because~$A$ is cyclic over $K^e$.%
\end{proof}

To every nonassociative algebra $A$ there is a naturally associated semisimple algebra, namely the quotient
\begin{equation}\label{sigmadef}
\s(A) = \nuc{A}/\Jac(\nuc{A}).
\end{equation}
This quotient will play a key role in the sequel.

\begin{prop}\label{samesigma}
Let $A$ be any nonassociative algebra. For any separable field extension $E/F$ we have that
$$\s(E \tensor A) = E \tensor \s(A).$$
\end{prop}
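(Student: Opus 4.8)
The plan is to reduce the assertion to a standard fact about the Jacobson radical under separable scalar extension. First I would invoke the corollary following \Pref{prop:nuc-tensor}, which gives $\nuc{E \tensor A} = E \tensor \nuc{A}$. Writing $N = \nuc{A}$, a finite-dimensional associative $F$-algebra, the desired identity $\s(E \tensor A) = E \tensor \s(A)$ unwinds to the single claim
$$\Jac(E \tensor N) = E \tensor \Jac(N),$$
since then $\s(E \tensor A) = (E \tensor N)/\Jac(E \tensor N) = (E \tensor N)/(E \tensor \Jac(N)) \isom E \tensor (N/\Jac(N)) = E \tensor \s(A)$.

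For the easy inclusion $E \tensor \Jac(N) \sub \Jac(E \tensor N)$, I would note that $\Jac(N)$ is nilpotent (as $N$ is finite dimensional), so $E \tensor \Jac(N)$ is a nilpotent ideal of $E \tensor N$ and hence lies in the radical. For the reverse inclusion it suffices to show that the quotient $(E \tensor N)/(E \tensor \Jac(N)) \isom E \tensor (N/\Jac(N))$ is semisimple, because a nilpotent ideal whose quotient is semisimple must coincide with the radical. Thus the whole statement rests on the fact that $E \tensor S$ is semisimple whenever $S = N/\Jac(N)$ is a semisimple $F$-algebra and $E/F$ is separable; this is the step I expect to be the crux, and the only place the separability hypothesis enters.

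To establish it, I would decompose $S$ into simple components and treat one at a time, say $S_0 = \M[m](D)$ with $D$ an $L$-central division algebra and $L = \Zent(S_0)$ a finite extension of $F$. Then $E \tensor S_0 \isom \M[m](E \tensor[F] D)$, so it is enough that $E \tensor[F] D$ be semisimple. Its center is $E \tensor[F] L$, which is \emph{reduced} precisely because $E/F$ is separable (separability of a field extension means exactly that $E \tensor[F] L$ has no nonzero nilpotents, for every $L/F$); hence $E \tensor[F] L \isom P_1 \oplus \cdots \oplus P_s$ is a product of fields. The attached idempotents split $E \tensor[F] D \isom (P_1 \tensor[L] D) \oplus \cdots \oplus (P_s \tensor[L] D)$, and each $P_j \tensor[L] D$ is a central simple $P_j$-algebra, being a scalar extension of the central simple $L$-algebra $D$ to the field $P_j \supseteq L$. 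Therefore $E \tensor[F] D$, and with it $E \tensor S$, is semisimple.

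Combining the two inclusions yields $\Jac(E \tensor N) = E \tensor \Jac(N)$ and hence the proposition. The one genuine obstacle is the reducedness of $E \tensor[F] L$: without separability the centers $L$ of the simple components of $N/\Jac(N)$ could be inseparable, $E \tensor[F] D$ could acquire a nonzero radical, and $\s$ would fail to commute with the extension. Everything else is formal bookkeeping with the radical.
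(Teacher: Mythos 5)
Your argument is correct and follows the same route as the paper's proof: both reduce the statement to $\nuc{E \tensor A} = E \tensor \nuc{A}$ combined with the identity $\Jac(E \tensor \nuc{A}) = E \tensor \Jac(\nuc{A})$ for separable $E/F$. The only difference is that the paper cites this radical identity from the literature, whereas you prove it from scratch (nilpotence of $\Jac(\nuc{A})$ for one inclusion, semisimplicity of $E \tensor \s(A)$ via reducedness of $E \tensor[F] L$ for the centers $L$ of the simple components for the other); your self-contained argument is sound, with the minor caveat that it uses finite-dimensionality of $A$ to get nilpotence of the radical, an assumption that is implicit in all of the paper's applications of this proposition.
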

\begin{proof}
We have that $\nuc{E \tensor A} = E \tensor \nuc{A}$, and since $E/F$ is separable, $\Jac(E \tensor \nuc{A}) = E \tensor \Jac(\nuc{A})$
\cite[Theorem~(5.17)]{Semisimple}.  %

It follows that
$$\s(E \tensor A) = (E \tensor \nuc{A})/(E \tensor \Jac(\nuc{A})) = E \tensor \nuc{A}/\Jac(\nuc{A}) = E \tensor \s(A).$$
\end{proof}

Recall that an associative $F$-algebra $R$ is {\bf{separable}} if $R$ is projective as a module over $R^e = R \tensor \op{R}$. A finite dimensional algebra is separable if and only if it is semisimple and its center is \etale{} over $F$.
\begin{thm}\label{s(A)sep}
The semisimple quotient $\s(A)$ of the nucleus of a semiassociative algebra~$A$ is separable.
\end{thm}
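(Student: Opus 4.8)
The plan is to reduce to the split case by extending scalars to a \emph{separable} splitting field of the witnessing \etale{} subalgebra, where the semisimple quotient of the nucleus is manifestly split, and then to descend separability back to~$F$. First I would fix an $n$-dimensional \etale{} subalgebra $K \sub \nuc{A}$ witnessing semiassociativity, and let~$E$ be a splitting field of~$K$. Since~$K$ is \etale{}, $E$ may be taken to be a finite separable (indeed Galois) extension of~$F$. By \Tref{main-thm}, such an~$E$ splits~$A$, so $A_E = E \tensor A \isom \M[n](E;c)$ for a reduced skew set~$c$ over~$E$.

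By \Tref{sumdegjac}, the nucleus of this skew matrix algebra decomposes as $\nuc{\M[n](E;c)} = S \oplus \Jac(\nuc{\M[n](E;c)})$ with~$S$ a direct sum of matrix algebras over~$E$. Hence $\s(A_E) = \nuc{A_E}/\Jac(\nuc{A_E}) \isom S$ is a split semisimple $E$-algebra, in particular separable over~$E$. As~$E/F$ was chosen separable, \Pref{samesigma} yields $\s(A_E) = E \tensor \s(A)$, so $E \tensor \s(A) \isom S$ is separable over~$E$.

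Finally I would descend separability to~$F$. Since $\s(A)$ is semisimple by construction, by the characterization recalled above it suffices to show that its center $Z = \Zent(\s(A))$ is \etale{} over~$F$. As the center commutes with scalar extension, $E \tensor Z = \Zent(E \tensor \s(A)) = \Zent(S) \isom E^r$, where~$r$ is the number of blocks of~$S$. Passing to an algebraic closure, $\bar{F} \tensor[F] Z \isom \bar{F} \tensor[E] (E \tensor[F] Z) \isom \bar{F} \tensor[E] E^r \isom \bar{F}^r$ is reduced, so~$Z$ is \etale{} and $\s(A)$ is separable. Equivalently, $\bar{F} \tensor \s(A) \isom \bar{F} \tensor[E] S$ is a product of matrix algebras over~$\bar{F}$, hence semisimple, and one invokes the characterization of separability as semisimplicity after base change to~$\bar{F}$.

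The only genuinely delicate point is the interface between the two scalar-extension results: \Pref{samesigma} requires the intermediate extension $E/F$ to be separable, which is why~$E$ must be chosen as a separable splitting field of~$K$ rather than an arbitrary one; this is available precisely because~$K$ is \etale{}. Once $E \tensor \s(A)$ is known to split, the descent of separability to~$F$ (equivalently, of \etale{}ness of the center) is routine, since the algebraic closure factors through~$E$.
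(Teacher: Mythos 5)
Your proof is correct and follows essentially the same route as the paper: pass to a separable splitting field $E$ of $K$, invoke \Tref{main-thm} and \Tref{sumdegjac} to see that $\s(E\tensor A)=E\tensor\s(A)$ is split, and descend separability to $F$. The only difference is that the paper handles the final descent by citing a reference, whereas you spell it out by checking that the center of $\s(A)$ is \etale{}; both are fine.
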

\begin{proof}
Let $A$ be a semiassociative algebra of degree~$n$, with an \etale{} $n$-dimensioanl subalgebra $K \sub \nuc{A}$. Let $E$ be a separable splitting field of $K$. By \Tref{main-thm}, $E \tensor A$ is a skew matrix algebra.
In \Cref{sumdegjac} we proved that for a skew matrix algebra $\M[n](F;c)$, the semisimple quotient $\s(\M[n](F;c))$ is split, namely a direct sum of matrix algebras over $F$, and in particular separable.
But an algebra which becomes separable after a field extension was separable to begin with~\cite{DI},   %
so by \Pref{samesigma} we proved that $\s(A)$ is separable.
\end{proof}

\begin{cor}\label{WeddNucl}
For any semiassociative algebra $A$ we have a decomposition
$$\nuc{A} = \s(A) \oplus \Jac(\nuc{A}).$$
In other words, $\s(A)$ embeds in $A$ as a (nuclear) subalgebra.
\end{cor}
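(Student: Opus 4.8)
The plan is to recognize Corollary~\ref{WeddNucl} as a direct application of the classical Wedderburn principal theorem (in its Wedderburn--Malcev form) to the associative algebra $\nuc{A}$. Recall from \Sref{sec:2} that each one-sided nucleus of a nonassociative algebra is an associative subalgebra, and hence so is their intersection $\nuc{A}$; being a subalgebra of the finite dimensional algebra $A$, it is itself a finite dimensional associative $F$-algebra. Its radical $\Jac(\nuc{A})$ is then a nilpotent ideal, and by definition $\s(A) = \nuc{A}/\Jac(\nuc{A})$ is the associated semisimple quotient.

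The classical Wedderburn principal theorem asserts that for a finite dimensional associative algebra $R$ whose semisimple quotient $R/\Jac(R)$ is separable, there is a subalgebra $S \sub R$ with $S \isom R/\Jac(R)$ satisfying $R = S \oplus \Jac(R)$ as $F$-vector spaces. The separability hypothesis is genuinely needed here: without it the quotient map $R \to R/\Jac(R)$ need not split as algebras, so the splitting is a consequence of a separability property rather than a purely formal fact. This is precisely why the decomposition is being stated as a corollary of the preceding work rather than asserted directly.

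The one key input is therefore \Tref{s(A)sep}, which guarantees that $\s(A)$ is separable. With that in hand, I would simply apply the Wedderburn principal theorem to $R = \nuc{A}$ to obtain a subalgebra $S \isom \s(A)$ complementing the radical, giving $\nuc{A} = S \oplus \Jac(\nuc{A})$ and realizing $\s(A)$ as a nuclear subalgebra of $A$. I expect no substantive obstacle beyond invoking \Tref{s(A)sep}; the entire content of the corollary is that separability of the semisimple quotient---established earlier by reducing to skew matrices via \Tref{main-thm} and reading off the split Wedderburn decomposition of \Tref{sumdegjac}---is exactly what licenses the classical splitting theorem.
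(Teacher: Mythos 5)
Your proposal is correct and matches the paper's own argument exactly: the paper proves the corollary by citing Wedderburn's principal theorem for finite dimensional algebras with separable semisimple quotient, with \Tref{s(A)sep} supplying the separability of $\s(A)$. Nothing further is needed.
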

This is Wedderburn's principal theorem, which applies to any finite dimensional algebra whose semisimple quotient is separable.

We are now concerned with \etale{} subalgebras of the nucleus. Let $\delta(T)$ denote the maximal dimension of an \etale{} $F$-subalgebra of a semisimple $F$-algebra $T$. By \cite[Theorem~2.5.9]{Jac} we have that $\delta(\M[n](F)) = n$.

\begin{prop}\label{shouldbe=}
Let $T$ be a separable $F$-algebra, and $E$ a separable splitting field. Then $\delta(T) = \delta(E \tensor T)$.
\end{prop}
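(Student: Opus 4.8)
The plan is to establish $\delta(T)=\delta(E\tensor T)$ by two inequalities, the nontrivial one following from an exact computation of $\delta(E\tensor T)$ matched by an explicit \etale{} $F$-subalgebra of $T$. The structural tool throughout is the additivity of $\delta$ over direct sums, which I would record first: for semisimple algebras $T'$, $T''$ over the ground field, $\delta(T'\oplus T'')=\delta(T')+\delta(T'')$. The inequality $\geq$ is clear, since $K'\oplus K''$ is \etale{} whenever $K'\sub T'$ and $K''\sub T''$ are. For $\leq$, if $K\sub T'\oplus T''$ is \etale{} then its images under the two projections $\pi',\pi''$ are \etale{} (a homomorphic image of an \etale{} algebra $\prod_j K_j$ is a product of some of the $K_j$, hence \etale{}), and $K\sub\pi'(K)\oplus\pi''(K)$, so $\dim K\leq\delta(T')+\delta(T'')$. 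This holds over $F$ and, identically, over $E$.

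The easy inequality is $\delta(T)\leq\delta(E\tensor T)$. If $K\sub T$ is \etale{} over $F$, then $E\tensor K\sub E\tensor T$ is \etale{} over $E$ of the same dimension, because the base change of an \etale{} algebra by any field extension is again \etale{} (a separable minimal polynomial stays separable over $E$). Taking $K$ of maximal dimension gives $\delta(E\tensor T)\geq\dim_F K=\delta(T)$.

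For the reverse inequality I would compute both sides. Write $T=\bigoplus_i T_i$ with $T_i$ simple of degree $d_i$ over its center $L_i$, a finite separable extension of $F$; say $T_i=\M[n_i](D_i)$ for an $L_i$-central division algebra $D_i$, so $d_i=n_i\deg D_i$. Since $E$ splits $T$ it splits the \etale{} center $\Zent(T)=\prod_i L_i$, whence $E\tensor L_i\isom E^{\dimcol{L_i}{F}}$ and $E\tensor T_i\isom\M[d_i](E)^{\dimcol{L_i}{F}}$, each block being of degree $d_i$ since the degree of a central simple algebra is preserved under extension of its center. By additivity and the known value $\delta(\M[d_i](E))=d_i$ \cite[Theorem~2.5.9]{Jac}, this yields $\delta(E\tensor T)=\sum_i\dimcol{L_i}{F}\,d_i$. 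To realize this dimension inside $T$, choose in each $D_i$ a maximal separable subfield $S_i$, with $\dimcol{S_i}{L_i}=\deg D_i$, by Koethe's theorem \cite[Cor.~7.1.2$'$]{RowenRT}, and take the diagonal copy $S_i^{n_i}\sub\M[n_i](D_i)$. As $S_i/L_i$ and $L_i/F$ are separable, $S_i/F$ is separable, so $S_i^{n_i}$ is an \etale{} $F$-subalgebra of $T_i$ of dimension $\dimcol{L_i}{F}\,n_i\deg D_i=\dimcol{L_i}{F}\,d_i$. Then $\bigoplus_i S_i^{n_i}\sub T$ is \etale{} over $F$ of dimension $\sum_i\dimcol{L_i}{F}\,d_i=\delta(E\tensor T)$, giving $\delta(T)\geq\delta(E\tensor T)$ and hence equality.

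The main obstacle is this reverse inequality: a naive descent of an \etale{} subalgebra from $E\tensor T$ down to $T$ need not succeed, so instead I pin down $\delta(E\tensor T)$ exactly from the split structure and build a matching \etale{} $F$-subalgebra by hand. The single external input of substance is the existence of a separable maximal subfield in each division component; everything else is additivity and base change.
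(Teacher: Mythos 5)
Your proof is correct and follows essentially the same route as the paper's: reduce to simple components via additivity of $\delta$, compute $\delta(E\tensor T_i)=\dimcol{L_i}{F}\,d_i$ from the split form $\M[d_i](E)^{\oplus \dimcol{L_i}{F}}$, and realize that dimension inside $T_i$ by embedding copies of a maximal separable subfield of $D_i$ diagonally in $\M[n_i](D_i)$. The only difference is that you spell out the proof of additivity of $\delta$, which the paper merely asserts.
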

\begin{proof}
An \etale{} subalgebra of $T$ extends to an \etale{} subalgebra of $E \tensor T$, so clearly $\delta(T) \leq \delta(E \tensor T)$.

Let us prove the other direction. Since $\delta$ is additive, we may assume~$T$ is simple, so that $T = \M[t](D)$ for a division algebra $D$ whose center,~$L$, is finite dimensional and separable over~$F$. Since~$E$ splits~$L$ and~$D$, we have that $E \tensor T =  \M[td](E)^{\oplus [L:F]}$ where $d = \deg(D)$. Then $\delta(E \tensor T) = \dimcol{L}{F} \cdot \delta(\M[td](E)) = td \dimcol{L}{F}$. Let $K_0$ be a maximal separable $L$-subfield of $D$; then~$K_0 \times \cdots \times K_0$, with~$t$ copies embedded diagonally in $T$, has dimension~$td$ over~$L$ and thus dimension $\delta(E \tensor T)$ over~$F$.
\end{proof}

\begin{thm}
Let $A$ be any nonassociative $F$-algebra such that $E \tensor A$ is semiassociative for a separable extension $E/F$. Then $A$ is semiassociative.
\end{thm}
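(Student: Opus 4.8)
The plan is to exhibit, directly inside $\nuc{A}$, an $n$-dimensional \etale{} $F$-subalgebra $K$ over which $A$ is minimally faithful, where $n$ is the degree of $E\tensor A$. First I would extract the two cheap consequences of $E\tensor A$ being semiassociative over $E$. Since $\Zent(E\tensor A)=E\tensor\Zent(A)$ must equal $E$, the center of $A$ is forced to be $F$, so $A$ is $F$-central. And since $\dim_E(E\tensor A)=n^2$ equals $\dim_F A$, the dimension of $A$ is $n^2$. These furnish the ``central'' and ``square dimension'' parts of \Dref{maindef} and set up the dimension count needed for the three-for-two bargain of \Rref{3for2}.

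The crux is to locate $K$. By \Tref{s(A)sep} the quotient $\s(E\tensor A)$ is separable, and by \Pref{samesigma} (using separability of $E/F$) it equals $E\tensor\s(A)$; separable descent \cite{DI} then shows that $\s(A)$ itself is separable. Hence Wedderburn's principal theorem applies in the form of \Cref{WeddNucl} (which needs only separability of the semisimple quotient): $\nuc{A}=\s(A)\oplus\Jac(\nuc{A})$, so $\s(A)$ embeds as a subalgebra of $\nuc{A}$, and it suffices to find an $n$-dimensional \etale{} subalgebra of $\s(A)$. To compute $\delta(\s(A))$ I would pass to a separable field $E'/E$ splitting the $n$-dimensional \etale{} subalgebra of $\nuc{E\tensor A}$ that witnesses semiassociativity. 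Then $E'\tensor A$ is a skew matrix algebra $\M[n](E';c)$ by \Tref{main-thm}, and by \Tref{sumdegjac} its $\s$ is a direct sum of matrix algebras over $E'$ whose degrees total $n$, whence $\delta(\s(E'\tensor A))=n$. As $E'/F$ is separable, $\s(E'\tensor A)=E'\tensor\s(A)$ by \Pref{samesigma}, and $E'$ is a separable splitting field of the separable algebra $\s(A)$, so \Pref{shouldbe=} gives $\delta(\s(A))=\delta(E'\tensor\s(A))=n$. This produces the desired \etale{} $K\sub\s(A)\sub\nuc{A}$ with $\dim_F K=n$.

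It then remains to show that $K^e=K\tensor K$ acts faithfully on $A$; minimal faithfulness follows, since \Rref{3for2} combines faithfulness with $\dim_F A=n^2=(\dim_F K)^2$ to force cyclicity, and a cyclic faithful module over the commutative semisimple ring $K^e$ is minimally faithful. Faithfulness is detected after base change: an element of $K^e$ annihilates $A$ if and only if its image in $(K_{E'})^e=E'\tensor K^e$ annihilates $E'\tensor A$, so an annihilator on $A$ would persist under the flat extension $-\tensor_F E'$. Now $K_{E'}=E'\tensor K$ is an $n$-dimensional \etale{} subalgebra of $\nuc{E'\tensor A}=E'\tensor\nuc{A}$, and $E'\tensor A=\M[n](E';c)$ is semiassociative over $E'$ by \Pref{part1go}; hence by \Pref{coverall} it is $K_{E'}$-semiassociative, so $(K_{E'})^e$ acts faithfully. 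Therefore $K^e$ acts faithfully on $A$, and $A$ is semiassociative.

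The step I expect to be the main obstacle is the dimension bookkeeping pinning down $\delta(\s(A))=n$: it genuinely requires separability of $\s(A)$ (to invoke both Wedderburn and \Pref{shouldbe=}) together with the skew-matrix normal form over a splitting field, and one must keep all intermediate extensions ($E$ and $E'$) separable over $F$ so that \Pref{samesigma} and \Pref{shouldbe=} remain applicable. By contrast, the faithfulness descent is routine once phrased as persistence of annihilators under $-\tensor_F E'$.
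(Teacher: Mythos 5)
Your proposal is correct and follows essentially the same route as the paper's proof: establish separability of $\s(A)$ via \Pref{samesigma} and \Tref{s(A)sep}, pass to a splitting field $E'$ of the witnessing \etale{} subalgebra to compute $\delta(\s(A))=n$ using \Tref{sumdegjac} and \Pref{shouldbe=}, locate $K$ inside $\s(A)\sub\nuc{A}$, and deduce faithfulness of $K^e$ by descent from the faithful action after scalar extension via \Pref{coverall}. The extra bookkeeping you supply (centrality, the dimension count, and the explicit reduction of minimal faithfulness to \Rref{3for2}) only makes explicit what the paper leaves implicit.
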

\begin{proof}
Let $n$ denote the degree of $E \tensor A$. By \Pref{samesigma}, $E \tensor \s(A) = \s(E \tensor A)$. This algebra is separable by \Tref{s(A)sep}, so it follows that $\s(A)$ is separable as well. Let $K'$ be an \etale{} $E$-subalgebra  of dimension~$n$ in the nucleus of~$E \tensor A$. Let $E'/E$ be a splitting field of $K'$. Then $E' \tensor[E] (E \tensor[F] A) = E' \tensor[F] A$ is a skew matrix algebra, and by \Cref{sumdegjac} we have that $$\delta(\s(E' \tensor A)) = \deg(E' \tensor A) = n.$$

Now \Pref{shouldbe=} gives us that $$\delta(\s(A)) = \delta(E' \tensor \s(A)) = \delta(\s(E' \tensor A)) = n,$$ so $\s(A)$ contains an \etale{} $n$-dimensional $F$-subalgebra~$K$. This subalgebra is contained in $\nuc{A}$ by separability of~$\s(A)$. Now $A$ must be faithful over $K^e$, since $E \tensor A$ is faithful over $(E \tensor K)^e$ by \Pref{coverall}.
\end{proof}

Calling an $F$-algebra $A$ a {\bf{form}} of $A'$ if $E \tensor A = A'$ for some separable field extension $E/F$, and building on \Tref{main-thm}, we thus proved:
\begin{cor}
An algebra is semiassociative if and only if it is a form of skew matrices.
\end{cor}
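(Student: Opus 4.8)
The plan is to assemble the corollary directly from two results already established, handling each direction with one of them, so that essentially no new computation is needed.

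For the forward implication I would start with a semiassociative $F$-algebra $A$ of degree $n$ and fix an $n$-dimensional \etale{} subalgebra $K \sub \nuc{A}$, which exists by \Dref{maindef}. The key observation is that a field splitting~$K$ may be taken separable over~$F$: since $K$ is \etale{} it is a finite product of finite separable field extensions of~$F$, so (writing $K = F[u]$ and taking the splitting field of the minimal polynomial $f_u$, which has $n$ distinct roots) the splitting field~$E$ is the splitting field of a separable polynomial, hence Galois, hence separable over~$F$. Applying \Tref{main-thm} to this~$E$ then shows that $E \tensor A$ is a skew matrix algebra, so $A$ is a form of skew matrices along the separable extension $E/F$.

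For the reverse implication I would suppose that $E \tensor A$ is a skew matrix algebra for some separable extension $E/F$. By \Pref{part1go} every skew matrix algebra is semiassociative, so $E \tensor A$ is semiassociative; the descent theorem immediately preceding this corollary (semiassociativity descends along a separable scalar extension) then gives that~$A$ itself is semiassociative. This closes the equivalence.

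The substantive content has already been absorbed into the cited results: \Tref{main-thm} carries the Brauer-factor-set construction that splits a semiassociative algebra over a field splitting its \etale{} subalgebra, while the descent theorem rests on \Tref{s(A)sep} and \Pref{shouldbe=} concerning the separability of $\s(A)$ and the invariance of~$\delta$. Consequently I expect no genuine obstacle here; the only point that needs explicit care is the remark that the splitting field in the forward direction can be chosen separable, which is automatic from the \etale{} hypothesis on~$K$. The corollary is thus the bookkeeping step that packages the two directions into a clean characterization of semiassociative algebras as exactly the forms of skew matrices.
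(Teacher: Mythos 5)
Your proposal is correct and matches the paper's own argument: the forward direction is exactly \Tref{main-thm} applied to a (necessarily separable, indeed Galois) splitting field of the \etale{} subalgebra $K$, and the reverse direction combines \Pref{part1go} with the descent theorem immediately preceding the corollary. The paper states the corollary as precisely this packaging, so there is nothing to add.
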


\part{Tensor products of semiassociative algebras}\label{Part3}

The class of semiassociative algebras is closed under taking tensor products (\Tref{basicTx} below). By \Rref{tensormat}, this is also the case for the subclass of skew matrix algebras. Considering several intermediate classes characterized by properties of the nucleus, we define in this part a semiassociative Brauer monoid, which contains the classical Brauer group as a unique maximal subgroup.

\section{Closure under tensor products}\label{sec:tensor}

We have the following fundamental observation.
\begin{thm}\label{basicTx}
The class of semiassociative algebras is closed under tensor products over a common center.
\end{thm}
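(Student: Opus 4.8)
The plan is to exhibit an explicit étale subalgebra of the nucleus of $A \tensor[F] B$ witnessing semiassociativity, assembled from the witnesses for the two factors. Suppose $A$ is $K_A$-semiassociative and $B$ is $K_B$-semiassociative, with $K_A \sub \nuc{A}$ and $K_B \sub \nuc{B}$ étale over the common center $F$. I would take $K = K_A \tensor[F] K_B$ as the candidate. That $K$ is étale follows from the closure of étale algebras under tensor products, and $K \sub \nuc{A} \tensor[F] \nuc{B} = \nuc{A \tensor[F] B}$ by \Pref{prop:nuc-tensor}. Moreover $A \tensor[F] B$ is $F$-central since $\Zent(A \tensor[F] B) = \Zent(A) \tensor[F] \Zent(B) = F$. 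So it remains only to verify that $A \tensor[F] B$ is minimally faithful over $K^e = K \tensor[F] K$.

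The key structural observation is that the enveloping action untangles: reorganizing the tensor factors yields a ring isomorphism $K^e \isom K_A^e \tensor[F] K_B^e$, and under it the two-sided $K$-module $A \tensor[F] B$ becomes the external tensor product of the $K_A^e$-module $A$ and the $K_B^e$-module $B$. Concretely, on pure tensors the two-sided action reads $(k_A \tensor k_B)(a \tensor b)(k_A' \tensor k_B') = k_A a k_A' \tensor k_B b k_B'$, which is precisely the $K_A^e$-action on $a$ tensored with the $K_B^e$-action on $b$.

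Since $K_A$ and $K_B$ are étale, $K_A^e$ and $K_B^e$ are commutative semisimple, so minimal faithfulness of $A$ and $B$ is equivalent to their being cyclic and faithful; in particular $A = K_A^e v_A$ and $B = K_B^e v_B$ for suitable cyclic generators $v_A, v_B$. The external tensor product is then cyclic over $K^e \isom K_A^e \tensor[F] K_B^e$ with generator $v_A \tensor v_B$, because $(K_A^e \tensor[F] K_B^e)(v_A \tensor v_B) = (K_A^e v_A) \tensor[F] (K_B^e v_B) = A \tensor[F] B$. A dimension count then gives $\dim(A \tensor[F] B) = \dim A \cdot \dim B = (\dim K_A)^2 (\dim K_B)^2 = (\dim K)^2$.

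With cyclicity and the dimension equality in hand, the three-for-two bargain (\Rref{3for2}) supplies faithfulness of $A \tensor[F] B$ over $K^e$. As $K^e$ is commutative semisimple, cyclic plus faithful is the same as minimally faithful, so $A \tensor[F] B$ is $K$-semiassociative and hence semiassociative. The only genuinely delicate point, which I would spell out carefully, is the identification of the two-sided $K^e$-module structure on $A \tensor[F] B$ with the external tensor product over $K_A^e \tensor[F] K_B^e$; once that compatibility of actions is pinned down, the cyclicity, the dimension bookkeeping, and the appeal to \Rref{3for2} are all formal.
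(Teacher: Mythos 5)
Your proof is correct, and it differs from the paper's in which key input it uses to close the argument. Both proofs begin identically: form $K = K_A \tensor[F] K_B$, note it is \'etale and sits in $\nuc{A \tensor[F] B} = \nuc{A} \tensor[F] \nuc{B}$, check $F$-centrality, and identify $K^e \isom K_A^e \tensor[F] K_B^e$ acting on $A \tensor[F] B$ as the external tensor product of the two enveloping actions. The divergence is in how minimal faithfulness is then obtained. The paper's stated ``key'' is the nontrivial external fact that the tensor product of faithful modules is faithful (it cites Bergman and Passman for this); combined with the dimension count $(nn')^2 = (\dim K)^2$, the three-for-two bargain then upgrades faithfulness to cyclicity and hence to minimal faithfulness. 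You instead establish \emph{cyclicity} directly --- since $K_A^e$ and $K_B^e$ are commutative semisimple, minimal faithfulness of $A$ and $B$ gives cyclic generators $v_A, v_B$, and $v_A \tensor v_B$ visibly generates $A \tensor[F] B$ over $K_A^e \tensor[F] K_B^e$ --- and then use the same dimension count and \Rref{3for2} to deduce faithfulness. In effect you supply a different two of the three conditions. Your route is more elementary and self-contained, needing no citation, but it leans on the commutative semisimple structure of $K^e$ (so that cyclic plus faithful equals minimally faithful); the paper's route via preservation of faithfulness is the one that would survive in settings where a cyclic generator is not available. Both are complete proofs of the theorem.
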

\begin{proof}
The key here is the fact that tensor product preserves faithfulness of modules, see \cite[Thm.~1]{Bergman1} or \cite[Lem.~1.1]{Passman1}.

Let $A$ and $A'$ be semiassociative $F$-algebras, of dimensions~$n^2$ and~$n'^2$; with \etale{} subalgebras $K \sub \nuc{A}$ and $K' \sub \nuc{A'}$ of dimensions~$n$ and~$n'$; and such that $A$ and $A'$ are faithful over $K^e$ and $K'^e$, respectively. The algebra $A \tensor[F] A'$, of dimension $(nn')^2$, is $F$-central; its nucleus $\nuc{A \tensor A'} = \nuc{A} \tensor \nuc{A'}$ contains the $nn'$-dimensional \etale{} algebra $K \tensor K'$, and is faithful over $(K \tensor K')^e = (K \tensor K') \tensor (K \tensor K') = (K \tensor K) \tensor (K' \tensor K') = K^e \tensor K'^e$.
\end{proof}

\begin{cor}\label{matricesup}
If $A$ is semiassociative over $F$, then $\M[m](A)$ is semiassociative over $F$ as well.
\end{cor}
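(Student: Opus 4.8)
The plan is to recognize $\M[m](A)$ as a tensor product and then invoke \Tref{basicTx}. The crucial first step is to establish the isomorphism of nonassociative $F$-algebras
$$\M[m](A) \isom A \tensor[F] \M[m](F),$$
under which the matrix having $a \in A$ in position $(i,j)$ and zeros elsewhere corresponds to $a \tensor e_{ij}$. I would verify this by comparing the two multiplications on these spanning elements: in $\M[m](A)$ the product of the matrix with entry $a$ at $(i,j)$ and the matrix with entry $a'$ at $(k,\ell)$ has, by the matrix-multiplication rule $(XY)_{pq} = \sum_r X_{pr}Y_{rq}$, its only possibly-nonzero entry at position $(i,\ell)$ equal to $\delta_{jk}(aa')$; this matches $(a \tensor e_{ij})(a' \tensor e_{k\ell}) = \delta_{jk}(aa') \tensor e_{i\ell}$. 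Since both multiplications are $F$-bilinear and agree on a spanning set, the isomorphism follows.

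With the identification in hand, the conclusion is immediate. The standard matrix algebra $\M[m](F)$ is an associative central simple $F$-algebra, hence semiassociative (this is the Example immediately following \Dref{maindef}). As $A$ is $F$-central and so is $\M[m](F)$, the common center is $F$, and \Tref{basicTx} applies to give that $A \tensor[F] \M[m](F)$ is semiassociative. Therefore $\M[m](A)$ is semiassociative over $F$.

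I expect the only point requiring genuine care to be the verification of the isomorphism in the nonassociative setting, since one must not implicitly assume associativity of $A$. The verification goes through precisely because the matrix units $e_{ij}$ multiply associatively among themselves, so all associativity constraints are carried by $\M[m](F)$ and never demanded of $A$; the entries of $A$ only ever enter through single products $aa'$, matching the bilinear structure of the tensor product. No issue of a ``common center'' arises beyond noting both factors are $F$-central, so this step is routine.
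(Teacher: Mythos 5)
Your proof is correct and is exactly the argument the paper intends: the corollary is stated immediately after \Tref{basicTx} precisely because $\M[m](A)\isom A\tensor[F]\M[m](F)$ with $\M[m](F)$ associative central simple, hence semiassociative, so the closure under tensor products applies. Your explicit verification of the isomorphism on the spanning elements $a\tensor e_{ij}$, without assuming associativity of $A$, is the only nontrivial point and you handle it correctly.
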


As we saw in \Pref{prop:nuc-tensor}, taking the nucleus commutes with tensor products, i.e.\ $\nuc{A\otimes B}=\nuc{A}\otimes\nuc{B}$. We show here that in the case of semiassociative algebras, this is not only true for the nuclei, but also for their semisimple quotients.

\begin{lem}\label{centers}
Let $S_1,S_2$ be finite dimensional  associative simple $F$-algebras with centers $K_i = \Zent(S_i)$. If $K_1 \tensor[F] K_2$ is semisimple then $S_1 \tensor S_2$ is semisimple as well.
\end{lem}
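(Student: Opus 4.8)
The plan is to reduce the statement to the classical fact that a tensor product of two central simple algebras over a common field is again central simple. Since each $S_i$ is a finite dimensional simple algebra, its center $K_i = \Zent(S_i)$ is a field, and $S_i$ is central simple over $K_i$. The crucial first observation is the associativity identity
$$S_1 \tensor[F] S_2 \isom S_1 \tensor[K_1] (K_1 \tensor[F] K_2) \tensor[K_2] S_2,$$
which one checks directly on elementary tensors: $S_1 \tensor[K_1](K_1 \tensor[F] K_2) = S_1 \tensor[F] K_2$, and tensoring this over $K_2$ with $S_2$ returns $S_1 \tensor[F] S_2$.

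Next I would invoke the hypothesis. The algebra $R = K_1 \tensor[F] K_2$ is commutative and, by assumption, semisimple, hence a finite direct product of fields $R \isom L_1 \oplus \cdots \oplus L_r$. Each $L_i$ receives ring maps from both $K_1$ and $K_2$, through the two canonical inclusions into $R$ followed by projection onto the $i$-th factor, so $L_i$ is simultaneously a finite field extension of $K_1$ and of $K_2$.

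Substituting the decomposition of $R$ into the displayed isomorphism would then give
$$S_1 \tensor[F] S_2 \isom \bigoplus_{i=1}^r \left( S_1 \tensor[K_1] L_i \right) \tensor[K_2] S_2 \isom \bigoplus_{i=1}^r \left( S_1 \tensor[K_1] L_i \right) \tensor[L_i] \left( L_i \tensor[K_2] S_2 \right),$$
where the last step uses that the $K_2$-action on $S_1 \tensor[K_1] L_i$ factors through $L_i$. Now $S_1 \tensor[K_1] L_i$ is the scalar extension of the central simple $K_1$-algebra $S_1$ to $L_i$, hence central simple over $L_i$; likewise $L_i \tensor[K_2] S_2$ is central simple over $L_i$. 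The tensor product over the field $L_i$ of two central simple $L_i$-algebras is central simple over $L_i$, in particular simple. Thus $S_1 \tensor[F] S_2$ is a finite direct sum of simple algebras, i.e.\ semisimple.

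The only point requiring genuine care is the bookkeeping of scalar actions in the middle isomorphism: one must verify that the left $K_1$-structure and the right $K_2$-structure on the $i$-th summand are exactly the two embeddings $K_1, K_2 \hookrightarrow L_i$, so that both outer tensor factors really become algebras over the common field $L_i$. Once this identification is secured, the two standard facts — scalar extension preserves central simplicity, and the tensor product of central simple algebras over a field is central simple — close the argument, and no semisimplicity input beyond that of $R$ is needed.
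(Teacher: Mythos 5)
Your proof is correct and follows essentially the same route as the paper: both decompose $K_1 \tensor[F] K_2$ into a product of fields $L_i$ and regroup $S_1 \tensor[F] S_2$ as the direct sum of the corresponding summands. The only (cosmetic) difference is the last step, where the paper views each summand as $S_1 \tensor[K_1] (L_i \tensor[K_2] S_2)$, a central simple $K_1$-algebra tensored over $K_1$ with a simple algebra, and cites the fact that such a product is simple, whereas you push both factors up to $L_i$ and invoke closure of central simple $L_i$-algebras under $\tensor[L_i]$ --- two equivalent packagings of the same standard fact.
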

Recall that the tensor product of two fields is semisimple unless both contain a common nonseparable subfield.
\begin{proof}
If $K_1 = F$ then we are done by \cite[Prop.~24.9]{RowenNCV} (indeed the tensor product of $S_1 \tensor[F] S_2$ with $\op{S_1}$ over~$F$ is a matrix algebra over~$S_2$ which is simple).

For the general case, decompose $K_1 \tensor K_2 = L_1 \oplus \cdots \oplus L_t$ where each~$L_j$ is a field containing both~$K_1$ and~$K_2$. Now $$S_1 \tensor[F] S_2 = (S_1 \tensor[K_1] K_1) \tensor[F] (K_2 \tensor[K_2] S_2) = S_1 \tensor[K_1] ((K_1 \tensor[F] K_2) \tensor[K_2] S_2)$$
 is a direct sum of the $S_1 \tensor[K_1] (L_j \tensor[K_2] S_2)$, so we reduced to the previous case.
\end{proof}

\begin{cor}\label{95}
The tensor product of separable algebras over $F$ is semisimple.
\end{cor}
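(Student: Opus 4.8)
The plan is to reduce the claim to the simple case already handled by \Lref{centers}. First I would unwind the characterization of separability recalled just before the theorem: a separable $F$-algebra is finite dimensional and semisimple with \etale{} center, hence a finite direct sum $R = S_1 \oplus \cdots \oplus S_p$ of simple algebras whose centers $K_i = \Zent(S_i)$ are separable field extensions of $F$. Writing a second separable algebra as $R' = S_1' \oplus \cdots \oplus S_q'$ in the same way, and using that $\tensor[F]$ distributes over finite direct sums, I would expand
$$R \tensor[F] R' = \bigoplus_{i,j} S_i \tensor[F] S_j'.$$
Since a finite direct sum of algebras is semisimple exactly when each summand is, it suffices to treat a single pair $S_i \tensor[F] S_j'$.

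For a fixed pair, $S = S_i$ and $S' = S_j'$ are simple with separable centers $K = \Zent(S)$ and $K' = \Zent(S')$, so \Lref{centers} reduces the semisimplicity of $S \tensor[F] S'$ to that of the tensor product of the centers $K \tensor[F] K'$. Here I would invoke the field-theoretic fact recalled before \Lref{centers}: the tensor product of two fields is semisimple unless they share a common nonseparable subfield. Since $K$ and $K'$ are separable over $F$, neither contains any subfield inseparable over $F$, so no such common subfield exists and $K \tensor[F] K'$ is semisimple. Thus every summand $S_i \tensor[F] S_j'$, and hence $R \tensor[F] R'$, is semisimple.

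For the statement with more than two factors I would argue by induction, which requires knowing that $R \tensor[F] R'$ is not merely semisimple but again separable. This is immediate once the binary case is in hand: its center is $\Zent(R) \tensor[F] \Zent(R')$, a tensor product of \etale{} algebras and hence \etale, so $R \tensor[F] R'$ is semisimple with \etale{} center, i.e.\ separable. I expect no genuine obstacle in any of this; the content is carried entirely by \Lref{centers} and the fact about tensor products of fields, and the only point demanding care is to invoke the right characterization of separable algebras so that the reduction to simple components with separable centers is legitimate. (Alternatively one could bypass the reduction and argue directly from the enveloping-algebra definition, noting that $R$ being projective over $R^e$ and $R'$ over $R'^e$ makes $R \tensor R'$ projective over $(R \tensor R')^e = R^e \tensor R'^e$.)
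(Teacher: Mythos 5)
Your argument is correct and is exactly the deduction the paper intends for this corollary: decompose each separable algebra into simple summands with separable centers, note that the tensor product of two separable field extensions is semisimple by the fact recalled before \Lref{centers}, and apply \Lref{centers} to each pair of summands. The extension to several factors via separability of the binary tensor product is a harmless bonus beyond what the paper needs.
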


\begin{prop}\label{s(A)-tens}
Let $A,B$ be two $F$-semiassociative algebras. Then
$$\Jac(\nuc{A\otimes B})=\Jac(\nuc{A})\otimes\nuc{B}+\nuc{A}\otimes\Jac(\nuc{B}),$$
and thus
$$\s(A\otimes B)\cong\s(A)\otimes\s(B).$$
\end{prop}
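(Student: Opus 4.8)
The plan is to identify the claimed subspace $I=\Jac(\nuc{A})\otimes\nuc{B}+\nuc{A}\otimes\Jac(\nuc{B})$ as the radical of $\nuc{A\otimes B}$ by showing simultaneously that it is a nilpotent ideal and that the quotient by it is semisimple. Write $N_A=\nuc{A}$, $J_A=\Jac(N_A)$, and likewise $N_B,J_B$. By \Pref{prop:nuc-tensor} the nucleus of $A\otimes B$ is, as an associative algebra, exactly $N_A\otimes N_B$ with the componentwise product $(x\otimes y)(x'\otimes y')=xx'\otimes yy'$, so the whole argument takes place inside the associative algebra $N_A\otimes N_B$. Since $J_A\vartriangleleft N_A$ and $J_B\vartriangleleft N_B$, the subspace $I$ is a two-sided ideal, and over the field $F$ it is precisely the kernel of the surjection $N_A\otimes N_B\twoheadrightarrow(N_A/J_A)\otimes(N_B/J_B)=\s(A)\otimes\s(B)$ induced by the two quotient maps. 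Thus proving the first assertion amounts to $\Jac(N_A\otimes N_B)=I$, and then $\s(A\otimes B)\cong\s(A)\otimes\s(B)$ is immediate from the definition of $\s$.

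For the inclusion $\Jac(N_A\otimes N_B)\subseteq I$ I would invoke separability, which is the crucial input and the only genuinely nontrivial point. By \Tref{s(A)sep} both $\s(A)$ and $\s(B)$ are separable, so by \Cref{95} their tensor product $\s(A)\otimes\s(B)$ is semisimple; hence the quotient $(N_A\otimes N_B)/I$ has zero radical, forcing $\Jac(N_A\otimes N_B)\subseteq I$. I would stress that this is exactly where semiassociativity enters: for a general pair of finite dimensional algebras the radical of a tensor product can be strictly larger than $I$ (witness an inseparable field extension tensored with itself), and it is \Tref{s(A)sep} that rules this out.

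For the reverse inclusion $I\subseteq\Jac(N_A\otimes N_B)$ it suffices to show $I$ is nilpotent. Choosing $s,t$ with $J_A^s=0$ and $J_B^t=0$, a product of $s+t-1$ generators of $I$, each drawn from $J_A\otimes N_B$ or from $N_A\otimes J_B$, equals $X\otimes Y$, where $X$ is a product of $s+t-1$ factors in $N_A$ of which $a$ lie in $J_A$, and $Y$ is a product in $N_B$ of which $b=(s+t-1)-a$ lie in $J_B$. As $J_A,J_B$ are ideals of the associative algebras $N_A,N_B$, one has $X\in J_A^{a}$ and $Y\in J_B^{b}$, and by pigeonhole either $a\geq s$ or $b\geq t$, so $X\otimes Y=0$. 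Hence $I^{s+t-1}=0$, whence $I\subseteq\Jac(N_A\otimes N_B)$. Combining the two inclusions gives $\Jac(\nuc{A\otimes B})=I$, and passing to the quotient yields $\s(A\otimes B)\cong\s(A)\otimes\s(B)$. Equivalently, one may organize everything around \Cref{WeddNucl}, writing $N_A=\s(A)\oplus J_A$ and $N_B=\s(B)\oplus J_B$ so that $N_A\otimes N_B=(\s(A)\otimes\s(B))\oplus I$ is a Wedderburn decomposition with semisimple part $\s(A)\otimes\s(B)$ and nilpotent ideal $I$, and uniqueness of the radical then identifies $\Jac(N_A\otimes N_B)$ with $I$.
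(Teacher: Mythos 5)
Your proof is correct and rests on exactly the same two pillars as the paper's: separability of $\s(A)$ and $\s(B)$ (\Tref{s(A)sep}) together with \Cref{95} to make the quotient by $I$ semisimple, and nilpotency of $I$ to force $I\subseteq\Jac$; your closing remark via \Cref{WeddNucl} is literally the paper's own formulation. The only difference is that you spell out the pigeonhole argument for nilpotency, which the paper leaves implicit.
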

\begin{proof}
By \Tref{s(A)sep}, the semisimple quotients $\s(A)$ and $\s(B)$ are separable. Hence by Wedderburn's principal theorem, we may write $\nuc{A}\cong \s(A)\oplus \Jac(\nuc{A})$ and $\nuc{B}\cong \s(B)\oplus\Jac(\nuc{B})$. Also, by \Cref{95}, $\s(A)\otimes\s(B)$ is semisimple. Therefore
$$\nuc{A\otimes B}\cong(\s(A)\otimes \s(B))\oplus\left(\Jac(\nuc{A})\otimes\nuc{B}+\nuc{A}\otimes\Jac(\nuc{B})\right)$$
is a vector space decomposition of $\nuc{A\otimes B}$ to a semisimple algebra and a nilpotent ideal, showing that the second direct summand is precisely $\Jac(\nuc{A\otimes B})$ and the first one is isomorphic to $\s(A\otimes B)$.
\end{proof}

\section{The semiassociative Brauer Monoid}\label{section:brauer monoid}

We now define the ``semiassociative Brauer monoid''. Recall that the elements of the classical Brauer group of a field~$F$ are the equivalence classes of associative central simple algebras over $F$, where algebras $A,A'$ are Brauer equivalent if $A\otimes\M[n'](F)\isom A'\otimes\M[n](F)$ for some $n,n'$. %
\begin{defn}\label{ourdef}
We say that semiassociative algebras $A$ and~$B$ over~$F$ are {\bf{Brauer equivalent}}, denoted $A\sim B$, if there are skew matrix algebras $\M[n](F;c)$ and $\M[n'](F;c')$ such that
$$A\tensor[F]\M[n](F;c)\cong B\tensor[F]\M[n'](F;c').$$
\end{defn}
Using \Rref{tensormat} this is readily seen to be an equivalence relation.
We denote the equivalence class of a semiassociative algebra~$A$ by~$[A]^{\sa}$.\smallskip

\begin{rem}\label{ImOK}
Let $A,A',B,B'$ be semiassociative algebras, such that $A \sim A'$ and $B \sim B'$. Then $A \tensor A' \sim B \tensor B'$.

This follows from the tensor product of skew matrix algebras being a skew matrix algebra.
\end{rem}

\begin{defn}
The {\bf{semiassociative Brauer monoid}} of a field~$F$, denoted $\Br[sa](F)$, is the set of equivalence classes of semiassociative $F$-algebras under equivalence, with the product $\nacl{A} \cdot \nacl{B} = \nacl{A \tensor B}$.
\end{defn}
The operation is well-defined by \Rref{ImOK}. We thus defined a monoid, whose identity element is $\nacl{F}$.

\smallskip
For associative algebras, this new definition of Brauer equivalence coincides with the classical one:
\begin{prop}\label{goodcompare}
Let $A,B$ be associative central simple algebras over a field $F$, which are equivalent according to \Dref{ourdef}. Then~$A$ and~$B$ are Brauer equivalent in the classical sense.
\end{prop}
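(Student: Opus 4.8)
The plan is to pass to the semisimple quotient of the nucleus, $\s(-)$, and exploit that it is multiplicative on semiassociative algebras while being a complete classical invariant on the pieces that arise. First I would record $\s$ on the two kinds of factors appearing in \Dref{ourdef}. Since $A$ is associative and simple we have $\nuc{A}=A$ and $\Jac(\nuc{A})=0$, so $\s(A)=A$; the same holds for $B$. For a skew matrix algebra, \Tref{sumdegjac} gives $\s(\M[n](F;c))=S$, a direct sum $\bigoplus_i\M[n_i](F)$ of matrix algebras over $F$ with $\sum_i n_i=n$, and in particular with at least one summand since $n\geq 1$.

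Next I would apply $\s$ to the defining isomorphism $A\tensor[F]\M[n](F;c)\cong B\tensor[F]\M[n'](F;c')$. All four factors are semiassociative (associative central simple algebras and skew matrix algebras both are), so \Pref{s(A)-tens} applies and yields
$$\s(A\tensor\M[n](F;c))\cong\s(A)\tensor\s(\M[n](F;c))=A\tensor S\cong\bigoplus_i\M[n_i](A),$$
and symmetrically $\s(B\tensor\M[n'](F;c'))\cong\bigoplus_j\M[n'_j](B)$. Because $\s$ is defined intrinsically (nucleus modulo its radical), an algebra isomorphism induces an isomorphism of the associated quotients; thus the hypothesis forces $\bigoplus_i\M[n_i](A)\cong\bigoplus_j\M[n'_j](B)$ as associative semisimple $F$-algebras.

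Finally I would invoke uniqueness in Wedderburn--Artin. Writing $A\cong\M[s](D)$ and $B\cong\M[s'](D')$ with $D,D'$ the underlying division algebras, each simple summand on the left is $\M[n_i s](D)$ and each on the right is $\M[n'_j s'](D')$. Matching the two multisets of simple components, both nonempty, gives an isomorphism $\M[n_i s](D)\cong\M[n'_j s'](D')$ for some pair $i,j$, and uniqueness of the division part of a simple algebra forces $D\cong D'$. Hence $A$ and $B$ have the same underlying division algebra and are Brauer equivalent in the classical sense.

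The only point requiring care is the verification that \Pref{s(A)-tens} is applicable, that is, that each tensor factor is genuinely semiassociative so that the multiplicativity $\s(X\tensor Y)\cong\s(X)\tensor\s(Y)$ holds; this is immediate here. Everything else is a direct reading off of the Wedderburn structure already recorded in \Tref{sumdegjac}.
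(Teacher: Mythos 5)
Your proposal is correct and follows essentially the same route as the paper's own proof: apply $\s$ to the defining isomorphism, use the multiplicativity from \Pref{s(A)-tens} together with the description of $\s$ of a skew matrix algebra from \Tref{sumdegjac} to get an isomorphism of direct sums of matrix algebras over $A$ and over $B$, and conclude by uniqueness of the Wedderburn decomposition. The only difference is cosmetic: you spell out the reduction to the underlying division algebras $D\cong D'$, where the paper simply compares corresponding simple components.
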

\begin{proof}
Suppose $A \tensor \M[n](F;c) \isom B \tensor \M[n'](F;c')$ for skew matrix algebras $\M[n](F;c)$ and $\M[n](F;c')$. Taking the semisimple quotient of the nucleus and using \Pref{s(A)-tens}, we obtain
\begin{equation}\label{this=}
A\tensor \s(\M[m](F;c)) \isom B\tensor \s(\M[n](F;c')).
\end{equation}

By \Cref{sumdegjac}, the left- and right-hand sides in \eq{this=} are direct sums of matrix algebras over $A$ and~$B$, respectively. The decomposition into direct sum of a semisimple associative algebra is unique, so comparing a %
component on the left-hand side with the respective isomorphic component at the right-hand side we conclude that $A$ and~$B$ are Brauer equivalent in the classical sense.
\end{proof}

By \Pref{goodcompare}, there is a well defined embedding of monoids
\begin{equation}\label{BrBr}
\Br(F) \hookrightarrow \Br[sa](F)
\end{equation}
where $\Br(F)$ is the classical Brauer group of the field, sending the Brauer class $[A]$ of an associative central simple algebra $A$ to $\nacl{A}$. The image of this map consists of the classes in $\Br[sa](F)$ which have at least one associative member.

\begin{exmpl}
The semiassociative Brauer monoid of an algebraically closed field is trivial. For example $\Br[sa](\C) = 1$.

Indeed, the only \etale{} algebras over~$F$ are of the split ones, of the form~$F^n$, and by \Cref{split-nuc-skew} any semiassociative $F$-algebra splits.
\end{exmpl}

\begin{rem}[on quotients of monoids]
\label{ss:mon}
Similarly to the classical Brauer group, the nonassociative Brauer group is defined as a monoid of algebras modulo a congruence relation. Let us make this explicit. Let~$C$ be a commutative semigroup. A semigroup~$C_0$ induces a congruence relation on~$C$, defined by setting $x \sim y$ if $\alpha x = \beta y$ for some $\alpha, \beta \in C_0$. The quotient space, which we denote by $C/C_0$, is a semigroup, whose identity element may strictly contain $C_0$. (Semigroup theorists often use another congruence relation, contracting $C_0$ to a point and leaving the elements of $C-C_0$ distinct.)
\end{rem}
Now, letting $\MM{semi}$ be the monoid of all semiassociative $F$-algebras and $\MM{mat}$ the monoid of skew matrices, $\Br[sa](F)$ is by definition the quotient $\MM{semi}/\MM{mat}$.

\section{Interlude: Semisimple associative algebras}\label{sec:ss}

In this section we consider associative algebras, anticipating their role as the nuclei of semiassociative algebras.
Let~$\SS{sep}$ be the family of separable algebras over $F$, which is a monoid under the tensor product by
\Cref{95}. We say that a (finite dimensional) separable algebra is {\bf{semicentral}} if it is a direct sum of $F$-central simple algebras. Let~$\SS{semicentral}$ be the family of such algebras, which is again a monoid because the tensor product of $F$-central simple algebras is $F$-central.

We will identify invertible elements in the Brauer monoid using the notion of purity. If $C$ is a semigroup and $C_0$ a subsemigroup, we say that $C_0$ is {\bf{pure}} if the complement $C-C_0$ is an ideal; namely if $ab \in C_0$ for $a,b \in C$ implies $a,b \in C_0$.

\begin{prop}\label{pure1}
$\SS{semicentral}$ is a pure submonoid of $\SS{sep}$.
\end{prop}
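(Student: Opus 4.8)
The plan is first to record that $\SS{semicentral}$ really is a submonoid, and then to establish purity by reducing everything to a computation with centers. Closure under tensor products is immediate: the tensor product over $F$ of two $F$-central simple algebras is again $F$-central simple, so the tensor product of two finite direct sums of $F$-central simple algebras is again such a direct sum, and $F$ itself belongs to $\SS{semicentral}$. For purity I must show that whenever $A,B\in\SS{sep}$ satisfy $A\tensor[F] B\in\SS{semicentral}$, both $A$ and $B$ are already semicentral.

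The key reduction is to the centers. A semicentral algebra $\bigoplus_k T_k$, with each $T_k$ being $F$-central simple, has center $\bigoplus_k\Zent(T_k)=F^m$, a split \etale{} algebra. Since $\Zent(A\tensor[F] B)=\Zent(A)\tensor[F]\Zent(B)$ (recalled in \Sref{sec:2}), the hypothesis gives $\Zent(A)\tensor[F]\Zent(B)\cong F^m$. As $A$ is separable, I decompose $A=\bigoplus_i A_i$ into simple summands, whose centers $K_i=\Zent(A_i)$ are separable field extensions of $F$, so that $\Zent(A)=\prod_i K_i$; likewise $\Zent(B)=\prod_j L_j$ with each $L_j$ a separable field. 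Then $\prod_{i,j}(K_i\tensor[F] L_j)\cong F^m$, and since each factor $K_i\tensor[F] L_j$ is a nonzero algebra direct summand of $F^m$, whose simple components are all copies of $F$, it is itself isomorphic to a split algebra $F^{m_{ij}}$ with $m_{ij}\geq1$.

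It remains to invoke the following elementary fact: a unital $F$-algebra homomorphism from a field extension $K/F$ into a split algebra $F^{m}$ (with $m\geq1$) forces $K=F$; indeed, composing with any coordinate projection yields a unital $F$-algebra homomorphism $K\to F$, which is injective because $K$ is a field, whence $[K\!:\!F]\leq1$. Applying this to the unital inclusion $K_i\hookrightarrow K_i\tensor[F] L_j\cong F^{m_{ij}}$, given by $k\mapsto k\tensor 1$, yields $K_i=F$ for every $i$; by symmetry $L_j=F$ for every $j$. Hence every simple summand $A_i$ is $F$-central simple, so $A\in\SS{semicentral}$, and likewise $B\in\SS{semicentral}$. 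I do not expect a serious obstacle here: the argument is essentially routine, and the only point requiring care is the bookkeeping with the idempotents that makes the embedding $K_i\hookrightarrow K_i\tensor[F] L_j$ land unitally inside a genuine split factor $F^{m_{ij}}$. Separability of $A$ and $B$ is used precisely to guarantee the decomposition into simple blocks with separable field centers, so that the center is the product $\prod_i K_i$ of fields to which the field lemma applies.
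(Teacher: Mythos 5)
Your proof is correct and takes essentially the same approach as the paper: the paper picks simple components $S\sub N$, $S'\sub N'$ and observes that the simple summands of $S\tensor S'$ have centers containing $\Zent(S)$ and $\Zent(S')$, which semicentrality of the product forces to equal $F$. You merely repackage this by working with the centers throughout, via $\Zent(A\tensor B)=\Zent(A)\tensor\Zent(B)$ and the observation that a field admitting a unital map to a split \'etale algebra must equal $F$; the content is the same.
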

\begin{proof}
Let $S$ and $S'$ be simple components of the semisimple algebras~$N$ and~$N'$, respectively. The product $S \tensor S'$, which is a direct summand of $N \tensor N'$, decomposes into simple direct summands, whose centers each contain both $\Zent(S)$ and $\Zent(S')$. Since we assume $N \tensor N' \in \SS{semicentral}$, we must have that $\Zent(S) = \Zent(S') = F$, so that $N, N' \in \SS{semicentral}$.
\end{proof}

For a finer analysis, let us define the {\bf{weight}} $\w(S)$ of an algebra $S \in \SS{semicentral}$ to be the number of its central simple components, counted as elements in the Brauer group $\Br(F)$.

\begin{prop}\label{onw-}
Let $N,N' \in \SS{semicentral}$ be semisimple algebras with central components. Then $\w(N \tensor N') \geq \w(N)$.
\end{prop}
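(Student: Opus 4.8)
The plan is to reduce the statement to an elementary counting inequality in the group $\Br(F)$. First I would write $N = S_1 \oplus \cdots \oplus S_m$ and $N' = S'_1 \oplus \cdots \oplus S'_{m'}$, where the $S_i$ and $S'_j$ are $F$-central simple algebras (this is what it means for the components to be central). Passing to Brauer classes, set $A = \set{[S_i] \suchthat 1 \leq i \leq m}$ and $B = \set{[S'_j] \suchthat 1 \leq j \leq m'}$, regarded as subsets of $\Br(F)$. By the definition of weight as the number of central simple components counted as elements of the Brauer group, we have $\w(N) = \card{A}$ and $\w(N') = \card{B}$.

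Next I would identify the central components of the tensor product. Distributing, $N \tensor N' = \bigoplus_{i,j} S_i \tensor S'_j$, and each summand $S_i \tensor S'_j$ is again $F$-central simple, since the tensor product over $F$ of two $F$-central simple algebras is $F$-central simple. Hence these are exactly the central components of $N \tensor N'$, and the Brauer class of $S_i \tensor S'_j$ is $[S_i][S'_j]$. Consequently the set of Brauer classes occurring among the components of $N \tensor N'$ is precisely the product set $AB = \set{ab \suchthat a \in A, \ b \in B}$, so that $\w(N \tensor N') = \card{AB}$.

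It then remains to show $\card{AB} \geq \card{A}$, which is the only real content. This is the standard fact that in any group one has $\card{AB} \geq \card{A}$ for nonempty finite subsets $A, B$: choosing any $b_0 \in B$ (possible since $N' \neq 0$ forces $B \neq \emptyset$), right multiplication by $b_0$ is a bijection of $\Br(F)$, whence $\card{A b_0} = \card{A}$; and since $A b_0 \sub AB$, we conclude $\card{AB} \geq \card{A} = \w(N)$.

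There is essentially no obstacle here beyond correctly recognizing that the invariant $\w$ is governed by the product set of Brauer classes; the inequality itself is immediate from $\Br(F)$ being a group, as it is the invertibility of Brauer classes that supplies the required injection. I would remark that the same argument in fact yields the stronger symmetric bound $\w(N \tensor N') \geq \max\set{\w(N), \w(N')}$, which may be convenient in the subsequent purity arguments.
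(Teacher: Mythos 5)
Your proof is correct and is essentially the paper's own argument: the paper likewise fixes one Brauer class $\beta$ of a component of $N'$ and observes that $\alpha_1\beta,\dots,\alpha_t\beta$ are distinct classes of components of $N\tensor N'$, which is exactly your injection $A \to AB$, $a \mapsto ab_0$. The only cosmetic difference is that you phrase it via the product set $AB$ and note the symmetric bound, which the paper does not state but which follows identically.
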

\begin{proof}
If $\set{\alpha_1,\dots,\alpha_t}$ are the Brauer classes of components of $N$, and~$\beta$ is any Brauer class of a component of $N'$,
then, as in \Cref{supernormalx}, $\set{\alpha_1 \beta, \dots, \alpha_t \beta}$ are Brauer classes of components of $N \tensor N'$.
\end{proof}

\forget %
\begin{rem}\label{rem:Haile}
According to Haile~\cite{Haile2} (who restricted the definition to simple algebras), a separable algebra~$B$ over $F$ is {\bf{normal}} if $B \tensor[F] \op{B}$ is a direct sum of matrix algebras (over respective centers). The simple normal algebras are the cornerstone of his theory,
as their equivalence classes form Haile's Brauer monoid. A semicentral algebra is normal if and only if it has weight~$1$, because for $B = \oplus B_i$ the simple components of $B \tensor \op{B}$ are the products $B_i \tensor \op{B_{i'}}$, so weight~$1$ requires all blocks to be similar.
\end{rem}
\forgotten

We say that a semicentral algebra~$N$ is {\bf{homogeneous}} if its components are similar to each other in the Brauer sense, namely if $\omega(N) = 1$. (This has nothing to do with homogeneity with respect to the grading of subspaces of skew matrices.)
Let $\SS{hom}$ be the class of semicentral homogeneous algebras.
This is a submonoid, because the components of $N \tensor N'$ are tensor products of components of~$N$ and components of~$N'$.
It follows from \Pref{onw-} that:
\begin{cor}\label{pure2}
The submonoid $\SS{hom}$ is pure in $\SS{semicentral}$.
\end{cor}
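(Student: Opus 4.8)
The plan is to unwind the definition of purity and reduce everything to the weight inequality already established in \Pref{onw-}. By definition, $\SS{hom}$ is pure in $\SS{semicentral}$ precisely when, for all $N,N' \in \SS{semicentral}$ with $N \tensor N' \in \SS{hom}$, both factors $N$ and $N'$ lie in $\SS{hom}$. So I would fix such $N,N'$ and assume $N \tensor N'$ is homogeneous, i.e.\ $\w(N \tensor N') = 1$.

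First I would apply \Pref{onw-} directly to get $\w(N \tensor N') \geq \w(N)$. Since a nonzero semicentral algebra is a nonempty direct sum of $F$-central simple algebras, its weight is at least $1$; combined with $\w(N \tensor N') = 1$ this forces $\w(N) = 1$, so $N$ is homogeneous and hence $N \in \SS{hom}$. Next I would invoke the symmetry of the tensor product: because $N \tensor N' \isom N' \tensor N$, the same proposition applied to the reversed product gives $\w(N \tensor N') = \w(N' \tensor N) \geq \w(N')$, whence $\w(N') = 1$ and $N' \in \SS{hom}$ as well. This establishes purity.

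There is no serious obstacle here; all the content sits in \Pref{onw-}, and the argument is just a symmetric two-sided application of it. The only point needing a moment's care is the elementary observation that the weight of a nonzero semicentral algebra is always at least $1$, so that the inequality together with $\w(N \tensor N') = 1$ pins the weight down exactly. (Should one wish to admit a trivial element in the monoid, this edge case would merit a separate remark, but for the monoid of genuine algebras the conclusion is immediate.)
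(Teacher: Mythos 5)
Your argument is correct and is exactly the paper's intended derivation: the paper deduces the corollary directly from \Pref{onw-}, and your two-sided application of that weight inequality (using symmetry of the tensor product for the second factor) is precisely the omitted unwinding. Nothing further is needed.
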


\begin{exer}\label{puretransitive}
Purity is transitive: for semigroups $C_0 \sub C_1 \sub C$, if~$C_0$ is pure in~$C_1$ and~$C_1$ is pure in~$C$, then~$C_0$ is pure in~$C$.
\end{exer}

Finally, let $\SS{split}$ be the split semisimple algebras, namely direct sums of matrix algebras over $F$, which is again a monoid. We formed the chain
\begin{equation}\label{SS}
\SS{split} \sub \SS{hom} \sub \SS{semicentral} \sub \SS{sep}.
\end{equation}

\begin{rem}
The classical Brauer group can be viewed as the quotient $$\Br(F) \isom \SS{hom}/\SS{split}.$$
\end{rem}

\section{Semicentral semiassociative algebras}\label{sec:semicentral}

We now return to semiassociative $F$-algebras.
For a semiassociative algebra $A$, the simple components of the semisimple quotient $\s(A)$ will be called the \textbf{atoms} of~$A$. The atoms are simple associative $F$-algebras. The centers of the atoms of $A$ are separable extensions of $F$ by \Tref{s(A)sep}. In this section we focus on algebras with central atoms.

\begin{defn}
A semiassociative algebra over $F$ is called {\bf{semicentral}} if all of its atoms are $F$-central.
\end{defn}
Namely, a semiassociative algebra $A$ is semicentral if $\s(A)$ is semicentral in the sense of \Sref{sec:ss}.

So the atoms of a semicentral semiassociative algebra are $F$-central simple associative algebras. Equivalently, a semiassociative algebra $A$ is semicentral if and only if the center $\Zent(\s(A))$ splits (although its dimension would usually be smaller than $\deg(A)$).

An associative central simple algebra over $F$ is always semicentral, as the algebra itself is its own single atom. Also, by \Cref{sumdegjac}, any skew matrix algebra over $F$ is semicentral.
A non-split nonassociative quaternion algebra is not semicentral, because the nucleus is a quadratic extension of~$F$. The associative zero quaternion algebra in \Eref{Q=0} is not semicentral unless $K$ splits.

We first show that this class of semiassociative algebras is closed under basic manipulations.

\begin{prop}
A scalar extension of a semicentral semiassociative algebra along a separable field extension is semicentral.
\end{prop}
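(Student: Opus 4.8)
The plan is to reduce the statement to the behaviour of the semisimple quotient $\s$ of the nucleus under scalar extension, and from there to the classical fact that central simple algebras remain central simple under field extensions. First I would note that $E \tensor A$ is again semiassociative by \Pref{basicT0}, so that the notion of semicentrality is meaningful for it; by definition its atoms are the simple components of $\s(E \tensor A)$, and the task is exactly to show that these are all $E$-central.

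The key step is to identify these atoms explicitly. Since $E/F$ is separable, \Pref{samesigma} gives $\s(E \tensor A) = E \tensor \s(A)$; this is the one place where separability of $E/F$ is used, guaranteeing that $\s$ commutes with the extension. Because $A$ is semicentral, I may write $\s(A) = S_1 \oplus \cdots \oplus S_m$, where the atoms $S_i$ of $A$ are $F$-central simple associative algebras. Distributing the tensor product over the direct sum yields
$$\s(E \tensor A) = (E \tensor S_1) \oplus \cdots \oplus (E \tensor S_m).$$
Now each $E \tensor S_i$ is $E$-central simple, since the scalar extension of an $F$-central simple algebra along the field extension $E/F$ is $E$-central simple (the center becomes $\Zent(S_i) \tensor[F] E = E$, and simplicity is preserved). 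Hence the displayed decomposition is precisely the decomposition of $\s(E \tensor A)$ into simple components, and the atoms of $E \tensor A$ are exactly the $E$-central algebras $E \tensor S_i$. Therefore $E \tensor A$ is semicentral, as required.

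I do not anticipate a serious obstacle here: the only inputs are \Pref{basicT0}, \Pref{samesigma}, and the invariance of central simplicity under field scalar extension. The one point worth a sentence is that each $E \tensor S_i$ is genuinely simple, so that it really is an atom of $E \tensor A$ rather than merely a coarser summand — but this is immediate from $E$-central simplicity of each summand, so the argument stays at the level of bookkeeping rather than requiring any new structural input.
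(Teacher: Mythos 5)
Your proof is correct and follows essentially the same route as the paper: both arguments hinge on \Pref{samesigma} to identify $\s(E\tensor A)$ with $E\tensor\s(A)$, the paper then noting that the (split) center $\Zent(\s(A))$ stays split, while you verify component-wise that each atom $E\tensor S_i$ stays $E$-central simple --- two phrasings of the same reduction.
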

\begin{proof}
When $E$ is separable, $\Zent(\s(E \tensor A)) = E \tensor \Zent(\s(A))$ by \Pref{samesigma}, and $\Zent(\s(A))$ is split.
\end{proof}

\begin{prop}\label{supernormalx}
The tensor product of two semicentral semiassociative algebras is semicentral.
\end{prop}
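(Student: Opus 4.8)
The plan is to reduce the statement to the purely associative setting via the multiplicativity of the semisimple quotient of the nucleus, established in \Pref{s(A)-tens}. Let $A$ and $B$ be semicentral semiassociative $F$-algebras. By definition this means that every atom of $A$ and of $B$ is $F$-central, equivalently that $\s(A), \s(B) \in \SS{semicentral}$ in the sense of \Sref{sec:ss}. The key observation is that the atoms of $A \tensor B$ are precisely the simple components of $\s(A \tensor B)$, so the problem becomes one of controlling the centers of these components.

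First I would invoke \Tref{basicTx} to know that $A \tensor B$ is again semiassociative, so that the notion of an atom applies to it. Next, \Pref{s(A)-tens} gives the crucial identification $\s(A \tensor B) \cong \s(A) \tensor \s(B)$, which transfers the question entirely to the associative algebra $\s(A) \tensor \s(B)$. It then remains to check that every simple component of this tensor product is $F$-central. This is exactly the assertion that $\SS{semicentral}$ is closed under tensor product, already recorded when $\SS{semicentral}$ was introduced and underlying \Pref{pure1}: decomposing $\s(A) = \bigoplus_i S_i$ and $\s(B) = \bigoplus_j S'_j$ into atoms, the components of $\s(A) \tensor \s(B)$ are the simple summands of the various $S_i \tensor S'_j$, and since each $S_i$ and each $S'_j$ is $F$-central simple, each $S_i \tensor S'_j$ is itself $F$-central simple. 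Hence all atoms of $A \tensor B$ are $F$-central, and $A \tensor B$ is semicentral.

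I do not expect a genuine obstacle here: once \Pref{s(A)-tens} is available, the argument is a direct translation of the associative fact that the tensor product over $F$ of two $F$-central simple algebras is again $F$-central simple. The only point deserving care is to confirm that every atom of $A \tensor B$ really arises from a pair of atoms, one from each factor; this is precisely what the isomorphism $\s(A \tensor B) \cong \s(A) \tensor \s(B)$ guarantees, since it lets us read off the simple components of $\s(A \tensor B)$ from those of the product $\s(A) \tensor \s(B)$.
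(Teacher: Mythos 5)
Your proof is correct and follows the same route as the paper: both rest on \Pref{s(A)-tens} to identify $\s(A\tensor B)$ with $\s(A)\tensor\s(B)$, and then use the classical fact that the tensor product of two $F$-central simple algebras is again $F$-central simple, so each $S_i\tensor S_j'$ is itself a single central atom. The paper's proof is just a terser version of exactly this argument.
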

\begin{proof}
Suppose $A$ and~$B$ are semicentral semiassociative algebras. By \Pref{s(A)-tens} the atoms of $A \tensor B$ are the tensor products of atoms of~$A$ and~$B$, which are central simple by assumption.
\end{proof}

Extending \Pref{sumdegjac} on the sum of degrees of the atoms in a skew matrix algebra, we have the following observation regarding the degrees of the atoms:

\begin{lem}\label{sumup}
The sum of the degrees of the atoms of a semicentral semiassociative algebra~$A$ is equal to the degree of~$A$.
\end{lem}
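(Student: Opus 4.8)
The plan is to reduce to the split case by a separable scalar extension and then read off the answer from \Tref{sumdegjac}. First I would fix an $n$-dimensional \etale{} subalgebra $K \sub \nuc{A}$ and choose a separable splitting field $E/F$ for $K$; such a field exists and is separable over $F$ because an \etale{} algebra is a finite product of separable field extensions. By \Tref{main-thm}, $E \tensor A$ is then a skew matrix algebra $\M[n](E;c)$, and since $E/F$ is separable, \Pref{samesigma} gives the identity $\s(E \tensor A) = E \tensor \s(A)$, which is the backbone of the argument.

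Next I would compare the two sides of this identity as semisimple $E$-algebras. Write $\s(A) = B_1 \oplus \cdots \oplus B_r$, where the $B_i$ are the atoms of $A$; by the semicentral hypothesis each $B_i$ is $F$-central simple. Then $E \tensor \s(A) = \bigoplus_i (E \tensor B_i)$, and each $E \tensor B_i$ is again central simple over $E$ (tensoring an $F$-central simple algebra with a field extension yields a central simple algebra), of the same degree $\deg(B_i)$, since base change preserves dimension and hence $\deg_E(E \tensor B_i) = \deg_F(B_i)$. Thus the $E \tensor B_i$ are precisely the simple components of $\s(E \tensor A)$. On the other hand, \Tref{sumdegjac} presents $\s(\M[n](E;c))$ as a direct sum of matrix algebras over $E$ whose degrees sum to $n = \deg(A)$. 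By the uniqueness of the decomposition of a semisimple associative algebra into simple components, these matrix components coincide with the $E \tensor B_i$. In particular each $E \tensor B_i$ is split, and $\sum_i \deg(E \tensor B_i) = n$; since $\deg(E \tensor B_i) = \deg(B_i)$, we conclude $\sum_i \deg(B_i) = n = \deg(A)$, as required.

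The only delicate points are bookkeeping rather than substance. One must ensure $E$ is taken separable so that \Pref{samesigma} applies, and that tensoring by $E$ keeps each atom simple and of unchanged degree, so that the two simple decompositions can be matched by uniqueness. The semicentral hypothesis is exactly what secures the latter: an atom with a nontrivial (separable) center would break up under base change, and its several pieces would contribute their degrees over the extended center, so the naive ``sum of degrees over $F$'' would no longer be the right invariant. I do not expect any obstacle beyond assembling these standard facts in the correct order.
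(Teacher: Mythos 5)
Your proposal is correct and follows essentially the same route as the paper's proof: pass to a separable splitting field so that $E\tensor A$ becomes a skew matrix algebra, use that the semicentral hypothesis keeps each atom simple of unchanged degree after base change, and read off the sum of degrees from \Tref{sumdegjac}. The extra detail you supply (invoking \Pref{samesigma} and the uniqueness of the simple decomposition explicitly) is exactly what the paper leaves implicit.
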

\begin{proof}
Let $\s(A) = S_1 \oplus \cdots \oplus S_t$ be the semisimple quotient of its nucleus. Let $E$ be a splitting field of~$A$. Then $E \tensor A$ is a semicentral skew matrix algebra, and its atoms, which are matrix algebras over~$E$
with sum of degrees equal to $\deg(A)$, have the same degrees as the~$S_j$. We are done by \Pref{sumdegjac}.
\end{proof}

We can now use atoms to characterize skew matrices.
\begin{cor}\label{use52}
A semiassociative algebra is a skew matrix algebra if and only if its atoms are all central matrix algebras.
\end{cor}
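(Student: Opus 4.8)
The plan is to deduce both implications from results already established, chiefly \Cref{sumdegjac}, \Cref{WeddNucl}, \Lref{sumup}, and \Cref{split-nuc-skew}.

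For the forward implication I would simply invoke \Cref{sumdegjac}: if $A = \M[n](F;c)$ is a skew matrix algebra, then its nucleus decomposes as $S \oplus \Jac(\nuc{A})$ with $S$ a direct sum of matrix algebras over $F$, so $\s(A) \cong S$ and every atom of $A$ is a central matrix algebra. Nothing further is required here.

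For the converse I would assume every atom of $A$ is a central matrix algebra, so that $\s(A) \cong \M[m_1](F) \oplus \cdots \oplus \M[m_t](F)$. First I would observe that $A$ is then semicentral (its atoms are $F$-central), which makes \Lref{sumup} applicable and yields $m_1 + \cdots + m_t = n = \deg(A)$. Next, by \Cref{WeddNucl} the semisimple quotient $\s(A)$ lifts to a unital subalgebra of $\nuc{A}$. Inside this lifted copy, the diagonal matrix units of each block $\M[m_j](F)$ form $m_j$ orthogonal idempotents summing to the identity of that block; collecting them across all blocks produces $\sum m_j = n$ orthogonal idempotents of $\nuc{A}$ that sum to $1_A$ and span a unital subalgebra isomorphic to $F^n$. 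Finally, \Cref{split-nuc-skew} converts the presence of this unital $F^n \sub \nuc{A}$ into the conclusion that $A$ is split, i.e.\ a skew matrix algebra.

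The one point requiring care --- the main, albeit minor, obstacle --- is that the copy of $F^n$ I build must be unital with identity exactly $1_A$, so that \Cref{split-nuc-skew} genuinely applies. This hinges on the Wedderburn--Malcev lift of \Cref{WeddNucl} carrying the identity of $A$ into $\s(A)$; once that is granted, the block identities sum to $1_A$ and the remainder is formal. This is also why I would make establishing semicentrality the very first step of the converse, since it is precisely what licenses the use of \Lref{sumup} to pin down $\sum m_j = n$.
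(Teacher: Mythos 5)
Your proposal is correct and follows essentially the same route as the paper: forward direction via \Cref{sumdegjac}, and the converse by combining \Lref{sumup}, the Wedderburn lift of \Cref{WeddNucl}, and \Cref{split-nuc-skew}. Your extra care that the lifted copy of $F^n$ is unital with identity $1_A$ is a valid (and easily verified) refinement of a point the paper leaves implicit.
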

\begin{proof}
The atoms of a skew matrix algebra are matrix algebras by \Pref{sumdegjac}. On the other hand, if the semisimple quotient of the nucleus of a semiassociative algebra~$A$, is the direct sum $\M[n_1](F)\oplus \dots \oplus \M[n_t](F)$, then, since $\sum n_i = \deg(A)$ by \Lref{sumup} and $\s(A)$ is contained in $\nuc{A}$ by \Cref{WeddNucl}, the nucleus contains $F^{\deg(A)}$, so the algebra splits by \Cref{split-nuc-skew}.
\end{proof}

\section{Monoids of semiassociative algebras}\label{sec:monoids}

The monoids~$\MM{semi}$ of all semiassociative algebras and~$\MM{mat}$ of skew matrices were defined after \Rref{ss:mon}.

\forget
For context, we add the following:
\begin{exer}
Let $P \sub M$ be commutative monoids. The relation $x \sim y$ (for $x,y \in M$) when $x p = y q$ for some $p,q \in P$ is an equivalence relation, and the quotient space $M/P$ is a monoid.
\end{exer}
\forgotten

Let the {\bf{radical envelope}} of a partition of $n$ over $F$ be the skew matrix algebra $\M[n](F;c)$ where $c_{ijk} = 1$ for $i,j,k$ in the same block of the partition, or when $j \in \set{i,k}$, and $c_{ijk} = 0$ otherwise. This is easily seen to be an associative algebra, whose semisimple quotient is a partition subalgebra (corresponding to the same partition).

\begin{prop}\label{Guypromisedaproof}
The map
$${\s} \co \MM{semi} \ra \SS{sep},$$
taking an algebra to the semisimple quotient of its nucleus, is a surjective homomorphism.
\end{prop}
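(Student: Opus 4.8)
The statement has three parts: that $\s$ maps into $\SS{sep}$, that it is multiplicative, and that it is onto. The first two are already in hand. For a semiassociative algebra $A$, \Tref{s(A)sep} says that $\s(A)$ is separable, so $\s$ indeed sends $\MM{semi}$ into $\SS{sep}$; since $\nuc{F}=F$ we have $\s(F)=F$, the identity of $\SS{sep}$. Multiplicativity is precisely \Pref{s(A)-tens}, which gives $\s(A\tensor B)\isom\s(A)\tensor\s(B)$, the right-hand side being separable by \Cref{95}. Thus $\s$ is a homomorphism of monoids, and the real content is surjectivity.

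Surjectivity asks that every separable $F$-algebra $T$ be realized as $\s(A)$ for some semiassociative $A$. I would first dispose of the split semisimple case $T=\M[n_1](F)\oplus\cdots\oplus\M[n_t](F)$: the radical envelope of the partition $n=n_1+\cdots+n_t$ is a skew matrix algebra whose semisimple quotient of the nucleus is, by its very construction together with \Tref{sumdegjac}, exactly $T$. (One may also read this off \Cref{split-nuc-skew}, the block structure of the envelope recovering the prescribed matrix sizes inside $F^n\sub\nuc{A}$.) Hence every split semisimple algebra lies in the image.

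For a general separable $T$ I would pass to a splitting field and descend. Choose a finite Galois extension $E/F$ splitting $T$, so that $E\tensor T\isom\M[m_1](E)\oplus\cdots\oplus\M[m_s](E)$ is split semisimple and carries the semilinear $\Gal(E/F)$-action whose fixed algebra is $T$; this action permutes the geometric components (those of one $F$-component of $T$ forming a single orbit) and twists within each orbit according to the Brauer class of that component over its center. By the split case there is a skew matrix algebra $M=\M[m](E;c)$ over $E$, with $m=\sum_j m_j$, whose index set is partitioned into the blocks of the envelope and with $\s(M)\isom E\tensor T$. The task is to choose the skew set $c$ together with a permutation action of $\Gal(E/F)$ on the indices so that the conjugacy condition \eq{Gact} holds and the induced action on $\s(M)\isom E\tensor T$ is the prescribed descent action. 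Granting this, \Sref{sec:6} shows that $A:=M^{\Gal(E/F)}$ is a semiassociative $F$-algebra with $E\tensor A\isom M$, while \Pref{samesigma} gives $E\tensor\s(A)=\s(E\tensor A)=\s(M)\isom E\tensor T$; passing to $\Gal(E/F)$-invariants, with matching descent data, yields $\s(A)\isom T$.

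The main obstacle is exactly this last matching. The conjugacy condition \eq{Gact} only permits ``monomial'' descent data coming from index permutations, which by itself realizes components that are split over their own centers, whereas a general separable $T$ may have components that are nontrivial division algebras over the separable fields $L_i$. I expect the resolution to require weaving the Brauer data into the nonzero values of the skew set over $E$ — so that the descended multiplication encodes the relevant $2$-cocycle, in the spirit of the nonassociative crossed products of \Eref{weak} — rather than working with a $\{0,1\}$-valued envelope alone. Carrying out this encoding, and checking that it is simultaneously compatible with \eq{Gact} and produces exactly the class of $T$ in each component, is the technical heart of the argument; the homomorphism properties above then complete the proof.
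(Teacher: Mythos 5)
The homomorphism part of your argument coincides with the paper's: multiplicativity is \Pref{s(A)-tens} and the target lands in $\SS{sep}$ by \Tref{s(A)sep}. The gap is in surjectivity, and you have in effect named it yourself: after disposing of the split case via the radical envelope (which is also how the paper starts), you reduce the general case to producing, for $S=T_1\oplus\cdots\oplus T_r$, a skew set over a Galois splitting field $E$ together with descent data satisfying \eq{Gact} whose invariant algebra has $\s(A)\isom S$ --- and then you defer ``the technical heart''\!, namely encoding the Brauer classes of the nonsplit components into the nonzero values of the skew set, without carrying it out. As written, your argument only establishes surjectivity onto the split semisimple algebras, not onto all of $\SS{sep}$.

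Your diagnosis of where the difficulty sits is nevertheless exactly right, and the missing construction is the one the paper (tersely) invokes. The paper descends the radical envelope of the block decomposition of $E\tensor S$ by letting $\Gal(E/F)$ permute the matrix units; the point to add --- and the one your worry about ``monomial'' descent data correctly isolates --- is that the matrix units being permuted are not an arbitrary basis of $\bigoplus_j\M[m_j](E)$ but the Brauer factor set basis $v_{st}=e_s(1\tensor v)e_t$ of \Sref{sec:5}, built inside each $E\tensor T_i$ from a maximal \etale{} $F$-subalgebra $K_i\sub T_i$ and a generator $v$ of $T_i$ over $K_i^e$. With that choice $\sigma(v_{st})=v_{\sigma(s)\sigma(t)}$ (\Pref{prop:sigma-vij}), so the Galois action really is a permutation of the basis, while the within-block structure constants are the (generally nontrivial, associative) factor set of $T_i$ and the cross-block products vanish; the off-block matrix units are then adjoined with the radical-envelope recipe. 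The conjugacy condition \eq{Gact} holds, the invariant algebra is semiassociative (its diagonal descends to $\bigoplus K_i$, of dimension $n=\sum\deg T_i$), the block part of the invariants recovers each $T_i$ rather than $\M[m_i](L_i)$, and the off-block part is the radical, giving $\s(A)=S$. Writing this out --- choice of $K_i$ and $v$, reducedness of the resulting skew set, and the identification of the invariants --- is precisely the step your proposal leaves open.
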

\begin{proof}
The map is a homomorphism by \Pref{s(A)-tens}. Let us now show that it is surjective.
Let $S = T_1\oplus\cdots\oplus T_r$ be a direct sum of~$r$ simple algebras, not necessarily central, all finite dimensional over $F$. Take a common Galois splitting field $E$; then $E\otimes (T_1\oplus\cdots\oplus T_r)$ is a direct sum of matrix algebras over~$E$, perhaps with more components (after scalar extension some~$T_i$ may decompose into more than one component if they are not central). Let $\M[n](E;c)$ be the skew matrix algebra of degree $n = \sum \deg T_i$, which is the radical envelope of the blocks defined by $E \tensor S$.

The skew set satisfies the conjugacy condition \eq{Gact} with respect to any $\tau \in \Gal(E/F)$, so we can extend the action of $\Gal(E/F)$ to the radical envelope as a permutation action on the matrix units. The invariant subalgebra~$A$, which is semiassociative over $F$, has $\s(A) = T_1 \oplus \cdots \oplus T_r$.
\end{proof}

\begin{defn}
A semiassociative algebra is {\bf{homogeneous}} if it is semicentral, and the atoms are all Brauer equivalent to each other.
\end{defn}
(So a semiassociative algebra is homogeneous if and only if $\s(A)$ is semicentral and homogeneous). All skew matrix algebras are homogeneous by \Cref{sumdegjac}.

We further define submonoids of $\MM{semi}$ as the inverse images of the monoids in \eq{SS}, as in the following diagram.
\begin{equation}\label{twolines}
\xymatrix@C=10pt@R=14pt{
\MM{mat} \ar@{->}[d] \ar@{-}@/^3ex/[rrrrrr]|{\Br[sa](F)} & \sub & \MM{hom} \ar@{->}[d]&  \sub & \MM{semicentral} \ar@{->}[d] & \sub & \MM{semi} \ar@{->}[d]^{\s} \\
\SS{split} & \sub & \SS{hom} & \sub & \SS{semicentral} & \sub & \SS{sep}
}
\end{equation}

Namely:
\begin{itemize}
\item $\MM{semicentral}$ is the family of semicentral semiassociative algebras, which are the algebras with central atoms, a monoid by \Pref{supernormalx}; %
\item $\MM{hom}$ is composed of the homogeneous algebras;
\item And $\MM{mat}$ is the family of skew matrix algebras, identifies by \Cref{use52} as the semiassociative algebras whose semisimple quotient of the nucleus belongs to $\SS{split}$.
\end{itemize}

\begin{exer}\label{88211}
Let $\varphi \co S \ra S'$ be a homomorphism of semigroups. Let $P' \sub S'$ be a pure subsemigroup. Then $\varphi^{-1}(P')$ is a pure subsemigroup of $S$.
\end{exer}

\begin{cor}\label{purim}
The inclusions $\MM{hom} \sub \MM{semicentral} \sub \MM{semi}$ are pure.
\end{cor}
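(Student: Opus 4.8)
The plan is to derive each purity statement by pulling back the corresponding purity result for semisimple algebras through the homomorphism $\s \co \MM{semi} \ra \SS{sep}$ of \Pref{Guypromisedaproof}, using the pullback principle recorded in \Exeref{88211} (preimages of pure subsemigroups under a homomorphism are pure).

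First I would handle the inclusion $\MM{semicentral} \sub \MM{semi}$. By the definition of the submonoids in diagram \eq{twolines}, we have $\MM{semicentral} = \s^{-1}(\SS{semicentral})$, and $\SS{semicentral}$ is pure in $\SS{sep}$ by \Pref{pure1}. Since $\s$ is a homomorphism of monoids, \Exeref{88211} applies directly and gives that $\s^{-1}(\SS{semicentral}) = \MM{semicentral}$ is pure in $\MM{semi}$.

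Next, for $\MM{hom} \sub \MM{semicentral}$, I would restrict $\s$ to $\MM{semicentral}$, obtaining a homomorphism $\s' \co \MM{semicentral} \ra \SS{semicentral}$; this is well defined because $\MM{semicentral} = \s^{-1}(\SS{semicentral})$ forces $\s(\MM{semicentral}) \sub \SS{semicentral}$. Since $\SS{hom} \sub \SS{semicentral}$, we have $\s^{-1}(\SS{hom}) \sub \s^{-1}(\SS{semicentral}) = \MM{semicentral}$, whence $\MM{hom} = \s^{-1}(\SS{hom}) = (\s')^{-1}(\SS{hom})$. As $\SS{hom}$ is pure in $\SS{semicentral}$ by \Cref{pure2}, applying \Exeref{88211} to the homomorphism $\s'$ yields that $\MM{hom}$ is pure in $\MM{semicentral}$.

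There is no serious obstacle here; the argument is a formal transport of \Prefs{pure1}{pure2} along $\s$. The only point requiring a moment's care is that the target of the restricted map is genuinely $\SS{semicentral}$, so that \Cref{pure2}, which is a purity statement \emph{inside} $\SS{semicentral}$, is applicable to $\s'$; this is immediate from the definition of $\MM{semicentral}$ as a preimage. If one also wants $\MM{hom}$ pure in $\MM{semi}$, it follows by combining the two established purities through the transitivity of purity (\Exeref{puretransitive}).
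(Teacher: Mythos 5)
Your argument is correct and is exactly the paper's proof: the paper likewise obtains both purities by applying Exercise~\ref{88211} to \Prefs{pure1}{pure2} via the homomorphism $\s$, merely stating this in one line where you have spelled out the details (including the harmless point about restricting $\s$ to $\MM{semicentral}$).
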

\begin{proof}
Apply Exercise~\ref{88211} to \Prefs{pure1}{pure2}.
\end{proof}

We conclude that semicentrality is a property of the Brauer class:
\begin{prop}\label{classofsuper}
Every semiassociative algebra similar to a semicentral algebra is semicentral.
\end{prop}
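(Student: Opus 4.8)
The plan is to reduce the statement entirely to the purity of $\MM{semicentral}$ inside $\MM{semi}$, which was established in \Cref{purim}. First I would unfold the word ``similar'', which here means Brauer equivalent in the sense of \Dref{ourdef}: if $A$ is similar to a semicentral algebra $B$, then there are skew matrix algebras $\M[n](F;c)$ and $\M[n'](F;c')$ with
$$A \tensor \M[n](F;c) \isom B \tensor \M[n'](F;c').$$
The whole point is to recognize the left-hand side as a product of two elements of $\MM{semi}$ whose value lands in $\MM{semicentral}$, so that purity can be applied.

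The key steps are then the following. Skew matrix algebras are semicentral, since by \Cref{use52} their atoms are central matrix algebras; in particular both $\M[n](F;c)$ and $\M[n'](F;c')$ lie in $\MM{semicentral}$. Combining this with the hypothesis that $B$ is semicentral, \Pref{supernormalx} (closure of semicentrality under tensor products) gives that $B \tensor \M[n'](F;c')$ is semicentral. Transporting this along the isomorphism above, $A \tensor \M[n](F;c)$ is semicentral as well, i.e.\ $A \tensor \M[n](F;c) \in \MM{semicentral}$. Now I have exhibited an element of $\MM{semicentral}$ as a product $A \tensor \M[n](F;c)$ of two elements of $\MM{semi}$, and $\MM{semicentral}$ is pure in $\MM{semi}$ by \Cref{purim}; purity forces both factors into $\MM{semicentral}$, so in particular $A$ is semicentral, as required.

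I do not anticipate a genuine obstacle: all of the substance has already been front-loaded into the closure result \Pref{supernormalx} and, above all, the purity statement \Cref{purim}, so the argument is essentially bookkeeping. The only point that deserves a moment's care is to confirm that the skew-matrix factor $\M[n](F;c)$ is itself an element of $\MM{semicentral}$ before invoking purity --- otherwise the hypothesis $ab \in C_0$ of the purity definition would not be legitimately in place. Once that is noted, the conclusion is immediate.
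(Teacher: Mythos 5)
Your argument is correct and is essentially the paper's own proof: both recognize $A\tensor\M[n](F;c)$ as a semicentral algebra (via closure under tensor products and semicentrality of skew matrices) and then invoke the purity of $\MM{semicentral}$ in $\MM{semi}$ from \Cref{purim} to conclude that the factor $A$ is semicentral. The only cosmetic difference is that you cite \Cref{use52} for semicentrality of skew matrices where the paper appeals to \Cref{sumdegjac}; both are valid.
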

\begin{proof}
If $A' \sim A$ where $A$ is semicentral, then $A'$ divides, in $\MM{semi}$, a semicentral algebra $A \tensor \M[n](F;c)$, so must be semicentral by purity.
\end{proof}

In order to connect this setup with the Brauer group, we should bring up the following fact.
\begin{exer}[First isomorphism theorem for monoids]\label{Isom1}
If the image of a homomorphism $\varphi \co M \ra N$ of commutative monoids is a group, then $\Im(\varphi) \isom M/\Ker(\varphi)$, where as usual $\Ker(\varphi) = \set{x \in M \suchthat \varphi(x) = 1}$.

(Hint: if $\varphi(x) = \varphi(y)$, take $r \in M$ with $\varphi(r) = \varphi(x)^{-1}$, so that $rx, ry$ are in the kernel and thus $x \sim y$).
\end{exer}

Let $\a \co \SS{hom} \ra \Br(F)$ be the map associating to an associative semisimple homogeneous algebra the Brauer class of its components, which are equivalent by assumption. The kernel of $\a$ is clearly $\SS{split}$, so by \Exeref{Isom1}, $\SS{hom}/ \SS{split} \isom \Br(F)$.

Now consider the composition $(\a \circ \s) \co \MM{hom} \ra \Br(F)$, sending a homogeneous semiassociative algebra to the Brauer class of its atoms. %
This map is onto because associative central simple algebras are in~$\MM{hom}$; and the kernel is $\MM{mat}$ by \Cref{use52}. It follows that
\begin{equation}\label{xx}
\MM{hom} / \MM{mat} \isom \Br(F),
\end{equation}
with the embedding $\Br(F) \sub \Br[sa](F)$ presented in~\eq{BrBr} obtained from $\MM{hom} \sub \MM{semi}$.

\forget
By dividing the upper line of the diagram above by $\MM{mat}$, we define an intermediate monoid,
\begin{equation}\label{Brchain}
\Br(F) \sub \Br[sc](F) \sub \Br[sa](F)
\end{equation}

as the submonoids containing the classes of associative, semicentral and arbitrary semiassociative algebras over $F$, respectively. In the same direction as above, we have that:
\begin{rem}
Every element of a class $\nacl{A} \in \Br[sc](F)$ is semicentral. Indeed if $A \sim A'$ where $A$ is semicentral, then $A'$ divides, in $\MM{semi}$, a semicentral algebra $A \tensor \M[n](F;c)$.
\end{rem}
\forgotten

\forget
Consider the classes
$$\MM{mat} \sub \MM{hom} \sub \MM{semicentral} \sub \MM{nonsingular} \sub \MM{semi}$$
of nonsingular skew matrix algebras,
semicentral semiassociative algebras whose simple component of the nucleus are all Brauer equivalent to each other,
all semicentral semiassociative algebras,
nonsingular semiassociative algebras, and all semiassociative algebras, respectively.
By
\Rref{tensormat}, %
\Cref{supernormalx}, %
and \Tref{basicTx}, %
respectively,
each class is closed under tensor product, so we obtain a chain of monoids. The three submonoids of $\MM{semi}$ are characterized in terms of the nucleus, by: having semisimple nucleus and thus having atoms; having central atoms; having only matrix algebras as atoms.
\forgotten

\begin{rem}
One would want to intersect the chain
\begin{equation}\label{MMchain}
\MM{mat} \sub \MM{hom} \sub \MM{semicentral} \sub \MM{semi}
\end{equation} defined above with the basic conditions lurking in the background. If we restrict to associative algebras there is still significant structure, since the map $\s \co \MM{semi} \ra \SS{sep}$ remains surjective, as seen in the proof of \Pref{Guypromisedaproof}. However, if we restrict attention to {\emph{simple}} associative algebras, the chain collapses: an associative simple semiassociative algebra is homogeneous, being its own single atom; and associative simple skew matrices are nothing but matrices; so in this class our Brauer monoid collapses to the Brauer group.
\end{rem}

\begin{rem}
In \Pref{goodcompare} we saw that our Brauer equivalence reduces to the classical equivalence for simple associative algebras. However, the zero matrix algebra of \Eref{assocnonsimple} is an associative skew matrix algebra which is not Morita equivalent to~$F$.
\end{rem}

\section{Underlying division algebras}\label{sec:underly}

We now prove that homogeneous semiassociative algebras have underlying associative division algebras.
\smallskip

A fundamental characterization of associative finite dimensional central simple algebras is that an algebra~$B$ is in this class if and only if whenever~$B$ is contained in a ring~$R$, the ring~$R$ decomposes as a tensor product of~$B$ and its centralizer \cite[Theorem~V.11.2]{Jac-sor}. The proof (in one direction) is to observe that $\End(B) = \op{B} \tensor B \sub \op{B} \tensor R$, and matrix algebras can be factored out. Matrix units in the nucleus provide the same factorization, so we have the following more general statement:
\begin{prop}\label{double}
Let $D$ be a central simple algebra contained in the nucleus of any nonassociative algebra $A$. Then $A \isom D \tensor \Ce[A]{D}$.
\end{prop}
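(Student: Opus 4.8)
The plan is to transport the classical double-centralizer argument, using at each step only the associativity guaranteed by $D\sub\nuc{A}$. First I would turn the two-sided action of $D$ on $A$ into a ring homomorphism $\phi\co D\tensor[F]\op{D}\to\End_F(A)$, $d\tensor d'\mapsto(a\mapsto dad')$. This is well defined and multiplicative precisely because $D$ lies in the full nucleus: $d\in\nuc[\ell]{A}$ makes $d\mapsto L_d$ a homomorphism and forces $L_dR_{d'}=R_{d'}L_d$ (via $(d,a,d')=0$), while $d\in\nuc[r]{A}$ makes $d\mapsto R_d$ an anti-homomorphism, so the left and right actions assemble to an action of the enveloping algebra. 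Since $D$ is central simple, $D\tensor[F]\op{D}\isom\End_F(D)\isom\M[m](F)$ with $m=\dimcol{D}{F}$ is simple, and as $\phi(1\tensor1)=\id_A\neq0$ the map $\phi$ is injective. Thus $A$ becomes a module over the simple ring $\Lambda:=\phi(D\tensor[F]\op{D})\isom\M[m](F)$.

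Next I would pin down the centralizer using the separability idempotent $e=\sum_i u_i\tensor v_i\in D\tensor[F]\op{D}$, which satisfies $\mu(e)=\sum_i u_iv_i=1$ and $(d\tensor1)e=(1\tensor d)e$ for all $d\in D$. Applying $\phi$, the second identity reads $L_d\phi(e)=R_d\phi(e)$, i.e.\ $d\cdot\phi(e)(a)=\phi(e)(a)\cdot d$ for every $a$, so $\phi(e)A\sub\Ce[A]{D}$; conversely, for $c\in\Ce[A]{D}$ the relation $\mu(e)=1$ together with $c$ commuting with each $v_i$ gives $\phi(e)(c)=\sum_i u_i(cv_i)=\sum_i u_i(v_ic)=(\sum_i u_iv_i)c=c$ (using $D\sub\nuc[\ell]{A}$ to contract the sum). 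Hence $\phi(e)$ is an idempotent with image exactly $\Ce[A]{D}$. As $e$ is a rank-one idempotent of $\M[m](F)$, the multiplicity decomposition of $A$ as a $\Lambda$-module yields $\dimcol{\Ce[A]{D}}{F}=\dimcol{A}{F}/m$.

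Finally I would show that the multiplication map $\psi\co D\tensor[F]\Ce[A]{D}\to A$, $d\tensor c\mapsto dc$, is the desired isomorphism. It is an algebra homomorphism by the computation $(dc)(d'c')=(dd')(cc')$, justified step by step by $d'\in\nuc[c]{A}$, then $c$ commuting with $d'$, then $d,dd'\in\nuc[\ell]{A}$. For surjectivity, $A$ is a semisimple $\Lambda$-module, and every simple submodule $S$ has $\phi(e)S\neq0$ (faithfulness of the simple $\M[m](F)$-module) and hence $S=\Lambda\,\phi(e)S\sub\Lambda\,\Ce[A]{D}$; so $A=\phi(\Lambda)\,\Ce[A]{D}$, and each $\phi(d\tensor d')(c)=dcd'=(dd')c$ lies in $D\cdot\Ce[A]{D}=\Im\psi$. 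A dimension count, $\dimcol{D\tensor[F]\Ce[A]{D}}{F}=m\cdot(\dimcol{A}{F}/m)=\dimcol{A}{F}$, then upgrades surjectivity to bijectivity. The main obstacle is bookkeeping rather than depth: every reassociation in the homomorphism and surjectivity steps must be explicitly licensed by the appropriate one-sided nucleus, and one must confirm that the separability idempotent genuinely cuts out $\Ce[A]{D}$ rather than some larger fixed subspace.
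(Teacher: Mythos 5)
Your proof is correct and follows essentially the same route as the paper, which only sketches the classical factorization argument (the action of $D\tensor[F]\op{D}\isom\M[m](F)$ on $A$ through the nucleus, with the matrix algebra factored out). Your writeup via the separability idempotent supplies in full the details the paper delegates to the associative reference, and each reassociation is correctly licensed by the appropriate one-sided nucleus.
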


This is nicely exemplified in \cite[Example~11]{BrPu}, where $A$ is a generalized nonassociative cyclic algebra and $D$ is an associative cyclic algebra.

\begin{prop}\label{underly}
Let $A$ be a homogeneous algebra, and $D$ the (associative) underlying division algebra of its atoms. Then there is a decomposition $A \isom D \tensor A'$ where $A'$ is a skew matrix algebra.
\end{prop}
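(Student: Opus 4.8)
The plan is to locate a single copy of the underlying division algebra $D$ inside the nucleus of $A$, split it off using \Pref{double}, and then recognize the complementary factor as a skew matrix algebra through its atoms. Since $A$ is homogeneous, its atoms are $F$-central simple and all Brauer equivalent to $D$, so $\s(A) \isom \M[n_1](D) \oplus \cdots \oplus \M[n_t](D) \isom D \tensor S_0$, where $S_0 = \M[n_1](F) \oplus \cdots \oplus \M[n_t](F)$. Embedding $D$ diagonally (as scalar matrices in each block) realizes it as a central simple subalgebra of $\s(A)$, which sits inside $\nuc{A}$ by \Cref{WeddNucl}. Then \Pref{double} yields $A \isom D \tensor A'$ with $A' = \Ce[A]{D}$, and it remains to prove that $A'$ is a skew matrix algebra.

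First I would pin down $\s(A')$. By \Pref{prop:nuc-tensor} we have $\nuc{A} = D \tensor \nuc{A'}$. As $D$ is separable, $D \tensor \Jac(\nuc{A'})$ is a nilpotent ideal whose quotient $D \tensor \s(A')$ is semisimple (each $D \tensor S$, for $S$ a simple component of $\s(A')$, is semisimple by \Lref{centers} since $\Zent(D) = F$); hence $\Jac(\nuc{A}) = D \tensor \Jac(\nuc{A'})$ and $\s(A) \isom D \tensor \s(A')$. Comparing with $\s(A) \isom D \tensor S_0$ and cancelling the central simple factor $D$ gives $\s(A') \isom S_0$, a split semisimple algebra whose atoms are the central matrix algebras $\M[n_k](F)$. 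Wedderburn's principal theorem then embeds $S_0$, and in particular its diagonal $F^m$ (with $m = \sum_k n_k$), as a unital subalgebra of $\nuc{A'}$.

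The main obstacle is showing that $A'$ is semiassociative, which reduces to faithfulness of $A'$ over $(F^m)^e$, i.e.\ to $e_i A' e_j \neq 0$ for the primitive idempotents $e_1,\dots,e_m$ of $F^m \sub \s(A')$. Here I would transport the faithfulness already present in $A$: choosing a maximal separable subfield $K_D \sub D$ and forming the \etale{} subalgebra $K = K_D \tensor F^m \sub \s(A) \sub \nuc{A}$, of dimension $\deg(D)\cdot m = \deg(A)$ by \Lref{sumup} (so $\dimcol{A'}{F} = \dimcol{A}{F}/\dimcol{D}{F} = m^2$), \Pref{coverall} shows that $A$ is faithful over $K^e$. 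The primitive idempotents of $K$ are the $f_i = 1_D \tensor e_i$, so for each $i,j$ the nonzero idempotent $f_i \tensor f_j \in K^e$ cannot annihilate $A$; since $f_i A f_j = D \tensor (e_i A' e_j)$ under $A \isom D \tensor A'$, this forces $e_i A' e_j \neq 0$. As $A' = \bigoplus_{i,j} e_i A' e_j$ is a Peirce decomposition (valid because $e_i \in \nuc{A'}$) and $\dimcol{A'}{F} = m^2$, faithfulness over $(F^m)^e$ follows, and \Rref{3for2} upgrades it to minimal faithfulness; hence $A'$ is $F^m$-semiassociative.

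Finally, $A'$ is a semiassociative algebra whose atoms $\M[n_k](F)$ are all central matrix algebras, so \Cref{use52} identifies $A'$ as a skew matrix algebra, completing the decomposition $A \isom D \tensor A'$. I expect the faithfulness step — carrying the faithful $K^e$-action on $A$ over to a faithful $(F^m)^e$-action on the centralizer $A'$ — to be the delicate point, since it is precisely what links the splitting of $D$ to the combinatorial skew matrix structure of the remaining factor.
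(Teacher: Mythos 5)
Your proposal is correct and follows essentially the same route as the paper's proof: both split off $D$ via \Pref{double} after embedding it in the nucleus through $\s(A) \isom D \tensor S_0$, and both transport faithfulness from $(K_D \tensor F^{m})^e$ acting on $A$ (via \Pref{coverall}) to $(F^m)^e$ acting on $\Ce[A]{D}$, concluding by the dimension count $\dim \Ce[A]{D} = m^2$. Your write-up merely makes explicit some steps the paper leaves implicit (the computation of $\s(A')$, the idempotents $f_i \tensor f_j$, and the final appeal to \Cref{use52}).
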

\begin{proof}
Let $n = \deg(A)$ and $m = \deg(D)$. By assumption there is a split semisimple algebra $S \in \SS{split}$ such that $\s(A) = D \tensor S$. By \Lref{sumup}, applied to $A$, the sum of the degrees of the components of $S$ is equal to $n/m$. By \Pref{double} we have a decomposition $A = D \tensor \Ce[A]{D}$, so $F^{n/m} \sub S \sub \Ce[A]{D}$. Taking a maximal subfield $K \sub D$, $(K \tensor F^{n/m})^e$ acts faithfully on $A$ by \Pref{coverall}, which implies that $(F^{n/m})^e$ acts faithfully on $\Ce[A]{D}$. Since $\dim \Ce[A]{D} = (n/m)^2$, it follows that~$\Ce[A]{D}$ is a skew matrix algebra, as required.
\end{proof}

Notice that $D$ is uniquely determined by $A$, and may rightfully be referred to as the {\bf{underlying division algebra}} of $A$. In the language of monoids, we proved here that  $\MM{hom} = \MM{assoc} \cdot \MM{mat}$, where $\MM{assoc}$ is the monoid of associative central simple algebras over~$F$.

\begin{cor}
An associative division algebra $D$ is the unique member of minimal degree in the class $\nacl{D} \in \Br[sa](F)$ (so every other element has the form $D \tensor \M[n](F;c)$).
\end{cor}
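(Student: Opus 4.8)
The plan is to leverage \Pref{underly} together with the identification $\MM{hom}/\MM{mat} \isom \Br(F)$ recorded in \eq{xx}. First I would check that the entire class $\nacl{D}$ is contained in $\MM{hom}$. Since $D$ is associative central simple, it is its own central atom and hence homogeneous. If $A \sim D$, then $A \tensor[F] \M[n](F;c) \isom D \tensor[F] \M[n'](F;c')$ for suitable skew matrix algebras; the right-hand side is homogeneous because tensoring a homogeneous algebra by a skew matrix algebra stays in $\MM{hom}$. As $\MM{hom}$ is pure in $\MM{semi}$ by \Cref{purim}, purity forces $A \in \MM{hom}$.

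Next, for an arbitrary member $A$ of $\nacl{D}$, I would invoke \Pref{underly} to write $A \isom D_A \tensor[F] A'$, where $D_A$ is the underlying division algebra of $A$ and $A'$ is a skew matrix algebra. The crucial step is to identify $D_A \isom D$. This is exactly what \eq{xx} delivers: the composite $\a \circ \s$ sends a homogeneous semiassociative algebra to the Brauer class of its atoms, namely to $[D_A] \in \Br(F)$, and it descends to the isomorphism $\MM{hom}/\MM{mat} \isom \Br(F)$. Because $A \sim D$ and both are homogeneous, their images coincide, giving $[D_A] = [D]$ in $\Br(F)$; since $D_A$ and $D$ are division algebras lying in the same Brauer class, $D_A \isom D$.

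Finally I would carry out the degree bookkeeping. Tensoring over $F$ multiplies dimensions, so $\deg(A) = \deg(D)\deg(A')$, where $A'$ is the skew matrix factor. Since $\deg(A') \geq 1$ we get $\deg(A) \geq \deg(D)$, with equality precisely when $\deg(A') = 1$. But the only skew matrix algebra of degree $1$ is $F$ itself (its single reduced matrix unit is the identity), so equality forces $A \isom D \tensor[F] F = D$. This proves that $D$ attains the minimal degree in $\nacl{D}$, is the unique member doing so, and that every other member has the stated form $D \tensor[F] \M[n](F;c)$.

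The main obstacle is the middle step, the identification $D_A \isom D$: one must know that the underlying division algebra is an invariant of the semiassociative Brauer class and not merely of the isomorphism type of $A$. The whole argument therefore rests on having the isomorphism \eq{xx} in hand, after which the homogeneity reduction and the degree count are routine.
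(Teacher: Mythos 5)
Your proposal is correct and follows the route the paper intends: the corollary is stated as an immediate consequence of \Pref{underly}, and your argument supplies exactly the missing details --- purity of $\MM{hom}$ (\Cref{purim}) to place the whole class $\nacl{D}$ inside $\MM{hom}$, the invariance of the atoms' Brauer class under tensoring by skew matrices (via \eq{xx}) to identify the underlying division algebra with $D$, and the degree count $\deg(A)=\deg(D)\deg(A')$ to conclude. Nothing further is needed.
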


In particular, similarly to the case in the associative Brauer group,
\begin{cor}
The trivial element $\nacl{F} \in \Br[sa](F)$ is the class of skew matrices.
\end{cor}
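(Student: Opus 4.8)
The plan is to specialize the corollary immediately preceding this statement to the trivial division algebra $D = F$. Since a field is (trivially) an associative division algebra, that corollary applies verbatim and describes the entire class $\nacl{F}$: its unique member of minimal degree is $F$ itself, and every other member is of the form $F \tensor \M[n](F;c)$.

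First I would verify the inclusion $\MM{mat} \sub \nacl{F}$, i.e.\ that every skew matrix algebra is Brauer equivalent to $F$. For any skew matrix algebra $\M[n](F;c)$, the identifications
$$\M[n](F;c) \tensor F \isom \M[n](F;c) \isom F \tensor \M[n](F;c)$$
exhibit, via \Dref{ourdef} (tensoring $\M[n](F;c)$ on the right by the trivial skew matrix $F = \M[1](F;1)$, and tensoring $F$ on the right by $\M[n](F;c)$), a Brauer equivalence $\M[n](F;c) \sim F$. Hence every skew matrix algebra lies in $\nacl{F}$.

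Conversely, the preceding corollary supplies the reverse inclusion $\nacl{F} \sub \MM{mat}$: any element of $\nacl{F}$ has the form $F \tensor \M[n](F;c) \isom \M[n](F;c)$ (with $F$ itself corresponding to $n=1$), which is a skew matrix algebra. Combining the two inclusions identifies the identity class $\nacl{F}$ of $\Br[sa](F)$ with the class of skew matrices $\MM{mat}$.

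I do not expect a genuine obstacle here, as the statement is a transparent special case of the result just proved; the task is mainly to observe that $F$ qualifies as a division algebra in the previous corollary. The one conceptual point worth recording is that that corollary really does govern the whole class: everything Brauer equivalent to the homogeneous algebra $F$ is again homogeneous by purity (\Cref{purim}), so \Pref{underly}, applied with underlying division algebra $F$, controls every member of $\nacl{F}$ and not merely those a priori known to be homogeneous.
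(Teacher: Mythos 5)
Your proposal is correct and follows essentially the same route as the paper, which derives this corollary precisely as the special case $D=F$ of the preceding corollary on underlying division algebras (via purity of $\MM{hom}$ and \Pref{underly}). Your explicit verification of the easy inclusion $\MM{mat}\sub\nacl{F}$ and your closing remark on why purity lets the preceding corollary govern the entire class are exactly the points the paper leaves implicit.
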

(This is not a-priori obvious, because $\Br[sa](F)$ contains every semiassociative algebra $A$ for which $A \tensor \M[n](F;c)$ is skew matrices for some $\M[n](F;c)$.)
\forget
\begin{prop}
The trivial element $\nacl{F} \in \Br[sa](F)$ is the class of nonsingular skew matrices.
\end{prop}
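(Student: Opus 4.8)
The plan is to prove the two inclusions $\MM{mat} \sub \nacl{F}$ and $\nacl{F} \sub \MM{mat}$ separately, where $\MM{mat}$ denotes the class of skew matrix algebras. The first is automatic: by the construction of a monoid quotient recalled in \Rref{ss:mon}, the identity class $\nacl{F}$ of $\Br[sa](F) = \MM{semi}/\MM{mat}$ always contains the divided-out submonoid $\MM{mat}$, so every skew matrix algebra lies in $\nacl{F}$. The entire content of the corollary is therefore the reverse inclusion, that $\nacl{F}$ does not strictly contain $\MM{mat}$. Unwinding the definition, $A \in \nacl{F}$ means $A \sim F$, i.e.\ there is a skew matrix algebra $P$ with $A \tensor P$ a skew matrix algebra, and the goal is to deduce that $A$ itself is a skew matrix algebra.

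First I would place $A$ in the homogeneous monoid. Since $A \tensor P$ is a skew matrix algebra we have $A \tensor P \in \MM{mat} \sub \MM{hom}$, and as $\MM{hom}$ is pure in $\MM{semi}$ (by \Cref{purim} together with transitivity of purity, \Exeref{puretransitive}), the factor $A$ must already lie in $\MM{hom}$. Now I would exploit the group structure of the quotient: by \eq{xx} the homomorphism $\a \circ \s \co \MM{hom} \ra \Br(F)$ has kernel exactly $\MM{mat}$. Applying it to $A \tensor P$ and using multiplicativity gives $(\a\circ\s)(A)\,(\a\circ\s)(P) = (\a\circ\s)(A \tensor P) = 1$, because $A \tensor P \in \MM{mat}$; and $(\a\circ\s)(P) = 1$ since $P \in \MM{mat}$ as well. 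As $\Br(F)$ is a group, these force $(\a\circ\s)(A) = 1$, that is $A \in \Ker(\a\circ\s) = \MM{mat}$, which is what we want.

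The concrete mechanism underlying the cancellation step, which one could also run directly, is the behaviour of atoms under \Cref{use52} and \Pref{s(A)-tens}. Writing $\s(A) = T_1 \oplus \cdots \oplus T_r$ and $\s(P) = \M[m_1](F) \oplus \cdots \oplus \M[m_s](F)$, we have $\s(A \tensor P) \isom \s(A) \tensor \s(P) = \bigoplus_{i,j} \M[m_j](T_i)$, whose simple components are the atoms of $A \tensor P$ and are central matrix algebras over $F$ by \Cref{use52}. Fixing any $j$, uniqueness of the Wedderburn decomposition forces each $\M[m_j](T_i) \isom \M[k_{ij}](F)$; writing $T_i \isom \M[d_i](D_i)$ for a division algebra $D_i$ with center $\Zent(T_i)$, this pins down $\Zent(T_i) = F$ and $D_i = F$, so each atom $T_i$ is a central matrix algebra and $A$ is a skew matrix algebra by \Cref{use52}. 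The main subtlety, and the reason the statement is ``not a-priori obvious'', is that $\MM{mat}$ is itself \emph{not} pure in $\MM{semi}$: two homogeneous algebras with inverse nontrivial Brauer classes tensor to a skew matrix algebra without either being one. What rescues the argument is precisely that the second factor $P$ is a genuine skew matrix algebra, hence carries the trivial Brauer class, so the cancellation in the group $\Br(F)$ (equivalently, the splitting of the atoms of $A$) is legitimate.
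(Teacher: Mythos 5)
Your proof is correct and follows essentially the same route as the paper's: purity of $\MM{hom}$ in $\MM{semi}$ places $A$ in the homogeneous monoid, and then the homomorphism $\a\circ\s\co\MM{hom}\ra\Br(F)$ with kernel $\MM{mat}$ (equivalently, the explicit atom computation via \Pref{s(A)-tens} and \Cref{use52}) forces $A$ itself to be a skew matrix algebra. Your closing observation that $\MM{mat}$ is \emph{not} pure in $\MM{semi}$ correctly pinpoints why the statement is not automatic and why the argument must pass through $\Br(F)$.
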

\begin{proof}
A semiassociative algebra $A$ is in $\nacl{F}$ if there are nonsingular skew sets $c,c'$ such that $A \tensor \M[n](F;c) \isom \M[n'](F;c')$, and a priori may not be a skew matrix algebra by itself. But as in \Tref{subgp}, since $\M[n'](F;c')$ is semicentral of weight~$1$, this is true for $A$ as well; but then the components of $\nuc{A}$ are all Brauer equivalent to $F$, in the classical sense, so $A$ splits by \Pref{use52}.
\end{proof}
\forgotten

\section{Invertibility in the Brauer monoid}\label{sec:final}

\forget
\begin{prop}\label{prime1}
$\MM{semicentral}$ is a prime submonoid of $\MM{semicentral}$.
\end{prop}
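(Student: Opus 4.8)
The plan is to read the statement with its evident typo corrected, namely as the assertion that $\MM{semicentral}$ is a prime submonoid of $\MM{semi}$, where "prime" carries its usual commutative-semigroup meaning: the complement $\MM{semi}\setminus\MM{semicentral}$ is a proper (completely) prime ideal. For a submonoid of a commutative monoid this is equivalent to the statement that $A\tensor B\in\MM{semicentral}$ forces both $A\in\MM{semicentral}$ and $B\in\MM{semicentral}$ — which is precisely the purity property already defined in the paper. Indeed, a nonempty proper subsemigroup $P$ of a commutative monoid has prime complement if and only if $P$ is a pure submonoid: the ideal condition on the complement is the purity condition $ab\in P\Rightarrow a,b\in P$, while primeness of the complement, $ab\in P\Rightarrow a\in P\text{ or }b\in P$ in contrapositive form, is just closure of $P$ under multiplication. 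So the content of the proposition is closure together with purity.

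First I would record that $\MM{semicentral}$ is a genuine submonoid of $\MM{semi}$: it contains $\nacl{F}$ because $\s(F)=F$ is $F$-central, and it is closed under tensor products by \Pref{supernormalx}. It is also proper and nonempty, since a non-split nonassociative quaternion algebra lies outside it (its nucleus is a quadratic field extension), so the complement is a proper nonempty ideal candidate.

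The key step is purity, which I would obtain from the behaviour of atoms under tensor product. By \Pref{s(A)-tens} we have $\s(A\tensor B)\isom\s(A)\tensor\s(B)$, so the atoms of $A\tensor B$ are exactly the simple components of $\s(A)\tensor\s(B)$. Arguing contrapositively, suppose $A\notin\MM{semicentral}$, so some atom $S$ of $A$ has center $L=\Zent(S)\neq F$, which is separable over $F$ by \Tref{s(A)sep}. Choosing any atom $T$ of $B$, with center $L'$, the summand $S\tensor T$ of $\s(A)\tensor\s(B)$ has center $\Zent(S)\tensor\Zent(T)=L\tensor L'$, an \etale{} $F$-algebra that is a direct product of fields each containing a copy of $L$, hence each strictly larger than $F$. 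Therefore every atom of $A\tensor B$ arising from $S\tensor T$ is non-central, so $A\tensor B\notin\MM{semicentral}$; by symmetry the same holds if $B\notin\MM{semicentral}$. Alternatively one may simply quote \Cref{purim}, which already asserts that $\MM{semicentral}$ is pure in $\MM{semi}$ (obtained from \Pref{pure1} by pulling back along the homomorphism $\s$).

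Assembling these, $\MM{semicentral}$ is a proper nonempty submonoid of $\MM{semi}$ whose complement is an ideal (purity) and which is closed under multiplication (submonoid), so its complement is a proper prime ideal and $\MM{semicentral}$ is a prime submonoid, as claimed. I expect the only real obstacle to be conceptual rather than computational: confirming that "prime" in this commutative-monoid context coincides with the already-established notion of "pure," after which the sole mathematical content is the center computation $\Zent(S\tensor T)=L\tensor L'$, which guarantees that a noncentral atom can never be absorbed by tensoring with another semiassociative algebra.
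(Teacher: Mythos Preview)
Your proposal is correct and takes essentially the same approach as the paper's own (very terse) proof: both argue that the atoms of $A\tensor B$ are the simple summands of tensor products $S\tensor S'$ of atoms, and that the center of each such summand contains both $\Zent(S)$ and $\Zent(S')$, forcing $\Zent(S)=\Zent(S')=F$ whenever $A\tensor B$ is semicentral. Your write-up is more explicit about the ambient monoid, the identification of ``prime'' with the paper's notion of ``pure'', and the use of \Pref{s(A)-tens} to pass from nuclei to their semisimple quotients, but the mathematical core is identical.
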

\begin{proof}
Let $S$ and $S'$ be atoms of $A$ and~$B$, respectively. The resulting atoms of $A \tensor B$ are the direct summands of $S \tensor S'$, but the center of each direct summand contains both $\Zent(S)$ and $\Zent(S')$, so if $A \tensor B$ has central atoms, so must $A$ and~$B$.
\end{proof}

For a finer analysis, let us define the {\bf{weight}} $\w(A)$ of a semicentral semiassociative algebra as its number of atoms, counted as elements in the Brauer group $\Br(F)$. For example, a nonsingular skew matrix algebra has weight~$1$, because its atoms are all in $[F]$.

\begin{prop}\label{onw}
Let $A,B$ be semicentral semiassociative algebras. Then $\w(A \tensor B) \geq \w(A)$.
\end{prop}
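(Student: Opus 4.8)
The plan is to transport \Pref{onw-}, the corresponding inequality for associative semicentral semisimple algebras, across the multiplicative map $\s$. Recall that the weight $\w(A)$ of a semicentral semiassociative algebra was defined as the number of distinct Brauer classes represented among its atoms, that is, among the simple components of $\s(A)$. Because $A$ is semicentral these components are all $F$-central, so $\s(A)$ lies in $\SS{semicentral}$ and $\w(A)$ coincides with the weight $\w(\s(A))$ of the associative algebra $\s(A)$ in the sense of \Sref{sec:ss}. The same holds for $B$. Thus the entire statement is really a statement about the associative semisimple algebras $\s(A)$ and $\s(B)$.

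Concretely, I would first invoke \Pref{s(A)-tens}, which yields $\s(A \tensor B) \isom \s(A) \tensor \s(B)$, and hence
$$\w(A \tensor B) = \w\big(\s(A \tensor B)\big) = \w\big(\s(A) \tensor \s(B)\big).$$
Since $A$ and $B$ are semicentral, both $\s(A)$ and $\s(B)$ belong to $\SS{semicentral}$, so \Pref{onw-} applies verbatim with $N = \s(A)$ and $N' = \s(B)$, giving $\w(\s(A) \tensor \s(B)) \geq \w(\s(A))$. Stringing these together,
$$\w(A \tensor B) = \w\big(\s(A) \tensor \s(B)\big) \geq \w\big(\s(A)\big) = \w(A),$$
which is exactly the claim.

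There is no genuine obstacle here: the combinatorial heart of the matter is already carried out in \Pref{onw-}. There one fixes a single Brauer class $\beta$ arising from a component of $N'$ and multiplies it against the distinct classes $\alpha_1,\dots,\alpha_t$ of the components of $N$; since $\s(A \tensor B) \isom \s(A) \tensor \s(B)$ exhibits the simple components of the tensor product as tensor products of components (each $F$-central simple, hence simple, by semicentrality), the elements $\alpha_1\beta,\dots,\alpha_t\beta$ all occur among the atoms of the product, and they are pairwise distinct because $\Br(F)$ is a group. The only points requiring care are the bookkeeping ones flagged above: that $A,B$ semicentral forces $\s(A),\s(B) \in \SS{semicentral}$, so that both the weight function and \Pref{onw-} are applicable, and that the weight of the semiassociative algebra agrees by definition with the weight of the associated semisimple algebra $\s(A)$.
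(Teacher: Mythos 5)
Your proof is correct and rests on the same mechanism as the paper's: the atoms of $A\tensor B$ are the tensor products of atoms of $A$ with atoms of $B$ (via \Pref{s(A)-tens}/\Cref{supernormalx}), so fixing one class $\beta$ from $B$ and multiplying it against the distinct classes $\alpha_1,\dots,\alpha_t$ of $A$ produces $t$ distinct classes among the atoms of the product. Your only departure is organizational — you route the count formally through $\s$ and \Pref{onw-} rather than restating the one-line argument at the level of atoms — which is a harmless and arguably cleaner packaging.
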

\begin{proof}
If $\set{\alpha_1,\dots,\alpha_t}$ are the Brauer classes of atoms of $A$, and $\beta$ is any Brauer class of an atoms of~$B$,
then, as in \Cref{supernormalx}, $\set{\alpha_1 \beta, \dots, \alpha_t \beta}$ are Brauer classes of atoms of $A \tensor B$.
\end{proof}

Let $\MM{hom}$ be the class of semicentral semiassociative algebras with $\w(A) = 1$. This is a submonoid, and we have the refinement
\begin{equation}\label{MM}
\MM{mat} \sub \MM{hom} \sub \MM{semicentral} \sub \MM{nonsingular} \sub \MM{semi}.
\end{equation}
{}It follows from \Pref{onw} that:
\begin{cor}\label{prime2}
The submonoid $\MM{hom}$ is prime in $\MM{semicentral}$.
\end{cor}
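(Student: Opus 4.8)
The plan is to derive primality directly from the monotonicity of the weight function recorded in \Pref{onw}. Recall that $\MM{hom}$ consists of exactly those semicentral semiassociative algebras $A$ with $\w(A)=1$, and that a submonoid $C_0 \subseteq C$ is prime precisely when $ab \in C_0$ forces both $a$ and $b$ to lie in $C_0$. So with $C = \MM{semicentral}$ and $C_0 = \MM{hom}$, it suffices to show that whenever $A,B \in \MM{semicentral}$ satisfy $A \tensor B \in \MM{hom}$, both $A$ and $B$ belong to $\MM{hom}$.

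First I would note that $\w$ takes values in the positive integers on all of $\MM{semicentral}$: the atoms of a semicentral semiassociative algebra form a nonempty multiset, so $\w(A) \geq 1$ for every $A \in \MM{semicentral}$. Next, assuming $\w(A \tensor B)=1$, I would apply \Pref{onw} to obtain $\w(A) \leq \w(A \tensor B) = 1$, whence $\w(A)=1$ and $A \in \MM{hom}$. Since the tensor product is commutative up to isomorphism, $A \tensor B \isom B \tensor A$, so $\w(B \tensor A)=1$ as well, and \Pref{onw} applied with the roles of $A$ and $B$ interchanged gives $\w(B) \leq \w(B \tensor A)=1$, hence $\w(B)=1$ and $B \in \MM{hom}$. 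This is the entire argument.

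I do not expect a serious obstacle here: the statement is essentially a one-line corollary of \Pref{onw}. The only points meriting a word of care are the symmetric use of \Pref{onw} (legitimate precisely because $A \tensor B \isom B \tensor A$, so the weight of the product bounds the weight of each factor) and the lower bound $\w \geq 1$ on $\MM{semicentral}$, which rules out any degenerate case. With these in hand, the complement $\MM{semicentral} \smallsetminus \MM{hom}$ is seen to be an ideal, i.e.\ $\MM{hom}$ is prime in $\MM{semicentral}$.
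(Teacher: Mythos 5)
Your argument is correct and is exactly the paper's own route: the corollary is derived directly from the monotonicity of the weight under tensor products (\Pref{onw}), applied to each factor via commutativity of $\tensor$, together with the observation that $\w \geq 1$ on $\MM{semicentral}$. Nothing further is needed.
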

\forgotten

We now use purity to identify the invertible elements of $\Br[sa](F)$.

\begin{thm}\label{subgp}
The invertible elements in $\Br[sa](F)$ are precisely the elements of $\Br(F)$, embedded in $\Br[sa](F)$ by \eq{BrBr}.
\end{thm}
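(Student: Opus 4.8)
The plan is to show two inclusions. One direction is immediate: every class in the image of $\Br(F)\hookrightarrow\Br[sa](F)$ is invertible, since if $A$ is associative central simple then $[A]\cdot[\op{A}] = [A\tensor\op{A}] = [\M[n](F)]$, which is the trivial class $\nacl{F}$ by the final corollary of \Sref{sec:underly}. So every element of $\Br(F)$ is a unit in $\Br[sa](F)$. The substance is the reverse inclusion: an invertible element must lie in $\Br(F)$.

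\medskip

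For the reverse inclusion, I would argue at the level of the monoid homomorphism $\s \co \MM{semi}\ra\SS{sep}$ from \Pref{Guypromisedaproof}, together with the purity results of \Cref{purim}. Suppose $\nacl{A}$ is invertible in $\Br[sa](F) = \MM{semi}/\MM{mat}$. First I would observe that invertibility forces $A$ into $\MM{hom}$. Indeed, if $\nacl{A}\cdot\nacl{B} = \nacl{F}$, then $A\tensor B$ is equivalent to a skew matrix algebra, hence (after tensoring by skew matrices on both sides) there are skew sets for which $A\tensor B\tensor\M{}(F;c)\isom\M{}(F;c')$; thus $A\tensor B\tensor\M{}(F;c)$ lies in $\MM{mat}\sub\MM{hom}$. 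Since $\MM{mat}$ and indeed $\MM{hom}$ are pure in $\MM{semicentral}$ and $\MM{semicentral}$ is pure in $\MM{semi}$ (\Cref{purim}, using transitivity of purity from \Exeref{puretransitive}), purity of the complement being an ideal means that a product landing in $\MM{hom}$ forces each factor into $\MM{hom}$. Hence $A\in\MM{hom}$, i.e. $A$ is homogeneous.

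\medskip

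Now that $A$ is homogeneous, I would invoke \Pref{underly}: there is an associative central division algebra $D$ with $A\isom D\tensor A'$ for a skew matrix algebra $A'$. Consequently $\nacl{A} = \nacl{D}$ in $\Br[sa](F)$, since $A'\in\MM{mat}$ is trivial in the quotient. But $\nacl{D}$ is manifestly in the image of $\Br(F)$, as $D$ is associative central simple; this is exactly the class sent to $\nacl{D}$ under the embedding \eq{BrBr}. Therefore every invertible element of $\Br[sa](F)$ coincides with the class of an associative central division algebra, and so lies in $\Br(F)$.

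\medskip

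The main obstacle is the passage from ``$\nacl{A}$ invertible'' to ``$A$ homogeneous'', and I expect the careful point to be the bookkeeping of purity. Purity is a statement about the ambient monoid $\MM{semi}$ (that $\MM{hom}$ is pure in it, via transitivity), so one must make sure that the witness to invertibility, namely $A\tensor B\tensor\M{}(F;c)$ landing in $\MM{hom}$, is genuinely a product \emph{inside} $\MM{semi}$ of the two factors $A$ and $B\tensor\M{}(F;c)$; then purity gives $A\in\MM{hom}$ directly. Once homogeneity is secured, \Pref{underly} does the rest essentially for free, so the real content is entirely contained in the interplay between the equivalence relation defining $\Br[sa](F)$ and the purity of the chain \eq{twolines}.
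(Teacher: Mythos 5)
Your proposal is correct and follows essentially the same route as the paper: from $A\tensor B\tensor \M[t](F;c)\cong \M[r](F;c')\in\MM{hom}$ you use the purity chain of \Cref{purim} to force $A\in\MM{hom}$, and then \Pref{underly} identifies $\nacl{A}$ with the class of its underlying associative division algebra. The only addition is your explicit check of the easy direction (that classes coming from $\Br(F)$ are invertible), which the paper leaves implicit.
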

\begin{proof}
Let $A \in \MM{semi}$, such that $\nacl{A} \in \Br[sa](F)$ is invertible, with inverse $\nacl{B} \in\Br[sa](F)$.
As $\nacl{A} \nacl{B}=\nacl{A\tensor B} = \nacl{F}$, we may write
$$A\tensor  B\tensor \M[t](F;c)\cong\M[r](F;c')$$
for some $t,r\ge 1$ and skew sets $(c), (c')$ of the respective dimension. Since $\M[r](F;c')$ is semicentral, \Pref{purim} shows that $A$ and~$B$ are semicentral. Since, in fact, $\M[r](F;c') \in \MM{hom}$, it follows from \Pref{purim} that $A \in \MM{hom}$ as well. By \Pref{underly}, $A$ is equivalent to an associative division algebra, and so $\nacl{A} \in \Br(F)$.

\end{proof}
In particular,
\begin{cor}
$\Br(F)$ is the unique maximal subgroup of $\Br[sa](F)$.
\end{cor}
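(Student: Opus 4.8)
The plan is to read the statement off \Tref{subgp}, which already identifies the image of $\Br(F)$ under \eq{BrBr} with the set of invertible elements of the monoid $\Br[sa](F)$. I would first note that the invertible elements of any commutative monoid form a subgroup containing the identity $\nacl{F}$, so $\Br(F)$ is a subgroup of $\Br[sa](F)$. I would then check maximality: if $H$ is a subgroup of $\Br[sa](F)$ containing $\nacl{F}$, then the identity $e_H$ of $H$ satisfies $e_H = e_H\nacl{F} = \nacl{F}$, so for each $\nacl{A} \in H$ its inverse in $H$ is an inverse in $\Br[sa](F)$; hence $\nacl{A}$ is invertible and lies in $\Br(F)$ by \Tref{subgp}, giving $H \sub \Br(F)$. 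Thus $\Br(F)$ contains every subgroup sharing its identity and is maximal. Under the standard convention that a subgroup of a monoid is a submonoid that is a group (so it contains $\nacl{F}$), this already proves the corollary.

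For the purely semigroup-theoretic reading I would rule out subgroups based at other idempotents. Every subgroup of a commutative monoid has an idempotent identity $e = \nacl{E}$ and sits inside the maximal subgroup ($\mathcal{H}$-class) attached to $e$; distinct idempotents give incomparable maximal subgroups, since $G_e \sub G_{e'}$ forces the idempotent $e \in G_{e'}$ to equal $e'$. Hence the corollary is equivalent to the assertion that $\nacl{F}$ is the only idempotent of $\Br[sa](F)$. The route I would take is: given an idempotent $\nacl{E}$, show $\nacl{E} \in \MM{hom}$; then \Pref{underly} writes $E \isom D \tensor A'$ with $D$ an associative division algebra and $A'$ a skew matrix algebra, so $\nacl{E} = \nacl{D} \in \Br(F)$; and an idempotent of the group $\Br(F)$ is its identity, forcing $\nacl{E} = \nacl{F}$, i.e. $E$ is a skew matrix algebra.

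The main obstacle is exactly the implication $\nacl{E}^2 = \nacl{E} \implies \nacl{E} \in \MM{hom}$. The tempting tool is purity of $\MM{hom} \sub \MM{semicentral} \sub \MM{semi}$ (\Cref{purim}), but it does not apply naively: realizing idempotency as $E \tensor E \tensor \M[t](F;c) \isom E \tensor \M[r](F;c')$ places $E$ symmetrically on both sides, and since semicentrality is a class invariant (\Pref{classofsuper}) the homogeneous/semicentral class of $E \tensor (\text{skew matrix})$ is just that of $E$, so purity merely re-expresses the hypothesis. One must instead extract a strictly monotone invariant from the equality. Applying $\s$ together with \Pref{s(A)-tens} reduces matters to the centre $Z = \Zent(\s(E))$, which satisfies $(Z \tensor Z)^{\oplus r} \isom Z^{\oplus r'}$ as \etale{} algebras; the difficulty is that the multiplicities $r,r'$ introduced by the skew-matrix factors prevent the naive conclusion $Z = F$ (already when $\nuc{E}$ is a quadratic field this relation holds nontrivially). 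Pinning down the invariant of the atom distribution that is genuinely preserved under skew-matrix equivalence and that forces every idempotent to be trivial is where I expect the real work to lie.
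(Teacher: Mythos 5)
Your first paragraph is exactly the paper's argument: the paper derives this corollary from \Tref{subgp} with no further proof beyond ``in particular'', implicitly reading ``subgroup'' as a submonoid that is a group, so that it contains $\nacl{F}$ and hence consists of units, which \Tref{subgp} identifies with $\Br(F)$. Your further observation about the stricter semigroup-theoretic reading is well taken: if maximal subgroups are understood as $\mathcal{H}$-classes of arbitrary idempotents, the corollary would additionally require showing that $\nacl{F}$ is the only idempotent of $\Br[sa](F)$, and the paper does not address this; as you correctly note, purity of $\MM{hom}\sub\MM{semicentral}\sub\MM{semi}$ does not settle it, since the idempotency relation places the algebra on both sides. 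So your proof is complete for the statement as the paper intends it, and the residual question you isolate --- whether $\Br[sa](F)$ has nontrivial idempotents, for instance arising from an algebra whose atoms represent a finite subgroup of $\Br(F)$ --- is a gap only under the stronger reading, which the paper neither claims nor proves.
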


\section{A combinatorial invariant of $\Br[sa](F)$}\label{sec:concrete}

The map $\s \co \MM{semi} \ra \SS{sep}$ discussed in \Pref{Guypromisedaproof} takes the elements of $\MM{mat}$ to $\SS{split}$, and thus induces a map
$$\bar{\s} \co \Br[sa](F) \ra \SS{sep}/\SS{split},$$
which is onto by \Pref{Guypromisedaproof}. How do we work with the quotient at the right-hand side? The idea is to view the atoms of a separable algebra $S \in \SS{sep}$ up to Brauer equivalence, and then pass to distributions to accommodate for the tensor product with arbitrary skew matrices,  whose images are arbitrary sums of central matrices.

\smallskip

Let $\allBr$ denote the disjoint union of the Brauer groups $\Br(F')$, ranging over the finite separable extensions $F'/F$. Namely, simple finite dimensional $F$-algebras with separable centers, up to Brauer equivalence over the respective center. This is not a monoid with respect to the tensor product over~$F$, as the tensor product of simple non-central algebras is often not simple. Therefore, reminiscent of the Grothendieck ring of representations, we consider the commutative $\Q$-algebra $\Q[\allBr]$ which by definition is generated by $\allBr$, with multiplication
of
$\alpha' \in \Br(F')$ and $\alpha'' \in \Br(F'')$ defined by $\alpha' \alpha'' = \gamma_1+ \cdots + \gamma_t$, where the tensor product of representatives $A' \in \alpha'$ and $A'' \in \alpha''$ decomposes into simple summands as $A' \tensor[F] A'' = C_1 \oplus \cdots \oplus C_t$ and $\gamma_i$ is the Brauer class of $C_i$ over its center. In particular the group algebra $\Q[\Br(F)]$ is a (monomial) subalgebra. The elements $\frac{1}{[F':F]}[F']$, for the Galois extensions $F'/F$, are idempotents in~$\Q[\allBr]$.

There is a {\bf{central dimension}} function $\dim\!\co \Q[\allBr] \ra \Q$, assigning to each class $\alpha \in \Br(F')$ the dimension of the center, $\dimcol{F'}{F}$, and extended additively.
Since the dimension of the center is additive and multiplicative, the central dimension is an algebra homomorphism.

\begin{defn}
By $\Delta(F)$ we denote the space of finitely supported rational ``distributions'' over $\allBr$, namely, elements
$\sum q_i \alpha_i \in \Q[\allBr]$ of central dimension~$1$, with positive rational coefficients~$q_i$.
\end{defn}
This is a monoid under the multiplication in the $\Q$-algebra. Geometrically, $\Delta(F)$ is the (rational) convex hull of the union $\allBr$. Its elements can be viewed as distributions in the usual sense, if we accept that the ``probability'' to choose $\alpha_j$ in the formal sum $\sum q_j \alpha_j$ is $\dim(\alpha_j)q_j$. The product of elements is then, indeed, the convolution of distributions, properly interpreted.

\begin{exmpl}
Take $F = \R$. Then $\allBr[\R] = \Br(\R) \cup \Br(\C) = \set{[\R],[\HQ],[\C]}$. In the algebra $\Q[\allBr[\R]]$, $[\R]$ is the identity element, $[\HQ]^2 = [\R]$, $[\C][\HQ] = [\C]$ and $[\C]^2 = 2[\C]$ since $\C \tensor[\R] \C = \C \oplus \C$. The algebra is isomorphic to $\Q \oplus \Q \oplus \Q$ by sending $[\R] \mapsto (1,1,1)$, $[\HQ] \mapsto (1,-1,1)$ and $[\C] \mapsto (2,0,0)$. The central dimension is then the projection onto the first entry.

Restricting to $\Delta(\R)$, the map $q_0[\R]+q_1[\HQ]+(1-q_0-q_1)\frac{1}{2}[\C] \mapsto (1,q_0-q_1,q_0+q_1)$ defines an isomorphism of monoids from $\Delta(\R)$ to $\set{(1,x,y) \in \Q^3 \suchthat \abs{x}\leq y \leq 1}$, with entry-wise multiplication.

\end{exmpl}

Now define an epimorphism $$\pi \co \SS{sep} \ra \Delta(F)$$
by sending a direct sum of separable simple algebras $T_1 \oplus \cdots \oplus T_t$ to the distribution $\frac{1}{t} \sum \frac{[T_i]}{\dim(\Zent(T_i))}$, where $t$ is the number of atoms. The map is onto by clearing denominators and repeating each summand as necessary.
For example, $\pi(r[\R]+2c[\C]+h[\HQ]) = \frac{r}{n}[\R]+\frac{c}{n}[\C]+\frac{h}{n}[\HQ]$, where $n = r+2c+h$.

Clearly $\pi(S) = [F]$ for any $S \in \SS{split}$, which is why we bother with distributions in the first place. So the map is well-defined as an
 epimorphism of monoids
$$\pi \co \SS{sep}/\SS{split} \ra \Delta(F).$$

\begin{cor}
The map $\pi \circ \bar{\s} \co \Br[sa](F) \ra \Delta(F)$ is an epimorphism.
\end{cor}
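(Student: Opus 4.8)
The plan is to read the statement as a purely formal assertion: $\pi \circ \bar{\s}$ is the composite of two maps each of which has already been shown to be an epimorphism of commutative monoids, so the real content lies entirely in the facts assembled just before the corollary, and the proof amounts to transporting surjectivity and the homomorphism property through a composition. First I would recall that $\s \co \MM{semi} \ra \SS{sep}$ is a surjective homomorphism by \Pref{Guypromisedaproof}, and that by \Cref{use52} it carries the submonoid $\MM{mat}$ of skew matrix algebras into $\SS{split}$, since the atoms of a skew matrix algebra are central matrix algebras. Consequently $\s$ descends to a well-defined homomorphism $\bar{\s} \co \Br[sa](F) = \MM{semi}/\MM{mat} \ra \SS{sep}/\SS{split}$.

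The one point deserving explicit verification is this descent, i.e.\ that $\bar{\s}$ respects the congruence defining $\Br[sa](F)$. If $A \sim B$ in $\MM{semi}$, then $A \tensor \M[n](F;c) \isom B \tensor \M[n'](F;c')$ for suitable skew matrix algebras; applying the homomorphism $\s$ and \Pref{s(A)-tens} yields $\s(A) \tensor \s(\M[n](F;c)) \isom \s(B) \tensor \s(\M[n'](F;c'))$, where both $\s(\M[n](F;c))$ and $\s(\M[n'](F;c'))$ lie in $\SS{split}$. Hence $\s(A)$ and $\s(B)$ represent the same class in $\SS{sep}/\SS{split}$, so $\bar{\s}$ is well defined; its surjectivity is immediate from that of $\s$.

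For the second factor, the map $\pi \co \SS{sep} \ra \Delta(F)$ is an epimorphism of monoids, and since $\pi(S) = [F]$ for every $S \in \SS{split}$ it factors through an epimorphism $\pi \co \SS{sep}/\SS{split} \ra \Delta(F)$, exactly as established in the paragraphs preceding the corollary. Composing, $\pi \circ \bar{\s} \co \Br[sa](F) \ra \Delta(F)$ is a composite of two surjective monoid homomorphisms, hence itself a surjective homomorphism of monoids, that is, an epimorphism.

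I do not expect any serious obstacle: the difficult structural work—surjectivity of $\s$ in \Pref{Guypromisedaproof}, the identification of $\MM{mat}$ via \Cref{use52}, and the construction of $\pi$ together with its vanishing on $\SS{split}$—is all already in hand. The only genuinely necessary check is the compatibility of $\bar{\s}$ with the Brauer congruence described above, after which the statement follows formally from the closure of epimorphisms under composition.
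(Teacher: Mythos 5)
Your proposal is correct and matches the paper's (implicit) argument: the corollary is stated without a separate proof precisely because it follows by composing the epimorphism $\bar{\s}$ (induced from \Pref{Guypromisedaproof} together with the fact that $\s$ sends $\MM{mat}$ into $\SS{split}$) with the epimorphism $\pi \co \SS{sep}/\SS{split} \ra \Delta(F)$. Your explicit check that $\bar{\s}$ respects the Brauer congruence, via \Pref{s(A)-tens}, is exactly the right (and only) point needing verification.
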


To put this in context, let us extend the diagram \eq{twolines} one line further:
$$
\xymatrix@C=10pt@R=14pt{
\MM{mat} \ar@{->}[d] & \sub & \MM{hom} \ar@{->}[d]&  \sub & \MM{semicentral} \ar@{->}[d] & \sub & \MM{semi} \ar@{->}[d]^{\s} \\
\SS{split}\ar@{->}[d] & \sub & \SS{hom} \ar@{->}[d] & \sub & \SS{semicentral} \ar@{->}[d] & \sub & \SS{sep} \ar@{->}[d]^{\pi} \\
1 & \sub & \Br(F) & \sub & \Q[\Br(F)]^{(1)} & \sub & \Delta(F) \\
}$$
where $\Q[\Br(F)]^{(1)}$ is the monoid of distribution elements in the group algebra of $\Br(F)$.
In particular, these maps realize the isomorphisms $$\MM{hom}/\MM{mat} \isom \SS{hom}/\SS{split} \isom \Br(F).$$

\begin{cor}
Let $F$ be a field with nontrivial Brauer group. Then $\Br[sa](F)$ has elements of infinite order.
\end{cor}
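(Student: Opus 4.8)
The plan is to exploit the epimorphism $\pi \circ \bar{\s} \co \Br[sa](F) \ra \Delta(F)$ established just above: since a monoid homomorphism cannot turn an infinite-order element into a finite-order one, it suffices to produce a single element of infinite order in the distribution monoid $\Delta(F)$ and to pull back any of its preimages. The conceptual point is that the infinite order cannot come from $\Br(F)$ itself, which is torsion; it must be manufactured from the scaling behaviour of distributions, the same phenomenon that makes $\tfrac12[\C]$ idempotent while $[\C]$ contributes a contracting factor in the $\R$-example.

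Concretely, I would fix a nontrivial class $\alpha \in \Br(F)$, say of order $m \geq 2$ (this exists by hypothesis), and set $x = \tfrac13[F] + \tfrac23\alpha$. Both summands have central dimension $1$, so $x \in \Delta(F)$; and because the tensor product over $F$ of $F$-central simple algebras is again $F$-central simple, the subalgebra of $\Q[\allBr]$ generated by $\alpha$ is genuinely the group algebra $\Q[\langle\alpha\rangle] \isom \Q[\Z/m\Z]$, in which $x$ is an ordinary group-algebra element. To detect its order I would apply the evaluation homomorphism $\Q[\Z/m\Z] \to \C$ sending $\alpha$ to a primitive $m$-th root of unity $\zeta$; under it $x \mapsto \lambda = \tfrac13 + \tfrac23\zeta$. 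Here $\lambda$ is a convex combination of the two distinct unit-circle points $1$ and $\zeta$ with both weights positive, so it lies strictly inside the open unit disc, giving $|\lambda| < 1$; moreover the unequal weights $\tfrac13,\tfrac23$ keep $\lambda$ off the origin even when $m$ is even and $\zeta = -1$ (where $\lambda = -\tfrac13 \neq 0$). Thus $0 < |\lambda| < 1$, the absolute values $|\lambda|^k$ strictly decrease, the images $\lambda^k$ of the powers $x^k$ are pairwise distinct, and hence the $x^k$ are pairwise distinct in $\Delta(F)$.

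Finally, choosing any $\nacl{A} \in \Br[sa](F)$ with $(\pi\circ\bar{\s})(\nacl{A}) = x$, an equality $\nacl{A}^k = \nacl{A}^l$ would force $x^k = x^l$, contradicting the distinctness just shown; so $\nacl{A}$ has infinite order. The one genuinely delicate point — and the step I would be most careful with — is arranging that the manufactured $\lambda$ is simultaneously inside the unit disc and nonzero for \emph{every} $m \geq 2$. The symmetric choice $\tfrac12[F]+\tfrac12\alpha$ fails precisely at $m = 2$, where it is idempotent ($x^2 = x$) and $\lambda = 0$ under $\zeta = -1$; this is exactly why I take the asymmetric combination $\tfrac13,\tfrac23$, which dodges both degeneracies uniformly.
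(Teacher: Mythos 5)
Your proof is correct, and it follows the same strategy as the paper: push the problem through the epimorphism $\pi\circ\bar{\s}\co\Br[sa](F)\ra\Delta(F)$, realize a convex combination of $[F]$ and a nontrivial Brauer class as the image of some semiassociative algebra, and detect infinite order by evaluating the cyclic group algebra at a root of unity. The one substantive difference is your choice of weights, and it is a genuine improvement rather than a cosmetic one. The paper takes $C$ of prime index $p$ and the element $\frac{1}{2}([F]+[C])$, evaluated at $[C]\mapsto\exp(2\pi i/p)$; but when $p=2$ (which is unavoidable if, say, $\Br(F)$ is entirely $2$-torsion, as for $F=\R$) this element is idempotent in $\Q[\Br(F)]$ and its image under the evaluation is $0$, so the paper's argument as written does not establish infinite order in that case. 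Your asymmetric combination $\frac{1}{3}[F]+\frac{2}{3}\alpha$ evaluates to $\lambda=\frac{1}{3}+\frac{2}{3}\zeta$ with $\frac{1}{3}\le|\lambda|<1$ for every order $m\ge 2$ (reverse triangle inequality for the lower bound, strict convexity for the upper), so the powers $\lambda^k$ have strictly decreasing moduli and are pairwise distinct uniformly in $m$. This is exactly the repair the authors themselves use in a variant argument (the element $\frac{1}{3}(2[F]+[Q])$ for a quaternion class $Q$), and your preimage exists because $\pi\circ\bar{\s}$ is onto, e.g.\ via an algebra with $\s(A)=F\oplus A_\alpha\oplus A_\alpha$. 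No gaps.
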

(By ``infinite order'' we mean that the powers of the element are distinct, so the statement does not immediately follow from \Tref{subgp}.)
\begin{proof}
Let $C$ be a central simple algebra of index~$p$ over $F$, so that $[C]^p = [F]$ in $\Br(F)$. Let~$A$ be a semiassociative algebra with $\s(A) = F \oplus C$. The image of $A$ is $\Delta(F)$ is $\pi (\bar{\sigma})([A]) = \frac{1}{2}([F]+[C])$, which has infinite order, as can be seen by mapping the subalgebra of $\Q[\Br(F)]$ generated by $[C]$ to $\C$ by sending $[C] \mapsto \exp(\frac{2\pi i}{p})$.
\end{proof}

\forget
\begin{cor}
Let $F$ be a field with nontrivial $\Brp[2](F)$. Then $\Br[sa](F)$ has elements of infinite order.
\end{cor}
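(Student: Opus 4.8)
The plan is to transport the problem through the monoid epimorphism $\pi \circ \bar{\s} \co \Br[sa](F) \ra \Delta(F)$ constructed above and to exhibit a single semiassociative algebra $A$ whose image in $\Delta(F)$ has pairwise distinct powers. Since a monoid homomorphism cannot send an element with distinct powers to one satisfying a relation $x^i=x^j$, it will then follow that the powers of $\nacl{A}$ are pairwise distinct in $\Br[sa](F)$, which is precisely the meaning of ``infinite order'' here. So the whole proof reduces to producing one good element of $\Delta(F)$ in the image of $\pi\circ\bar{\s}$.

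First I would fix the input: as $\Br(F)$ is nontrivial and, being a Brauer group, torsion, I choose a central simple $F$-algebra $C$ whose class $[C]\in\Br(F)$ has finite order $m\geq 2$. Using the surjectivity of $\s\co\MM{semi}\ra\SS{sep}$ (\Pref{Guypromisedaproof}), I then pick a semiassociative algebra $A$ with $\s(A)=F\oplus F\oplus C$; this is legitimate because $F\oplus F\oplus C$ is separable. Reading off the definition of $\pi$, and using that all three atoms are $F$-central (so their centers have dimension $1$ over $F$), I would compute
$$\pi\big(\bar{\s}(\nacl{A})\big)=\pi\big(\s(A)\big)=\tfrac{1}{3}\big(2[F]+[C]\big)\in\Delta(F).$$
This element lies in the group subalgebra $\Q[\Br(F)]$, in fact in the subalgebra $\Q[\sg{[C]}]\cong\Q[t]/(t^m-1)$ generated by $[C]$. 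I would then evaluate through the ring homomorphism $\Q[\sg{[C]}]\ra\C$ sending $[C]\mapsto\zeta:=\exp(2\pi i/m)$ (well defined since $\zeta^m=1$), under which our element maps to $(2+\zeta)/3$. The reverse and ordinary triangle inequalities give $1=\abs{2}-\abs{\zeta}\leq\abs{2+\zeta}<\abs{2}+\abs{\zeta}=3$, the right-hand inequality being strict because $\zeta\neq 1$; hence $0<\abs{(2+\zeta)/3}<1$. Thus $(2+\zeta)/3$ is neither zero nor a root of unity, so the complex numbers $((2+\zeta)/3)^n$ have strictly decreasing moduli and are pairwise distinct. Tracking this back, the powers of $\tfrac13(2[F]+[C])$ in $\Delta(F)$ are pairwise distinct, and therefore so are the powers of $\nacl{A}$.

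The step that requires care, and the reason I use $2[F]+[C]$ rather than the more obvious $[F]+[C]$, is the case $m=2$. With $[F]+[C]$ the image in $\Delta(F)$ is $\tfrac12([F]+[C])$, which for $m=2$ evaluates to $(1+\zeta)/2=0$ at $[C]\mapsto-1$; worse, $\tfrac12([F]+[C])$ is then idempotent and all its powers coincide, so the argument collapses exactly for quaternion-type classes (e.g.\ $\Br(\R)=\Z/2$). Inserting a second copy of $F$ shifts the evaluation to $(2+\zeta)/3$, whose modulus stays strictly between $0$ and $1$ for \emph{every} $m\geq 2$ at once, giving a single uniform construction. The remaining verifications—that $\pi\circ\bar{\s}$ is a monoid homomorphism, that $\Q[\Br(F)]$ carries the ordinary group-algebra product, and that evaluation at $\zeta$ is a ring homomorphism—are all recorded in or immediate from the material above, so no further obstacle remains.
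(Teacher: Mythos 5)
Your proof is correct and takes essentially the same route as the paper: the same reduction through the epimorphism $\pi\circ\bar{\s}$, the same choice of a semiassociative algebra with $\s(A)=F\oplus F\oplus C$, and the same resulting element $\frac{1}{3}(2[F]+[C])$ of $\Delta(F)$, whose infinite order you verify by evaluating $[C]$ at a root of unity --- which for order $2$ is exactly the paper's projection onto the idempotent $e'=\frac{1}{2}([F]-[C])$. Your observation that the more obvious choice $\frac{1}{2}([F]+[C])$ becomes idempotent when $[C]$ has order $2$ is precisely the reason the paper inserts the extra copy of $F$, and your version has the mild added virtue of working uniformly for any nontrivial Brauer class of any finite order.
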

\begin{proof}
Let~$Q$ be a nonsplit associative quaternion algebra over $F$. Then $e = \frac{1}{2}([F]+[Q])$ is an idempotent in the group algebra $\Q[\Br(F)]$; let $e' = \frac{1}{2}([F]-[Q])$ be its complement; the central dimensions are~$1$ and~$0$, respectively. Let~$A$ be a semiassociative algebra with $\s(A) = F \oplus F \oplus Q$. The image of $A$ in $\Delta(F)$ is $\frac{1}{3}(2[F]+[Q]) = \frac{1}{3}(2(e+e')+(e-e')) = e+\frac{1}{3}e'$, which has infinite multiplicative order.
\end{proof}
\forgotten

\bibliographystyle{plain}
\bibliography{refs}

\end{document}